\numberwithin{equation}{section}
\numberwithin{figure}{section}
\renewcommand{\subsection}[1]{\hspace{-\parindent}\refstepcounter{subsection}{\bf (\arabic{section}\alph{subsection}) #1.}\addcontentsline{toc}{subsection}{\bf #1.}}
\newenvironment{nouppercase}{%
  \renewcommand{\uppercasenonmath}[1]{}}{}
\theoremstyle{plain}
\newtheorem{thm}{Theorem}[section]
\newtheorem{theorem}[thm]{Theorem}
\newtheorem{corollary}[thm]{Corollary}
\newtheorem{prop}[thm]{Proposition}
\newtheorem{definition}[thm]{Definition}
\newtheorem{remark}[thm]{Remark}
\newtheorem{proposition}[thm]{Proposition}
\newtheorem{example}[thm]{Example}
\newtheorem{non-example}[thm]{Non-example}
\newtheorem{properties}[thm]{Properties}
\newtheorem{lemma}[thm]{Lemma}
\newtheorem{conjecture}[thm]{Conjecture}
\newtheorem*{claim*}{Claim} 
\newtheorem*{lemma*}{Lemma}
\newtheorem*{theorem*}{Theorem}
\newtheorem*{conjecture*}{Conjecture}
\newtheorem{terminology}[thm]{Terminology}
\newcommand{\bC}{{\mathbb C}}
\newcommand{\bF}{{\mathbb F}}
\newcommand{\bG}{{\mathbb G}}
\newcommand{\bQ}{{\mathbb Q}}
\newcommand{\bR}{{\mathbb R}}
\newcommand{\bZ}{{\mathbb Z}}
\newcommand{\scrA}{\EuScript A}
\newcommand{\scrB}{\EuScript B}
\newcommand{\scrC}{\EuScript C}
\newcommand{\scrD}{\EuScript D}
\newcommand{\scrF}{\EuScript F}
\newcommand{\scrH}{\EuScript H}
\newcommand{\scrI}{\EuScript I}
\newcommand{\scrK}{\EuScript K}
\newcommand{\scrL}{\EuScript L}
\newcommand{\scrM}{\EuScript M}
\newcommand{\scrR}{\EuScript R}
\newcommand{\iso}{\cong}
\newcommand{\htp}{\simeq}
\newcommand{\smooth}{C^\infty}
\renewcommand{\hom}{\mathit{hom}}
\newcommand{\howmany}[1]{\underline{#1}}
\title[Formal groups]{\Large\larger\rm Formal groups and quantum cohomology}
\author{Paul Seidel}
\begin{document}
\begin{nouppercase}
\maketitle
\end{nouppercase}

\begin{abstract}
We use chain level genus zero Gromov-Witten theory to associate to any closed monotone symplectic manifold a formal group (loosely interpreted), whose Lie algebra is the odd degree cohomology of the manifold (with vanishing bracket). When taken with coefficients in $\bF_p$ for some prime $p$, the $p$-th power map of the formal group is related to quantum Steenrod operations. The motivation for this construction comes from derived Picard groups of Fukaya categories, and from arithmetic aspects of mirror symmetry.
\end{abstract}

\section{Introduction}

This paper is concerned with aspects of genus zero Gromov-Witten theory, which are specifically of interest if one works with integer or mod $p$ cohomological coefficients. There is a shared context between this and arithmetic aspects of Fukaya categories (e.g.\ \cite{evans-lekili15, lekili-polishchuk16, lekili-perutz12, allston-amorim12}), even though we do not work on a categorical level. Instead, our constructions will resemble those of certain chain level structures, and of cohomology operations, in algebraic topology.

\subsection{Background}
Gromov-Witten theory on a closed symplectic manifold $X$ can be axiomatized as a Cohomological Field Theory \cite{kontsevich-manin94}, which means that operations are parametrized by Deligne-Mumford moduli spaces of curves. We will only consider genus zero curves, where the notion of Cohomological Field Theory is related to ones from classical topology: namely, one can start with the little disc operad \cite[Chapter IV]{may72}, then enlarge it to the framed little disc operad \cite{getzler94b}, and finally trivialize the circle action \cite{drummond-cole14} to obtain the genus zero Deligne-Mumford operad. It is important for this paper to work on the chain level. An example of a chain level construction is the quantum $A_\infty$-ring structure \cite{ruan-tian95}, which refines the small quantum product. In abstract terms, this comes from mapping Stasheff associahedra to Deligne-Mumford spaces, compatibly with the operad structures.

To define the genus zero Cohomological Field Theory for a general $X$, one usually has to work with coefficient rings containing $\bQ$, because of the multivalued perturbations involved in making moduli spaces regular. However, in the special case where $X$ is weakly monotone (also called semi-positive; see e.g.\ \cite[Section 6.4]{mcduff-salamon}), the relevant Gromov-Witten invariants, which count genus zero curves with $\geq 3$ marked points in a given homology class, can be defined over $\bZ$. If one reduces coefficients to a finite field $\bF_p$, there are two obvious constructions of cohomology operations. One can use the relation with the little disc operad to obtain analogues of the Cohen operations on the homology of double loop spaces \cite{cohen76}. For ease of reference, let's call these quantum Cohen operations. The second approach is to introduce quantum Steenrod operations, which were proposed in \cite{fukaya93b} and have attracted some recent attention \cite{wilkins18}. These are both facets of a common story, which involves the equivariant cohomology of Deligne-Mumford space with $(p+1)$ marked points, with respect to the action of the symmetric group $\mathit{Sym}_p$ permuting all but one point.

We take our bearings from both the ``one-dimensional'' quantum $A_\infty$-structure and the ``two-dimensional'' quantum Cohen and Steenrod operations. To thread our way between the two, we use another family of moduli spaces, which come from the convolution theory of Lagrangian correspondences \cite{mau-wehrheim-woodward18, bottman-wehrheim18, bottman17, fukaya17}. They map to Deligne-Mumford spaces, and on the other hand, their boundary structure is governed by Stasheff associahedra. The effect of using (the simplest of) these spaces is to equip the set of quantum Maurer-Cartan solutions with a multiplicative structure. After reduction mod $p$, that structure will admit a partial description in terms of a specific quantum (Cohen or) Steenrod operation.

\subsection{Algebraic terminology\label{subsec:mc}}
Before continuing the discussion, we need to recall some definitions. In a ``functor of points'' approach, an object is often described as a functor from a class of ``coefficient rings'' to sets. We use the following coefficient rings, familiar from the theory of formal schemes and from deformation theory.

\begin{definition} \label{th:adic-ring}
An adic ring is a non-unital commutative ring $N$ such that the map $N \rightarrow \underleftarrow{\lim}_m N/N^m$ is an isomorphism. In other words, $\bigcap_m N^m = 0$, and $N$ is complete with respect to the topology given by the decreasing filtration $\{N^m\}$. Note that one can adjoin a unit, forming the augmented ring $\bZ 1 \oplus N$, which contains $N$ as an ideal.
\end{definition}

\begin{example} \label{th:example-of-ring}
Standard examples are $N = q\bZ[[q]]$ (power series with zero constant term) or its truncations $N = q\bZ[q]/q^{m+1}$. We can also use field coefficients, for instance taking $N = q\bF_p[[q]]$, which simplifies the algebraic behaviour slightly. An example with ``unequal characteristic'' is $N = p\bZ_p$, the maximal ideal in the ring of $p$-adic integers, where $N/N^m = \bZ/p^{m-1}$.
\end{example}


\begin{definition}
A ``formal group'' is a functor from adic rings to groups. 
\end{definition}

This is somewhat weaker than the classical notion of formal group \cite{lazard75}: there, one imposes additional conditions on the functor, leading to representability results in an appropriate category of formal schemes. In our application, we will be truncating what should really be an object of derived geometry, and representability in the classical sense is not expected to hold. For simplicity, we have chosen to ignore the issue, resulting in the definition given above.

As mentioned before, adic rings are a standard way to formulate deformation problems \cite{schlessinger68}. The specific problem relevant for us is the following. Let $\scrA$ be an $A_\infty$-ring (see Section \ref{subsec:hh} for our conventions). Given an adic ring $N$, let $\scrA \hat\otimes N$ be the inverse limit of tensor products $\scrA \otimes (N/N^m)$. We consider solutions $\gamma \in \scrA^1 \hat\otimes N$ of the (generalized) Maurer-Cartan equation
\begin{equation} \label{eq:mc}
\sum_{d \geq 1} \mu^d_{\scrA}(\gamma,\dots,\gamma) = 0.
\end{equation}
Two such solutions $\gamma, \tilde\gamma$ are considered equivalent if there is an $h \in \scrA^0 \hat\otimes N$ such that
\begin{equation} \label{eq:mc-equivalence}
\sum_{p,q} \mu_{\scrA}^{p+q+1}(\overbrace{\gamma,\dots,\gamma}^{p},h,\overbrace{\tilde\gamma,\dots,\tilde\gamma}^q) = \gamma - \tilde\gamma.
\end{equation}

\begin{definition} $\mathit{MC}(\scrA;N)$ is the set of equivalence classes of Maurer-Cartan elements in $\scrA \hat\otimes N$. This is functorial in $N$, giving a functor $\mathit{MC}(\scrA)$ from adic rings to sets.
\end{definition}

If $N^2 = 0$, \eqref{eq:mc} reduces to $\mu^1_{\scrA}(\gamma) = 0$, and \eqref{eq:mc-equivalence} to $\mu^1_{\scrA}(h) = \gamma - \tilde{\gamma}$. Hence, in this case $\mathit{MC}(\scrA;N) = H^1(\scrA; N)$ is the cohomology with coefficients in the abelian group $N$. Correspondingly, the general $\mathit{MC}(\scrA;N)$ can be viewed as nonlinear analogues of cohomology groups. Note that what we are studying is not the deformation theory of $\scrA$ as an $A_\infty$-ring: instead, it can be viewed as the deformation theory of the free module $\scrA$, inside the dg category of $A_\infty$-modules.

\subsection{The formal group structure}
With this in mind, let's return to symplectic geometry. To keep the formalism in the simple form set up above (avoiding Novikov rings), we will assume that our symplectic manifold $X$ is monotone (rather than weakly monotone), which means that its symplectic form satisfies
\begin{equation} \label{eq:monotone}
[\omega_X] = \delta c_1(X) \in H^2(X;\bR) \;\; \text{for some $\delta > 0$.}
\end{equation}
Take a suitable chain complex $\scrC = C^*(X)$ representing its integral cohomology, equipped with the quantum $A_\infty$-structure $\mu_{\scrC}$. Note that the quantum $A_\infty$-structure is only $\bZ/2$-graded; hence, the definition above should be interpreted so that Maurer-Cartan elements are taken in $\scrC^{odd} \hat\otimes N$, and correspondingly, the entire odd degree cohomology of $X$ appears. Let $\mathit{MC}(X;N) = \mathit{MC}(\scrC;N)$ be the set of equivalence classes of Maurer-Cartan solutions. One can think of this as the deformation theory of the diagonal $\Delta_X$ as an object of the Fukaya category $\scrF(X \times \bar{X})$, where $\bar{X}$ means that we have reversed the sign of the symplectic form. In other words, deformations are ``bounding cochains'' for $\Delta_X$ in the sense of \cite{fooo}. If the closed-open map is an isomorphism, one can also think of this theory as deformations of the identity functor on $\scrF(X)$, which describes the formal neighbourhood of the identity in the ``automorphism group'' of that category. From the composition of automorphisms, one would expect additional structure, and indeed:

\begin{prop} \label{th:main}
The functor $\mathit{MC}(X)$ has the canonical structure of a ``formal group''. 
\end{prop}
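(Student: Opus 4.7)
The plan is to construct a binary operation on $\mathit{MC}(X)$ using the ``simplest'' moduli spaces from the convolution theory of Lagrangian correspondences alluded to in the introduction, and then to verify the group axioms in succession. Concretely, I would introduce a family of chain operations $\nu^{p,q}: \scrC^{\otimes p} \otimes \scrC^{\otimes q} \to \scrC$ of degree $1-p-q$, for $p,q \geq 0$ with $(p,q) \neq (0,0)$, coming from moduli spaces $\overline{\scrR}^{p,q}$ of two-colored stable discs (or pseudoholomorphic quilts modeled on the $2$-associahedron of Bottman--Wehrheim) with $p$ incoming boundary points of one color, $q$ of another, and one output. The codimension-one boundary strata should give $A_\infty$-type identities in which $\nu^{p,q}$ interacts with the ordinary $\mu^d_{\scrC}$ (both on inputs and on the output) and with lower $\nu^{p',q'}$, exactly in the way dictated by composing Lagrangian correspondences $\Delta_X \circ \Delta_X = \Delta_X$.

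With these operations in hand, I would define the multiplication on Maurer--Cartan elements by
\begin{equation*}
\gamma_1 * \gamma_2 \;=\; \sum_{(p,q) \neq (0,0)} \nu^{p,q}(\underbrace{\gamma_1,\dots,\gamma_1}_p,\underbrace{\gamma_2,\dots,\gamma_2}_q),
\end{equation*}
and check step by step: (i) the sum converges in $\scrC^1\hat\otimes N$ thanks to the adic topology; (ii) $\gamma_1*\gamma_2$ solves \eqref{eq:mc} as a formal consequence of $\mu^d_{\scrC}\gamma_i^{\otimes d}=0$ together with the boundary identities for $\overline{\scrR}^{p,q}$; (iii) changing $\gamma_i$ within its equivalence class, by applying a gauge $h_i$ as in \eqref{eq:mc-equivalence}, changes $\gamma_1*\gamma_2$ only by a gauge built from $\nu^{p,q}$ and $h_i$; (iv) the element $0\in\mathit{MC}(X;N)$ is a two-sided identity, which reduces to the normalization $\nu^{1,0}=\nu^{0,1}=\Id$ plus the vanishing (up to gauge) of the terms with $p=0$ or $q=0$ and the other index $\geq 2$. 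Functoriality in $N$ is automatic, since everything is built from universal $N$-multilinear operations.

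For associativity, I would introduce one further family, a ``three-colored'' moduli space $\overline{\scrR}^{p,q,r}$, whose codimension-one boundary matches both iterated compositions $(\gamma_1*\gamma_2)*\gamma_3$ and $\gamma_1*(\gamma_2*\gamma_3)$ and produces an explicit gauge $h \in \scrC^0\hat\otimes N$ interpolating between them, so that the two are equal in $\mathit{MC}(X;N)$. Inverses are then constructed by a standard inductive argument on the adic filtration: on $N/N^2$ the operation $*$ reduces to addition on $H^1(\scrC;N)$ (this is the observation at the end of Section \ref{subsec:mc}), so $-\gamma$ is the inverse at first order, and given an inverse mod $N^m$ one solves the linearized Maurer--Cartan equation to lift it to $N^{m+1}$; taking the inverse limit produces $\gamma^{-1}$.

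The main obstacle is the first, geometric step: constructing the moduli spaces $\overline{\scrR}^{p,q}$ (and $\overline{\scrR}^{p,q,r}$) as regular, compact, oriented manifolds-with-corners whose stratification exactly realizes the combinatorics sketched above. One has to establish the correct gluing/regularity theory for the underlying quilted pseudoholomorphic curves in $X\times\bar X$, identify the boundary strata unambiguously, and keep track of signs so that the desired identities for $\nu^{p,q}$ hold on the nose (not merely up to unspecified homotopy). Granted this, all the algebraic verifications above are formal consequences of the boundary combinatorics and the adic completeness of $N$.
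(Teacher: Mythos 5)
Your overall architecture coincides with the paper's: the two-colored spaces you call $\overline{\scrR}^{p,q}$ are the colored multiplihedra $\mathit{MWW}_{p,q}$ of Section \ref{subsec:mww}, your $\nu^{p,q}$ are the operations \eqref{eq:beta-operations}, your product $*$ is $\Pi^2_{\scrC}$, steps (ii)--(iii) are Lemma \ref{th:multiproduct}, and your adic induction for inverses is essentially Lemmas \ref{th:inverse-1}--\ref{th:inverse-2}. The place where your plan diverges, and where it has a genuine gap, is associativity. A three-colored family $\overline{\scrR}^{p,q,r}$ ``whose codimension-one boundary matches both iterated compositions'' is precisely a witch-ball-type space (Section \ref{subsec:witch-ball-spaces}): interpolating directly between $(\gamma_1 * \gamma_2)*\gamma_3$ and $\gamma_1 * (\gamma_2 * \gamma_3)$ forces boundary strata lying over the whole deformation interval, and those strata are \emph{fibre products} over $[0,1]$ as in \eqref{eq:glue-tree-fibered}. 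Such strata do not factor as products of lower-dimensional parameter spaces, so they do not yield chain-level operations of the kind your $\nu$'s are, and the ``explicit gauge $h$'' cannot be read off from a fundamental chain in the naive way (this is exactly why the paper introduces witch balls only didactically and does not use them). The paper's route is different: it defines the simultaneous $r$-fold products $\Pi^r_{\scrC}$ from $\mathit{MWW}_{d_1,\dots,d_r}$, and compares $\Pi^r$ with $\Pi^{r-1}(\dots,\gamma_m\bullet\gamma_{m+1},\dots)$ one pair of adjacent colors at a time, using the strip-shrinking spaces of Section \ref{subsec:strip-shrinking}, which are engineered to avoid the fibre products -- at the cost of being only manifolds with generalized corners (Lemmas \ref{th:corner-1}--\ref{th:corner-3}), of needing stabilizing marked points, and of needing a homotopy unit, since the comparison is phrased via the unital equivalence criterion of Lemma \ref{th:alternative-mc}. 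Associativity is then the $r=3$ case of Proposition \ref{th:semi-associativity}. If you insist on the direct interpolation you are effectively committing to the $(A_\infty,2)$-categorical formalism of Bottman--Carmeli, which is a substantially heavier machine than what your sketch budgets for.

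Two smaller points. First, your unit axiom (iv) is not formal: $\gamma * 0 = \sum_d \nu^{d,0}(\gamma,\dots,\gamma)$, and the terms with $d\geq 2$ do not ``vanish up to gauge'' individually; what is needed is that the $A_\infty$-endomorphism $(\nu^{1,0}=\mathit{id},\nu^{2,0},\dots)$ is homotopic to the identity, or, as the paper does, an indirect argument -- $0*0=0$, the map $\gamma\mapsto\gamma*0$ is a bijection of $\mathit{MC}$ by Lemmas \ref{th:inverse-3}--\ref{th:inverse-4}, and by associativity it is idempotent, hence the identity -- so the unit is downstream of associativity, not independent of it. Second, for the structure to be ``canonical'' one must show independence of the choices of fundamental chains/perturbation data and of the underlying $A_\infty$-structure, which requires its own interpolation argument (Section \ref{subsec:hideous} and its Floer-theoretic counterpart in Section \ref{sec:floer}); your sketch is silent on this.
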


As mentioned above, if one makes suitable assumptions on the closed-open map, this structure has an explanation purely within homological algebra. If one drops that assumption, one could still obtain the group structure by looking at $\scrF(X \times \bar{X})$ together with its monoidal structure (in a suitable sense, which we will not try to make precise) given by convolution of correspondences (see \cite{mau-wehrheim-woodward18, bottman-wehrheim18}; another approach is \cite{lekili-lipyanskiy10, fukaya17}). Compared to those constructions, the definition given here (which avoids talking about Fukaya categories or Lagrangian correspondences) is less general but more direct, and hence more amenable to computations.

\begin{proposition} \label{th:commutative}
The groups $\mathit{MC}(X;N)$ are commutative if $N^3 = 0$. They are also commutative if $N^4 = 0$, provided that additionally, $H^*(X;\bZ)$ is torsion-free.
\end{proposition}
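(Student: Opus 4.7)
The strategy is to work order-by-order in powers of the defining ideal $N$, identifying at each order the chain-level operations that enter the formal group product and its commutator, and using the $A_\infty$-relations to produce explicit Maurer--Cartan equivalences.

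For $N^3 = 0$, all chain-level expressions involving three or more copies of elements of $\scrC \hat\otimes N$ vanish automatically. Unwinding the moduli-theoretic definition of the formal group product from Proposition~\ref{th:main}, the product reduces modulo $N^3$ to
\[
\gamma_1 \ast \gamma_2 \;\equiv\; \gamma_1 + \gamma_2 + B(\gamma_1,\gamma_2) \pmod{\scrC \hat\otimes N^3},
\]
where $B$ is a chain-level bilinear operation built out of $\mu^2$ (up to a fixed sign and an exact correction that can be absorbed into a gauge transformation). The commutator is then represented up to MC-equivalence by the chain-level graded commutator $\mu^2(\gamma_1,\gamma_2) - \mu^2(\gamma_2,\gamma_1)$ of two degree-$1$ classes. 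Graded commutativity of the quantum cup product on $H^*(X;\bZ)$ forces this graded commutator to be a coboundary $\mu^1(h)$ for some $h \in \scrC^0 \hat\otimes N$, and such an $h$ witnesses the desired equivalence $\gamma_1 \ast \gamma_2 \sim \gamma_2 \ast \gamma_1$.

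For $N^4 = 0$ with $H^*(X;\bZ)$ torsion-free, one order more has to be tracked. The new contributions to the commutator are trilinear in $\gamma_1, \gamma_2$, coming from $\mu^3$ and from cascades of $\mu^2$'s produced by the codimension-one boundary of the relevant $4$-pointed moduli space (including its Lagrangian-correspondence decorations). Using the $\mathit{Sym}_2$ symmetry acting on such moduli, together with the graded-commutativity argument above, this trilinear obstruction can be identified as a $2$-torsion class in the target cohomology group; the torsion-freeness hypothesis makes it vanish. An equivalent route, valid whenever $N$ is itself $\bZ$-torsion-free, is to tensor with $\bQ$: over $\bQ$ one can apply formal Baker--Campbell--Hausdorff to the underlying $L_\infty$-algebra, whose primary bracket vanishes on $H^{\mathit{odd}}$, deducing commutativity rationally, and then descend via the injection $H^*(X;\bZ) \hookrightarrow H^*(X;\bQ)$ afforded by torsion-freeness.

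The main obstacle is the $N^4 = 0$ case: the first case essentially reformulates graded commutativity at the chain level, but the second requires genuinely new input. One must extract the precise trilinear operation entering the commutator, verify via a moduli-symmetry argument that it is $2$-torsion, and then produce an explicit MC-equivalence. The alternative approach through rational BCH is conceptually cleaner but still requires torsion-freeness to descend integrally, and does not directly cover adic rings $N$ with $\bZ$-torsion; in either case, the heart of the argument is the $2$-torsion nature of the trilinear obstruction, which is where the torsion-freeness of $H^*(X;\bZ)$ is essential.
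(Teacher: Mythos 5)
There is a genuine gap, and it is in the step you treat as the easy one. The quadratic term of the group law is not (up to exact correction) the product $\mu^2$: by definition $\gamma_1\bullet\gamma_2$ is built from the operations $\beta^{d_1,d_2}_{\scrC}$, and the bilinear term surviving in the commutator is the antisymmetrization of $\beta^{1,1}_{\scrC}$, an operation of degree $-1$ (the $\circ$-product witnessing homotopy commutativity, cf.\ \eqref{eq:beta-1-1}), not the degree-$0$ operation $\mu^2$. Consequently the obstruction to commutativity mod $N^3$ is not the graded commutator of the quantum cup product (which is indeed nullhomotopic, via $\beta^{1,1}$ itself), but the Gerstenhaber bracket $[[c_1],[c_2]]$ on $H^{\mathrm{odd}}$. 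For a general algebra over the Fulton--MacPherson/little-discs operad this bracket need not vanish, and the group law is genuinely non-commutative mod $N^3$; its vanishing here is the special CohFT input, namely that the one-cycle in $\mathit{FM}_2$ underlying the bracket maps to the contractible space $\EuScript{DM}_2$ (Floer-theoretically: the $S^1$-symmetry of the autonomous equation, which lets one rotate the two-point family to a symmetric one). This input produces not just vanishing in cohomology but a preferred chain-level nullhomotopy $\kappa^{1,1}_{\scrC}$ as in \eqref{eq:first-kappa}, and that cochain is indispensable: without it the cubic obstruction is not even a well-defined cocycle, since the relevant operation is $K^{2,1}_{\scrC}$ of \eqref{eq:k21}, which mixes $\beta^{2,1}-\beta^{1,2}$ with $\kappa^{1,1}$-corrections. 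So "graded commutativity of the quantum cup product" does not carry the first part, and the second part as you sketch it inherits the gap.

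Your outline of the $N^4$ step is the right shape and close to the actual argument: the obstruction is trilinear, a symmetry argument shows it is $2$-torsion, and torsion-freeness kills it. In the paper the symmetry comes from the fact that $K^{2,1}_{\scrC}$ is induced by a cycle generating $H_2(\mathit{DM}_3)\iso H_2(S^2)$, on which $\mathit{Sym}_3$ acts trivially, so the induced trilinear map on cohomology is graded symmetric and $K^{2,1}_{\scrC}(c_1,c_1;c_2)$ with $c_1$ odd is $2$-torsion (Proposition \ref{th:commutative-2}); in the Floer model this is identified with the graded-symmetric four-point Gromov--Witten invariant, \eqref{eq:gw4}. You would need to say precisely which trilinear operation you mean and why the relevant two-parameter family closes up to a cycle in Deligne--Mumford space — which again uses $\kappa^{1,1}$. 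Finally, the BCH-over-$\bQ$ alternative cannot replace this: the statement must hold for adic rings such as $N=q\bF_p[q]/q^4$, where tensoring with $\bQ$ destroys everything, and even for torsion-free $N$ there is no evident injection of $\mathit{MC}(X;N)$ into $\mathit{MC}(X;N\otimes\bQ)$ through which commutativity would descend; torsion-freeness of $H^*(X;\bZ)$ enters only to guarantee $H^*(\scrC;G)=H^*(\scrC)\otimes G$, so that the $2$-torsion class vanishes with arbitrary coefficients.
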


Commutativity mod $N^3$ is not surprising: it amounts to the well-known fact that the Lie bracket on cohomology, which exists for any algebra over the little disc operad, becomes zero for Cohomological Field Theories. For general algebraic reasons (formal exponentiation), one expects commutativity to hold always if $N$ is an algebra over $\bQ$; and the same should be true if $N$ is an algebra over $\bF_p$ and $N^{p+1} = 0$. In contrast, the origin of the second part of Proposition \ref{th:commutative} is more geometric: it reflects an explicit (if poorly understood, partly due to a lack of examples) enumerative obstruction to commutativity.
%
%

\begin{remark}
Our construction focuses on the odd degree cohomology of $X$. One could try to include even degree classes by enlarging the notion of formal group to its derived counterpart, which in our terms means allowing $N$ to be a commutative dg (or maybe better simplicial) ring. Another potential use of even degree classes (with different enumerative content) would be as ``bulk insertions'' at points in arbitrary position, as in big quantum cohomology. Note however that, for classes of degree $>2$, the standard algebraic formalism of ``bulk insertions'' involves dividing by factorials. Hence, it would have to be modified for our applications. Neither direction will be attempted in this paper.
\end{remark}

\subsection{Quantum Steenrod operations}
Fix a prime $p$. The quantum Steenrod operation, in a form slightly simplified by the monotonicity assumption \eqref{eq:monotone}, is a map
\begin{equation} \label{eq:quantum-steenrod}
\begin{aligned}
& Q\mathit{St}_{X,p} =  \textstyle\sum_A Q\mathit{St}_{X,p,A}: H^*(X;\bF_p)
\longrightarrow H^*(X;\bF_p) \otimes H^*_{\bZ/p}(\bF_p), \\
& Q\mathit{St}_{X,p,A}: H^l(X;\bF_p) \longrightarrow \big(H^*(X;\bF_p) \otimes H^*_{\bZ/p}(\bF_p) \big)^{pl-2c_1(A)}.
\end{aligned}
\end{equation}
Here, $H^*_{\bZ/p}(\bF_p)$ is the group cohomology of the cyclic group with coefficients mod $p$, which is one-dimensional in each degree. We fix generators
\begin{equation} \label{eq:t-theta}
H^*_{\bZ/p}(\bF_p) = \bF_p[t,\theta], \quad |t| = 2, \, |\theta| = 1.
\end{equation}
The notation here requires some explanation. For $p = 2$, we have $\theta^2 = t$ (or $\theta = t^{1/2}$), so the two generators are not independent. For $p>2$, it is implicit that our description is as a graded commutative algebra, so $\theta^2 = 0$. The sum in \eqref{eq:quantum-steenrod} is over $A \in H_2(X;\bZ)$, and the notation $c_1(A)$ is shorthand for integrating the first Chern class of $X$ over $A$. The classical Steenrod operations \cite{steenrod-epstein} are encoded in the $A = 0$ term. More precisely, if we write $\mathit{St}_{X,p} = \mathit{QSt}_{X,p,0}$, the relation with the classical notation is that
\begin{equation} \label{eq:classical-steenrod}
\mathit{St}_{X,p}(x) = \begin{cases}
\sum_i  \mathit{Sq}^i(x) t^{(|x|-i)/2} & p = 2, \\
(-1)^\ast {\textstyle \big( \frac{p-1}{2} ! \big)}^{|x|} 
\sum_i (-1)^i P^i(x)\, t^{(|x|-2i)(p-1)/2} + \theta \text{\it (terms invoving $\beta P^i$)} &
p>2,
\end{cases}
\end{equation}
where $\beta$ is the Bockstein, and 
\begin{equation} \label{eq:the-sign}
\textstyle \ast = \frac{|x|(|x|-1)}{2} \frac{p-1}{2}.
\end{equation}
When handling the constants in \eqref{eq:classical-steenrod} in practice, one should bear in mind that \cite[Lemma 6.3]{steenrod-epstein}
\begin{equation} \label{eq:number-theory}
{\textstyle \big( \frac{p-1}{2} ! \big)}^2 \equiv (-1)^{\frac{p+1}{2}}\; \mathrm{mod}\, p.
\end{equation}
For instance, if $|x|$ is even and $p > 2$,
\begin{equation}
\begin{aligned}
\text{\it $t^0$ term of } \mathit{St}_{X,p}(x) & = (-1)^\ast {\textstyle \big( \frac{p-1}{2} ! \big)}^{|x|}
(-1)^{\frac{|x|}{2}} P^{\frac{|x|}{2}}(x) \\ 
& = (-1)^{\frac{|x|}{2} \frac{p-1}{2}} (-1)^{\frac{|x|}{2} \frac{p+1}{2}}
(-1)^{\frac{|x|}{2}} P^{\frac{|x|}{2}}(x) = P^{\frac{|x|}{2}}(x) = x^p.
\end{aligned}
\end{equation}
%

\begin{definition}
Define an endomorphism $Q\Xi_{X,p}$ of $H^{\mathrm{odd}}(X;\bF_p)$ by
\begin{equation}
Q\Xi_{X,p}(x) = \begin{cases} \text{the $t^{\frac12}$ (or $\theta$) component of $Q\mathit{St}_{X,2}(x)$} & p = 2, \\
\textstyle{\big(\frac{p-1}{2} !\big)}^{-1} \,\, \text{times the $t^{\frac{p-1}{2}}$-component of $Q\mathit{St}_{X,p}(x)$}
& p > 2.
\end{cases}
\end{equation}
\end{definition}

To recapitulate, this has the form 
\begin{equation} \label{eq:quantum-xi}
\begin{aligned}
& Q\Xi_{X,p} = \textstyle \sum_A Q\Xi_{X,p,A}, \\
& Q\Xi_{X,p,A}: H^l(X;\bF_p) \longrightarrow H^{pl-(p-1)-2c_1(A)}(X;\bF_p),
\end{aligned}
\end{equation}
and where the classical component is
\begin{equation} \label{eq:classical-xi}
\Xi_{X,p}(x) = Q\Xi_{X,p,0}(x) = 
\begin{cases} \mathit{Sq}^{|x|-1}(x) & p = 2, \\
P^{\frac{|x|-1}{2}}(x) & p> 2.
\end{cases}
\end{equation}

\subsection{The $p$-th power maps\label{subsec:p-power}}
Let's return to the formal group $\mathit{MC}(X)$. The group structure gives rise to $m$-th power (meaning the $m$-fold product) maps for each $m \geq 1$, which are functorial endomorphisms of $\mathit{MC}(X;N)$ for any $N$.


\begin{theorem} \label{th:p-power}
The power maps of prime order fit into a diagram
\begin{equation} \label{eq:p-power-diagram}
\xymatrix{
\ar[d]_-{\text{projection}}
\mathit{MC}(X;q\bF_p[q]/q^{p+1}) \ar[rrrr]^-{\text{$p$-th power of the formal group}} &&&& \mathit{MC}(X;q\bF_p[q]/q^{p+1}) 
\\
\mathit{MC}(X;q\bF_p[q]/q^2)  &&&& \mathit{MC}(X;q^p\bF_p[q]/q^{p+1}) \ar[u]_-{\text{inclusion}}
\\
\ar@{=}[u]
H^{\mathrm{odd}}(M;\bF_p) \ar[rrrr]^{Q\Xi_{X,p}} &&&& H^{\mathrm{odd}}(M;\bF_p). \ar@{=}[u]
}
\end{equation}
\end{theorem}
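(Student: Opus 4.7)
The plan is to compute $\gamma^{*p}$ explicitly using the chain-level convolution operations that define the formal group, show that the result is Maurer--Cartan equivalent to an element supported in degree $q^p$, and identify its leading coefficient with $Q\Xi_{X,p}$ applied to the mod-$q^2$ reduction. The underlying mechanism is analogous to the restricted $p$-structure on a Lie algebra in characteristic $p$: the $\bZ/p$-cyclic symmetry of $\gamma^{*p}$ translates, at the chain level, into the $\Sym_p$-equivariant structure on $\overline{M}_{0,p+1}$ that underlies quantum Steenrod.

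First I would write the formal group multiplication concretely in the form
\[
\gamma_1 * \gamma_2 \;=\; \gamma_1 + \gamma_2 + \sum_{a+b \geq 2} \kappa^{a,b}(\gamma_1^{\otimes a}; \gamma_2^{\otimes b}),
\]
where $\kappa^{a,b}\colon \scrC^{\otimes a} \otimes \scrC^{\otimes b} \to \scrC$ are the multilinear chain-level operations built from fundamental chains of the convolution moduli spaces used in Proposition \ref{th:main}. Iterating, $\gamma^{*p}$ becomes a sum over association patterns; writing $\gamma = qx + \sum_{k \geq 2} q^k x_k$ with $x \in \scrC^1$ a cocycle representative of the mod-$q^2$ reduction, the $q^p$-coefficient modulo $q^{p+1}$ reduces, after absorbing the corrections induced by the higher Maurer--Cartan terms $x_k$ as gauges, to a single $p$-multilinear expression $\Phi(x, x, \ldots, x)$. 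Non-diagonal contributions either cancel via the $A_\infty$-relations or form $\bZ/p$-orbits whose sums carry a factor of $p$ and vanish.

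Next I would exploit symmetry. The $p$ equal factors of $\gamma$ in $\gamma^{*p}$ give a $\bZ/p$-cyclic action; via the compatibility of the convolution moduli spaces with $\overline{M}_{0,p+1}$ (obtained by collapsing the associahedron/convolution resolution), this extends to a $\Sym_p$-equivariance on the assembled chain model. Codimension-one boundary strata of the iterated convolution moduli spaces, where adjacent input marks collide, correspond via the convolution--$A_\infty$-compatibility to Maurer--Cartan gauge boundaries. Consequently $\Phi$ lifts, up to gauge, to a $\Sym_p$-equivariant chain on $\overline{M}_{0,p+1}$, which a boundary-complex comparison identifies with the one defining $Q\mathit{St}_{X,p}$. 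As a byproduct, $\gamma^{*p}$ is Maurer--Cartan equivalent to an element supported purely at $q^p$, which is the factorization claimed by the left side of the diagram.

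Finally, $\Phi(x,\ldots,x)$ occupies the top $\bZ/p$-equivariant cell, i.e.\ the $t^{(p-1)/2}$-component of $Q\mathit{St}_{X,p}$ for $p$ odd and the $\theta$-component for $p = 2$. Incorporating the normalization $((p-1)/2)!^{-1}$ and the sign conventions of \eqref{eq:classical-steenrod}--\eqref{eq:number-theory} recovers $Q\Xi_{X,p}([x])$. The principal obstacle is the equivariant geometric identification in the previous step: one must exhibit a $\Sym_p$-equivariant cobordism between the assembly of iterated convolution moduli spaces, suitably compactified, and an equivariant model of $\overline{M}_{0,p+1}$ used to define $Q\mathit{St}_{X,p}$, with boundaries matching the Maurer--Cartan gauges. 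This matching, including the bookkeeping of signs and orientations needed for the constants in \eqref{eq:number-theory} to come out correctly, is the technical heart of the theorem.
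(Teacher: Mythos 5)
Your outline is in the right spirit, and it essentially reproduces the paper's algebraic warm-up (the Hochschild-level Lemma \ref{th:xi-algebra}), but two of its steps are exactly where the geometric proof requires substantial input that your sketch does not supply. First, you propose to iterate the binary product and argue that the $q^p$-coefficient collapses to a single $p$-multilinear expression $\Phi(x,\dots,x)$ because ``non-diagonal contributions either cancel via the $A_\infty$-relations or form $\bZ/p$-orbits whose sums carry a factor of $p$.'' At chain level the iterated $\bullet$-product is a composite of the two-color operations $\beta^{a,b}_{\scrC}$, and such a composite is \emph{not} equal to any single $p$-ary operation, nor does it carry a visible cyclic symmetry; identifying it (up to Maurer--Cartan gauge) with the $p$-color operation $\beta^{1,\dots,1}_{\scrC}$ is precisely the content of Proposition \ref{th:semi-associativity} and Corollary \ref{th:r-at-least-3}, whose proof needs the strip-shrinking spaces of Section \ref{subsec:strip-shrinking} (plus homotopy unitality). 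The paper avoids your cancellation bookkeeping altogether: it computes the $p$-ary multiproduct $\Pi^p_{\scrC}(\gamma,\dots,\gamma)$, which with the normalization \eqref{eq:cancel-0} equals $\beta^{1,\dots,1}_{\scrC}(c;\dots;c)\,q^p$ on the nose by a trivial degree count (Lemma \ref{th:p-power-beta}), and then transfers the work into proving $\Pi^p\simeq\bullet^{\,p}$. Your version would need an independent argument controlling the gauge corrections and the mod-$p$ combinatorics of composites of $\beta^{a,b}$, which you have not given.

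Second, the identification of the surviving term with the normalized Steenrod component is not achieved by an ``equivariant cobordism to an equivariant model of $\overline{M}_{0,p+1}$,'' and that formulation is likely not even available: the relevant chain $B_p$, the image of $[\mathit{MWW}_{1,\dots,1}]$ in $\mathit{FM}_p$, is not $\Sym_p$-equivariant; it only becomes a cycle after passing to the sign-twisted coinvariants $C_*(\mathit{FM}_p)\otimes_{\Sym_p}\bF_p(1)$, because its codimension-one faces are permuted by the free $\bZ/p$-action on ordered decompositions (Lemma \ref{th:quotient-cycle}). Its class in $H_{p-1}(\mathit{FM}_p/\Sym_p;\bF_p(1))\iso\bF_p$ is then pinned down by pairing with the explicit dual cocycle $Z_p$ of vertical configurations, via a local intersection number and a $\bZ/p$-equivariant Euler class computation (Lemmas \ref{th:dual-cocycle}, \ref{th:one-point}, \ref{th:cycle-found}); this is where the constant $\big(\frac{p-1}{2}!\big)^{-1}$ actually comes from, and it cannot be recovered from sign conventions alone. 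Finally, the passage from the configuration-space (Cohen) operation to the quantum Steenrod operation is done by an equivariant cohomology comparison (Gysin sequence) showing that the twisted groups of $\mathit{FM}^\circ_p/\Sym_p$ and the equivariant $\mathit{DM}^\circ_p$ agree in degrees $p-2$ and $p-1$ (Lemmas \ref{th:open-moduli-space}, \ref{th:cohen-and-steenrod}), not by a geometric cobordism. You correctly flag this identification as the technical heart, but the proposal leaves it, and the associativity input above, as open holes, so as it stands the argument does not go through.
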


\begin{remark} \label{th:new-remark}
Because of the monotonicity of $X$ and the grading of our operations, see \eqref{eq:quantum-xi}, one always has
\begin{equation} \label{eq:deg1}
Q\Xi_{X,p}(x) = \Xi_{X,p}(x) = x \quad \text{for $x \in H^1(X;\bF_p)$.} 
\end{equation}
For comparison, consider the formal completion $\hat\bG_m$ of the multiplicative group. In a local coordinate $1+z \in \hat\bG_m$, the $p$-th power map is
\begin{equation}
\overbrace{z \bullet \cdots \bullet z}^p = (1+z)^p - 1 = z^p + p(\text{\it something}) = z \quad \text{for $z \in \bF_p$,}
\end{equation}
which matches what we have seen in \eqref{eq:deg1}. There is a categorical explanation for the occurrence of the multiplicative group. Recall that $H^1(X;\bG_m)$ classifies flat line bundles over $X$. The definition of the Fukaya category $\scrF(X)$ includes having the Lagrangian submanifolds equipped with flat bundles. By tensoring with the restriction of flat line bundles on $X$, one gets an action of $H^1(X;\bG_m)$ on the Fukaya category. For us, it is better to think of the action as being given by the trivial Lagrangian correspondence, namely the diagonal in $X \times \bar{X}$, equipped with a flat line bundle. From that viewpoint, one can pass to the formal completion: one has a formal family of objects in $\scrF(X \times \bar{X})$, which consists of the diagonal together with a formal deformation of the trivial line bundle; that gives rise to a deformation of the identity functor on $\scrF(X)$; and composition of such deformations corresponds to the tensor product of line bundles. Of course, within the present framework this discussion is of very limited concrete use: the known examples of monotone symplectic manifolds with nontrivial $H^1$ (obtained by combining \cite{reznikov} and \cite{millson}, see \cite{fine-panov} for a discussion) are somewhat esoteric.
\end{remark}

\begin{example} \label{th:blowup-example}
Let $X \subset \bC P^1 \times \bC P^3$ be a hypersurface of bidegree $(1,2)$, which has odd cohomology $H^3(X;\bF_p) = \bF_p^2$ for any $p$. Then $Q\Xi_{X,2} = \mathit{id}$, by a computation from \cite[Section 8]{wilkins18}. More generally, each $Q\Xi_{X,p}$ is a multiple of the identity. Here are the results for the first few primes:
\begin{equation} \label{eq:numbers}
\begin{array}{c|ccccccccccccc}
p & 2 & 3 & 5 & 7 & 11 & 13 & 17 & 19 & 23 & 29 & 31 & 37 & 41 \\
\hline
\text{$Q\Xi_p/\mathit{id}$} & -1 & -1 & 1 & 0 & -4 & -2 & 2  & 4 & 0 & -2 & 0 & -10 & 10
\end{array}
\end{equation}
The entries lie in $\bF_p$, and we have chosen integer representatives with the least absolute value (with some fudging for $p = 2$). Those integers are meaningful: they are the $q^p$ coefficients of the modular form \cite[Newform 15.2.a.a]{modular-forms-database}
\begin{equation} \label{eq:modular-form}
\eta(q)\eta(q^3)\eta(q^5)\eta(q^{15}),\;\; \text{ where } 
\eta(q) = q^{1/24} \prod_{n=1}^{\infty} (1-q^n).
\end{equation}
One can interpret this observation via mirror symmetry and arithmetic geometry. The (conjectural, but supported by superpotential computations) statement is that a specific elliptic curve appears in the mirror geometry, and hence is encoded in the Fukaya category of $X$. Correspondingly, the automorphism group of the Fukaya category would contain the derived automorphism group of that curve, and in particular, the product of two copies of the curve itself. What we see in \eqref{eq:numbers} is the leading coefficient of the $p$-th power map of the formal group law of the elliptic curve. For general number theory reasons, this is closely related to counting $\bF_p$-points on the curve, and the appearance of \eqref{eq:modular-form} is an instance of the modularity of elliptic curves. For further discussion, see Example \ref{th:blowup-example-2} and Conjecture \ref{th:isogeny}.
\end{example}

The computation underlying Example \ref{th:blowup-example} turns out to involve only those quantum Steenrod operations which can ultimately (using forthcoming work of Wilkins and the author) be reduced to ordinary Gromov-Witten invariants. To push the understanding of $Q\Xi_{X,p}$ further, one would have to study the contribution of $p$-fold covered curves, which is beyond our scope here.

\begin{example}
Let $X \subset \bC P^1 \times \bC P^5$ be a hypersurface of bidegree $(1,2)$. In this case, $Q\Xi_{X,p}$ is unknown. The answer involves stable maps to $X$ with first Chern number $2p-2$. The difficulty is that there are points in the relevant space of stable maps which have $\bZ/p$ isotropy groups.
\end{example}

\subsection{Structure of the paper} 
In order to make the underlying ideas appear clearly, the paper is set up as follows. Most of the time (Sections \ref{sec:mc}--\ref{sec:power}) we work in an abstract operadic framework. In principle, one could aim to prove that quantum cohomology is an instance of this general setup, but that would overshoot the desired target somewhat. Instead, we will explain (in Section \ref{sec:floer}) how to convert the previous arguments into symplectic terms, in a more ad hoc way. In Section \ref{sec:fukaya}, we outline an alternative approach to parts of the construction, based on \cite{fukaya17}. After that, Section \ref{sec:quantum} is a bit of an outlier: it is concerned with computational techniques for quantum Steenrod operations, and is formulated in a language much closer to standard Gromov-Witten theory.  At this point, we should make one apology for the paper. Because of the complexity of the formulae involved, signs are sometimes not worked out, which we signal by $\pm$; however, we have made sure that signs are given at key points. Part of this involves spelling out certain conventions for equivariant cohomology, which is done in Section \ref{sec:signs}.

{\em Acknowledgments.} I would like to thank Nate Bottman and Kenji Fukaya for providing important insights into moduli spaces and Fukaya categories; Nicholas Wilkins for many conversations about quantum Steenrod operations; John Pardon, Bjorn Poonen, and Andrew Sutherland for teaching me bits of arithmetic geometry; Alessio Corti, Vasily Golyshev, and Victor Przyjalkowski for useful information about mirror symmetry and periods; and Mikhail Kapranov for pointing out related homological algebra results.

{\em Funding.} This research was partially supported by the Simons Foundation, through a Simons Investigator award as well as the Simons Collaboration in Homological Mirror Symmetry; and by the National Science Foundation, through award DMS-1904997. I would also like to thank Columbia University, Princeton University, and the Institute for Advanced Study, for generous visiting appointments during which I worked on this paper.

\section{Maurer-Cartan theory\label{sec:mc}}

After some introductory remarks about solutions of the Maurer-Cartan equations in general $A_\infty$-rings, we turn to a specific situation, namely the induced $A_\infty$-structure on Hochschild cochains. Maurer-Cartan solutions in Hochschild cochains carry a formal group structure, which can be considered as a purely algebraic counterpart of our main construction. This algebraic viewpoint will not really be used later on: we include it here for expository purposes, and also because it would provide the background for linking the results in this paper to the Fukaya category. To make things more intuitive from a classical homological algebra viewpoint, we will take the $A_\infty$-structures to be $\bZ$-graded in this section, even though as mentioned before, the quantum $A_\infty$-structure is only $\bZ/2$-graded.

\subsection{$A_\infty$-structures}
To clarify our conventions, let's spell out the definition of an $A_\infty$-ring. This is a free graded abelian group $\scrA$ is with multilinear operations $\{\mu^d_{\scrA}\}$, $d \geq 1$, which satisfy the $A_\infty$-associativity relations
\begin{equation} \label{eq:associativity}
0 = \sum_{ij} (-1)^{\maltese_i} \mu^{d-j+1}_{\scrA}(a_1,\dots,\mu^j_{\scrA}(a_{i+1},\dots,a_{i+j}),\dots,a_d).
\end{equation}
Here, $\maltese_i = \|a_1\| + \cdots + \|a_i\|$, where $\|a\| = |a| - 1$ is the reduced degree; both will be standing notation from now on.  If we consider $\scrA$ as a chain complex with differential $d_{\scrA} = -\mu^1_{\scrA}$, the associative algebra structure on $H^*(\scrA)$ is induced by the chain level product
\begin{equation} \label{eq:coh-product}
a_1 \cdot a_2 = (-1)^{|a_1|} \mu^2_{\scrA}(a_1,a_2).
\end{equation}

From the overall ``$A_\infty$-lingo'', the notions of $A_\infty$-homomorphism and homotopy between such homomorphisms will be the ones that occur most frequently in our discussion. Homotopy admits the following useful interpretation. Take the following dg ring (cochains on the interval as a simplicial complex, with the Alexander-Whitney product):
\begin{equation} \label{eq:nc-interval}
\begin{aligned}
& \scrI = \bZ u \oplus \bZ \tilde{u} \oplus \bZ v,  \quad |u| = |\tilde{u}| = 0, \;\; |v| = 1, \\
& u^2 = u, \;\; \tilde{u}^2 = \tilde{u}, \;\; \tilde{u}u  = u \tilde{u} = 0, \;\; u v  = v = v \tilde{u}\; \text{ (and hence $vu = \tilde{u}v= 0$),} \\
& d_{\scrI}u = v, \;\; d_{\scrI}\tilde{u} = -v.
\end{aligned}
\end{equation}
If $\scrA$ is an $A_\infty$-ring, the tensor product $\scrA \otimes \scrI$ inherits the same structure, with
\begin{equation} \label{eq:tensor-product}
\left\{
\begin{aligned}
& \mu^1_{\scrA \otimes \scrI}(a \otimes x) = \mu^1_{\scrA}(a) \otimes x + (-1)^{|a|} a \otimes d_{\scrI}x, \\
& \mu^d_{\scrA \otimes \scrI}(a_1 \otimes x_1,\dots, a_d \otimes x_d) = (-1)^* \mu^d_{\scrA}(a_1,\dots,a_d) \otimes
x_1 \cdots x_d, \;\; d \geq 2, \\
\end{aligned}
\right.
\end{equation}
where $* = \sum_{i>j} \|a_i\|\cdot |x_j|$. This $A_\infty$-structure is compatible with the projections
\begin{equation} \label{eq:two-projections}
\xymatrix{
\scrA \otimes \scrI \ar@/^.5pc/[rrr]^-{\text{project to $\scrA \otimes \bZ u$}} 
\ar@/_.5pc/[rrr]_-{\text{project to $\scrA \otimes \bZ \tilde{u}$}} &&& \scrA.
}
\end{equation}
Two $A_\infty$-homomorphisms $\tilde\scrA \rightarrow \scrA$ are homotopic iff they can be obtained from a common homomorphism $\tilde\scrA \rightarrow \scrA \otimes \scrI$ by composing with \eqref{eq:two-projections}. We will often use the following fact:

\begin{lemma} \label{th:homotopy-inverse}
Let $\scrF: \tilde{\scrA} \rightarrow \scrA$ be an $A_\infty$-homomorphism such that the linear term $\scrF^1$ is a chain homotopy equivalence (in view of our freeness assumption, that will be the case whenever it's a quasi-isomorphism). Then $\scrF$ has an inverse up to homotopy.
\end{lemma}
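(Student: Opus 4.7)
The plan is to follow a standard order-by-order construction of an inverse, together with a matching inductive construction of the witnessing homotopies. First I would choose $\scrG^1: \scrA \rightarrow \tilde\scrA$ to be a chain homotopy inverse to $\scrF^1$, along with chain homotopies $\tilde{K} \in \mathrm{End}^{-1}(\tilde\scrA)$ and $K \in \mathrm{End}^{-1}(\scrA)$ satisfying $[\mu^1_{\tilde\scrA}, \tilde{K}] = \mathrm{id}_{\tilde\scrA} - \scrG^1 \scrF^1$ and $[\mu^1_{\scrA}, K] = \mathrm{id}_{\scrA} - \scrF^1 \scrG^1$. These are the only ingredients from which everything else will be cooked up.

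Next, I construct $\scrG^d$ for $d \geq 2$ by induction on $d$. The $A_\infty$-homomorphism relation for $\scrG$ at order $d$ takes the form $[\mu^1_{\tilde\scrA}, \scrG^d] = E^d$, where $E^d \in \mathrm{Hom}^1(\scrA^{\otimes d}, \tilde\scrA)$ is a polynomial expression in the previously built $\scrG^{<d}$ and the structure maps $\mu_{\scrA}^{\leq d}$, $\mu_{\tilde\scrA}^{\leq d}$. Using the $A_\infty$-relations on $\scrA$ and $\tilde\scrA$ together with the inductive hypothesis, one checks that $E^d$ is a cocycle. To show that it is in fact a coboundary, I postcompose by $\scrF^1$: then $\scrF^1 \circ E^d$ is a cocycle in $\mathrm{Hom}^1(\scrA^{\otimes d}, \scrA)$ which, thanks to the $A_\infty$-homomorphism relations for $\scrF$ truncated to order $d+1$, is visibly a chain-level boundary. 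Since postcomposition with $\scrF^1$ is a chain homotopy equivalence on Hom-complexes out of $\scrA^{\otimes d}$, the cohomology class of $E^d$ is trivial, and an explicit primitive $\scrG^d$ can be written down using $\scrG^1$, $\tilde{K}$, and $K$.

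Once $\scrG$ is assembled, the $A_\infty$-homotopy $\scrG \circ \scrF \simeq \mathrm{id}_{\tilde\scrA}$ is built by the same scheme. Using the interval $\scrI$ from \eqref{eq:nc-interval}, such a homotopy is precisely an $A_\infty$-homomorphism $\tilde\scrA \rightarrow \tilde\scrA \otimes \scrI$ restricting under \eqref{eq:two-projections} to $\mathrm{id}_{\tilde\scrA}$ on one side and to $\scrG \circ \scrF$ on the other. The linear piece comes from $\tilde{K}$, and each higher component kills an inductively defined obstruction cocycle by exactly the argument above. Running the symmetric construction starting from a right chain-homotopy inverse yields an $A_\infty$-homomorphism $\scrG': \scrA \rightarrow \tilde\scrA$ with $\scrF \circ \scrG' \simeq \mathrm{id}_{\scrA}$; then the standard manipulation $\scrG \simeq \scrG \circ \scrF \circ \scrG' \simeq \scrG'$ also gives $\scrF \circ \scrG \simeq \mathrm{id}_{\scrA}$, completing the proof.

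The hard part will be the combinatorial bookkeeping. At each inductive stage one must verify that the cocycle condition for $E^d$ really follows from the $A_\infty$-relations, and sort out the signs dictated by the reduced-degree conventions of \eqref{eq:associativity}. Choosing $\scrG^d$ compatibly with the subsequent homotopy extension adds a further layer of care, since a careless choice may spoil the cocycle property at the next stage. In practice this is cleanest to organize via the bar construction, where $A_\infty$-homomorphisms correspond to dg coalgebra maps and the whole argument reduces to a lifting problem against an acyclic fibration; but even without that reformulation, the argument goes through by direct computation.
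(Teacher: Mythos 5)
The paper does not actually prove this lemma---it is quoted as a standard fact---so the comparison can only be with the standard argument, and there your sketch has a genuine gap in the inductive step for $\scrG$. You assume only that $\scrG^1,\dots,\scrG^{d-1}$ satisfy the $A_\infty$-homomorphism relations up to order $d-1$ (with $\scrG^1$ a homotopy inverse of $\scrF^1$), and then assert that the obstruction cocycle $E^d$ is exact because $\scrF^1\circ E^d$ is ``visibly a boundary'' by the relations for $\scrF$. That assertion is false in this generality: the class of $E^d$ in the complex $\mathrm{Hom}(\scrA^{\otimes d},\tilde\scrA)$ (with the differential induced by $\mu^1$ alone) depends on the earlier choices, and a truncated $A_\infty$-morphism with admissible lower-order terms need not extend. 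Already $\scrF=\mathrm{id}_{\scrA}$ gives a counterexample: take $\scrA = \bZ\langle x,y\rangle$ with zero differential, $|x|=|y|=1$, set $\scrG^1=\mathrm{id}$, and let $\scrG^2$ be the (automatically closed) degree $-1$ map with $\scrG^2(x,y)=x$ and zero on all other pairs of basis words. The relations hold through order $2$, but $E^3$ is, up to sign, the Hochschild-type expression $\mu^2(\scrG^2\otimes\mathrm{id})+\mu^2(\mathrm{id}\otimes\scrG^2)-\scrG^2(\mu^2\otimes\mathrm{id})-\scrG^2(\mathrm{id}\otimes\mu^2)$, which is nonzero (evaluate on $(x,x,y)$) and hence not exact, while postcomposing with $\scrF^1=\mathrm{id}$ gives no information. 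So the mechanism you propose for killing the obstruction does not work; note also that the worry you flag (``a careless choice may spoil the cocycle property'') is misplaced---the cocycle property always holds, it is exactness that fails.

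The standard repair is to run the induction for $\scrG$ and the homotopy \emph{simultaneously}, rather than building $\scrG$ first and the homotopies afterwards. Seek $\scrG$ together with a homotopy $T$ from $\scrF\circ\scrG$ to $\mathrm{id}_{\scrA}$; at order $d$ the two equations to be solved amount to exactness of a cocycle in the mapping cone of postcomposition $(\scrF^1)_*\colon \mathrm{Hom}(\scrA^{\otimes d},\tilde\scrA)\rightarrow\mathrm{Hom}(\scrA^{\otimes d},\scrA)$, and that cone is contractible precisely because $\scrF^1$ is a chain homotopy equivalence (any additive functor preserves chain homotopy equivalences, so this works over $\bZ$, where a quasi-isomorphism hypothesis would not suffice). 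This produces a one-sided homotopy inverse, and your concluding manipulation $\scrG\simeq\scrG\scrF\scrG'\simeq\scrG'$ upgrades it to a two-sided one, provided you also check that composition of $A_\infty$-homomorphisms descends to homotopy classes (a short verification in the $\scrA\otimes\scrI$ model \eqref{eq:nc-interval}). Alternatively, your remark about explicit primitives built from $\scrG^1,K,\tilde K$ can be made precise via the homological perturbation lemma, but then the vanishing of the next obstruction must be verified against those specific formulas; it does not follow from the truncated relations alone, which is exactly the point the sketch elides.
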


Unitality conditions, while not always strictly necessary, are both convenient for the theory and satisfied in most applications (including ours). A homology unit for $\scrA$ is a cocycle $e_{\scrA} \in \scrA^0$ such that the products
\begin{equation} \label{eq:homotopy-unit}
\begin{aligned}
& a \longmapsto a \cdot e_{\scrA} = (-1)^{|a|} \mu^2_{\scrA}(a,e_{\scrA}), \\
& a \longmapsto e_{\scrA} \cdot a = \mu^2_{\scrA}(e_{\scrA},a)
\end{aligned}
\end{equation}
are homotopic to the identity (when working over a field, one asks that these products induce the identity on cohomology, but that is obviously inadequate over $\bZ$; the notion used here goes back to \cite[Definition 7.3]{lyubashenko08}). One says that $e_{\scrA}$ is a strict unit if: the inclusion $\bZ e_{\scrA} \rightarrow \scrA^0$ splits, as a map of abelian groups; the maps \eqref{eq:homotopy-unit} are equal to the identity; and in addition, all operations $\mu^d_{\scrA}(\cdots, e_{\scrA}, \cdots)$, $d \geq 3$, are zero. The following is \cite[Theorem 3.7 and Remark 3.8]{lm}:

\begin{lemma} \label{th:unit-unit}
Given any homologically unital $A_\infty$-ring $\scrA$, there is a strictly unital one $\tilde{\scrA}$ and an inclusion $\scrA \hookrightarrow \tilde{\scrA}$, compatible with the $A_\infty$-structures, which is a chain homotopy equivalence. (Note that by Lemma \ref{th:homotopy-inverse}, we then also have an inverse $A_\infty$-functor $\scrF: \tilde{\scrA} \rightarrow \scrA$, such that $\scrF^1$ is a chain homotopy equivalence.)
\end{lemma}

The result in \cite{lm} is more explicit: one can enlarge the $A_\infty$-structure to $\tilde{\scrA} = \scrA \oplus \bZ h \oplus \bZ e_{\tilde{\scrA}}$, where $e_{\tilde{\scrA}}$ is the strict unit, and 
\begin{equation} \label{eq:p}
\begin{aligned}
& \mu^1_{\tilde{\scrA}}(h) \in e_{\tilde{\scrA}} + \scrA^0, \\
& \mu^d_{\tilde{\scrA}}(\scrA \oplus \bZ h,\dots, \scrA \oplus \bZ h) \subset \scrA
\;\; \text{ for $d \geq 2$.}
\end{aligned}
\end{equation}
This has a consequence which we find useful to state, even though it goes slightly beyond the limits of our current terminology. Introduce an $A_\infty$-category with two objects $Y$ and $\tilde{Y}$, morphism spaces
\begin{equation}
\begin{aligned}
& \mathit{hom}(Y,Y) = \hom(Y,\tilde{Y}) = \hom(\tilde{Y},Y) = \scrA, \\ 
& \mathit{hom}(\tilde{Y},\tilde{Y}) = \tilde{\scrA},
\end{aligned}
\end{equation}
and with all $A_\infty$-structures inherited from $\tilde{\scrA}$ (the second part of \eqref{eq:p} ensures that this makes sense). The two objects are quasi-isomorphic, and so we arrive at the following:

\begin{lemma} \label{th:new-unit}
Given any homologically unital $A_\infty$-ring $\scrA$, there is a homologically unital $A_\infty$-category with two objects, such that: the endomorphism ring of the first object is $\scrA$; the endomorphism ring of the second object is strictly unital; and the two objects are mutually quasi-isomorphic.
\end{lemma}

\subsection{Maurer-Cartan elements}
We have already mentioned the notions of Maurer-Cartan element \eqref{eq:mc} and of equivalence between such elements \eqref{eq:mc-equivalence}.   Given an $A_\infty$-homomorphism $\scrF: \tilde\scrA \rightarrow \scrA$, we define the induced map $\mathit{MC}(\scrF;N): \mathit{MC}(\tilde\scrA;N) \rightarrow \mathit{MC}(\scrA;N)$ by
\begin{equation} \label{eq:functorial-mc}
\tilde\gamma \longmapsto \gamma = \sum_d \scrF^d(\tilde\gamma,\dots,\tilde\gamma).
\end{equation}
The basic results (the second is a consequence of the first and Lemma \ref{th:homotopy-inverse}) are:

\begin{lemma} \label{th:homotopy-mc}
Homotopic $A_\infty$-homomorphisms induce the same map $\mathit{MC}(\tilde\scrA;N) \rightarrow \mathit{MC}(\scrA;N)$.
\end{lemma}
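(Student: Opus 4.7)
The plan is to unwind the definition of homotopy via the noncommutative interval $\scrI$, apply the Maurer-Cartan construction to $\scrA \otimes \scrI$, and read off the equivalence between the two images from the $v$-component of the resulting equation.

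Concretely, suppose two $A_\infty$-homomorphisms $\scrF_0, \scrF_1 : \tilde\scrA \to \scrA$ are homotopic. By definition, there is an $A_\infty$-homomorphism $\scrH: \tilde\scrA \to \scrA \otimes \scrI$ such that composing with the two projections in \eqref{eq:two-projections} recovers $\scrF_0$ and $\scrF_1$. Given $\tilde\gamma \in \tilde\scrA^1 \hat\otimes N$ satisfying the Maurer-Cartan equation, I first form
\[
\Gamma = \sum_{d \geq 1} \scrH^d(\tilde\gamma, \dots, \tilde\gamma) \in (\scrA \otimes \scrI)^1 \hat\otimes N.
\]
A general fact about $A_\infty$-homomorphisms and \eqref{eq:functorial-mc} (easily checked from the $A_\infty$-relations, applicable to any target ring) shows that $\Gamma$ itself satisfies the Maurer-Cartan equation in $\scrA \otimes \scrI$, and that the two projections in \eqref{eq:two-projections} send $\Gamma$ to $\gamma_0 = \mathit{MC}(\scrF_0;N)(\tilde\gamma)$ and $\gamma_1 = \mathit{MC}(\scrF_1;N)(\tilde\gamma)$, respectively.

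Next, I decompose $\Gamma$ according to the basis of $\scrI$, writing
\[
\Gamma = \gamma_0 \otimes u + \gamma_1 \otimes \tilde{u} + h \otimes v
\]
for a unique $h \in \scrA^0 \hat\otimes N$. The candidate equivalence between $\gamma_0$ and $\gamma_1$ will be this $h$. To verify that it works, I expand $\sum_d \mu^d_{\scrA \otimes \scrI}(\Gamma, \dots, \Gamma) = 0$ using the formulas \eqref{eq:tensor-product} together with the multiplication table of $\scrI$ and $d_\scrI u = v$, $d_\scrI \tilde u = -v$. The crucial combinatorial input is that in $\scrI$ the products $\tilde u u = u \tilde u = vu = \tilde u v = v^2 = 0$, while $u^p v \tilde u^q = v$; hence in each $\mu^d$ the only products of basis elements that survive are (i) all $u$'s, (ii) all $\tilde u$'s, or (iii) a single $v$ appearing between a block of $u$'s and a block of $\tilde u$'s. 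Separating the Maurer-Cartan equation according to the $u$, $\tilde u$, $v$ components yields: the $u$- and $\tilde u$-components are just the Maurer-Cartan equations for $\gamma_0$ and $\gamma_1$; the $v$-component rearranges exactly into \eqref{eq:mc-equivalence}, with the contributions $-\gamma_0$ and $+\gamma_1$ coming from the differentials $d_\scrI u$, $d_\scrI \tilde u$, and the higher terms coming from products of the type (iii).

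The only real work is sign bookkeeping in the $v$-component; the structural part of the argument is essentially forced by the construction of $\scrI$. I therefore expect the main obstacle to be verifying that the signs $(-1)^*$ in \eqref{eq:tensor-product}, combined with the sign conventions for $d_\scrI$, reproduce exactly those in \eqref{eq:mc-equivalence}. Once this is checked, the homotopy invariance of $\mathit{MC}$ follows with no further choices, and one recovers Lemma \ref{th:homotopy-inverse}'s consequence that homotopy equivalences induce bijections on $\mathit{MC}(-;N)$.
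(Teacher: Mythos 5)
Your proposal is correct and follows exactly the route the paper intends: the paper states the lemma without a separate proof, but the decomposition \eqref{eq:decompose-gamma} given immediately afterwards — a Maurer–Cartan element of $\scrA \otimes \scrI$ is precisely a pair of Maurer–Cartan elements together with an equivalence $h$ — is the same key observation you use, applied to the image of $\tilde\gamma$ under the homotopy $\tilde\scrA \rightarrow \scrA \otimes \scrI$. So this is essentially the paper's own argument, just written out in detail.
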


\begin{lemma} \label{th:mc-bijection}
Suppose that we have an $A_\infty$-homomorphism $\tilde\scrA \rightarrow \scrA$, whose linear part is a chain homotopy equivalence. Then the induced map $\mathit{MC}(\tilde\scrA;N) \rightarrow \mathit{MC}(\scrA;N)$ is bijective.
\end{lemma}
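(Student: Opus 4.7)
The plan is to deduce this formally from Lemmas \ref{th:homotopy-inverse} and \ref{th:homotopy-mc}, once one has checked that $\mathit{MC}$ is functorial under composition of $A_\infty$-homomorphisms.

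First, by Lemma \ref{th:homotopy-inverse}, the hypothesis on $\scrF^1$ produces an $A_\infty$-homomorphism $\scrG: \scrA \rightarrow \tilde\scrA$ such that both $\scrG \circ \scrF$ and $\scrF \circ \scrG$ are homotopic to the identity $A_\infty$-homomorphisms on $\tilde\scrA$ and $\scrA$, respectively. Here composition is understood in the $A_\infty$-sense, with components $(\scrG \circ \scrF)^d(\tilde a_1,\dots,\tilde a_d) = \sum_{r;\,i_1+\cdots+i_r=d} \scrG^r\bigl(\scrF^{i_1}(\tilde a_1,\dots,\tilde a_{i_1}),\dots,\scrF^{i_r}(\dots,\tilde a_d)\bigr)$.

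The key technical step, which one carries out by a direct combinatorial computation, is to verify that $\mathit{MC}$ is functorial: for any composable pair of $A_\infty$-homomorphisms $\scrF: \tilde\scrA \rightarrow \scrA$ and $\scrG: \scrA \rightarrow \scrA'$, one has $\mathit{MC}(\scrG \circ \scrF;N) = \mathit{MC}(\scrG;N) \circ \mathit{MC}(\scrF;N)$. This is immediate upon setting $\gamma = \sum_d \scrF^d(\tilde\gamma,\dots,\tilde\gamma)$ and expanding
\begin{equation*}
\sum_r \scrG^r(\gamma,\dots,\gamma) = \sum_{r,\,i_1,\dots,i_r} \scrG^r\bigl(\scrF^{i_1}(\tilde\gamma,\dots,\tilde\gamma),\dots,\scrF^{i_r}(\tilde\gamma,\dots,\tilde\gamma)\bigr),
\end{equation*}
the right hand side being exactly $\sum_d (\scrG \circ \scrF)^d(\tilde\gamma,\dots,\tilde\gamma)$. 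Convergence in $\scrA' \hat\otimes N$ holds because each factor of $\tilde\gamma$ lies in $\tilde\scrA^1 \hat\otimes N$, so the terms with $d$ copies of $\tilde\gamma$ take values in $\scrA' \otimes N^d$ and are negligible modulo $N^m$ for $d \geq m$. By the same logic, the identity $A_\infty$-homomorphism induces the identity map on $\mathit{MC}(\cdot;N)$.

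Combining these facts with Lemma \ref{th:homotopy-mc} applied to the two composites above yields
\begin{equation*}
\mathit{MC}(\scrG;N) \circ \mathit{MC}(\scrF;N) = \mathit{MC}(\scrG \circ \scrF;N) = \mathit{MC}(\mathit{id}_{\tilde\scrA};N) = \mathit{id},
\end{equation*}
and symmetrically $\mathit{MC}(\scrF;N) \circ \mathit{MC}(\scrG;N) = \mathit{id}$, so that $\mathit{MC}(\scrF;N)$ is a bijection. I do not anticipate a substantive obstacle: once the functoriality bookkeeping is carried out, the statement is a purely formal consequence of what precedes it, and the only mild subtlety is ensuring that all the Maurer-Cartan series converge in the $N$-adic topology, which is automatic from the definition of $\hat\otimes$.
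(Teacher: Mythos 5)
Your argument is correct and matches the paper's own reasoning: the paper proves this lemma precisely by the one-line remark that it follows from Lemma \ref{th:homotopy-mc} together with Lemma \ref{th:homotopy-inverse}, exactly the route you take. Your explicit verification of the functoriality of $\mathit{MC}$ under composition of $A_\infty$-homomorphisms, and of $N$-adic convergence, simply fills in the routine bookkeeping the paper leaves implicit.
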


One can think of equivalence of Maurer-Cartan elements in several ways. In terms of \eqref{eq:tensor-product},
\begin{equation} \label{eq:decompose-gamma}
\gamma \otimes u + \tilde{\gamma} \otimes \tilde{u} + h \otimes v \in (\scrA \otimes \scrI\, \hat\otimes N)^1
\end{equation}
is a Maurer-Cartan element for $\scrA \otimes \scrI$ if and only $\gamma$ and $\tilde{\gamma}$ are Maurer-Cartan elements for $\scrA$, and $h$ satisfies \eqref{eq:mc-equivalence}. This makes Lemma \ref{th:homotopy-mc} particularly intuitive. Another possible interpretation goes as follows. Let's add a strict unit, forming $\bZ e \oplus \scrA \hat\otimes N$. There is an $A_\infty$-category whose objects are Maurer-Cartan elements in $\scrA \hat\otimes N$, with morphisms between any two elements given by $\bZ e \oplus \scrA \hat\otimes N$. The differential for morphisms $\tilde{\gamma} \rightarrow \gamma$ is 
\begin{equation} \label{eq:mc-differential}
g \longmapsto \sum_{p,q} \mu^{p+q+1}_{\bZ e \oplus \scrA \hat\otimes N}(\overbrace{\gamma,\dots,\gamma}^p,g,\overbrace{\tilde{\gamma},\dots, \tilde{\gamma}}^{q}),
\end{equation}
and the formulae for higher $A_\infty$-compositions are similar. Clearly, $h$ satisfes \eqref{eq:mc-equivalence} iff $g = e+h$
is a closed morphism $\tilde{\gamma} \rightarrow \gamma$ in our category. This viewpoint can be useful when thinking about the transitivity and functoriality of the notion of equivalence. Finally, if $\scrA$ is homologically unital, one can introduce a modified version of the Maurer-Cartan category, by setting the morphisms between objects to be $\scrA \otimes (\bZ 1 \oplus N)$, which means using the natural identity of $\scrA$ rather than artificially adjoining one. The resulting version of our previous observation (obvious in the strictly unital case, and generalized from there using Lemmas \ref{th:unit-unit} and \ref{th:mc-bijection}) is this:

\begin{lemma} \label{th:alternative-mc}
Suppose that $\scrA$ is homologically unital. Then, two Maurer-Cartan solutions are equivalent if and only if there is a $g \in \scrA^0 \hat\otimes (\bZ 1 \oplus N)$, which modulo $N$ reduces to a cocycle homologous to $e_{\scrA}$, and which satisfies
\begin{equation} \label{eq:alternative-mc}
\sum_{p,q} \mu_{\scrA}^{p+q+1}(\overbrace{\gamma, \dots, \gamma}^p,g,\overbrace{\tilde\gamma, \dots,\tilde \gamma}^q) = 0.
\end{equation}
\end{lemma}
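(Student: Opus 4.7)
The plan is to establish the lemma first in the strictly unital case by direct computation, then deduce the homotopy unital case via Lemma~\ref{th:unit-unit}.

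\textbf{Strictly unital case.} For the ``if'' direction, suppose $h \in \scrA^0 \hat\otimes N$ satisfies \eqref{eq:mc-equivalence}, and set $g = e_\scrA + h$. By strict unitality, all insertions of $e_\scrA$ into operations of arity $\geq 3$ vanish, leaving only $\mu^1(e_\scrA) = 0$, $\mu^2(\gamma, e_\scrA) = -\gamma$, and $\mu^2(e_\scrA, \tilde\gamma) = \tilde\gamma$. These contribute $\tilde\gamma - \gamma$ to the left-hand side of \eqref{eq:alternative-mc}, and subtracting reduces that equation to exactly \eqref{eq:mc-equivalence}. For the converse, write $g = g_0 + h$ with $g_0$ the $N$-reduction. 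If $g_0 = e_\scrA$ the argument reverses. If $g_0$ is only cohomologous to $e_\scrA$, choose $k \in \scrA^{-1}$ with $\mu^1(k) = e_\scrA - g_0$ and replace $g$ by
\[
g' = g + \sum_{p,q \geq 0} \mu^{p+q+1}(\gamma, \ldots, \gamma, k \otimes 1, \tilde\gamma, \ldots, \tilde\gamma).
\]
The added term is the twisted differential \eqref{eq:mc-differential} of $k \otimes 1$, so $g'$ still satisfies \eqref{eq:alternative-mc} (since that differential squares to zero, by $A_\infty$-associativity combined with the MC equations for $\gamma, \tilde\gamma$), while its $N$-reduction is now $g_0 + \mu^1(k) = e_\scrA$, reducing to the case just handled.

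\textbf{Homotopy unital case.} Apply Lemma~\ref{th:unit-unit} to obtain a strictly unital $\bar\scrA$ and $\scrF: \bar\scrA \to \scrA$ whose linear term is a chain homotopy equivalence, and let $\scrG$ be a homotopy inverse supplied by Lemma~\ref{th:homotopy-inverse}. The $A_\infty$-homomorphism $\scrF$ extends to morphism spaces of MC-categories via
\[
\bar g \longmapsto \sum_{p,q \geq 0} \scrF^{p+q+1}(\bar\gamma, \ldots, \bar\gamma, \bar g, \bar{\tilde\gamma}, \ldots, \bar{\tilde\gamma}),
\]
carrying solutions of the alternative MC equation in $\bar\scrA$ to solutions in $\scrA$; the $N$-reduction of the image is just $\scrF^1$ applied to the $N$-reduction of the input, and $\scrF^1$ preserves cohomological unit classes. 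The analogous formula with $\scrG$ supplies a map in the opposite direction. Combining this transfer with the bijection of Lemma~\ref{th:mc-bijection} (which matches MC-equivalence classes on the two sides) and the strictly unital case above yields the equivalence of the two conditions in $\scrA$.

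\textbf{Main obstacle.} The main technical effort is in verifying the two pieces of $A_\infty$-formalism used as black boxes above: that the twisted differential $\mu^1_{\mathrm{MC}}$ squares to zero on $\scrA \otimes (\bZ 1 \oplus N)$, and that $\scrF$ extends coherently to morphism spaces of MC-categories in the manner described. Both are standard manipulations of $A_\infty$-associativity, the $A_\infty$-homomorphism axiom, and the MC equations, but the sign bookkeeping and the handling of the unital factor $\bZ 1$ (which $\scrF$ does not a priori respect strictly) require some care.
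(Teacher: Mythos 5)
Your overall strategy (do the strictly unital case by hand, then transfer along the homomorphism supplied by Lemma \ref{th:unit-unit}) is exactly the route the paper indicates, and your strictly unital argument is correct; in particular the device of correcting $g$ by the twisted differential \eqref{eq:mc-differential} of a primitive $k$ of $e_{\scrA}-g_0$, using that this differential squares to zero because $\gamma,\tilde\gamma$ are Maurer-Cartan, is the right way to reduce to $g_0=e_{\scrA}$.

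There is, however, a gap in the final combination step for the direction ``equivalent $\Rightarrow$ existence of $g$''. The maps $\scrF$ and $\scrG$ are inverse only up to homotopy, so $\scrF_*\scrG_*\gamma$ and $\scrF_*\scrG_*\tilde\gamma$ are merely \emph{equivalent} to $\gamma,\tilde\gamma$ (Lemmas \ref{th:homotopy-mc}, \ref{th:mc-bijection} identify equivalence classes, nothing more). Hence pushing the witness $\bar g$ forward along $\scrF$ produces a solution of \eqref{eq:alternative-mc} for the pair $(\scrF_*\scrG_*\gamma,\scrF_*\scrG_*\tilde\gamma)$, not for the original pair $(\gamma,\tilde\gamma)$ that the lemma is about. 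To close this you need the additional statement that the existence of a unit-like closed $g$ depends only on the equivalence classes of $\gamma$ and $\tilde\gamma$; this is not a formal consequence of the cited lemmas (indeed, converting an equivalence $h$ into a unit-like closed morphism is precisely the content of the lemma being proved, so invoking it naively would be circular). It can be proved directly: given $g$ for $(\gamma,\tilde\gamma')$ and an equivalence $h$ between $\tilde\gamma$ and $\tilde\gamma'$, the corrected element
\begin{equation*}
g' \;=\; g \;+\; \sum_{a,b,c} \mu_{\scrA}^{a+b+c+2}(\overbrace{\gamma,\dots,\gamma}^{a},g,\overbrace{\tilde\gamma',\dots,\tilde\gamma'}^{b},h,\overbrace{\tilde\gamma,\dots,\tilde\gamma}^{c})
\end{equation*}
satisfies \eqref{eq:alternative-mc} for $(\gamma,\tilde\gamma)$ (the verification is the usual telescoping argument with the $A_\infty$-relations, the Maurer-Cartan equations, closedness of $g$, and \eqref{eq:mc-equivalence} for $h$), and its reduction mod $N$ is unchanged; the analogous correction handles the $\gamma$ slot. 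Note that the opposite direction of your argument is unaffected, since the relation \eqref{eq:mc-equivalence} is manifestly a property of equivalence classes. With this invariance statement added, your proof goes through; without it, the last sentence of your transfer step does not yield the lemma as stated.
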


\subsection{Hochschild cochains\label{subsec:hh}}
As before, let $\scrA$ be an $A_\infty$-ring. Our attention will now shift to its Hochschild complex (the complex underlying Hochschild cohomology)
\begin{equation} \label{eq:hochschild-complex}
\scrC = \mathit{CC}^*(\scrA) = \prod_{d \geq 0} \mathit{Hom}(\scrA[1]^{\otimes d}, \scrA).
\end{equation}
The Hochschild differential is
\begin{equation}
\begin{aligned}
& (d_{\scrC}c)^d(a_1,\dots,a_d) =
-\sum_{ij} (-1)^{\maltese_i \cdot \|c\|} \mu^{d-j+1}_{\scrA}(a_1,\dots,c^j(a_{i+1},\dots,a_{i+j}),\dots,a_d) \\
& \qquad \qquad \qquad \qquad
+ \sum_{ij} (-1)^{\maltese_i + \|c\|} c^{d-j+1}(a_1,\dots,\mu^j_{\scrA}(a_{i+1},\dots,a_{i+j}),\dots,a_d)
\end{aligned}
\end{equation}
(we apologize for the double use of $d$ as differential and as counting the number of entries); and its cohomology is the Hochschild cohomology $\mathit{HH}^*(\scrA)$. We will also use Hochschild cohomology with coefficients in a commutative ring $R$, denoted by $\mathit{HH}^*(\scrA;R)$, which is the cohomology of $\mathit{CC}^*(\scrA;R) = \scrC \hat\otimes R$ (here, completion means that we take each term in \eqref{eq:hochschild-complex} $\otimes R$ and then their product). $\scrC$ carries a canonical $A_\infty$-structure, with $\mu^1_{\scrC} = -d_{\scrC}$, and where the next term is
\begin{equation} \label{eq:hochschild-structure}
\begin{aligned}
& \mu^2_{\scrC}(c_1,c_2)^d(a_1,\dots,a_d) = \sum_{\substack{i_1,j_1,i_2,j_2 \\ i_2 \geq i_1+j_1}} 
(-1)^{\maltese_{i_1} \|c_1\| + \maltese_{i_2} \|c_2\|}
\mu^{d-j_1-j_2+2}_{\scrA}( a_1,\dots, \\ & \qquad \qquad \qquad \qquad c_1^{j_1}(a_{i_1+1},\dots,a_{i_1+ j_1}),  
 \dots, c_2^{j_2}(a_{i_2+1},\dots,a_{i_2+j_2}), \dots, a_d).
\end{aligned}
\end{equation}
The higher order $A_\infty$-operations  follow the same pattern as $\mu^2_{\scrC}$. If $\scrA$ has a homological unit, then so does $\scrC$. One way to show that is to apply Lemma \ref{th:new-unit}: in that situation, the restriction from the Hochschild complex of the $A_\infty$-category to the Hochschild complex of either $\scrA$ or $\tilde{\scrA}$ is a homotopy equivalence, allowing one to transfer properties from $\tilde{\scrA}$ to $\scrA$ in two steps.

Note that strictly speaking, $\scrC$ does not fit into the original context for $A_\infty$-rings, because \eqref{eq:hochschild-complex} is not usually free. However, it is the inverse limit of chain complexes of free groups, by using the (complete decreasing) length filtration, which is compatible with the $A_\infty$-structure. All the associated notions have to be modified to take this ``pro-object'' nature into account. We have already done that when defining Hochschild cohomology with coefficients, by using the completed tensor product $\scrC \hat\otimes R$. Maurer-Cartan elements, and homotopies between such elements, will live in such completed tensor products. To prove the analogue of Lemma \ref{th:alternative-mc} for Hochschild complexes, one again uses reduction to the strictly unital case via Lemma \ref{th:new-unit}.
 
The product on Hochschild cohomology induced from $\mu^2_{\scrC}$ is graded commutative. Additionally, Hochschild cohomology has a Lie bracket of degree $-1$. The two combine to form the structure of a Gerstenhaber algebra. When we take coefficients in a ring with $pR = 0$, let's say for concreteness $R = \bF_p$, there is one more operation 
\begin{equation} \label{eq:hh-theta}
\Xi_{\scrA,p}: \mathit{HH}^l(\scrA; \bF_p) \longrightarrow \mathit{HH}^{pl-(p-1)}(\scrA;\bF_p) 
\quad \begin{cases}\text{for odd $l$ if $p>2$,} \\ \text{for all $l$ if $p = 2$.}\end{cases}
\end{equation}
This combines with the bracket to form a restricted Lie algebra \cite{zimmermann06}. As we will now explain, following \cite{tourtchine06}, the underlying chain level map can be written as a sum over trees.

\begin{terminology} \label{th:trees}
A rooted tree with $d$ leaves is a tree which (in addition to its finite edges) has $d+1$ semi-infinite edges. One of the semi-infinite edges is singled out, and called the root; the other $d$ are the leaves. There is a unique way of orienting edges, so that they point towards the root. Given a vertex $v$, write $|v|$ for its valence. Among the edges adjacent to $v$, there is a unique outgoing one, and $\|v\| = |v|-1$ incoming ones.

In our applications, the rooted trees (unless otherwise indicated) come with the following structure. First, an ordering of the semi-infinite edges by $\{0,\dots,d\}$, starting with the root. Secondly, at any vertex, an ordering of the adjacent edges by $\{0,\dots,|v|\}$, again starting with the outgoing edge. A special case is that of rooted planar trees, where all orderings come from a single embedding of the tree into the plane, which implies certain compatibilities between them.
\end{terminology}

For now, we will only use rooted planar trees (the more general version will play a role later on, see Section \ref{subsec:fm}). Given such a tree and a Hochschild cochain $c$, one defines an operation $\scrA^{\otimes d} \rightarrow \scrA$, by starting with elements of $\scrA$ at the leaves, and having $c^{\|v\|}$ act at each vertex, with the output of that fed into the next vertex on our way to the root. To define the chain map underlying \eqref{eq:hh-theta} one considers those operations for trees with $p$ vertices, and adds them up with certain multiplicities: the multiplicity of a tree is the number of ways to order its vertices, so that the ordering increases when going towards the root (``causal orderings''). For $p = 2$, we get 
\begin{equation} \label{eq:theta-2}
(\Xi_{\scrA,2} c)^d(a_1,\dots,a_d) = \sum_{ij} c^{d-j+1}(a_1,\dots,c^j(a_{i+1},\dots,a_{i+j}),\dots,a_d).
\end{equation}
This is usually written as $c \circ c$, where $\circ$ is the operation which underlies the homotopy commutativity of $\mu^2_{\scrC}$, and which upon antisymmetrization yields the Lie bracket. The $p = 3$ case is less familiar \cite[Example 3.3]{tourtchine06}:
\begin{equation} \label{eq:theta-3}
\begin{aligned}
(\Xi_{\scrA,3} c)^d(a_1,\dots,a_d) = &  \;\;2 \!\!\!\sum_{\substack{i_1,j_1,i_2,j_2\\ i_1+j_1 \leq i_2}}
c^{d-j_1-j_2+2}( a_1,\dots, c^{j_1}(a_{i_1+1},\dots,a_{i_1+ j_1}),
\\[-1em] & \qquad \qquad  \qquad \qquad \qquad
 \dots, c^{j_2}(a_{i_2+1},\dots,a_{i_2+j_2}), \dots, a_d)
\\ & + \sum_{i_1,j_1,i_2,j_2} c^{d-j_1-j_2+2}(a_1,\dots, c^{j_1}(a_{i_1+1}, \dots c^{j_2}(a_{i_2+1},\dots,a_{i_2+j_2}), \\[-.5em]
& \qquad \qquad \qquad \qquad \qquad \dots, a_{i_1+j_1+j_2-1}),\dots,a_d).
\end{aligned}
\end{equation}
The summands in \eqref{eq:theta-3} correspond to trees as in Figure \ref{fig:3-tree}, where that on the left admits two causal orderings. Koszul signs as in \eqref{eq:hochschild-structure} are absent here, since $\|c\|$ is even (recall that for odd $p$, the operation $\Xi_{\scrA,p}$ is only defined on odd degree Hochschild cohomology).
\begin{figure}
\begin{centering}
\includegraphics{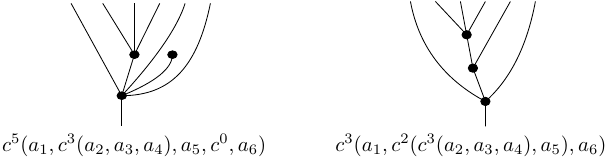}%
\caption{\label{fig:3-tree}Sample trees corresponding to expressions from \eqref{eq:theta-3}.}
\end{centering}
\end{figure}

\begin{example}
The first terms of $d_{\scrC}(c) = 0$, for $\|c\|$ even, are
\begin{equation}
\begin{aligned}
& \mu^1_{\scrA}(c^0) = 0, \\
& \mu^1_{\scrA}(c^1(a)) + \mu^2_{\scrA}(a,c^0) + \mu^2_{\scrA}(c^0,a) = c^1(\mu_{\scrA}^1(a)), \\
& \mu^1_{\scrA}(c^2(a_1,a_2)) + \mu^2_{\scrA}(c^1(a_1),a_2) + \mu^2_{\scrA}(a_1,c^1(a_2)) + 
\mu^3_{\scrA}(c^0,a_1,a_2) + \mu^3_{\scrA}(a_1,c^0,a_2) \\ & \qquad \qquad + \mu^3_{\scrA}(a_1,a_2,c^0) 
= 
c^1(\mu^2_{\scrA}(a_1,a_2)) + c^2(\mu^1_{\scrA}(a_1),a_2) + (-1)^{\|a_1\|} c^2(a_1,\mu^1_{\scrA}(a_2)).
\end{aligned}
\end{equation}
The constant term in \eqref{eq:theta-3} is
\begin{equation}
(\Xi_{\scrA,3} c)^0 = 2c^2(c^0,c^0) + c^1(c^1(c^0)).
\end{equation}
One sees that this is again a cocycle modulo 3:
\begin{equation}
\begin{aligned}
& \mu_{\scrA}^1(c^2(c^0,c^0)) = -\mu^2_{\scrA}(c^1(c^0),c^0) - \mu^2_{\scrA}(c^0,c^1(c^0))
 -3\mu^3_{\scrA}(c^0,c^0,c^0) + c^1(\mu^2_{\scrA}(c^0,c^0)) 
 \\ & \qquad \qquad = \mu_{\scrA}^1(c^1(c^1(c^0))) - 3\mu^3_{\scrA}(c^0,c^0,c^0) 
 + 3c^1(\mu^2_{\scrA}(c^0,c^0)).
 \end{aligned}
\end{equation}
\end{example}

\begin{example}
Suppose that $\scrA$ is a differential graded algebra ($\mu_{\scrA}^d = 0$ for $d \geq 3$). A derivation of $\scrA$ gives a cocycle in $\scrC^1$, and applying \eqref{eq:hh-theta} amounts to taking the $p$-th iterate of that derivation.
\end{example}

\subsection{The formal group structure}
Given an adic ring $N$, let $\scrC \hat\otimes N$ be the space obtained by taking each factor in \eqref{eq:hochschild-complex} $\hat\otimes N$, and then again forming their product. We consider Maurer-Cartan elements $\gamma \in \scrC \hat\otimes N$. Concretely, the first terms are
\begin{equation}
\begin{aligned}
& \gamma^0 \in \scrA^1 \hat\otimes N, && \textstyle \sum_d \mu^d_{\scrA}(\gamma^0,\dots,\gamma^0) = 0, \\
& \gamma^1 \in \mathit{Hom}(\scrA,\scrA)^0 \hat\otimes N,
&& \textstyle \sum_{p,q} \mu^{p+q+1}_{\scrA}(\underbrace{\gamma^0,\dots,\gamma^0}_{p},a,
\underbrace{\gamma^0\dots,\gamma^0}_{q}) = \gamma^1(\mu^1_{\scrA}(a)), \\[-1em] & \cdots
\end{aligned}
\end{equation}
One can think of $\gamma$ as a formal deformation of the identity endomorphism of $\scrA$. What this means is that $\gamma$ satisfies \eqref{eq:mc} if and only if, over $\bZ 1 \oplus N$,
\begin{equation} \label{eq:gamma-to-phi}
\phi^d = \begin{cases} \mathit{id}_{\scrA} + \gamma^1 & d = 1, \\ \gamma^d & d \neq 1 \end{cases}
\end{equation}
satisfies the (curved) $A_\infty$-homomorphism equations. Similarly, two Maurer-Cartan solutions are equivalent \eqref{eq:mc-equivalence} if the associated $A_\infty$-homomorphisms \eqref{eq:gamma-to-phi} are (curved) homotopic. The standard composition of $A_\infty$-homomorphisms \eqref{eq:gamma-to-phi} leads to the following composition law for Maurer-Cartan solutions:
\begin{equation} \label{eq:gamma-composition}
\begin{aligned}
& (\gamma_1 \bullet \gamma_2)^d(a_1,\dots,a_d)  = \gamma_2^d(a_1,\dots,a_d) \\ &  + 
\hspace{-3em} \sum_{\substack{m \geq 0 \\ i_1,j_1,\dots,i_m,j_m \\ i_1+j_1 \leq i_2, \dots, i_{m-1}+j_{m-1} \leq i_m}} \hspace{-3em}
\gamma_1^{d-j_1-\dots-j_m+m}(a_1,\dots,\gamma_2^{j_1}(a_{i_1+1},\dots,a_{i_1+j_1}), 
a_{i_1+j_1+1}, \dots, \\[-1.5em] & \qquad \qquad \qquad \qquad \qquad \gamma_2^{j_2}(a_{i_2+1},\dots,a_{i_2+j_2}), \dots, \gamma_2^{j_m}(a_{i_m+1},\dots,a_{i_m+j_m}), \dots).
\end{aligned}
\end{equation}
This is strictly associative, and descends to a product on $\mathit{MC}(\scrC;N)$. Moreover, by explicitly solving the equation $\phi_2 \phi_1 = \mathit{id}_{\scrA}$, one sees that this composition has inverses. The outcome is that $N \mapsto \mathit{MC}(\scrC;N)$ comes with the structure of a ``formal group''. The analogue of Theorem \ref{th:p-power} in this algebraic context is \cite[Equation (3-1)]{tourtchine06}:

\begin{lemma} \label{th:xi-algebra}
There is a commutative diagram
\begin{equation}
\xymatrix{
\ar[d]_-{\text{projection}}
\mathit{MC}(\scrC;q\bF_p[q]/q^{p+1}) \ar[rrrr]^-{\text{$p$-th power of the formal group}} &&&& \mathit{MC}(\scrC;q\bF_p[q]/q^{p+1}) 
\\
\mathit{MC}(\scrC;q\bF_p[q]/q^2)  &&&& \mathit{MC}(\scrC;q^p\bF_p[q]/q^{p+1}) \ar[u]_-{\text{inclusion}}
\\
\ar@{=}[u]
\mathit{HH}^1(\scrA;\bF_p) \ar[rrrr]^{\Xi_{\scrA,p}} &&&& \mathit{HH}^1(\scrA;\bF_p). \ar@{=}[u]
}
\end{equation}
\end{lemma}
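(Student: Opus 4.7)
The plan is to compute $\gamma^{\bullet p}$ modulo $q^{p+1}$ by iterating \eqref{eq:gamma-composition}, and to match the result against the tree sum \eqref{eq:hh-theta} defining $\Xi_{\scrA,p}$. Writing $\gamma = c\,q + \gamma_2\,q^2 + \cdots + \gamma_p\,q^p$ with $c \in \scrC^1 \otimes \bF_p$ representing the given class, I would first observe that \eqref{eq:gamma-composition} always nests $\gamma_2$ inside $\gamma_1$ (never the reverse). Iterating therefore expresses $\gamma_1 \bullet \cdots \bullet \gamma_p$, with formally distinct factors, as a sum over rooted planar trees whose vertices carry labels in $\{1,\dots,p\}$ strictly increasing along every root-to-leaf path. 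Collapsing to $\gamma_i = \gamma$, each tree shape $T$ of size $k$ contributes $N_T(p)\,\gamma^T$, where $\gamma^T$ is the tree operation on $\gamma$ and $N_T(p)$ counts the valid labellings. Since each vertex contributes a factor of $\gamma = c\,q + O(q^2)$, the operation $\gamma^T$ has $q$-order at least $k$, so only $k \leq p$ contributes modulo $q^{p+1}$, and for $k=p$ the $q^p$-coefficient of $\gamma^T$ is precisely the chain-level tree operation $c^T$ appearing in \eqref{eq:hh-theta}.

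The crux is then the congruence
\[
N_T(p) \equiv \begin{cases} 0 & \text{for } 1 \leq k < p, \\ m(T) & \text{for } k = p, \end{cases} \pmod{p}
\]
where $m(T)$ is the causal-ordering count of Terminology \ref{th:trees}. I would derive this from the root-subtree recursion $N_T(p) = \sum_{i=0}^{p-1}\prod_j N_{T_j}(i)$ combined with the Fermat power-sum identities $\sum_{i=0}^{p-1} i^j \equiv 0 \pmod{p}$ for $0 \leq j \leq p-2$ and $\sum_{i=0}^{p-1} i^{p-1} \equiv -1 \pmod{p}$. For $k < p$, $\prod_j N_{T_j}(i)$ has degree $k-1 \leq p-2$ in $i$, so the sum vanishes. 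For $k = p$, only the leading $i^{p-1}$ coefficient contributes; by induction its value is $\prod_j m(T_j)/k_j!$ (the leading term of a strict order polynomial being $e(T_j)/k_j!$), and combining with Wilson's theorem $(p-1)! \equiv -1 \pmod{p}$ and the standard formula $m(T) = (k-1)!\prod_j m(T_j)/k_j!$ for linear extensions of rooted trees recovers $N_T(p) \equiv m(T) \pmod{p}$. This is the chain-level content of \cite[Equation (3-1)]{tourtchine06}.

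Assembling these steps gives $\gamma^{\bullet p} \equiv q^p\,\Xi_{\scrA,p}(c) \pmod{q^{p+1}}$ at chain level, which both places $\gamma^{\bullet p}$ inside the subgroup $\mathit{MC}(\scrC; q^p\bF_p[q]/q^{p+1})$ and identifies its $q^p$-component with $\Xi_{\scrA,p}$ applied to the mod-$q^2$ reduction of $\gamma$. Functoriality in $N$, together with the fact (established in the preceding discussion) that $\bullet$ descends to equivalence classes of Maurer-Cartan solutions, shows the induced square of pointed sets commutes. The main obstacle is the $k=p$ half of the congruence: one must correctly isolate the leading coefficient of the recursion and show that Wilson's theorem converts it into $m(T)$ modulo $p$ — an appearance of $(p-1)!$ that is characteristic of $p$-th power operations in characteristic $p$, and which would be harder to see without the explicit tree bookkeeping.
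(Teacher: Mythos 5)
Your proposal is correct, and its skeleton is the same as the paper's: iterate \eqref{eq:gamma-composition}, write the $p$-fold product as a sum over rooted planar trees with causally labeled vertices, use $N^{p+1}=0$ to discard trees with more than $p$ vertices, and show that mod $p$ only the $p$-vertex trees with each label used exactly once survive, with multiplicity the causal-ordering count that defines $\Xi_{\scrA,p}$; your convention of labels increasing from root to leaf rather than leaf to root is harmless, since the counts are invariant under order reversal. Where you genuinely diverge is the proof of the central congruence on the labeling count $N_T(p)$. The paper disposes of it in one line: sort labelings by the subset of $\{1,\dots,p\}$ actually used, so $N_T(p)=\sum_j \binom{p}{j}\,s_j(T)$ with $s_j(T)$ the number of surjective causal labelings onto a fixed $j$-element set, and $p \mid \binom{p}{j}$ for $0<j<p$ leaves only $j=p$, which forces $k=p$ and a bijective labeling, i.e.\ the multiplicity $m(T)$. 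Your route — the root recursion $N_T(p)=\sum_{i=0}^{p-1}\prod_j N_{T_j}(i)$, vanishing of $\sum_{i=0}^{p-1} i^j$ mod $p$ for $j\le p-2$, the leading coefficient $m(T_j)/k_j!$ of the strict order polynomial, the linear-extension formula $m(T)=(k-1)!\prod_j m(T_j)/k_j!$, and Wilson's theorem — is also valid, and the implicit division by $k_j!$ is legitimate because the subtrees have fewer than $p$ vertices, so these factorials are invertible mod $p$; but it is considerably more machinery than the statement needs, and the subset/binomial argument obtains the same congruence without order polynomials or Wilson at all. The remaining assembly (only the leading term $c$ contributes at a $p$-vertex tree for $q$-degree reasons, the chain-level identity $\gamma^{\bullet p}=\Xi_{\scrA,p}(c)\,q^p$, and descent to equivalence classes) matches the paper.
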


The proof is quite straightforward. Namely, let's iterate \eqref{eq:gamma-composition} to form the $p$-th power of a Maurer-Cartan element $\gamma$. The outcome can be written as a sum over rooted planar trees, with multiplicities. These multiplicities count ``causal labelings'' of trees, where the vertices are labeled by $\{1,\dots,p\}$ and the numbers increase when going towards the root. This limits the depth of the tree to be $\leq p$, but does not by itself limit the number of vertices, since several vertices can carry the same label. However, in the formula for the $p$-th power map, each vertex carries a copy of $\gamma$, and since the coefficient ring $N = q\bF_p[q]/q^{p+1}$ satisfies $N^{p+1} = 0$, the contribution from trees with $>p$ vertices vanishes. The labels on trees with $\leq p$ vertices can be thought as consisting of two pieces: a choice of subset of $\{1,\dots,p\}$, and then a choice of labels which uses all numbers in that subset, and which obeys the causality condition. From that, it follows that the only trees with nontrivial mod $p$ contribution are those with exactly $p$ vertices, and where each label is used once. If we write $\gamma = c q + O(q^2)$, it then follows that
\begin{equation}
\overbrace{\gamma \bullet \cdots \bullet \gamma}^p = \Xi_{\scrA,p}(c)\, q^p \in \scrC \hat\otimes q\bF_p[q]/q^{p+1}.
\end{equation}

\begin{remark}
In characteristic zero, the deformation theory associated to the Maurer-Cartan equation in $\scrC$ is unobstructed: as a concrete illustration, the truncation map
\begin{equation}
\mathit{MC}(\scrC; q\bQ[[q]]) \longrightarrow \mathit{MC}(\scrC; q\bQ[q]/q^2) = \mathit{HH}^1(\scrA;\bQ)
\end{equation}
is onto. This is closely related to the formal group structure, since one can prove it by formal exponentiation. 
The analogous statement in positive characteristic is no longer generally true. The square of a class in $\mathit{HH}^1(\scrA;\bF_2)$ is not necessarily zero, and that gives an obstruction to lifting to $\mathit{MC}(\scrC;q\bF_2[q]/q^3)$. As an example, take a polynomial ring $\scrA = \bZ[a]$ with $|a| = 1$; the element $a$ becomes central over $\bF_2$, hence gives a Hochschild cohomology class. Instead, one could look at the $2$-adic lifting problem, but that's also obstructed: in the first step, which means lifting to $\mathit{MC}(\scrC; 2\bZ / 8\bZ)$, the requirement is that the square of the Hochschild cohomology class must be equal to its Bockstein (which fails in the same example).
%
\end{remark}

\begin{remark} \label{th:braces}
If $\scrA$ is a dg algebra, the Hochschild complex has the same structure. Let's follow classical notation and write $\smile$ for the product on Hochschild cochains. The Maurer-Cartan equation is
\begin{equation} \label{eq:mc-smile}
d_{\scrC}\gamma + \gamma \smile \gamma = 0,
\end{equation}
and two solutions are equivalent if 
\begin{equation}
d_{\scrC}h + (\gamma + \gamma \smile h) - (\tilde{\gamma} + h \smile \tilde{\gamma}) = 0.
\end{equation}
The composition law \eqref{eq:gamma-composition} can be written in terms of the brace operations from \cite{gerstenhaber-voronov95} as
\begin{equation} \label{eq:gamma-composition-braces}
\gamma_1 \bullet \gamma_2 = \gamma_2 + \sum_{m \geq 0} \gamma_1 \{\overbrace{\gamma_2,\dots, \gamma_2}^m\}.
\end{equation}
When put in this way, the formalism can be generalized to any complex $\scrC$ which is an algebra over the braces operad \cite{mcclure-smith02}, since that exactly provides the operations used in \eqref{eq:mc-smile}--\eqref{eq:gamma-composition-braces}. The formula \eqref{eq:gamma-composition-braces} can be viewed as an application of a construction \cite{gerstenhaber-voronov95b}  (see \cite{young13} for a review and further context) which equips the tensor coalgebra of $\scrC$ with a bialgebra structure. It is possible that the geometric results in this paper could be similarly sharpened, replacing ``formal groups'' with a suitable bialgebra language (where the comultiplication would be the standard tensor coalgebra structure, but the multiplication would be $A_\infty$); however, that would likely require the full generality of Bottman's witch ball spaces.
\end{remark}


\section{Parameter spaces\label{sec:operads}}

This section discusses the moduli spaces underlying our constructions. This is mostly an exposition of known material; the small amount that may be new appears towards the end of the section. Stasheff associahedra, Deligne-Mumford spaces, and Fulton-MacPherson spaces (for the latter, originally in their quasi-isomorphic guise \cite{salvatore01} as the little squares operad) belong to classical algebraic topology and geometry, and we include a brief exposition mainly as a warmup exercise. The more complicated spaces are borrowed from the theory of Lagrangian correspondences, variously combining \cite{mau-woodward10, mau-wehrheim-woodward18, bottman-wehrheim18, fukaya17, bottman17a, bottman17}.

\subsection{Associahedra\label{subsec:apply-stasheff}}
The Stasheff spaces (associahedra) $S_d$, $d \geq 2$, are compactifications of the space of ordered point configurations on the real line, modulo translations and positive dilations, meaning of
\begin{equation} \label{eq:configuration-space-1}
\frac{
\{ (s_1,\dots,s_d) \in \bR^d, \;\; s_1 < \cdots < s_d  \} }{ \{ (s_1,\dots,s_d) \sim (\tau(s_1),\dots,\tau(s_d))\;\; \text{for $\tau(s) = \lambda + \mu s$, $\lambda \in \bR$, $\mu > 0$}\} }.
\end{equation}
The collection $\{S_d\}$ has the structure of a non-symmetric operad, given by maps
\begin{equation} \label{eq:glue-tree-1}
\prod_{v}\! S_{\|v\|} \stackrel{T}{\longrightarrow} S_d,
\end{equation}
one for each rooted planar tree $T$ with $(d+1)$ semi-infinite edges, and where every vertex $v$ has valence $|v| \geq 3$ (see Terminology \ref{th:trees}; it will be our standard procedure to just denote such maps by the underlying tree). The single-vertex tree is a trivial special case, since it gives rise to the identity map on $S_d$. 

Topologically, $S_d$ is a (contractible) compact manifold with boundary, whose interior is \eqref{eq:configuration-space-1},
and whose boundary is the union of the images of the nontrivial maps \eqref{eq:glue-tree-1}. One can get a slightly more precise description by introducing a suitable smooth structure, for instance by embedding the Stasheff spaces into the real locus of Deligne-Mumford spaces. Then $S_d$ becomes a smooth (and in fact real sub-analytic) manifold with corners, whose open strata are the images of  $\prod_v (S_{\|v\|} \setminus \partial S_{\|v\|})$ under \eqref{eq:glue-tree-1}.

We orient $S_d$ by picking, on the interior \eqref{eq:configuration-space-1}, the parametrization where $(s_1,s_2)$ are fixed, and using the standard orientation of the remaining parameters $(s_3,\dots,s_d)$.

\subsection{Fulton-MacPherson spaces\label{subsec:fm}}
The Fulton-MacPherson spaces (the terminology is taken from \cite{getzler-jones94}; versions of the construction arose in \cite{axelrod-singer94, fulton-macpherson94, kimura-stasheff-voronov95}) $\mathit{FM}_d$, $d \geq 2$, are compactifications of planar configuration space up to translations and positive dilations:
\begin{equation} \label{eq:configuration-space-2}
\frac{
\{ (z_1,\dots,z_d) \in \bC^d \;:\; z_i \neq z_j \text{ for } i \neq j \} }{ \{ (z_1,\dots,z_d) \sim (\tau(z_1),\dots,\tau(z_d))\;\; \text{for $\tau(z) = \lambda + \mu z$, $\lambda \in \bC$, $\mu > 0$}\} }.
\end{equation}
The (symmetric) operad structure on $\{\mathit{FM}_d\}$ comes from permutations of the $z_k$, together with maps similar to \eqref{eq:glue-tree-1},
\begin{equation} \label{eq:glue-tree-2}
\prod_v \mathit{FM}_{\|v\|} \stackrel{T}{\longrightarrow} \mathit{FM}_d.
\end{equation}
Here, the rooted trees $T$ come with our usual structure (see Terminology \ref{th:trees}), but are not necessarily planar.
Changing the ordering of the semi-infinite edges of $T$ amounts to composing \eqref{eq:glue-tree-2} with an element of $\mathit{Sym}_d$ on the left; and changing the orderings at the vertices amounts to composing \eqref{eq:glue-tree-2} with an element of $\prod_v \mathit{Sym}_{\|v\|}$ on the right. The inclusion $\bR \subset \bC$ induces maps
\begin{equation} \label{eq:s-into-fm}
S_d \longrightarrow \mathit{FM}_d,
\end{equation}
which are compatible with \eqref{eq:glue-tree-1}, \eqref{eq:glue-tree-2} (they form a morphism of non-symmetric operads).

As before, $\mathit{FM}_d$ is topologically a compact manifold with boundary. One can complexify it by considering point configurations in $\bC^2$, which yields a smooth compact complex manifold, and then embed $\mathit{FM}_d$ into the real locus of that. As a consequence, it inherits the structure of a smooth (or real sub-analytic) manifold with corners, just as in the case of the associahedra. 

To orient $\mathit{FM}_d$, we consider representatives in \eqref{eq:configuration-space-2} where $z_1$ and $|z_1-z_2|$ are fixed. Then, rotating $z_2$ anticlockwise around $z_1$ yields the first coordinate, and the remaining coordinates are $(z_3,\dots,z_d)$ with their complex orientations. Equivalently, consider the classical configuration space $\mathit{Conf}_d(\bC)$, of which \eqref{eq:configuration-space-2} is a quotient by the action of $(\lambda,\mu) \in \bC \times \bR^{>0}$. The Lie algebra of that group fits into an exact sequence
\begin{equation} \label{eq:split-orientations}
0 \rightarrow \bC \oplus \bR \longrightarrow T_{(z_1,\dots,z_d)}\mathit{Conf}_d(\bC) \longrightarrow
T_{(z_1,\dots,z_d)} \mathit{FM}_d \rightarrow 0;
\end{equation}
our orientation of the quotient is compatible with that sequence and with the complex orientation of $\mathit{Conf}_d(\bC)$. In particular, $\mathit{Sym}_d$ acts orientation-preservingly. 

\subsection{Deligne-Mumford spaces}
For most of this paper, we will write $\mathit{DM}_d$ for the Deligne-Mumford moduli space of genus $0$ curves with $(d+1)$ marked points, bringing it in line with the notation for the other moduli spaces. One can consider it as a compactification of
\begin{equation} \label{eq:configuration-space-3}
\frac{
\{ (z_1,\dots,z_d) \in \bC^d \;:\; z_i \neq z_j \text{ for } i \neq j \} }{ \{ (z_1,\dots,z_d) \sim (\tau(z_1),\dots,\tau(z_d))\;\; \text{for $\tau(z) = \lambda + \mu z$, $\lambda \in \bC$, $\mu \in \bC^*$}\} },
\end{equation}
which is a free $S^1$ quotient of \eqref{eq:configuration-space-2}. The operadic structure takes on exactly the same form as for Fulton-MacPherson spaces. Indeed, the quotient map on configuration spaces extends to a map
\begin{equation} \label{eq:fm-to-dm}
\mathit{FM}_d \longrightarrow \mathit{DM}_d,
\end{equation}
which is compatible with \eqref{eq:glue-tree-2} and its Deligne-Mumford counterpart. 

We adopt the usual orientation of $\mathit{DM}_d$ as a complex manifold.

\subsection{Colored multiplihedra\label{subsec:mww}}
Ma'u-Wehrheim-Woodward \cite{mau-woodward10, mau-wehrheim-woodward18} introduced a geometric interpretation of the classical multiplihedra, as well as certain generalizations. We will call these spaces colored multiplihedra, and denote them by
\begin{equation} \label{eq:mww-spaces}
\mathit{MWW}_{d_1,\dots,d_r}, \;\; r \geq 1, \; d_1,\dots,d_r \geq 0, \; d = d_1+\cdots + d_r > 0.
\end{equation}
They are compactifications of
\begin{equation} \label{eq:configuration-space-4}
\frac{
\{ (s_{1,1},\dots,s_{1,d_1}; \dots ;s_{r,1},\dots,s_{r,d_r}) \in \bR^d, \;\; s_{k,1} <  \cdots < s_{k,d_k} \text{ for each $k$}\} 
}{ 
\{ s_{k,i} \sim s_{k,i} + \mu \; \text{for $\mu \in \bR$}\} 
}.
\end{equation}
The intuitive meaning of \eqref{eq:configuration-space-4} is that we have $d$ points on the real line, which are divided into $r$ colors, with $d_k$ points of any given color $k$. Points of different colors can have the same position, while those of the same color are distinct and lie on the real line in increasing order. We denote the compactification by $\mathit{MWW}_{d_1,\dots,d_r}$. It tracks what happens on a large scale, meaning the relative speeds as points diverge from each other, as well as on the small scale, where points of the same color converge. Therefore, a point in the compactification consists of ``screens'' (terminology taken from \cite{fulton-macpherson94}) which are either ``large-scale'', ``mid-scale'', or ``small-scale''. Correspondingly, the analogue of \eqref{eq:glue-tree-1} is of the form
\begin{equation} \label{eq:glue-tree-3}
\prod_{v \text{ large}} S_{\|v\|} \times \prod_{v \text{ mid}} \mathit{MWW}_{\|v\|_1,\dots,\|v\|_r} \times \prod_{v \text{ small}} S_{\|v\|} \stackrel{T}{\longrightarrow} \mathit{MWW}_{d_1,\dots,d_r}.
\end{equation}
Here, the tree $T$ has $d+1$ semi-infinite edges. We still single out a root, but the leaves are now divided into subsets of orders $d_k$, each subset being then ordered by $\{1,\dots,d_k\}$. Each vertex has one of three scales. The mid-scale vertices have the same kind of combinatorial data attached to them as the entire tree: their incoming edges are divided into $r$ subsets of different colors, whose sizes we denote by $\|v\|_1,\dots,\|v\|_r$, and then ordered within each subset. The large-scale vertices and small-scale vertices just come with an ordering of the incoming edges. The small-scale vertices are also labeled with a color in $\{1,\dots,r\}$. Any path going from a leaf of color $k$ to the root travels in nondecreasing order of scale: first through any number (which can be zero) of small-scale vertices of color $k$; then through exactly one single mid-scale vertex, which it enters by an edge with color $k$; and finally, through any number (which can be zero) of large-scale vertices. There are compatibility conditions between the orderings, which are somewhat tedious to write down combinatorially, see \cite[Section 6]{mau-wehrheim-woodward18} (they are similar in principle to those for planar rooted trees, but concern each color separately).

\begin{example} \label{th:ex-deg}
Suppose that in $\mathit{MWW}_{2,2}$, we have a sequence of configurations where one point (of the first color) moves to $-\infty$, and the remaining three points move towards the same position. The outcome is shown in Figure \ref{fig:limiting-configuration}.
\end{example}
\begin{figure}
\begin{centering}
\includegraphics{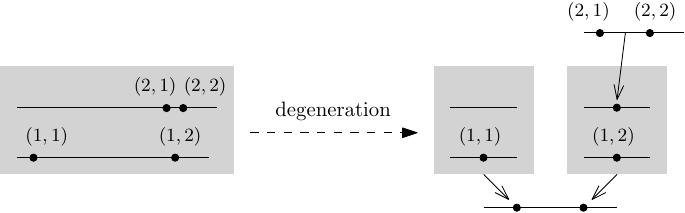}
\caption{\label{fig:limiting-configuration}A degeneration in $\mathit{MWW}_{2,2}$, see Example \ref{th:ex-deg}. The shaded regions are ``mid-scale screens''; we have drawn marked points of different colors as lying on separate copies of the real line.
}
\end{centering}
\end{figure}

Topologically, $\mathit{MWW}_{d_1,\dots,d_r}$ is again a compact manifold with boundary, having \eqref{eq:configuration-space-4} as its interior. Note that the codimension of the image of \eqref{eq:glue-tree-2} is the number of small-scale plus large-scale vertices, mid-scale vertices being irrelevant. As a consequence of the resulting combinatorial structure of boundary strata, $\mathit{MWW}_{d_1,\dots,d_r}$ can't be made into a smooth manifold with corners in the same way as the previously considered moduli spaces. However, it is naturally a (sub-analytic) manifold with generalized corners in the sense of \cite{joyce16}. To prove that, one introduces a complexification as in \cite{mau-woodward10, bottman-oblomkov19}, which is a complex variety with toric singularities, and embeds $\mathit{MWW}_{d_1,\dots,d_r}$ into its real locus.

As for orientations, we orient \eqref{eq:configuration-space-4} by ordering the coordinates lexicographically, and then keeping the first one fixed to break the translation-invariance.

\begin{example} \label{th:111-space}
In the spaces $\mathit{MWW}_{1,\dots,1}$, no small-scale vertices can appear. The maps \eqref{eq:glue-tree-3} with zero-dimensional domains correspond to trivalent planar rooted trees with an additional ordering of the $r$ leaves, hence there are $(2r-2)!/(r-1)!$ of them. For instance, the two-dimensional space $\mathit{MWW}_{1,1,1}$ is a $12$-gon, see Figure \ref{fig:12-gon}. The boundary sides each have either one large-scale screen containing three points, or one mid-scale screen with two points (each possibility occurs six times). Figure \ref{fig:12-gon-2} shows in more detail a neighbourhood of one of the corners of the $12$-gon, and in particular, the degenerate configuration associated to the vertex.
\end{example}
\begin{figure}
\begin{centering}
\includegraphics{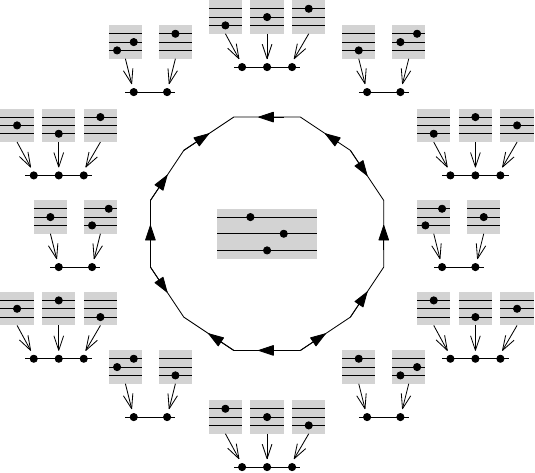}
\caption{\label{fig:12-gon}The space $\mathit{MWW}_{1,1,1}$ from Example \ref{th:111-space}.}
\end{centering}
\end{figure}%
\begin{figure}
\begin{centering}
\includegraphics{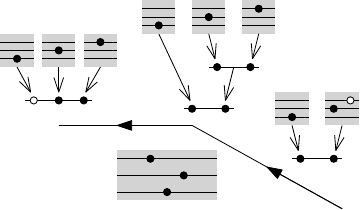}
\caption{\label{fig:12-gon-2}A specific part of $\mathit{MWW}_{1,1,1}$, compare Figure \ref{fig:12-gon}. As one approaches the vertex along the edge from the left, the leftmost of the three points on the large-scale screen moves to $-\infty$. As one approaches it along the other edge from the right, the rightmost point on the mid-scale screens moves to $+\infty$. We have colored the points that we think of as moving white. (Of course, because of translation-invariance, there are other equivalent ways of thinking about the degenerations.)}
\end{centering}
\end{figure}%

\begin{example} \label{th:21-space}
The space $\mathit{MWW}_{2,1}$ is an octagon, see Figure \ref{fig:8-gon}. There are only two points which belong to the same color, hence only one way in which a small-scale vertex can occur, which is the boundary side at the top of our figure. The other boundary sides are of two kinds, as in Example \ref{th:111-space}.
\end{example}
\begin{figure}
\begin{centering}
\includegraphics[scale=0.8]{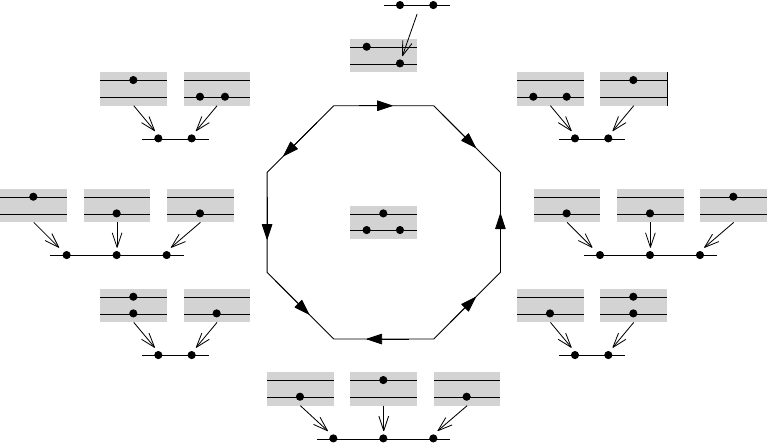}
\caption{\label{fig:8-gon}The space $\mathit{MWW}_{2,1}$ from Example \ref{th:21-space}. }
\end{centering}
\end{figure}%

One can associate to a real colored configuration a complex configuration, by setting
\begin{equation} \label{eq:imaginary-part-is-k}
z_{k,i} = s_{k,i} + k \textstyle\sqrt{-1}
\end{equation}
and then ordering the $z_{k,i}$ lexicographically (we use $\sqrt{-1}$ here to avoid notational confusion with the index $i$). This extends to a continuous map 
\begin{equation} \label{eq:mww-into-fm}
\mathit{MWW}_{d_1,\dots,d_r} \longrightarrow \mathit{FM}_d, \;\; \text{provided that } d = d_1 + \cdots + d_r \geq 2.
\end{equation}
In terms of \eqref{eq:glue-tree-3}, the extension uses the same formula \eqref{eq:imaginary-part-is-k} for the points on each  mid-scale screen, while the small-scale and large-scale screens use \eqref{eq:s-into-fm}. To be precise, there is one exception: mid-scale vertices with $|v| = 2$ have no Fulton-MacPherson counterpart, and we simply forget about them, which is unproblematic since $\mathit{MWW}_{0,\dots,0,1,0,\dots,0} = \mathit{point}$. There is a commutative diagram involving \eqref{eq:mww-into-fm} as well as \eqref{eq:s-into-fm}, \eqref{eq:glue-tree-2}, \eqref{eq:glue-tree-3}:
\begin{equation} \label{eq:mww-into-fm-compatible}
\xymatrix{
\ar[d]_-{\eqref{eq:mww-into-fm}}
\displaystyle \prod_{v \text{ small}} S_{\|v\|} \times \prod_{v \text{ mid}} \mathit{MWW}_{\|v\|_1,\dots,\|v\|_r} \times \prod_{v \text{ large}} S_{\|v\|} \ar[rr]^-{\eqref{eq:glue-tree-3}} && \mathit{MWW}_{d_1,\dots,d_r} \ar[dd]^-{\eqref{eq:mww-into-fm}} \\
\displaystyle
\prod_{v \text{ small}} S_{\|v\|} \times \prod_{\substack{v \text{ mid} \\ |v| > 2}} \mathit{FM}_{\|v\|} \times \prod_{v \text{ large}}
S_{\|v\|} 
\ar[d]_-{\eqref{eq:s-into-fm}}
\\
\displaystyle \prod_{|v| > 2} \mathit{FM}_{\|v\|}  \ar[rr]_-{\eqref{eq:glue-tree-2}} && \mathit{FM}_d.
}
\end{equation}

It can be convenient to allow more flexibility in the construction of \eqref{eq:mww-into-fm}. Namely, suppose that we have a collection of continuous functions 
\begin{equation} \label{eq:tau-functions}
\tau_{d_1,\dots,d_r} = (\tau_{d_1,\dots,d_r,1,1},\dots,\tau_{d_1,\dots,d_r,r,d_r}): \mathit{MWW}_{d_1,\dots,d_r} \longrightarrow \bR^d,
\end{equation}
with the following properties. In the interior of our space,
\begin{equation} \label{eq:traffic-rules}
\tau_{d_1,\dots,d_r,k,i} < \tau_{d_1,\dots,d_r,l,j} \;\; \text{at any point of \eqref{eq:configuration-space-4} where
$s_{k,i} = s_{l,j}$ for some $k<l$ and $i,j$.}
\end{equation}
Take the pullback of $\tau_{d_1,\dots,d_r}$ by \eqref{eq:glue-tree-3} for some tree $T$. Each index $(k,i)$ corresponds to a leaf of $T$, and the path from that leaf to the root enters a single mid-scale vertex $v$ through an incoming edge labeled $(k,j)$. Then, we require that the $(k,i)$ component of the pullback must be given by the $(k,j)$-component of $\tau_{\|v\|_1,\dots,\|v\|_r}$, as a function on the product in \eqref{eq:glue-tree-3}.  Instead of \eqref{eq:imaginary-part-is-k}, we can then set
\begin{equation} \label{eq:imaginary-part-is-tau}
z_{k,i} = s_{k,i} + \tau_{d_1,\dots,d_r,k,i}  \sqrt{-1}.
\end{equation}
Intuitively, the imaginary parts of the $z_{k,i}$ can vary depending on the modular parameters, but if two points of different colors $k<l$ come to lie on the same vertical axis, the point with the higher color $l$ always passes above that of color $k$ (in contrast, points of the same color still collide, ``bubbling off'' into a small-scale screen). The consistency condition we have imposed on \eqref{eq:tau-functions} ensures that \eqref{eq:imaginary-part-is-tau} extends to a continuous map  \eqref{eq:mww-into-fm}, with the same boundary compatibilities \eqref{eq:mww-into-fm-compatible} as before. This is a strict generalization of the previous construction, since the constant functions $\tau_{d_1,\dots,d_r,k,i} = k$ clearly satisfy our conditions. More general choices of \eqref{eq:tau-functions} can be defined inductively by extension from the boundary of $\mathit{MWW}_{d_1,\dots,d_r}$ to the entire space, which is unproblematic since \eqref{eq:traffic-rules} is a convex condition.

As one application of \eqref{eq:imaginary-part-is-tau}, note that we have (orientation-preserving) identifications
\begin{equation} \label{eq:drop-color}
\mathit{MWW}_{d_1,\dots,d_r} = \mathit{MWW}_{d_1,\dots,d_{l-1},d_{l+1},\dots,d_r} \;\; \text{if $d_l = 0$.}
\end{equation}
According to the original formula \eqref{eq:imaginary-part-is-k}, these two isomorphic spaces come with different maps to $\mathit{FM}_d$. However, when constructing the functions \eqref{eq:tau-functions}, one can additionally achieve that
\begin{equation} \label{eq:drop-color-2}
\tau_{d_1,\dots,d_r,k,i} = 
\begin{cases} \tau_{d_1,\dots,d_{l-1},d_{l+1},\dots,k,i} & k < l \\
\tau_{d_1,\dots,d_{l-1},d_{l+1},\dots,k-1,i} & k>l
\end{cases}
\;\; \text{if $d_l = 0$,}
\end{equation}
and then the maps \eqref{eq:mww-into-fm} obtained from \eqref{eq:imaginary-part-is-tau} become compatible with  \eqref{eq:drop-color}.

\subsection{Witch ball spaces\label{subsec:witch-ball-spaces}} 
Our next topic is a simplified version of Bottman's witch ball spaces \cite{bottman17}, for didactic reasons: we won't use them as such, but the discussion serves as a preparation for a related construction to be carried out afterwards. Our notation is
\begin{equation} \label{eq:bottman}
B_{d_1,\dots,(d_m,d_{m+1}),\dots,d_r}, \;\; r \geq 2, \; d = d_1 + \cdots + d_r > 0, \; 1 \leq m \leq r-1.
\end{equation}
The interior is the configuration space \eqref{eq:configuration-space-4} with an additional parameter $t \in (0,1)$. This parameter extends to a map
\begin{equation} \label{eq:map-to-t}
B_{d_1,\dots,(d_m,d_{m+1}),\dots,d_r} \longrightarrow [0,1].
\end{equation}
Over $t \in (0,1]$, we just have a copy of $(0,1] \times \mathit{MWW}_{d_1,\dots,d_r}$. In particular, by looking at $t = 1$ one gets boundary strata inherited from \eqref{eq:glue-tree-3}, which are images of maps
\begin{equation} \label{eq:glue-tree-4}
\prod_{v \text{ large}} S_{\|v\|} \times \prod_{v \text{ mid}} \mathit{MWW}_{\|v\|_1,\dots,\|v\|_r} \times \prod_{v \text{ small}} S_{\|v\|} \stackrel{T}{\longrightarrow} B_{d_1,\dots,(d_m,d_{m+1}),\dots,d_r}.
\end{equation}
At $t = 0$, the $m$-th and $(m+1)$-st color ``collide''. There, the analogue of \eqref{eq:glue-tree-4} is
\begin{equation} \label{eq:glue-tree-5}
\begin{aligned}
& 
\prod_{v \text{ mid}} \mathit{MWW}_{\|v\|_1,\dots,\|v\|_{r-1}} \times
\prod_{\!\!v \text{ small-mid}\!\!\!\!\!} \mathit{MWW}_{\|v\|_1,\|v\|_2} 
\times \prod_{\substack{v \text{ of any} \\ \!\!\text{other scale}}\!\!} S_{\|v\|}
\stackrel{T}{\longrightarrow} B_{d_1,\dots,(d_m,d_{m+1}),\dots, d_r}.
\end{aligned}
\end{equation}
This time six different scales are involved, which we call (unimaginatively) ``large'', ``mid'', ``small'', ``small-large'', ``small-mid'', ``small-small''. Suppose that we have a path from a leaf to the root. As usual, the leaves carry colors $\{1,\dots,r\}$. If the color of our leaf is $\neq m,m+1$, things proceed as for the $\mathit{MWW}$ spaces, with the path going through any number of small vertices, one mid-scale vertex, and then any number of large vertices.
(There is a re-labelling rule: if the color is $k>m+1$, it enters the mid-scale vertex through an edge with color $k-1$.) If the color is $m$ (or $m+1$), the path first goes through small-small vertices, and then through exacly one small-mid vertex, which it enters through an edge colored by $1$ (respectively $2$). It then proceeds through an arbitrary number of small-large vertices, then through a mid-scale vertex, which it always enters through the $m$-th color, following by large-scale vertices. To compute the codimension of \eqref{eq:glue-tree-5} one counts the number of ``other scale'' screens. Finally, our space has boundary strata which lie over the entire interval $[0,1]$, and those are images of maps
\begin{equation} \label{eq:glue-tree-fibered}
\prod_{v \text{ large}} S_{\|v\|} \times \prod_{v \text{ mid}}^{[0,1]} B_{\|v\|_1,\dots,(\|v\|_m,\|v\|_{m+1}),\dots,\|v\|_r} \times \prod_{v \text{ small}} S_{\|v\|} \stackrel{T}{\longrightarrow} B_{d_1,\dots,(d_m,d_{m+1}),\dots,d_r},
\end{equation}
where the superscript means that instead of a product, we have a fibre product over \eqref{eq:map-to-t}; compare \cite[Equation (1)]{bottman-carmeli18}. We refer to \cite{bottman17, bottman17a, bottman-carmeli18} for a detailed discussion; the results obtained there can easily be carried over to our version. 
\begin{figure}
\begin{centering}
\includegraphics{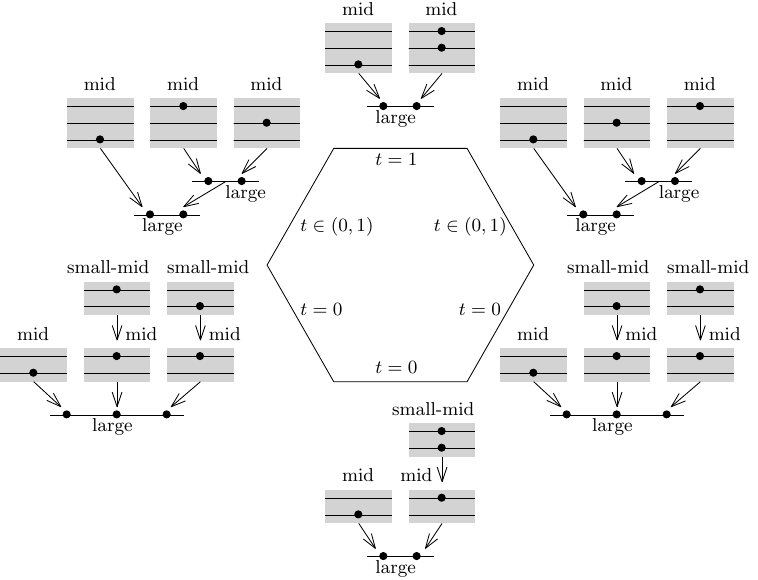}
\caption{\label{fig:6-gon}One of the boundary faces of $B_{1,(1,1)}$, see Example \ref{th:b111}.}
\end{centering}
\end{figure}

\begin{example} \label{th:b111}
The space $B_{1,(1,1)}$ is half (sliced through horizontally) of \cite[Figure 1b]{bottman-carmeli18}. Figure \ref{fig:6-gon} shows one of its boundary faces of type \eqref{eq:glue-tree-fibered}, namely $S_2 \times B_{1,(0,0)} \times_{[0,1]} B_{0,(1,1)} \iso B_{(1,1)}$.
\end{example}

The spaces \eqref{eq:bottman} are topological manifolds with boundary, and smooth manifolds with generalized corners. For Bottman's witch ball spaces, this is proved in \cite{bottman-oblomkov19}, and the same arguments apply to the (comparatively simpler) situation here. 

As was the case for the $\mathit{MWW}$ spaces, one can map our spaces to Fulton-MacPherson spaces,
\begin{equation}
B_{d_1,\dots,(d_m,d_{m+1}),\dots,d_r} \longrightarrow \mathit{FM}_d, \;\; \text{provided that $d \geq 2$,}
\end{equation}
compatibly with \eqref{eq:glue-tree-4}, \eqref{eq:glue-tree-5}, \eqref{eq:glue-tree-fibered}. Suppose for simplicity that the maps \eqref{eq:mww-into-fm} have been defined using \eqref{eq:imaginary-part-is-k}. Then, the corresponding formula for $B_{d_1,\dots,(d_m,d_{m+1}),\dots,d_r}$ is
\begin{equation} \label{eq:t-bottman}
z_{k,i} = \begin{cases} s_{k,i} + k\sqrt{-1} & k \leq m, \\
s_{k,i} + (k-1+t)\sqrt{-1} & k > m.
\end{cases}
\end{equation}
As before, the extension of this map to the entire space forgets any screens (necessarily of mid-scale or small-mid-scale) which carry configurations consisting only of one point.

\subsection{Strip-shrinking spaces\label{subsec:strip-shrinking}}
We will now introduce a modification of the idea of witch ball spaces, designed to avoid the kind of fibre products which appeared in \eqref{eq:glue-tree-fibered}. This is inspired by \cite{bottman-wehrheim18}, and correspondingly called strip-shrinking spaces. We will denote them by
\begin{equation} \label{eq:strip-shrinking}
\mathit{SS}_{d_1,\dots,(d_m,d_{m+1}),\dots,d_r}, \;\; r \geq 2, \; d = d_1 + \dots + d_r \geq 0, \; 1 \leq m \leq r-1.
\end{equation}
(Note that this time, unlike the situation in \eqref{eq:bottman}, it is possible to have all $d_k = 0$.) The $SS$ spaces compactify colored configuration space as in \eqref{eq:configuration-space-4}, but without dividing by common translation. The important point is an asymmetry between the two ways in which points in the configuration can go to infinity.
In the $s \rightarrow -\infty$ direction, we dictate a fairly standard behaviour, where $\mathit{MWW}$ spaces with $r$ colors appear. In the $s \rightarrow +\infty$ limit, we think of the $m$-st and $(m+1)$-st colored points as lying on lines that become asymptotically close to each other, at a rate of $1/s$. One way to make this more concrete is to consider the analogue of \eqref{eq:t-bottman}, which associates to a real configuration a complex one. Choose a function $\psi: \bR \rightarrow (-1,0]$ with asymptotics
\begin{equation}
\psi(s) \approx \begin{cases} 0 & s \ll 0, \\
-1 + 1/s & s \gg 0.
\end{cases}
\end{equation}
Then set (see Figure \ref{fig:convergent-example})
\begin{equation} \label{eq:imaginary-part-is-almost-k}
z_{k,i} = \begin{cases} s_{k,i} + k \sqrt{-1} & k \leq m, \\
s_{k,i} = s_{k,i} + (k + \psi(s_{k,i})) \sqrt{-1} & k > m.
\end{cases} 
\end{equation}
To relate the spaces to Fulton-MacPherson spaces, we can add two auxiliary marked points, say 
\begin{equation} \label{eq:stabilization}
z_{\pm} = \pm 1 + (r+1)\sqrt{-1}, 
\end{equation}
which stabilize the situation and otherwise stay out of the way. This gives continuous maps
\begin{equation} \label{eq:double-stabilization}
\mathit{SS}_{d_1,\dots,(d_m,d_{m+1}),\dots,d_r} \longrightarrow \mathit{FM}_{d+2}. 
\end{equation}
%
\begin{figure}
\begin{centering}
\includegraphics{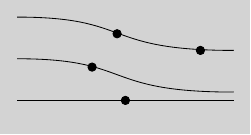}
\caption{\label{fig:convergent-example}A point in $\mathit{SS}_{(1,1),2}$, thought of as a configuration in $\bC$ as in \eqref{eq:imaginary-part-is-almost-k}.}
\end{centering}
\end{figure}%
%
%

For a more precise picture, consider the analogue of \eqref{eq:glue-tree-1},
\begin{equation} \label{eq:glue-tree-6}
\begin{aligned}
& 
\prod_{v < v_* \text{ mid}} \mathit{MWW}_{\|v\|_1,\dots,\|v\|_r} \times \mathit{SS}_{\|v_*\|_1,\dots,\|v_*\|_r}
\times \prod_{v > v_* \text{ mid}} \mathit{MWW}_{\|v\|_1,\dots,\|v\|_{r-1}} \qquad \\
& \qquad \qquad
\times \prod_{v \text{ small-mid}} \mathit{MWW}_{\|v\|_1,\|v\|_2} 
\times \prod_{\substack{v \text{ of any} \\ \text{other scale}}} S_{\|v\|}
\stackrel{T}{\longrightarrow} \mathit{SS}_{d_1,\dots,d_r}.
\end{aligned}
\end{equation}
Here, we have the same six scales as in \eqref{eq:glue-tree-5}, but with different roles. There is a distinguished mid-scale vertex, denoted by $v_*$, to which corresponds an $\mathit{SS}$ space. All other mid-scale vertices carry $\mathit{MWW}$ spaces, in two different versions: if $v<v_*$ (with respect to the ordering of mid-scale vertices determined by the ribbon structure at large-scale vertices) that space has $r$ colors, but for $v>v_*$ there are only $(r-1)$ colors. The part of the tree lying on top of $v \leq v_*$ vertices consists of small-scale vertices as in \eqref{eq:glue-tree-3}, and the same is true for $v > v_*$ if the color is $\neq m$. For that remaining color, we have a structure of small-large, small-mid, and small-small vertices parallel to \eqref{eq:glue-tree-5}.

\begin{example} \label{th:110-convergent}
Take the two-dimensional space $\mathit{SS}_{(1,1)}$, denoting points in its interior by $(s_1;s_2)$ for brevity. Consider sequences 
\begin{equation}
(s_1^{(k)}; s_2^{(k)}), \; k = 1,2,\dots, \;\; \text{where } s_1^{(k)} < s_2^{(k)} \text{ and } s_1^{(k)}; s_2^{(k)} \rightarrow +\infty.
\end{equation}
The possible limit configurations, shown in Figure \ref{fig:convergent-screens}, correspond to the following behaviours:
\begin{itemize}
\itemsep.5em
\item[(i)] $(s_2^{(k)}-s_1^{(k)})/s_1^{(k)} \rightarrow +\infty$.
\item[(ii)] $(s_2^{(k)}-s_1^{(k)})/s_1^{(k)}$ converges to a nonzero constant.
\item[(iii)] $s_2^{(k)} - s_1^{(k)} \rightarrow +\infty$, but $(s_2^{(k)}-s_1^{(k)})/s_1^{(k)} \rightarrow 0$.
\item[(iv)] $s_2^{(k)} - s_1^{(k)}$ converges to a nonzero constant.
\item[(v)] $s_2^{(k)} - s_1^{(k)} \rightarrow 0$, but $s_1^{(k)}(s_2^{(k)}-s_1^{(k)}) \rightarrow +\infty$. Since the two points get increasingly close to each other, the additional mid-scale screen carries only one marked point. On the other hand, rescaling by $s_1^{(k)}$ separates the two points in the limit. This leads to the appearance of a small-large screen. 
\item[(vi)]  $s_2^{(k)} - s_1^{(k)} \rightarrow 0$, but $s_1^{(k)}( s_2^{(k)} - s_1^{(k)} )$ converges to a constant (which can be zero). In that case, we get a small-mid scale screen with two marked points on it.
\end{itemize}
The whole space is a $14$-gon (Figure \ref{fig:14-gon}), with three adjacent sides corresponding to (ii), (iv), (vi) above, and corners corresponding to (i), (iii), (iv).
\end{example}
\begin{figure}
\begin{centering}
\includegraphics{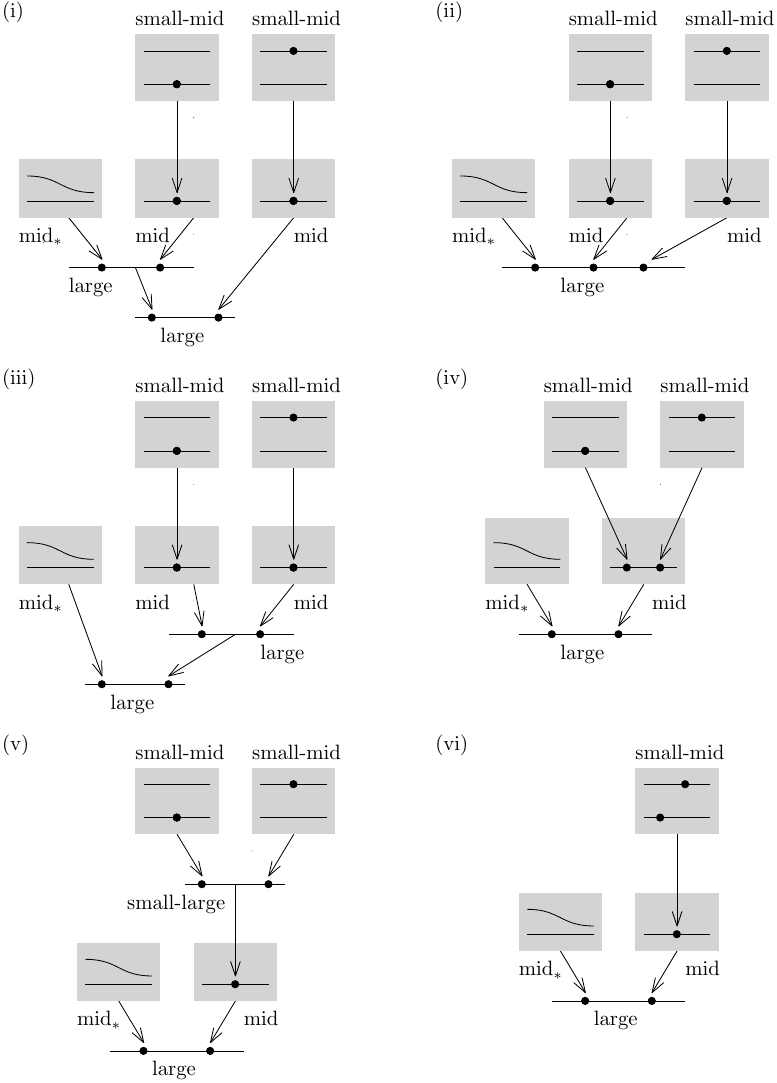}
\caption{\label{fig:convergent-screens}Some limits in $\mathit{SS}_{(1,1)}$, as discussed in Example \ref{th:110-convergent}. The $\ast$ marks the distinguished mid-scale screen.}
\end{centering}
\end{figure}%
\begin{figure}
\begin{centering}
\includegraphics{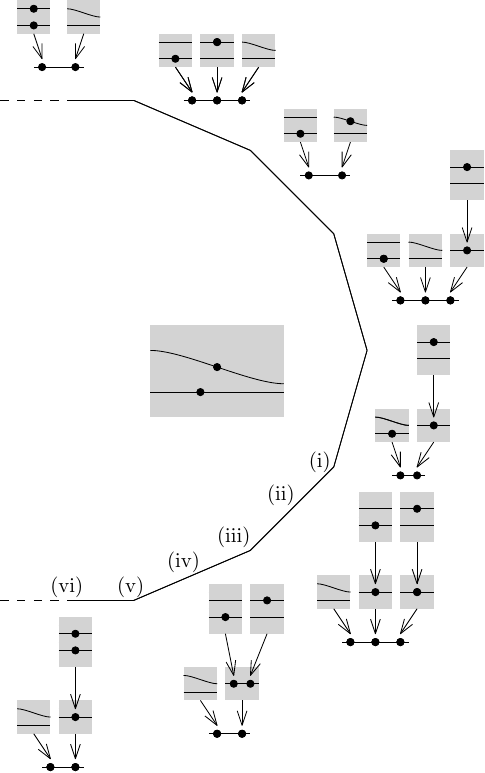}
\caption{\label{fig:14-gon}The space $\mathit{SS}_{(1,1)}$ is a $14$-gon, see Example \ref{th:110-convergent}. For space reasons, we have only drawn half of it. However, the drawing is arranged so that exchanging the first two colors corresponds to reflection along the vertical axis, and the missing half can be inferred from that.
}
\end{centering}
\end{figure}%

The structure of $\mathit{SS}$ as a compact topological space is relatively straightforward to obtain, following the model of \cite{bottman17}. It turns out that it is also a topological manifold with boundary, and in fact a differentiable manifold with generalized corners. The last-mentioned property deserves some discussion, since the required construction of coordinate charts, which borrows ideas from \cite{bottman-oblomkov19}, is instructive in its own right.

A boundary point in $\mathit{SS}_{d_1,\dots,(d_m,d_{m+1}),\dots,d_r}$ is given by a tree $T$ and associated screens carrying point configurations, as in \eqref{eq:glue-tree-6}. The gluing process which associates to this point a chart in the interior involves (small) gluing parameters $\lambda_e > 0$ for the finite edges of $T$, subject to constraints. Our main interest lies in those constraints, but let's first recall how to think of such gluing processes. This is made slightly more complicated in our case by the fact that the screens have different natures: the vertex $v_*$ carries a configuration of real numbers, without dividing by any group action; the mid-scale and small-mid scale vertices carry configurations which are given up to translation; and at all other scales we have configurations up to translation and rescaling. To deal with that, it is convenient to stabilize the configuration associated to the distinguished mid-scale vertex $v_*$ by adding two points $s_{\pm} = \pm 1$, thought of as belonging to their own new color, just as in \eqref{eq:stabilization}. To glue the screens together, we first choose specific representatives for those configurations which are defined only up to ambiguites. Then, given any finite edge $e$ of the tree, we take the screen associated to its source vertex, rescale the points in that configuration by $\lambda_e$, and then insert that into the target vertex by adding the real number that corresponds to the point where our configuration is being glued in (In abbreviated notation, gluing $s$ with scale $\lambda$ into a screen at point $r$ results in $r+\lambda s$.) After we have done that for all edges, we translate and rescale the resulting configuration to bring the points $s_{\pm}$ back to their original position (and then forget about those points). 

It may strike the reader that there are too many gluing parameters with respect to the codimension of the boundary strata; and indeed, the parameters are not independent, but subject to constraints. To formulate those, we can think in terms of the scales that the screens acquire after gluing. For any vertex $v$, let $\lambda_v$ be the product of the $\lambda_e^{\pm 1}$ along a path going from $v_*$ to $v$, with the sign $+1$ if the path follows the edge orientation, and $-1$ otherwise. We also need the following terminology:
\begin{equation}
\parbox{38em}{
 Given a small-mid scale vertex $v_-$, we say that a large scale vertex $v_+$ is a turning point for $v_-$ if there is a path from $v_*$ to $v_-$ which follows the orientation until it hits $v_+$, and then goes against the orientation to $v_-$.} 
 \end{equation}
Clearly, for any $v_-$ there is a unique turning point $v_+$. With that at hand, the relations are:
\begin{align}
& \label{eq:relation-1}
\text{if $v$ is a mid-scale vertex, $\lambda_v = 1$ (this is automatic for $v = v_*$);}
\\ & \label{eq:relation-2}
\text{if $v_+$ is a turning point for $v_-$ (which means $v_-$ is small-mid scale), $\lambda_{v_+}\lambda_{v_-} = 1$.}
\end{align}
It is easy to see that for a codimension one stratum, all the $\lambda_e$ therefore end up being the same.
%

\begin{example} \label{th:g1}
Consider gluing from the horizontal boundary edge at the top of Figure \ref{fig:14-gon}. Let's say that the large screen carries the configuration $(r_1,r_2)$; the mid-scale screen on the left carries a configuration $(s_1;s_2)$; the remaining screen, corresponding to $v_*$, is empty, but we add a third color and its points $s_\pm$ as explained above. The constraint
\eqref{eq:relation-1} says that the gluing parameters for both edges must be equal, so we effectively have a single parameter $\lambda$. In a first step, gluing with that parameter yields the configuration
\begin{equation} \label{eq:naive-glue}
(r_1+\lambda s_1; r_1+\lambda s_2; r_2-\lambda, r_2+\lambda).
\end{equation}
After that, we apply translation and rescaling which maps $r_2\pm\lambda $ back to $\pm 1$, that being $s \mapsto \lambda^{-1}(s-r_2)$; and (forgetting those points) end up with
\begin{equation} \label{eq:not-so-naive}
(\lambda^{-1}(r_1-r_2) + s_1; \lambda^{-1}(r_1-r_2) + s_2) \in \mathit{SS}_{(1,1)} \setminus \partial \mathit{SS}_{(1,1)}.
\end{equation}
This means that the gluing takes place in way which preserves the size of the mid-scale screens, even though that has been obscured a bit by writing it as rescaling by $\lambda$ and then its inverse.
\end{example}

\begin{example} \label{th:glue-example-0}
Consider the situation of the horizontal boundary edge at the bottom of Figure \ref{fig:14-gon}, which is also Figure \ref{fig:convergent-screens}(vi). Let's say that $\lambda_1,\lambda_2$ are the gluing parameters for the edges leading to the large-scale vertices, and $\lambda_3$ that for the remaining edge. Then, \eqref{eq:relation-2} says that $\lambda_1\lambda_2^{-1} = 1$, and \eqref{eq:relation-2} that $\lambda_{v_+}\lambda_{v_-} = \lambda_1 (\lambda_1\lambda_2^{-1}\lambda_3^{-1}) = \lambda_1^2 \lambda_2^{-1}\lambda_3^{-1} = 1$. As mentioned before, the end result is again that all gluing parameters are equal. Suppose that the large-scale screen carries $(r_1,r_2)$, the mid-scale screen carries $r$, and the small-mid scale screen carries $(s_1;s_2)$. The analogue of \eqref{eq:naive-glue} is
\begin{equation}
(r_2+\lambda^2 s_1; r_2 +\lambda^2 s_2; r_1 - \lambda, r_1 + \lambda)
\end{equation}
and that of \eqref{eq:not-so-naive} is obtained by applying $s \mapsto \lambda^{-1}(s-r_1)$, which gives
\begin{equation}
(\lambda^{-1}(r_2-r_1) + \lambda s_1; \lambda^{-1}(r_2 - r_1) + \lambda s_2).
\end{equation}
In the end, the two points at up at position $O(\lambda^{-1})$, and at distance $O(\lambda)$ from each other, which matches the description in Example \ref{th:110-convergent}(vi).
\end{example}

Allowing some of the parameters to become zero yields a partial gluing process, which extends the chart obtained by gluing to include boundary points. In order for the relations \eqref{eq:relation-1}, \eqref{eq:relation-2} to make sense in this context, one multiplies them by all $\lambda_e^{-1}$, so as to get equations between monomials with nonnegative coefficients. One can think of this completely as a limit of the previous gluing process.

%


\begin{example} \label{th:glue-example-2}
Take the example from Figure \ref{fig:glue-example-2}. After some preliminary simplifications, the relations between gluing parameters are $\lambda_1 = \lambda_2$, $\lambda_7 = \lambda_2\lambda_3$, $\lambda_4 = \lambda_5$, and more importantly 
\begin{equation} \label{eq:singularity}
\lambda_2\lambda_3 = \lambda_5\lambda_6. 
\end{equation}
Hence, this point is not a classical corner in its moduli space. After gluing, the position of the two rightmost points is of order $\lambda_2\lambda_3$, and the distance between them of order $\lambda_6^{-1}$. In the limit as all gluing parameters to go zero, $\lambda_2\lambda_3\lambda_6^{-1} = \lambda_5 \rightarrow 0$ by \eqref{eq:singularity}, as in the similar but simpler situation of Example \ref{th:110-convergent}(v).
\end{example}
\begin{figure}
\begin{centering}
\includegraphics{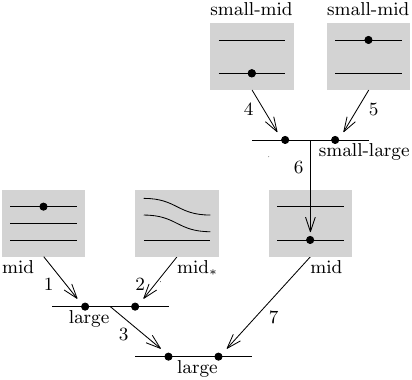}
\caption{\label{fig:glue-example-2}A generalized corner point, see Example \ref{th:glue-example-2}.}
\end{centering}
\end{figure}

It is convenient to pass from the multiplicative language of gluing parameters to the additive language of monoids. We define an abelian group $G_T$ as follows. There is one generator $g_e$ for each edge. For a vertex $v$, we define $g_v$ to be the signed sum of $g_e$ over a path from $v_*$ to $v$, with signs according to orientations. The additive relations corresponding to the ones above are: 
\begin{align} 
\label{eq:relation-3}
& \text{$g_v = 0$ for a mid-scale vertex $v$;} \\
& \label{eq:relation-4}
\text{$g_{v_+} + g_{v_-} = 0$ if $v_+$ is a turning point for $v_-$.}
\end{align}
Let $G_{T,\geq 0} \subset G_T$ be the sub-monoid generated by the $g_e$. The gluing parameters, including  the degenerate cases where some are set to zero, are elements of $\mathit{Hom}(G_T^{\geq 0}, \bR^{\geq 0})$, where $\bR^{\geq 0}$ is the multiplicative monoid. 

\begin{lemma} \label{th:corner-1}
$G_T$ is a free abelian group, whose rank is the number of vertices of $T$ which are neither mid-scale nor small-mid scale; in other words, the ``other scales'' in \eqref{eq:glue-tree-6}. 
\end{lemma}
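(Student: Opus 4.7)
The plan is to change basis from the edge-generators $g_e$ to vertex-labelled generators, after which both the relations and the final rank become transparent.

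Since $T$ is a tree with $V$ vertices, it has exactly $V-1$ finite edges. I would embed $F = \bigoplus_e \bZ g_e$ into $\bZ^V = \bigoplus_v \bZ[v]$ via $g_e \mapsto [s(e)] - [t(e)]$; the contractibility of $T$ identifies $F$ with the zero-sum subgroup $\ker(\Sigma\colon \bZ^V \to \bZ)$. Telescoping along the defining signed path gives $g_v \mapsto [v_*] - [v]$ for every vertex $v$, so \eqref{eq:relation-3} becomes $[v] = [v_*]$ (automatic at $v = v_*$, a genuine relation at each of the other $M-1$ mid-scale vertices, where $M$ denotes the total number of mid-scale vertices), and \eqref{eq:relation-4} becomes $[v_+] + [v_-] = 2[v_*]$ for each turning pair. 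Let $R \subset \bZ^V$ be the subgroup generated by all of these elements; then $G_T = F/R$ embeds as the kernel of the induced sum map on $\bZ^V/R$, so it suffices to compute that quotient and check that it is free.

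The mid-scale relations collapse the $M$ mid-scale generators onto $[v_*]$, leaving a free abelian group with basis $\{[v_*]\} \cup \{[v] : v \text{ not mid-scale}\}$, of rank $V-M+1$. I would next group the small-mid vertices by common turning point: a given large-scale vertex $v_+$ may serve as the turning point of $k \geq 0$ small-mid partners $v_-^1,\dots,v_-^k$, and the associated $k$ relations are resolved uniformly by setting $[v_+] = 2[v_*] - [v_-^1]$ and $[v_-^i] = [v_-^1]$ for $i \geq 2$. A short determinant computation shows that the change of basis replacing $([v_+], [v_-^1], \dots, [v_-^k])$ by $([v_-^1], \text{the } k \text{ relations themselves})$ has determinant $\pm 1$ over $\bZ$, so the quotient remains free and loses exactly $k$ generators per group. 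Because every small-mid vertex has a unique turning point, summing over all turning-point groups removes exactly $S$ generators in total, where $S$ is the number of small-mid vertices. Therefore $\bZ^V/R$ is free of rank $1 + V - M - S$, and its zero-sum subgroup $G_T$ is free of rank $V - M - S$, which is precisely the number of vertices that are neither mid-scale nor small-mid-scale.

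The main subtlety is twofold: the coefficient $2$ of $[v_*]$ in the turning-pair relation, and the possibility that several small-mid vertices share a common turning point $v_+$. Either phenomenon could in principle introduce torsion or cause a miscount of the rank; the unimodular elementary step above neutralizes both simultaneously, and is where essentially all of the work of the proof is concentrated.
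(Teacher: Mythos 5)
Your proof is correct, and it takes a different (if closely related) route from the paper's. The paper works entirely in edge coordinates: starting from the presentation \eqref{eq:abelian-group-sequence}, it assigns to each relation a distinguished finite edge (the edge exiting $v$ for \eqref{eq:relation-3}, the edge exiting $v_-$ for \eqref{eq:relation-4}); these edges are pairwise distinct and each relation involves its own distinguished edge with coefficient $\pm 1$, so projecting onto those coordinates retracts $\bZ^{R_T}$ inside $\bZ^{E_T}$, giving freeness of the quotient, with the rank then given by the count $(V-1)-(M-1)-S$. You instead transport everything to vertex coordinates via the augmented chain complex of the tree, under which $g_v \mapsto [v_*]-[v]$, so that \eqref{eq:relation-3} and \eqref{eq:relation-4} take the transparent forms $[v]=[v_*]$ and $[v_+]+[v_-]=2[v_*]$, and you then carry out explicit unimodular eliminations grouped by turning point, recovering $G_T$ as the zero-sum part of the free quotient $\bZ^V/R$. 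Your version needs a little more setup (the identification of the edge lattice with $\ker\Sigma$, and the final splitting off of the sum map to get the rank), but it buys two things: the unimodularity is verified explicitly --- note that in the paper's argument a turning relation does pass through the distinguished edge of the mid-scale vertex lying between $v_+$ and $v_-$, so the distinguished-edge projection is only triangular with unit diagonal on relations rather than literally the identity, a small point left implicit there --- and the computation is phrased in terms of the $g_v$, which is exactly the viewpoint the paper itself switches to for the proof of Lemma \ref{th:corner-2}, so both corner lemmas end up in one uniform picture. Your handling of the coefficient $2$ and of several small-mid vertices sharing a single turning point is correct, and those are indeed the only places where torsion or a miscount could have arisen.
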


\begin{proof}
Let $E_T$ be the set of finite edges, and $R_T$ be the set of relations. Our definition amounts to a short exact sequence 
\begin{equation} \label{eq:abelian-group-sequence}
0 \rightarrow \bZ^{R_T} \xrightarrow{\text{relations}} \bZ^{E_T} \longrightarrow G_T \rightarrow 0.
\end{equation}
Any relation has a distinguished finite edge associated to it: for \eqref{eq:relation-3}, the edge exiting $v$, and for \eqref{eq:relation-4}, the edge exiting $v_-$. Those edges are pairwise different. Given an element of $\bZ^{E_T}$, the coefficients for the distinguished edges give a splitting of the first map in \eqref{eq:abelian-group-sequence}, which implies freeness of the quotient.
 \end{proof}

\begin{lemma} \label{th:corner-2}
$G_{T,\geq 0}$ is saturated, meaning that if $g \in G_T$ satisfies $mg \in G_{T,\geq 0}$ for some $m \geq 2$, then $g \in G_{T,\geq 0}$.
\end{lemma}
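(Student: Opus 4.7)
The plan is to prove saturation by exhibiting $G_{T,\geq 0}$ as the intersection of the free abelian group $G_T$ with a rational polyhedral cone; by Gordan's lemma, any such intersection is automatically a saturated submonoid. What needs verification is that $G_{T,\geq 0}$ coincides with the lattice points in the rational cone it spans, i.e.\ that any $g\in G_T$ expressible as a non-negative rational combination of the $g_e$'s admits a non-negative integer such expression.

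To do this, I would first reformulate the problem in terms of ``vertex scales''. Let $L = \bZ^V$ with basis $\{\nu_v\}_{v\in V}$ (intuitively, $\nu_v = \log \lambda_v$), and let $L'' = L/R$, where $R$ is generated by $\{\nu_v : v \in V_\mathrm{m}\}$ together with $\{\nu_{v_+}+\nu_{v_-} : (v_+,v_-) \text{ a turning pair}\}$. The formula $g_e \mapsto \nu_{t(e)} - \nu_{s(e)}$ (where $s(e), t(e)$ denote the source and target of $e$) defines a homomorphism $G_T \to L''$, and matching the presentations shows it is an isomorphism: the path formula for $g_v$ telescopes to $\nu_v - \nu_{v_*} = \nu_v$ in $L''$, converting relations \eqref{eq:relation-3}, \eqref{eq:relation-4} into the defining relations of $R$, with the rank count matching Lemma \ref{th:corner-1}.

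Under this identification, $G_{T,\geq 0}$ is the submonoid of $L''$ generated by the ``edge differences'' $\nu_{t(e)}-\nu_{s(e)}$, and the task is to show that any lattice point in the rational cone they span is a non-negative integer combination of them. I would proceed by induction on the number of turning-point pairs. In the base case (no small-mid vertices), $L''$ is freely generated by $\{\nu_v : v \in V_\mathrm{other}\}$ after setting $\nu_v=0$ for mid-scale $v$; the generators $\nu_{t(e)}-\nu_{s(e)}$ are then exactly the rows of the signed incidence matrix of an oriented tree (with mid-scale vertices contracted to ``sinks''). Since tree incidence matrices are totally unimodular, the resulting polyhedron has integral vertices and every lattice point in the cone admits an integer non-negative decomposition, which is a standard integral network-flow argument.

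For the inductive step, each turning-point relation identifies $\nu_{v_-}$ with $-\nu_{v_+}$, which amounts to attaching the subtree descending from the small-mid vertex $v_-$ onto the tree beneath the large-scale vertex $v_+$, but with reversed orientation. I would show that the edge-generators of the original tree pull back to edge-generators (possibly with sign flips absorbed by the turning-point identification) of this merged configuration, so that a non-negative integer decomposition in the merged picture lifts to one upstairs. The main obstacle is that the sign reversal under the turning-point identification is exactly what makes the monoid non-simplicial (as in Example \ref{th:glue-example-2}, where $\lambda_2\lambda_3 = \lambda_5\lambda_6$), so the merged incidence matrix is not manifestly totally unimodular; the point to verify is that the $\pm 1$-linear form of relation \eqref{eq:relation-4} still preserves the unimodularity needed to promote rational decompositions to integer ones, which can be checked directly from the structure of paths through $v_{\pm}$.
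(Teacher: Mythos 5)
Your reduction is the right one: saturation follows once one shows that every element of $G_T$ which is a nonnegative \emph{rational} combination of the $g_e$ already lies in $G_{T,\geq 0}$, and your base case (no turning pairs) is fine, since there the generators are genuine edge-differences of a rooted tree and integrality is standard. But the inductive step over turning-point pairs is exactly where the content of the lemma sits, and you have not supplied it -- you explicitly defer ``the point to verify'' that the relation \eqref{eq:relation-4} preserves the unimodularity you need. Worse, the route you sketch is unlikely to close: once you impose $\nu_{v_-} = -\nu_{v_+}$, any generator incident to the small-mid vertex $v_-$ becomes a \emph{sum} of two vertex coordinates rather than a difference, so the relevant matrix is an incidence matrix of a bidirected (signed) graph, and such matrices are not totally unimodular in general (minors of $\pm 2$ occur, and the associated polyhedra are only half-integral). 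Example \ref{th:glue-example-2} already shows the cone is not simplicial, so integrality really has to be argued from the specific combinatorics, not from a generic total-unimodularity principle.

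For comparison, the paper avoids unimodularity altogether. After passing to vertex coordinates (as you do) and using \eqref{eq:relation-3} to drop the mid-scale vertices, it writes down an explicit inequality description of the nonnegative lifts: coefficients $\leq 0$ above $v_*$, $\geq 0$ below, and weakly increasing towards the root. A rational witness for $mg \in G_{T,\geq 0}$ then becomes an expression \eqref{eq:vertex-sum-2} with fractional coefficients $c_{v_-} \in [0,1]$ on the relation vectors $g_{v_-} + g_{v_+}$, and the proof is a direct rounding argument: at each turning point $v_+$ one first raises the non-integral $c_{v_{-,i}}$ until the combined coefficient \eqref{eq:plus-coefficient} reaches the next integer (this only improves the monotonicity inequalities), and then redistributes the fractional parts among the $c_{v_{-,i}}$ with their sum held fixed so that each becomes $0$ or $1$. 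Some such hands-on rounding step, exploiting that the only constraints are sums at turning points plus monotonicity along paths, is what your proposal is missing; without it the argument is incomplete.
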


%
%
%
%

\begin{proof}
For this, it is simpler to work exclusively in terms of the $g_v$, and use \eqref{eq:relation-3} to drop the mid-scale vertices. Hence, let $V_T$ be the set of all vertices which are not mid-scale. We start with $\bZ^{V_T}$, and define $G_T$ by quotienting out by \eqref{eq:relation-4}. An element
\begin{equation} \label{eq:vertex-sum}
\sum_{v \in V_T} m_v g_v \in \bZ^{V_T}
\end{equation}
is nonnegative if satisfies the following conditions. If $v$ lies above $v_*$ in our tree (meaning, the path from $v$ to the root goes through $v_*$), then $m_v \leq 0$. If $v_*$ lies above $v$, then $m_v \geq 0$. Finally, the $m_v$ increase as one goes towards the root. As before, $G_{T,\geq 0}$ is the image of the nonnegative elements in the quotient $G_T$. Here is an equivalent form of the desired statement:

{\bf Claim:} {\em Given some \eqref{eq:vertex-sum}, suppose that there are rational numbers $c_{v_-} \in [0,1]$, one for each small-mid-scale vertex $v_-$, such that 
\begin{equation} \label{eq:vertex-sum-2}
\sum_{v \in V_T} m_v g_v + \sum_{v_- \text{\rm small-mid}} c_{v_-} (g_{v_-} + g_{v_+}),
\end{equation}
satisfies the nonnegativity condition. Then, the same can be achieved with $c_{v_-} \in \{0,1\}$.}

To prove this, we take \eqref{eq:vertex-sum-2} and then gradually modify the $c_{v_-}$. Take a turning point $v_+$. There can in principle be several corresponding small-mid scale vertices $v_{-,1},\dots,v_{-,j}$. The coefficient of $v_+$ in \eqref{eq:vertex-sum-2} is then 
\begin{equation} \label{eq:plus-coefficient}
m_{v_+} + c_{v_+}, \;\; \text{where } c_{v_+} = \sum_{i=1}^j c_{v_{-,i}}.
\end{equation}
If this is an integer, we do nothing. Otherwise, we can increase (some of) the non-integer $c_{v_{-,i}}$ until the resulting expression \eqref{eq:plus-coefficient} becomes equal to the next larger integer. Let's apply this to all turning points. The outcome is that now, we have an expression \eqref{eq:vertex-sum-2} which still satisfies the nonnegativity condition, and where the coefficients of all turning points are integers. In a second pass, we change the coefficients of small-mid scale vertices again, but without affecting \eqref{eq:plus-coefficient}, to make all of them integers. The situation is, simplifying the notation, that we have non-integer $c_1,\dots,c_k \in [0,1]$ such that $c_1 + \cdots + c_k$ is an integer; and we then need to change them to be either $0$ or $1$, while preserving the sum, something that's clearly possible. Having done that, we have justified our claim.
\end{proof}

\begin{lemma} \label{th:corner-3}
$G_{T,\geq 0}$ is sharp, meaning that it contains no nontrivial pair of elements $\pm g$.
\end{lemma}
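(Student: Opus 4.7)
The plan is to prove sharpness by exhibiting a group homomorphism $\phi: G_T \to \bR$ such that $\phi(g_e) \geq 0$ for every finite edge $e$, with strict inequality whenever $g_e$ is nonzero in $G_T$. Granted such $\phi$, sharpness is immediate: for any $g = \sum n_e g_e \in G_{T,\geq 0}$, we have $\phi(g) \geq 0$, and if additionally $-g \in G_{T,\geq 0}$, then $\phi(g) = -\phi(-g) \leq 0$, forcing $\phi(g) = 0$. This in turn forces $n_e = 0$ for each $e$ with $\phi(g_e) > 0$, so $g$ becomes a non-negative combination of generators $g_e$ that already vanish in $G_T$; hence $g = 0$.

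To build $\phi$, I assign to each vertex $v$ a real ``logarithmic scale'' $\sigma_v$ and set $\phi(g_e) = \sigma_{\mathrm{target}(e)} - \sigma_{\mathrm{source}(e)}$. For this to descend to $G_T$, the relations \eqref{eq:relation-3} and \eqref{eq:relation-4} translate into $\sigma_v = 0$ for all mid-scale vertices $v$, and $\sigma_{v_+} + \sigma_{v_-} = 0$ for every turning pair. The six scale types appearing in \eqref{eq:glue-tree-6} are totally ordered along leaf-to-root paths, so I start with a base assignment respecting this order as well as the turning-pair constraint at the level of types, for example
\[
\text{small-small}\mapsto -3,\;\; \text{small-mid}\mapsto -2,\;\; \text{small-large}\mapsto -1,\;\; \text{small}\mapsto -1,\;\; \text{mid}\mapsto 0,\;\; \text{large}\mapsto +2.
\]
This already satisfies both relations and gives strict monotonicity along any edge connecting vertices of distinct types. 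Along edges joining two vertices of the same type the base assignment gives $\phi(g_e)=0$, which must be refined by adding a small perturbation $\epsilon f_v$, where $f_v$ vanishes on mid-scale vertices, satisfies $f_{v_+}+f_{v_-}=0$ on turning pairs, and is strictly monotone toward the root on each connected component of the complement of the mid-scale vertices.

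The main obstacle is constructing $f$: the turning-pair constraint links vertices in widely separated parts of the tree, and could a priori conflict with monotonicity. I would handle this inductively. First delete the mid-scale vertices to decompose $T$ into a forest; on each component initialize $f$ as signed tree-distance from a base point (oriented so $f$ strictly increases toward the root). Then impose the turning-pair constraints sequentially using the fact that every small-mid vertex $v_-$ has a unique turning point $v_+$: set $f_{v_-} := -f_{v_+}$ and, if this breaks monotonicity on the component containing $v_-$, rescale $f$ on that component to restore it. Because each turning pair couples a large-scale vertex ($+2$) with a small-mid vertex ($-2$) and these lie in distinct components of the complement of the mid-scale vertices, the adjustments can be made independently.

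Finally I would verify that edges with $\phi(g_e)=0$ under the resulting $\phi$ are precisely edges between two mid-scale vertices, and for any such edge both endpoints satisfy relation \eqref{eq:relation-3}, which forces $g_e = g_{\mathrm{target}(e)} - g_{\mathrm{source}(e)} = 0$ in $G_T$. All remaining edges have $\phi(g_e)>0$, completing the construction and the proof.
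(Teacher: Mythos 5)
Your overall strategy --- producing an additive functional $\phi\colon G_T \to \bR$ from vertex potentials $\sigma_v$ that increase toward the root, nonnegative on every $g_e$ and strictly positive on those $g_e$ that survive in $G_T$ --- is sound, and it is genuinely different from the paper's argument. The paper deduces sharpness in two lines from the geometric fact that the configuration with all gluing parameters equal to zero is a legitimate point of $\mathit{Hom}(G_{T,\geq 0},\bR^{\geq 0})$: restricting that monoid homomorphism along $n \mapsto ng$ for an invertible $g \neq 0$ would make the zero map $\bZ \to \bR^{\geq 0}$ multiplicative, which is absurd. Your route trades that geometric input for an explicit combinatorial construction, which is more self-contained; your translation of the two defining relations into $\sigma_v = 0$ at mid-scale vertices and $\sigma_{v_+} + \sigma_{v_-} = 0$ at turning pairs is correct, as is the reduction of sharpness to the existence of such a $\phi$.

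However, the construction of the perturbation $f$ --- exactly the step you flag as the main obstacle --- does not work as written. The prescription ``set $f_{v_-} := -f_{v_+}$ and, if this breaks monotonicity on the component containing $v_-$, rescale $f$ on that component'' fails when a component of the complement of the mid-scale vertices contains several small-mid vertices (which happens as soon as a small-large vertex has two small-mid children): rescaling the component disturbs the constraints already imposed at the other small-mid vertices in it, and your independence claim only addresses the coupling between the large-scale component and the upper components, not these within-component interactions. Two observations repair this. First, all small-mid vertices lying in one such component sit above the same mid-scale vertex $w$, and their turning point is in every case the meet of $v_*$ and $w$ on the root-ward paths; so the constrained values inside a component coincide and can be imposed simultaneously. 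Second, and more simply, every finite edge incident to a mid-scale or small-mid vertex joins two \emph{distinct} scale types, where your base assignment is already strictly monotone; hence for small $\epsilon$ the value of $f$ at those vertices is irrelevant, and all that is needed is that $f$ increase strictly toward the root along same-type edges, none of which touch mid or small-mid vertices. Taking, say, $f_v = -\mathrm{depth}(v)$ on all vertices that are neither mid nor small-mid, $f = 0$ on mid vertices, and $f_{v_-} := -f_{v_+}$, achieves this with no rescaling at all. Finally, note that no two mid-scale vertices are ever adjacent (each leaf-to-root path contains exactly one), and, using the basis of $G_T$ given by the $g_v$ with $v$ neither mid nor small-mid, one checks that every $g_e$ is nonzero in $G_T$; so the functional you obtain is in fact strictly positive on every generator, and your closing case analysis about mid--mid edges is vacuous rather than wrong.
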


\begin{proof}
We know that $\mathit{Hom}(G_{T,\geq 0}, \bR^{\geq 0})$ recovers the space of gluing parameters, including degenerate ones. In particular, there is a distinguished point where all gluing parameters are set to zero, which is the zero map. Composing that with a homomorphism $\bZ \rightarrow G_{T, \geq 0}$ would mean that the zero map $\bZ \rightarrow \bR^{\geq 0}$ is a group homomorphism, which is nonsense.
\end{proof}

Lemmas \ref{th:corner-1}--\ref{th:corner-3} say that $G_T^{\geq 0}$ is a toric monoid (terminology as in \cite{joyce16}). For the space of gluing parameters, this is precisely what defines a generalized corner.

\section{The formal group structure\label{sec:group}}

This section carries out versions of our main constructions in an idealized context, where the technicalities of symplectic topology have been replaced by a general operadic framework (this degree of abstraction comes with its own occasional complications). The primary objects under consideration will be chain complexes which are algebras over the Fulton-MacPherson operad. Abstractly speaking, in view of \cite[Theorem 1.1]{mcclure-smith02}, this situation is not more general than the purely algebraic one mentioned in Remark \ref{th:braces}. However, that viewpoint lacks the geometric explicitness which is useful for applications to symplectic topology.

\subsection{Associahedra\label{subsec:stasheff}}
Consider the singular chain complexes of the associahedra, $C_*(S_d)$. These inherit the structure of a non-symmetric operad, using the maps induced by \eqref{eq:glue-tree-1} as well as the shuffle (Eilenberg-MacLane or Eilenberg-Zilber) product. One can inductively construct ``fundamental chains'' $[S_d] \in C_{d-2}(S_d)$ such that $[S_2] = [\mathit{point}]$, and
\begin{equation} \label{eq:fundamental-chain-1}
\partial [S_d] = \sum_{ij} (-1)^{(d-i-j)j + i}  T_{ij,*}( [S_{d-j+1}] \times [S_j] ).
\end{equation}
Here, the sum is over pairs $(i,j)$ corresponding to trees $T_{ij}$ with two vertices, of valence $j+1$ and $d-j+2$, respectively; and where the unique finite edge is the $(i+1)$-st incoming edge of the first vertex ($0 \leq i < d-j+1$), see Figure \ref{fig:ij}. We take the shuffle product (here just denoted by $\times$) of the fundamental chains $[S_{d-j+1}]$ and $[S_j]$, and then map that to $C_*(S_d)$ by the chain level map induced by \eqref{eq:glue-tree-1}, denoted here by $T_{ij,*}$. The sign takes into account the co-orientations of the boundary faces. In view of \eqref{eq:fundamental-chain-1}, $[S_d]$ has a preferred lift to a cycle for the pair $(S_d,\partial S_d)$, whose homology class is then a fundamental class in the standard sense, compatible with the orientations described in Section \ref{subsec:stasheff}.
\begin{figure}
\begin{centering}
\includegraphics{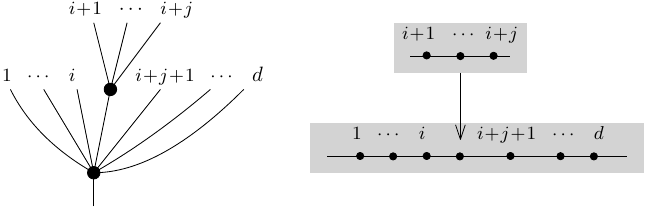}
\caption{The boundary faces of $S_d$ appearing in \eqref{eq:fundamental-chain-1}\label{fig:ij}, as trees and as strata of the compactified configuration space.}
\end{centering}
\end{figure}

Our standing convention is that chain complexes are cohomologically graded, hence we now switch to the grading-reversed version $C_{-*}(S_d)$. By an algebra over the chain level Stasheff operad, we mean a chain complex of free abelian groups $\scrA$, which comes with maps
\begin{equation} \label{eq:algebra-structure}
C_{-*}(S_d) \otimes \overbrace{\scrA \otimes \cdots \otimes \scrA}^d \longrightarrow \scrA,
\end{equation}
compatible with the composition maps induced by \eqref{eq:glue-tree-1}. Let's evaluate these maps at $[S_d] \otimes a_1 \otimes \cdots \otimes a_d$, multiply with a sign $(-1)^*$, where 
\begin{equation} \label{eq:weird-signs}
* = (d-1)|a_1| + (d-2)|a_2| + \cdots + |a_{d-1}|,
\end{equation}
and denote the outcome by $\mu^d_{\scrA}(a_1,\dots,a_d)$. These maps, together with $\mu^1_{\scrA} = -d_{\scrA}$,
make $\scrA$ into an $A_\infty$-ring. The associativity equations \eqref{eq:associativity} are a direct consequence of \eqref{eq:fundamental-chain-1}. Homological unitality is not part of this framework, hence has to be imposed as a separate property.

\begin{remark}
It is maybe appropriate to recall briefly how the signs work out. If we denote the operation \eqref{eq:algebra-structure} by $o_{\scrA}^d$, the starting point is its chain map property, which together with \eqref{eq:fundamental-chain-1} yields
\begin{equation}
\sum_{ij} (-1)^{(d+1)j + i(j+1)} o_{\scrA}^d(T_{ij,*} ([S_{d-j+1}] \times [S_j]) \otimes a_1 \otimes \cdots \otimes a_d)  + 
(\text{\it terms involving $d_{\scrA}$}) = 0.
\end{equation}
The operad property, not forgetting the Koszul signs, transforms this into
\begin{equation}
\begin{aligned}
& \sum_{ij} (-1)^{(d+1)j + i + j \maltese_i}
o_{\scrA}^{d-j+1}([S_{d-j+1}] \otimes a_1 \otimes \cdots \otimes
o_{\scrA}^j([S_j] \otimes a_{i+1} \otimes \cdots \otimes a_{i+j}) \otimes \cdots \otimes a_d) \\[-1em] &
\qquad \qquad \qquad
+ (\text{\it terms involving $d_{\scrA}$}) = 0;
\end{aligned}
\end{equation}
or in terms of the $A_\infty$-operations,
\begin{equation} \label{eq:almost-a-infinity}
(-1)^* \sum_{ij} (-1)^{\maltese_i} \mu^{d-j+1}_{\scrA}(a_1,\dots,\mu^j_{\scrA}(a_{i+1},\dots,a_{i+j}),\dots,a_d)
+ (\text{\it terms involving $d_{\scrA}$}) = 0,
\end{equation}
with $*$ as in \eqref{eq:weird-signs}. The sum in \eqref{eq:almost-a-infinity} is over $2 \leq j \leq d-1$, but only because we have omitted the differential terms, which are:
\begin{equation}
\begin{aligned}
& \sum_i (-1)^{d+\maltese_i+i} o_{\scrA}([S_d] \otimes a_1 \otimes \cdots \otimes d_{\scrA} a_{i+1} \otimes \cdots \otimes a_d)
- d_{\scrA}(o_{\scrA}^d([S_d] \otimes a_1 \otimes \cdots a_d)) \\[-.5em]
& \qquad \qquad = (-1)^* \sum_i (-1)^{\maltese_i} \mu_{\scrA}^d(a_1,\dots,\mu_{\scrA}^1(a_{i+1}),\dots,a_d) 
+ (-1)^* \mu^1_{\scrA}(\mu_{\scrA}^d(a_1,\dots,a_d)).
\end{aligned}
\end{equation}
\end{remark}

\subsection{Dependence on the fundamental chains\label{subsec:interval}}
Suppose we are given two sequences of fundamental chains $[S_d]$ and $[\tilde{S}_d]$, each of which separately satisfies \eqref{eq:fundamental-chain-1}. To relate them, we want to make further choices of fundamental chains, which have a mixed boundary property:
\begin{equation} \label{eq:f-relation}
\begin{aligned}
& f_{p,1,q} \in C_{d-2}(S_d), \quad \text{where $p,q \geq 0$ and $d = p+1+q$}, \\
& f_{p,1,0} = [S_{p+1}], \quad f_{0,1,q} = [\tilde{S}_{q+1}], \\
& \partial f_{p,1,q} = 
\sum_{ij}  (-1)^{(d-i-j)j + i} T_{ij,*} \begin{cases} 
 f_{p-j+1,1,q} \times [S_j] & \text{if $p \geq i+j$,} \\
 f_{i,1,p+q+1-i-j} \times f_{p-i,1,i+j-p-1} & \text{if $i \leq p < i+j$,} \\
 f_{p,1,q-j+1} \times [\tilde{S}_j] & \text{if $p<i$.}
 \end{cases}
\end{aligned}
\end{equation}
Graphically, one can think of \eqref{eq:f-relation} as follows. Let's mark the $(p+1)$-st leaf of our planar trees. Vertices that lie on the unique path connecting that leaf to the root correspond to factors carrying an appropriate $f$ chain, while the remaining ones always carry $[S]$ or $[\tilde{S}]$ chains, depending on whether they lie to the left or right of the path; see Figure \ref{fig:marked-tree}.
\begin{figure}
\begin{centering}
\includegraphics[scale=0.8]{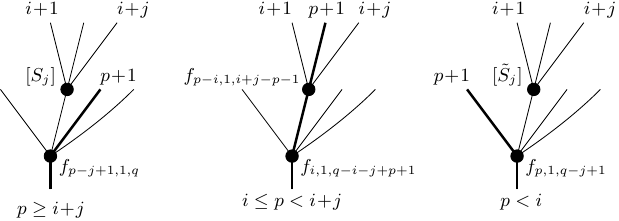}
\caption{\label{fig:marked-tree}Graphical representation of the terms in \eqref{eq:f-relation}.}
\end{centering}
\end{figure}%

Let $\mu_{\scrA}$ and $\tilde{\mu}_{\scrA}$ be the $A_\infty$-ring structures associated to $[S_d]$ and $[\tilde{S}_d]$.
In the same way, the action of $f_{p,1,q}$ gives rise to operations 
\begin{equation} \label{eq:fg-operations}
\begin{aligned} 
& 
\phi^{p,1,q}_{\scrA}: \scrA^{\otimes p+q+1} \longrightarrow \scrA[1-p-q], \;\; p+1+q \geq 2,\\
& \phi^{p,1,0}_{\scrA} = \mu_{\scrA}^{p+1}, \;\; \phi^{0,1,q}_{\scrA} = \tilde{\mu}_{\scrA}^{q+1},
\end{aligned}
\end{equation}
which, as before, we extend by setting $\phi^{0,1,0}_{\scrA} = -d_{\scrA}$. The relations inherited from \eqref{eq:f-relation} are
\begin{equation} \label{eq:f-relation-algebra}
\begin{aligned}
& \sum_{p \geq i+j} (-1)^{\maltese_i} \phi_{\scrA}^{p-j+1,1,q}(a_1,\dots,\mu_{\scrA}^j(a_{i+1},\dots,a_{i+j}),\dots,a_p;a_{p+1};a_{p+2},\dots,a_{p+q+1})  \\ &
+ \sum_{i \leq p < i+j} (-1)^{\maltese_i} \phi_{\scrA}^{i,1,p+q+1-i-j}(a_1,\dots;\phi_{\scrA}^{p-i,1,i+j-p-1}(a_{i+1},\dots,a_p;a_{p+1}; 
\\[-.5em] & \qquad \qquad \qquad \qquad \qquad \qquad \qquad \qquad \qquad
a_{p+2},\dots,a_{i+j});\dots,a_{p+q+1}) \\
& + \sum_{p<i} (-1)^{\maltese_i} \phi_{\scrA}^{p,1,q-j+1}(a_1,\dots,a_p;a_{p+1};a_{p+2},\dots,\tilde{\mu}_{\scrA}^j(a_{i+1},\dots,a_{i+j}),\dots,a_{p+q+1}) = 0.
\end{aligned}
\end{equation}

\begin{remark} \label{th:bimodule-1}
The operations \eqref{eq:fg-operations} equip the shifted space $\scrA[1]$ with the structure of an $A_\infty$-bimodule, where $\mu$ acts on the left and $\tilde{\mu}$ on the right (see e.g.\ \cite[Equation (2.5)]{seidel08}; the shift is there to match sign conventions).
\end{remark}

In a second step, we find fundamental chains
\begin{equation} \label{eq:g-relation}
\begin{aligned}
& g_{p,q} \in C_{d-1}([0,1] \times S_d), \quad \text{for $p,q > 0$ and $d = p+q$,} \\
& \partial g_{p,q} =  \{1\} \times f_{p-1,1,q} - \{0\} \times f_{p,1,q-1} \\
& + \sum_{ij} (-1)^{(d-i-j)j +i} T_{ij,*}  \begin{cases}
-g_{p-j+1,q} \times [S_j] & \text{if } p \geq i+j, \\
(-1)^{d-j} f_{i,1,q-i+p-j} \times g_{p-i,i+j-p} & \text{if } i \leq p < i+j, \\
-g_{p,q-j+1} \times [\tilde{S}_j] & \text{if } p <i.
\end{cases}
\end{aligned}
\end{equation}
When compared to \eqref{eq:fundamental-chain-1} and \eqref{eq:f-relation}, the spaces involved have acquired an additional $[0,1]$ factor: hence, we should really write $\mathit{id}_{[0,1]} \times T_{ij,*}$. The graphical representation involves drawing a dividing line between the first $p$ and last $q$ leaves of our trees. In the first two summands in \eqref{eq:g-relation}, we remove that dividing line and instead mark the leaves that are on either side of it, leading to the appearance of two $f$ terms. For the remaining summands, vertices to the left or right of the dividing line carry $[S]$ resp.\ $\tilde{[S]}$ chains (Figure \ref{fig:marked-tree-2}). If the dividing line ends at the top vertex (which is the middle case in both \eqref{eq:g-relation} and Figure \ref{fig:marked-tree-2}), the finite edge of the tree becomes the marked edge of the bottom vertex, which explains how that vertex carries an $f$ term.
\begin{figure}
\begin{centering}
\includegraphics[scale=0.8]{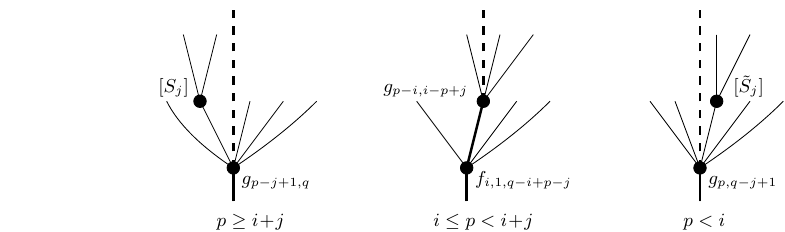}
\caption{\label{fig:marked-tree-2}Graphical representation of the $\sum_{ij}$ in \eqref{eq:g-relation}.}
\end{centering}
\end{figure}%

Let's take the image of \eqref{eq:g-relation} under projection to $S_d$. Its action under the operad structure, with additional signs inserted as in \eqref{eq:weird-signs}, gives operations
\begin{equation}
\label{eq:fg-operations-2}
\psi^{p,q}_{\scrA}: \scrA^{\otimes p+q} \longrightarrow \scrA[1-p-q], \;\; p,q > 0, \\
\end{equation}
which we complement by setting
$\psi^{1,0}_{\scrA} = -\mathit{id}_{\scrA}$, $\psi^{0,1}_{\scrA} = \mathit{id}_{\scrA}$.
These satisfy
\begin{equation} \label{eq:g-relation-algebra}
\begin{aligned}
& \sum_{p \geq i+j} 
(-1)^{\maltese_i}
\psi^{p-j+1,q}_{\scrA}(a_1,\dots,\mu_{\scrA}^j(a_{i+1},\dots,a_{i+j}),\dots,a_p;a_{p+1},\dots,a_{p+q}) 
\\
& - \sum_{i < p < i+j} \phi_{\scrA}^{i,1,p+q-i+j}(a_1,\dots,a_i;\psi_{\scrA}^{p-i,i+j-p}(a_{i+1},\dots,a_p;
\\[-1em] & \qquad \qquad\qquad\qquad\qquad\qquad\qquad\qquad
a_{p+1},\dots,a_{i+j});a_{i+j+1},\dots,a_{p+q}) \\[.5em]
& + \sum_{p \leq i} (-1)^{\maltese_i} \psi_{\scrA}^{p,q-j+1}(a_1,\dots,a_p;a_{p+1},\dots,\tilde{\mu}_{\scrA}^j(a_{i+1},\dots,a_{i+j}),\dots,a_{p+q}) = 0. 
\end{aligned}
\end{equation}
%
Note that \eqref{eq:g-relation-algebra} contains terms which correspond to the boundary faces $\{0\} \times S_d$ and $\{1\} \times S_d$:
\begin{equation} \label{eq:01-boundary-terms}
\begin{aligned}
&
- \phi_{\scrA}^{p-1,1,q}(a_1,\dots,a_{p-1}; \psi_{\scrA}^{1,0}(a_p); a_{p+1},\dots, a_{p+q}) -
\phi_{\scrA}^{p,1,q-1}(a_1,\dots,a_p; \psi_{\scrA}^{0,1}(a_{p+1}); \\
&
\qquad \qquad \qquad \qquad a_{p+2},\dots,a_{p+q})
\\ &
= \phi_{\scrA}^{p-1,1,q}(a_1,\dots,a_{p-1};a_p;a_{p+1},\dots,a_{p+q}) - \phi_{\scrA}^{p,1,q-1}(a_1,\dots,a_p;a_{p+1};a_{p+2},\dots,a_{p+q}). 
\end{aligned}
\end{equation}

\begin{example}
The simplest instance of \eqref{eq:g-relation-algebra}, bearing in mind the conventions for $\phi^{0,1,0}$, $\phi^{1,1,0}$ and $\phi^{0,1,1}$, is:
\begin{equation}
\psi^{1,1}_{\scrA}(\mu_{\scrA}^1(a_1);a_2) + (-1)^{\|a_1\|} \psi^{1,1}_{\scrA}(a_1;\mu_{\scrA}^1(a_2)) =
\mu^1_{\scrA}(\psi_{\scrA}^{1,1}(a_1;a_2))  + \mu_{\scrA}^2(a_1,a_2) - \tilde\mu_{\scrA}^2(a_1,a_2).
\end{equation}
This says that $(-1)^{|a_1|}\psi^{1,1}(a_1,a_2)$ is a chain homotopy relating the two versions of multiplication.
\end{example}

\begin{remark} \label{th:bimodule-2}
Following up on our last observation, one can give the following interpretation of \eqref{eq:g-relation-algebra}.
Recall from Remark \ref{th:bimodule-1} that the operations $\phi$ equip $\scrA$ (here, we undo the shift for simplicity) with an $A_\infty$-bimodule structure. By construction, this is isomorphic to $(\scrA,\mu_{\scrA})$ as a left module over itself, and to $(\scrA,\tilde{\mu}_{\scrA})$ as a right module. Correspondingly, one has two bimodule maps
\begin{equation} \label{eq:two-bimodule}
\xymatrix{
(\scrA,\mu_{\scrA}) \otimes_{\bZ} (\scrA,\tilde{\mu}_\scrA) \ar@<1ex>[rr]^-{\rho_{\scrA}} \ar@<-1ex>[rr]_-{\tilde{\rho}_{\scrA}} && (\scrA,\phi_{\scrA}),
}
\end{equation}
given by
\begin{equation} \label{eq:two-bimodule-2}
\begin{aligned}
&
\rho_{\scrA}^{p-1,1,q-1}(a_1,\dots ;a_p \otimes a_{p+1}; \dots,a_{p+q}) = \pm \phi^{p,1,q-1}_{\scrA}(a_1,\dots;a_{p+1};\dots,a_{p+q}),
\\ & 
\tilde{\rho}_{\scrA}^{p-1,1,q-1}(a_1,\dots;a_p \otimes a_{p+1};\dots,a_{p+q}) = \pm \phi^{p-1,1,q}_{\scrA}(a_1,\dots;a_p;\dots,a_{p+q}).
\end{aligned}
\end{equation}
In these terms, \eqref{eq:g-relation-algebra} says that $\psi$ provides a homotopy between $\rho$ and $\tilde{\rho}$.

It is worth noting that homological unitality, when it holds, can be used to simplify the picture. Namely, suppose that $\mu_{\scrA}$ and $\tilde{\mu}_{\scrA}$ are both homologically unital, with a priori different units $e_{\scrA}$ and $\tilde{e}_{\scrA}$. Then, a bimodule map as in \eqref{eq:two-bimodule} is determined up to homotopy by the image of $[e_{\scrA} \otimes \tilde{e}_{\scrA}]$ in $H^0(\scrA)$. In our situation, these two classes are 
\begin{equation}
\begin{aligned}
& [\mu^2_{\scrA}(e_{\scrA},\tilde{e}_{\scrA})] = [\tilde{e}_{\scrA}], \\
& [\tilde\mu^2_{\scrA}(e_{\scrA},\tilde{e}_{\scrA})] = [e_{\scrA}],
\end{aligned}
\end{equation}
so the existence of a homotopy $\psi$ just amounts to saying that the two units are, after all, cohomologous. Similarly, the different choices of $\psi$ form an affine space over $H^{-1}(\scrA)$. 
\end{remark}


Let's define an $A_\infty$-ring structure on 
\begin{equation}
\scrH = \scrA \otimes \scrI = \scrA u \oplus \scrA \tilde{u} \oplus \scrA v,
\end{equation}
where $\scrI$ is the noncommutative interval \eqref{eq:nc-interval}, as follows. The differential $\mu^1_{\scrH}$ is as in \eqref{eq:tensor-product}. The nonzero higher $A_\infty$-operations are
\begin{equation} \label{eq:tensor-structure}
\begin{aligned}
& 
\mu^d_{\scrH}(a_1 \otimes u, \dots, a_d \otimes u) = \mu^d_{\scrA}(a_1,\dots,a_d) \otimes u,
\\
& 
\mu^{p+q}_{\scrH}(a_1 \otimes u, \dots, a_p \otimes u, a_{p+1} \otimes \tilde{u}, \dots, a_{p+q} \otimes \tilde{u}) \\ & \quad = (-1)^{\maltese_{p+q}} \psi^{p,q}_{\scrA}(a_1,\dots,a_p;a_{p+1},\dots,a_{p+q}) \otimes v \quad \text{for $p,q>0$}, \\
&
\mu^d_{\scrH}(a_1 \otimes \tilde{u},\dots,a_d \otimes \tilde{u}) = \tilde{\mu}^d_{\scrA}(a_1,\dots,a_d) \otimes \tilde{u}, 
\\
&
\mu^{p+q+1}_{\scrH}(a_1 \otimes u, \dots, a_p \otimes u, a_{p+1} \otimes v, a_{p+2} \otimes \tilde{u}, \dots, a_{p+q+1} \otimes \tilde{u}) \\ & \quad = (-1)^{\maltese_{p+q+1} - \maltese_{p+1}} \phi^{p,1,q}(a_1,\dots,a_p; a_{p+1}; a_{p+2},\dots, a_{p+q+1}) \otimes v.
\end{aligned}
\end{equation}
(This generalizes the previous \eqref{eq:tensor-product}, which corresponds to the diagonal $A_\infty$-bimodule structure and vanishing $\psi$). The $A_\infty$-associativity relations follow directly from \eqref{eq:f-relation-algebra}, \eqref{eq:g-relation-algebra}.

\begin{remark}
As a check on the signs, consider the associativity relation for $(a_1 \otimes u, \dots, a_p \otimes u, a_{p+1} \otimes \tilde{u},\dots,a_{p+q} \otimes \tilde{u})$, and more specifically the $v$-component of that relation. This turns out to be exactly \eqref{eq:g-relation-algebra} multiplied by $(-1)^{\maltese_{p+q}+1}$. The crucial terms, compare \eqref{eq:01-boundary-terms}, are
\begin{equation}
\begin{aligned}
& (-1)^{\maltese_{p-1}} \mu_{\scrH}^{p+q}(a_1 \otimes u, \dots, \text{\it $v$-component of } \mu^1_{\scrH}(a_p \otimes u), a_{p+1} \otimes \tilde{u}, \dots) \\ & \qquad =
(-1)^{\maltese_p+1} \mu_{\scrH}^{p+q}(a_1 \otimes u, \dots, a_p \otimes v, a_{p+1} \otimes \tilde{u}, \dots) 
\\ & \qquad =
(-1)^{\maltese_{p+q}+1} \phi_{\scrA}^{p-1,1,q}(a_1,\dots,a_{p-1};a_p;a_{p+1},\dots,a_{p+q}) \otimes v
\end{aligned}
\end{equation}
and
\begin{equation}
\begin{aligned}
& (-1)^{\maltese_p} \mu_{\scrH}^{p+q}(a_1 \otimes u, \dots, a_p \otimes u, \text{\it $v$-component of } \mu^1_{\scrH}(a_{p+1} \otimes \tilde{u}), \dots) \\ & \qquad =
(-1)^{\maltese_{p+1}} \mu_{\scrH}^{p+q}(a_1 \otimes u, \dots, a_p \otimes u, a_{p+1} \otimes v, \dots) \\ & \qquad =
(-1)^{\maltese_{p+q}} \phi_{\scrA}^{p,1,q-1}(a_1,\dots,a_p;a_{p+1};a_{p+2},\dots,a_{p+q}) \otimes v.
\end{aligned}
\end{equation}
\end{remark}

By construction, the projections  \eqref{eq:two-projections} are $A_\infty$-homomorphisms from $\mu_{\scrH}$ to $\mu_{\scrA}$ and $\tilde{\mu}_{\scrA}$, respectively, and also chain homotopy equivalences. 
By taking a homotopy inverse (Lemma \ref{th:homotopy-inverse}) of one projection, and composing with the other projection, we get an $A_\infty$-homomorphism
\begin{equation} \label{eq:a-tilde-a}
(\scrA,\mu_{\scrA}) \longrightarrow (\scrA,\tilde{\mu}_{\scrA}),
\end{equation}
whose linear part is homotopic to the identity (one can achieve that it's exactly the identity). For a completely satisfactory statement, one would need to prove that \eqref{eq:a-tilde-a} is itself independent of the choice of \eqref{eq:fg-operations}, \eqref{eq:fg-operations-2} up to homotopy of $A_\infty$-homomorphisms; and also, that the composition of two maps \eqref{eq:a-tilde-a} is again a map of the same type, up to homotopy. This would use higher analogues of $\scrI$. For the sake of brevity, we will not carry it out here. 

\subsection{Fulton-MacPherson spaces and colored multiplihedra\label{subsec:apply-fm}}
One defines the structure of an algebra over $C_{-*}(\mathit{FM}_d)$ on a chain complex $\scrC$ by maps analogous to \eqref{eq:algebra-structure}, with the additional stipulation of $\mathit{Sym}_d$-invariance. On the cohomology level, $H^*(\scrC)$ becomes a Gerstenhaber algebra. The chain level structure is a classical topic in algebraic topology ($E_2$-algebras; see e.g.\ \cite{may72, cohen76, mcclure-smith02, sinha13}). For our purpose, only part of that structure is relevant (that part, maybe surprisingly, does not include the fundamental chains $[\mathit{FM}_d] \in C_{2d-3}(\mathit{FM}_d)$ and the resulting $L_\infty$-structure; in fact, the chains relevant for us have dimension $\leq d$).
\begin{figure}
\begin{centering}
\includegraphics[scale=0.85]{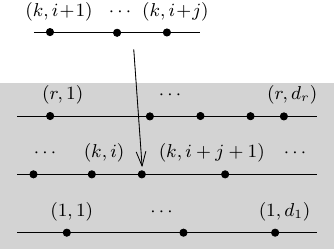}
\caption{\label{fig:codim-1-trees}A summand in the first sum in \eqref{eq:boundary-mww}.}
\end{centering}
\end{figure}
\begin{figure}
\begin{centering}
\includegraphics{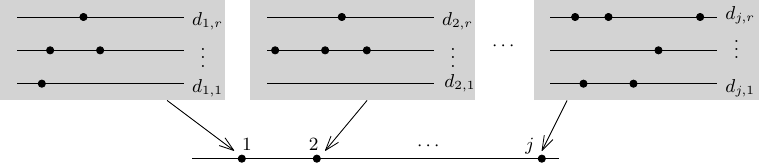}
\caption{\label{fig:codim-1-trees-2}A summand in the second sum in \eqref{eq:boundary-mww}.}
\end{centering}
\end{figure}

First of all, having chosen fundamental chains $[S_d]$ for the Stasheff associahedra, one can map them to $C_{d-2}(\mathit{FM}_d)$ via \eqref{eq:s-into-fm}, and their action turns $\scrC$ into an $A_\infty$-ring. As before, one has to require homological unitality separately. Next, choose fundamental chains $[\mathit{MWW}_{d_1,\dots,d_r}] \in C_{d-1}(\mathit{MWW}_{d_1,\dots,d_r})$ for the colored multiplihedra, which satisfy the analogue of \eqref{eq:fundamental-chain-1}. It is worth while writing this down:
\begin{equation} \label{eq:boundary-mww}
\begin{aligned}
& \partial [\mathit{MWW}_{d_1,\dots,d_r}] = \\ &
\sum_{ijk} (-1)^{(d_k-i-j+d_{k+1}+\cdots+d_r)j + (d_1+\cdots+d_{k-1}+i + 1)} T_{ijk,*}( [\mathit{MWW}_{d_1,\dots,d_k-j+1,\dots,d_r}] \times [S_j] ) 
\\ &
\!\!\! + \sum_{\text{partitions}} \!\!\! (-1)^\diamondsuit\, T_{d_{1,1},\dots,d_{j,r},*}(
 [S_j] \times [\mathit{MWW}_{d_{1,1},\dots,d_{1,r}}] \times \cdots \times [\mathit{MWW}_{d_{j,1},\dots,d_{j,r}}]).
\end{aligned}
\end{equation}
The second sum is over all $j \geq 2$ and partitions $d_1 = d_{1,1} + \cdots + d_{j,1}$, $\dots$, $d_r = d_{1,r} + \cdots + d_{j,r}$, such that $d_{i,1} + \cdots + d_{i,r} > 0$ for each $i = 1,\dots,j$. The sign there is given by
\begin{equation}
\begin{aligned}
& \diamondsuit = \sum_{\substack{i_1 < i_2 \\ k_1>k_2}} d_{i_1,k_1} d_{i_2,k_2} 
\\[-1em] & \qquad \qquad \quad
+ (j-1)(d_{1,1}+\cdots+d_{1,r}-1) + (j-2)(d_{2,1} + \cdots + d_{2,r}-1) + \cdots
\end{aligned}
\end{equation}

\begin{example}
Examples of the degenerate configurations corresponding to the terms in \eqref{eq:boundary-mww} are shown in Figures \ref{fig:codim-1-trees} and \ref{fig:codim-1-trees-2} (the trees $T_{ijk}$ and $T_{d_{1,1},\dots,d_{j,r}}$ can be inferred from looking at those, so we will not define them explicitly). Figures \ref{fig:12-gon} and \ref{fig:8-gon} illustrate the orientation issues: in both of them, the actual moduli space has the standard orientation of the plane, and the arrows show the orientations of the boundary strata arising from \eqref{eq:glue-tree-3}.
\end{example}

Choose maps \eqref{eq:mww-into-fm}, take the images of the fundamental chains under those maps, and let them act on $\scrC$. The outcome are operations
\begin{equation} \label{eq:beta-operations}
\beta^{d_1,\dots,d_r}_{\scrC}: \scrC^{\otimes d} \longrightarrow \scrC[1-d].
\end{equation}
In their definition, we insert signs as in \eqref{eq:weird-signs}; for $\beta_{\scrC}^{d_1,\dots,d_r}(c_{1,1},\dots,c_{1,d_1};\dots;c_{r,1},\dots,c_{r,d_r})$ this means $(-1)^\ast$ with
\begin{equation}
\begin{aligned}
\ast = & \,(d_1 + \cdots + d_r -1) |c_{1,1}| + (d_1+ \cdots + d_r - 2)|c_{1,2}| + \cdots + (d_1 + \cdots + d_{r-1}) |c_{1,d_1}| \\
& + (d_1 + \cdots + d_{r-1}-1) |c_{2,1}| + \cdots
\end{aligned}
\end{equation}
For $\mathit{MWW}_{0,\dots,0,1,0,\dots,0} = \mathit{point}$, where there is no corresponding Fulton-MacPherson space, we artificially set
\begin{equation} \label{eq:010}
\beta^{0,\dots,0,1,0,\dots,0}_{\scrC} = \mathit{id}_{\scrC}. 
\end{equation}
As a consequence of \eqref{eq:boundary-mww}, 
\begin{equation} \label{eq:property-of-beta}
\begin{aligned}
& \sum_{ijk} (-1)^{\maltese_{k,i}} \beta_{\scrC}^{d_1,\dots,d_k-j+1,\dots,d_r}\big(c_{1,1},\dots,c_{1,d_1};  \dots; 
c_{k,1},\dots, \mu_{\scrC}^j(c_{k,i+1},\dots,c_{k,i+j}),
\\[-1em] & \qquad \qquad \qquad \qquad \dots,c_{k,d_k} ; \dots; c_{r,1},\dots,c_{r,d_r}\big) \\[.5em]
& = 
\sum_{\text{partitions}} (-1)^\heartsuit \, \mu_{\scrC}^j\big(\beta_{\scrC}^{d_{1,1},\dots,d_{1,r}}(c_{1,1},\dots,c_{1,d_{1,1}};\dots;c_{r,1},\dots,c_{r,d_{1,r}}),  \dots,
\\[-.5em] & \qquad \qquad \qquad \qquad
\beta_{\scrC}^{d_{j,1},\dots,d_{j,r}}(c_{1,d_1-d_{j,1}+1},\dots,c_{1,d_1};\dots;c_{d_r-d_{j,r}+1},\dots,c_{r,d_r})\big).
\end{aligned}
\end{equation}
Here, the sums are over indexing sets as in \eqref{eq:boundary-mww}, except that we now additionally allow the differential $\mu^1_{\scrC} = d_{\scrC}$. Recall that by construction, the map \eqref{eq:mww-into-fm} forgets factors of $\mathit{MWW}_{0,\dots,0,1,0,\dots,0} = \mathit{point}$. Algebraically, this corresponds to the places where \eqref{eq:010} appears in \eqref{eq:property-of-beta}. The $\maltese$ symbol is the sum of reduced degrees of all $c$ which precede the $\mu$; and $\heartsuit$ yields the Koszul sign that corresponds to permuting the $c_{k,i}$ from their original order into the order in which they appear on the right hand side of \eqref{eq:property-of-beta}, but using reduced degrees $\|c_{k,i}\|$.

\begin{remark}
The operations \eqref{eq:beta-operations} constitute an $A_\infty$-multihomomorphism with $r$ entries (the single-object version of an $A_\infty$-multifunctor; see \cite[Definition 8.8]{bespalov}, or closer to our context, the discussion of the $r =2$ case in \cite[Section 4.5]{bottman-wehrheim18}) $\scrC \times \cdots \times \scrC\rightarrow \scrC$. What we will study later on amounts to the action of those $A_\infty$-multihomomorphisms on Maurer-Cartan elements. One can argue that the homomorphisms themselves should be at the center of attention (meaning that Proposition \ref{th:semi-associativity} should be understood as a consequence of a composition property of the $A_\infty$-multihomomorphisms up to homotopy; and similarly that Corollary \ref{th:r-1-trivial} should be true because for $r = 1$, one gets an $A_\infty$-endomorphism of $\scrC$ which is homotopy equivalent to the identity); in the interest of keeping the discussion concrete, we have chosen not to take that route.
\end{remark}

\begin{example} \label{th:beta-2}
In view of \eqref{eq:010} and \eqref{eq:property-of-beta}, $\beta^2_{\scrC}$ satisfies
\begin{equation}
\begin{aligned} &
\mu^1_{\scrC}(\beta_{\scrC}^{2}(c_1,c_2)) - \beta_{\scrC}^{2}(\mu^1_{\scrC}(c_1),c_2) - (-1)^{\|c_1\|}
\beta_{\scrC}^{2}(c_1,\mu^1_{\scrC}(c_2)) \\ & \quad = \beta_{\scrC}^1(\mu^2_{\scrC}(c_1,c_2)) -
\mu^2_{\scrC}(\beta^1_{\scrC}(c_1), \beta^1_{\scrC}(c_2)) = 0,
\end{aligned}
\end{equation}
which means that $(-1)^{|c_1|} \beta_{\scrC}^2(c_1,c_2)$ is a chain map of degree $-1$. Geometrically, the reason is that the image of the fundamental chain under $\mathit{MWW}_2 \rightarrow \mathit{FM}_2$ is a one-cycle. However, this cycle is supported at a single point of $\mathit{FM}_2 \iso S^1$, hence is necessarily nullhomologous. This implies that $\beta_{\scrC}^2$ is chain homotopic to zero.
\end{example}

\begin{example} \label{th:beta-1-1}
The first substantially nontrivial case is $\beta^{1,1}_{\scrC}$, which satisfies
\begin{equation} \label{eq:beta-1-1}
\begin{aligned} &
\mu^1_{\scrC}\beta_{\scrC}^{1,1}(c_1;c_2) - \beta_{\scrC}^{1,1}(\mu^1_{\scrC}(c_1);c_2) -
(-1)^{\|c_1\|}\beta_{\scrC}^{1,1}(c_1;\mu^1_{\scrC}(c_2)) \\ & \qquad = -\mu^2_{\scrC}(c_1,c_2) -
(-1)^{\|c_1\| \cdot \|c_2\|} \mu^2_{\scrC}(c_2,c_1).
\end{aligned}
\end{equation}
In more conventional terminology, $(-1)^{|c_1|} \beta_{\scrC}^{1,1}(c_1;c_2)$ is the $\circ$ operation which shows homotopy commutativity of the product on $H^*(\scrC)$.
\end{example}

\begin{definition} \label{th:def-multiproduct}
Fix an adic ring $N$ (Definition \ref{th:adic-ring}). Given $\gamma_1,\dots,\gamma_r \in \scrC^1 \hat\otimes N$, define
\begin{equation} \label{eq:multiproduct}
\Pi^r_{\scrC}(\gamma_1,\dots,\gamma_r) = 
\sum_{\substack{d_1,\dots,d_r\geq 0 \\ d_1+\cdots+d_r > 0}} \beta_{\scrC}^{d_1,\dots,d_r}(\overbrace{\gamma_1,\dots,\gamma_1}^{d_1};\dots;\overbrace{\gamma_r,\dots,\gamma_r}^{d_r}).
\end{equation}
\end{definition}

Suppose that we have $(\gamma_1,\dots,\gamma_r)$ as well as, for some $k$, another element $\tilde{\gamma}_k$ (the basic case is $\gamma_k = \tilde{\gamma}_k$, but for some applications, the freedom to choose a general $\tilde{\gamma}_k$ is important). Then, define a linear endomorphism of $\scrC \hat\otimes N$ by a generalization of \eqref{eq:mc-differential}:
\begin{equation}
\begin{aligned}
P_{\scrC}^{r,k}(\xi) & = \sum_{d_1,\dots,d_r} \sum_{p+q+1=d_r} \beta_{\scrC}^{d_1,\dots,d_k,\dots,d_r}(\overbrace{\gamma_1,\dots,\gamma_1}^{d_1};\dots;\overbrace{\gamma_k,\dots,\gamma_k}^p,\xi,
\\[-.5em] & \qquad \qquad \qquad \qquad \qquad \qquad \qquad \qquad \qquad
\underbrace{\tilde\gamma_k,\dots,\tilde\gamma_k}_q;\dots;\underbrace{\gamma_r,\dots,\gamma_r}_{d_r}).
\end{aligned}
\end{equation}
The definitions, taking \eqref{eq:010} into account, have the following immediate consequences:
\begin{align}
& \Pi^r_{\scrC}(\gamma_1,\dots,\gamma_r) = \gamma_1 + \cdots + \gamma_r \; \text{mod } N^2, \\
& 
\Pi^r_{\scrC}(\gamma_1,\dots,\gamma_k + \xi,\dots,\gamma_r) = \Pi_{\scrC}(x) + 
P^{r,k}_{\scrC}(\xi) + (\text{\it order $\geq 2$ terms in $\xi$}), \label{eq:derivative-of-pi}
\\ \label{eq:derivative-of-pi-2}
& \xi \in \scrC \hat\otimes N^m \;\; \Longrightarrow P^{r,k}_{\scrC}(\xi) = \xi \;\text{mod } N^{m+1}, \\
& \Pi^r_{\scrC}(\gamma_1,\dots,\gamma_r) - 
\Pi^r_{\scrC}(\gamma_1,\dots,\tilde{\gamma}_k,\dots,\gamma_r) = P^{r,k}_{\scrC}(\gamma_k - \tilde{\gamma}_k). \label{eq:forgot}
\end{align}
In \eqref{eq:derivative-of-pi}, the endomorphism $P^{r,k}_{\scrC}$ is with respect to $\tilde\gamma_k = \gamma_k$. The two subsequent equations, in contrast, use a general $\tilde{\gamma}_k$.

\begin{lemma} \label{th:multiproduct}
If $\gamma_1,\dots,\gamma_r$ are Maurer-Cartan elements \eqref{eq:mc}, then so is $\gamma = \Pi^r_{\scrC}(\gamma_1,\dots,\gamma_r)$. Moreover, the equivalence class of $\gamma$ depends only on those of $\gamma_1,\dots,\gamma_r$.
\end{lemma}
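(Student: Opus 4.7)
For the Maurer--Cartan property of $\gamma = \Pi^r_{\scrC}(\gamma_1,\dots,\gamma_r)$, the plan is to evaluate the boundary identity \eqref{eq:property-of-beta} with each $c_{k,i}$ set equal to $\gamma_k$, and sum over all multi-indices $(d_1,\dots,d_r)$ with $d_1+\cdots+d_r>0$. Because every $\gamma_k$ has degree $1$, the reduced degrees $\|\gamma_k\|$ vanish and every Koszul sign in \eqref{eq:property-of-beta} becomes trivial. By multilinearity of $\mu^j_{\scrC}$, the right-hand side collapses to $\sum_{j\geq 1}\mu^j_{\scrC}(\gamma,\dots,\gamma)$, which is precisely the Maurer--Cartan expression for $\gamma$. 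On the left-hand side, after reindexing the output multi-index via $e_k = d_k-j+1$ and fixing a position in the $k$-th stream, multilinearity of $\beta$ lets me pull the sum over $j\geq 1$ (which includes the differential $\mu^1_{\scrC}$) inside one slot, producing $\beta^{e_1,\dots,e_r}\bigl(\dots;\gamma_k,\dots,\sum_{j\geq 1}\mu^j_{\scrC}(\gamma_k,\dots,\gamma_k),\dots,\gamma_k;\dots\bigr) = 0$ by the Maurer--Cartan equation for $\gamma_k$. Hence the left-hand side is zero, and we conclude $\sum_{j\geq 1}\mu^j_{\scrC}(\gamma,\dots,\gamma) = 0$.

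For the equivalence statement, it suffices to show that replacing a single input $\gamma_k$ by an equivalent $\tilde\gamma_k$ yields an equivalent $\Pi^r$, and then iterate over $k=1,\dots,r$. The key device is to lift everything to $\scrC \otimes \scrI$, where $\scrI$ is the noncommutative interval \eqref{eq:nc-interval}: the operations $\beta^{d_1,\dots,d_r}_{\scrC}$ extend to $\scrC \otimes \scrI$ by the same Koszul tensor rule as in \eqref{eq:tensor-product}, and because $\scrI$ is a dg ring the extended operations continue to satisfy the analogue of \eqref{eq:property-of-beta}, so Part 1 applies verbatim over $\scrC \otimes \scrI$. Following \eqref{eq:decompose-gamma}, an equivalence $\gamma_k \sim \tilde\gamma_k$ with witness $h \in \scrC^0 \hat\otimes N$ corresponds to a Maurer--Cartan element
\begin{equation*}
\gamma_k' = \gamma_k \otimes u + \tilde\gamma_k \otimes \tilde u + h \otimes v \in (\scrC\otimes\scrI)^1 \hat\otimes N,
\end{equation*}
while for $i \neq k$ the constant lift $\gamma_i' = \gamma_i \otimes (u+\tilde u)$ remains Maurer--Cartan since $u+\tilde u$ is the unit of $\scrI$. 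Applying the extended $\Pi^r_{\scrC\otimes\scrI}$ yields a Maurer--Cartan element $\gamma' \in (\scrC\otimes\scrI)^1\hat\otimes N$. The projections \eqref{eq:two-projections} are compatible with the $\beta$-operations by construction, so they send $\gamma'$ to $\Pi^r_{\scrC}(\gamma_1,\dots,\gamma_k,\dots,\gamma_r)$ and $\Pi^r_{\scrC}(\gamma_1,\dots,\tilde\gamma_k,\dots,\gamma_r)$ respectively, and the $v$-component of $\gamma'$ supplies the required homotopy, again by the equivalence in \eqref{eq:decompose-gamma}.

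The main technical obstacle will be verifying that the tensor-product extension of the $\beta$-operations to $\scrC \otimes \scrI$ genuinely satisfies the analogue of \eqref{eq:property-of-beta}: this is a routine but notationally heavy Koszul-sign exercise, parallel to the verification that \eqref{eq:tensor-product} defines an $A_\infty$-structure, and ultimately reduces to the Leibniz rule for the differential on $\scrI$. Once that extension is in hand, both parts of the lemma follow by the steps above.
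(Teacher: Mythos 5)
Your first half is correct and is in fact the paper's own argument: summing \eqref{eq:property-of-beta} over all multi-indices with every input equal to the appropriate $\gamma_k$ is exactly how the paper derives \eqref{eq:inherit-mc}; your ``sum over $j$ pulled inside one slot'' is the operator $P^{r,k}_{\scrC}$ (taken with $\tilde\gamma_k=\gamma_k$) applied to the Maurer--Cartan expression of $\gamma_k$, and since all $\|\gamma_k\|$ vanish the sign discussion is fine.

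For the second half, the step you describe as a routine Koszul-sign exercise is not one, and as stated it is false. Extending $\beta^{d_1,\dots,d_r}_{\scrC}$ to $\scrC\otimes\scrI$ by the tensor rule of \eqref{eq:tensor-product} does not make the analogue of \eqref{eq:property-of-beta} hold: unlike the $A_\infty$-relations, which never permute inputs (so tensoring with a noncommutative dg ring is harmless), the right-hand side of \eqref{eq:property-of-beta} regroups the inputs into blocks that interleave the colors, so the $\scrI$-factors get multiplied in a permuted order, and $\scrI$ is not graded commutative ($uv=v$ while $vu=0$). Concretely, for $r=2$, $(d_1,d_2)=(2,1)$ and the partition $j=2$, $(d_{1,1},d_{1,2})=(1,1)$, $(d_{2,1},d_{2,2})=(1,0)$, the right-hand term $\mu^2_{\scrC}(\beta^{1,1}_{\scrC}(c_{1,1};c_{2,1}),\beta^{1,0}_{\scrC}(c_{1,2}))$ would multiply the $\scrI$-components in the order $x_{1,1}x_{2,1}x_{1,2}$, whereas the left-hand side uses $x_{1,1}x_{1,2}x_{2,1}$; with $x_{1,2}=u$ and $x_{2,1}=v$ these differ. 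Hence ``Part 1 applies verbatim over $\scrC\otimes\scrI$'' is not justified. The argument is repairable: in your application every color except the $k$-th carries the unit $u+\tilde u$, and within the $k$-th color each term of \eqref{eq:property-of-beta} preserves the relative order of the inputs; so you should either tensor with $\scrI$ only in the $k$-th color (inputs of the other colors staying in $\scrC$), or verify the identity only on the specific elements you insert. With that modification the rest of your plan (apply the extended $\Pi^r$, project via \eqref{eq:two-projections}, read off the $v$-component as in \eqref{eq:decompose-gamma}, then iterate over $k$ using transitivity of equivalence) does go through. For comparison, the paper's proof avoids the interval entirely: \eqref{eq:property-of-beta} gives the intertwining identity \eqref{eq:inherit-equivalence} for the operator $P^{r,k}_{\scrC}$ formed with the pair $(\gamma_k,\tilde\gamma_k)$, so if $h_k$ solves \eqref{eq:mc-equivalence} for $\gamma_k,\tilde\gamma_k$, then $h=P^{r,k}_{\scrC}(h_k)$ solves it for the two products by \eqref{eq:forgot}; this is shorter and produces the equivalence witness explicitly.
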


\begin{proof}
From \eqref{eq:property-of-beta} one gets
\begin{equation} \label{eq:inherit-mc}
\sum_d \mu_{\scrC}^d(\gamma,\dots,\gamma) = \sum_k P^{r,k}_{\scrC}\big( \sum_j \mu^j_{\scrC}(\gamma_k,\dots,\gamma_k) \big).
\end{equation}
Here, the $P$ operations are defined using $\tilde{\gamma}_k = \gamma_k$. This shows that the Maurer-Cartan property is preserved. Similarly,  suppose that for some $1 \leq k \leq r$, we have another Maurer-Cartan solution $\tilde{\gamma}_k$. Then, for the associated $\gamma$ and $\tilde{\gamma} = \Pi^r_{\scrC}(\gamma_1,\dots,\tilde{\gamma}_k,\dots,\gamma_r)$,
\begin{equation} \label{eq:inherit-equivalence}
\sum_{p,q} \mu_{\scrC}^{p+q+1}(\overbrace{\gamma,\dots,\gamma}^p, P^{r,k}_{\scrC}(x),
\overbrace{\tilde{\gamma},\dots,\tilde{\gamma}}^q) = P^{r,k}_{\scrC}\big(\sum_{p,q} \mu_{\scrC}^{p+q+1}(\overbrace{\gamma_k,\dots,\gamma_k}^p,x,\overbrace{\tilde{\gamma}_k,\dots,\tilde{\gamma}_k}^q)\big).
\end{equation}
In particular, if we have an element $h_k$ which provides an equivalence between $\gamma_k$ and $\tilde{\gamma}_k$, then $h = P^{r,k}_{\scrC}(h_k)$ provides an equivalence between $\gamma$ and $\tilde{\gamma}$, by \eqref{eq:forgot}.
\end{proof}

We want to mention a few elementary statements which, taken together, stand in a converse relation of sorts to Lemma \ref{th:multiproduct}. 

\begin{lemma} \label{th:inverse-1}
Suppose that we have $\gamma_1,\dots,\gamma_{k-1},\gamma_{k+1},\dots,\gamma_r \in \scrC^1 \hat\otimes N$. Then, for each $\gamma \in \scrC^1 \hat\otimes N$ there is exactly one $\gamma_k$ such that $\Pi^r_{\scrC}(\gamma_1,\dots,\gamma_r) = \gamma$.
\end{lemma}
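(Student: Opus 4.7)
The plan is to solve for $\gamma_k$ by successive approximation with respect to the decreasing filtration $\{N^m\}$, relying crucially on \eqref{eq:derivative-of-pi} and \eqref{eq:derivative-of-pi-2}. The adic completeness of $N$ (Definition \ref{th:adic-ring}) will ensure convergence, while the identity property of $P^{r,k}_{\scrC}$ modulo higher powers will give both existence and uniqueness.

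For existence, I would build $\gamma_k$ by induction on $m$, constructing approximate solutions $\gamma_k^{(m)} \in \scrC^1 \hat\otimes N$ with
\[
\Pi^r_{\scrC}(\gamma_1,\dots,\gamma_k^{(m)},\dots,\gamma_r) \equiv \gamma \;\; \mathrm{mod}\, N^{m+1}.
\]
For $m=1$, use $\Pi^r_{\scrC}(\gamma_1,\dots,\gamma_r) \equiv \gamma_1+\cdots+\gamma_r \; \mathrm{mod}\, N^2$ and set $\gamma_k^{(1)} = \gamma - \sum_{j\neq k}\gamma_j$. For the inductive step, set $\gamma_k^{(m+1)} = \gamma_k^{(m)} + \xi$ with $\xi \in \scrC^1 \hat\otimes N^{m+1}$. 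By \eqref{eq:derivative-of-pi}, expanding gives
\[
\Pi^r_{\scrC}(\gamma_1,\dots,\gamma_k^{(m)}+\xi,\dots,\gamma_r)
= \Pi^r_{\scrC}(\gamma_1,\dots,\gamma_k^{(m)},\dots,\gamma_r) + P^{r,k}_{\scrC}(\xi) + O(\xi^2),
\]
where the quadratic and higher terms in $\xi$ lie in $N^{2m+2} \subset N^{m+2}$. By \eqref{eq:derivative-of-pi-2}, $P^{r,k}_{\scrC}(\xi) \equiv \xi \;\mathrm{mod}\, N^{m+2}$. Hence one can choose $\xi$ to be any lift to $N^{m+1}$ of the error $\gamma - \Pi^r_{\scrC}(\gamma_1,\dots,\gamma_k^{(m)},\dots,\gamma_r) \in \scrC \hat\otimes N^{m+1}$. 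Since the sequence $\gamma_k^{(m)}$ is Cauchy with respect to the filtration, the limit $\gamma_k = \lim_m \gamma_k^{(m)}$ exists in $\scrC^1 \hat\otimes N$ by adic completeness, and by construction satisfies $\Pi^r_{\scrC}(\gamma_1,\dots,\gamma_r) = \gamma$.

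For uniqueness, suppose $\gamma_k$ and $\tilde{\gamma}_k$ both give the same output $\gamma$. By \eqref{eq:forgot} applied with these two choices, $P^{r,k}_{\scrC}(\gamma_k - \tilde{\gamma}_k) = 0$. I claim $\gamma_k - \tilde{\gamma}_k \in N^m$ for all $m$, which by $\bigcap_m N^m = 0$ forces equality. The base case is trivial, and inductively, if $\gamma_k - \tilde{\gamma}_k \in \scrC \hat\otimes N^m$, then by \eqref{eq:derivative-of-pi-2}, $0 = P^{r,k}_{\scrC}(\gamma_k - \tilde{\gamma}_k) \equiv \gamma_k - \tilde{\gamma}_k \;\mathrm{mod}\, N^{m+1}$, giving $\gamma_k - \tilde{\gamma}_k \in \scrC \hat\otimes N^{m+1}$.

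No serious obstacle is anticipated; the only thing to be careful about is keeping track of the fact that $P^{r,k}_{\scrC}$ in \eqref{eq:forgot} is the operator associated to the pair $(\gamma_k, \tilde{\gamma}_k)$ rather than the ``diagonal'' version with $\tilde{\gamma}_k = \gamma_k$, but its behaviour modulo higher powers of $N$ (i.e.\ being the identity up to order one shift) is identical, so the argument goes through uniformly.
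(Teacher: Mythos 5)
Your argument is correct and is essentially the paper's own proof, which solves for $\gamma_k$ order by order using the fact that adding $\xi \in \scrC^1 \hat\otimes N^m$ changes $\Pi^r_{\scrC}$ by $\xi$ modulo $N^{m+1}$, and proves uniqueness the same way; your version just spells out the induction and the adic-completeness step. Your closing remark about the non-diagonal $P^{r,k}_{\scrC}$ from \eqref{eq:forgot} is exactly the right point to check, and it is handled correctly since the leading term $\beta^{0,\dots,1,\dots,0} = \mathit{id}$ is unaffected by the choice of $\tilde\gamma_k$.
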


\begin{proof}
By \eqref{eq:derivative-of-pi} and \eqref{eq:derivative-of-pi-2}, if $\xi \in \scrC^1 \hat \otimes N^m$, then $\Pi^r_{\scrC}(\gamma_1,\dots,\gamma_k + \xi,\dots,\gamma_r) = \Pi^r_{\scrC}(\gamma_1,\dots,\gamma_r) + \xi$ mod $N^{m+1}$. This allows one to solve for $\gamma_k$ order by order, and to show uniqueness of the solution in the same way.
\end{proof}

\begin{lemma} \label{th:inverse-2}
Suppose that we have $\gamma_1,\dots,\gamma_r \in \scrC^1 \hat\otimes N$. If all but $\gamma_k$ are Maurer-Cartan elements, and $\gamma = \Pi^r_{\scrC}(\gamma_1,\dots,\gamma_r)$ is Maurer-Cartan as well, then $\gamma_k$ must also be Maurer-Cartan.
\end{lemma}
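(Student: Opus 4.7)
The plan is to reuse the identity derived in the proof of Lemma \ref{th:multiproduct}, observing that it does not actually require any of the $\gamma_i$ to be Maurer-Cartan.

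First I would re-inspect the derivation of \eqref{eq:inherit-mc} from \eqref{eq:property-of-beta}. Substituting $c_{k',i} = \gamma_{k'}$ into \eqref{eq:property-of-beta} is a purely formal operation: since $\|\gamma_{k'}\| = |\gamma_{k'}| - 1 = 0$, all Koszul signs $(-1)^{\maltese_{k,i}}$ and $(-1)^\heartsuit$ are trivial, and the sums on either side reindex into the sums defining $P^{r,k'}$ and $\Pi^r_{\scrC}$ respectively. Thus the identity
\[
\sum_d \mu_{\scrC}^d(\gamma,\dots,\gamma) \;=\; \sum_{k'=1}^r P^{r,k'}_{\scrC}\big(\textstyle\sum_j \mu^j_{\scrC}(\gamma_{k'},\dots,\gamma_{k'})\big)
\]
holds for arbitrary $\gamma_1,\dots,\gamma_r \in \scrC^1 \hat\otimes N$, where $\gamma = \Pi^r_{\scrC}(\gamma_1,\dots,\gamma_r)$ and each $P^{r,k'}$ is taken with $\tilde\gamma_{k'} = \gamma_{k'}$.

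Next I would apply the hypotheses of the lemma. The Maurer-Cartan hypothesis on $\gamma_{k'}$ for $k' \neq k$ kills all but one summand on the right, and the Maurer-Cartan hypothesis on $\gamma$ kills the left side, leaving
\[
P^{r,k}_{\scrC}\big(\textstyle\sum_j \mu^j_{\scrC}(\gamma_k,\dots,\gamma_k)\big) = 0.
\]

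The crux is then to deduce that $\sum_j \mu^j_{\scrC}(\gamma_k,\dots,\gamma_k)$ itself vanishes, which I would do by showing that $P^{r,k}_{\scrC}$ is injective on $\scrC \hat\otimes N$. By \eqref{eq:derivative-of-pi-2}, the endomorphism $\mathit{id} - P^{r,k}_{\scrC}$ strictly raises the $N$-adic filtration; because $N$ is adic (Definition \ref{th:adic-ring}), $\scrC \hat\otimes N$ is complete and separated for this filtration, so the geometric series $\sum_{n \geq 0}(\mathit{id} - P^{r,k}_{\scrC})^n$ converges and gives a two-sided inverse to $P^{r,k}_{\scrC}$. Injectivity follows, and therefore $\gamma_k$ is Maurer-Cartan.

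I do not anticipate a serious obstacle. The only point that warrants care is the verification in the first step that \eqref{eq:inherit-mc} persists without the Maurer-Cartan assumption on the inputs, and this reduces to tracking which parts of the proof of Lemma \ref{th:multiproduct} truly used the Maurer-Cartan hypothesis (only the step where one sets the right side to zero) versus which used only the algebraic identity \eqref{eq:property-of-beta} (everything else).
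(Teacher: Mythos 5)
Your proposal is correct and follows the paper's own argument: apply the identity \eqref{eq:inherit-mc} (which, as you note, is a formal consequence of \eqref{eq:property-of-beta} and needs no Maurer-Cartan hypothesis on the inputs), use the assumptions to isolate $P^{r,k}_{\scrC}\big(\sum_j \mu^j_{\scrC}(\gamma_k,\dots,\gamma_k)\big)=0$, and conclude via the invertibility of $P^{r,k}_{\scrC}$, which the paper asserts from \eqref{eq:derivative-of-pi-2} and you justify by the standard filtration-complete geometric-series argument. Your write-up merely makes explicit two points the paper leaves implicit.
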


\begin{proof}
From \eqref{eq:inherit-mc} and the assumptions, one sees that $P^{r,k}_{\scrC}(\sum_j \mu_{\scrC}^j(\gamma_k,\dots,\gamma_k)) = 0$. On the other hand, by \eqref{eq:derivative-of-pi-2}, $P^{r,k}_{\scrC}$ is clearly invertible. 
\end{proof}

\begin{lemma} \label{th:inverse-3}
Given Maurer-Cartan elements $\gamma_1,\dots,\gamma_{k-1},\gamma_{k+1},\dots,\gamma_r$ and $\gamma$, there is a unique Maurer-Cartan element $\gamma_k$ such that $\Pi^r_{\scrC}(\gamma_1,\dots,\gamma_r) = \gamma$.
\end{lemma}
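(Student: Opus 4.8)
The plan is to deduce this purely formally from the two preceding lemmas, which between them already separate the ``solving'' part from the ``Maurer-Cartan'' part. First I would invoke Lemma \ref{th:inverse-1} with the given $\gamma_1,\dots,\gamma_{k-1},\gamma_{k+1},\dots,\gamma_r$ (and the target $\gamma$) to produce the unique $\gamma_k \in \scrC^1 \hat\otimes N$ --- at this stage not yet known to be Maurer-Cartan --- with $\Pi^r_{\scrC}(\gamma_1,\dots,\gamma_r) = \gamma$. The existence and uniqueness there are obtained by solving order by order in the $N$-adic filtration, using \eqref{eq:derivative-of-pi} and \eqref{eq:derivative-of-pi-2}, so nothing extra is needed.

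Next I would feed this $\gamma_k$ into Lemma \ref{th:inverse-2}: by hypothesis all of $\gamma_1,\dots,\gamma_{k-1},\gamma_{k+1},\dots,\gamma_r$ are Maurer-Cartan, and $\gamma = \Pi^r_{\scrC}(\gamma_1,\dots,\gamma_r)$ is Maurer-Cartan by assumption, so the conclusion of that lemma is precisely that $\gamma_k$ is Maurer-Cartan as well. (Concretely this is the identity \eqref{eq:inherit-mc} combined with the invertibility of $P^{r,k}_{\scrC}$ coming from \eqref{eq:derivative-of-pi-2}, but that is all contained in Lemma \ref{th:inverse-2}.) Finally, uniqueness of $\gamma_k$ as a Maurer-Cartan element is immediate, and in fact stronger than needed: Lemma \ref{th:inverse-1} already gives uniqueness among \emph{all} elements of $\scrC^1 \hat\otimes N$, so a fortiori among Maurer-Cartan ones.

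I do not expect any real obstacle here: the statement is essentially a bookkeeping combination of Lemmas \ref{th:inverse-1} and \ref{th:inverse-2}, with the only minor point being to note that the $\gamma_k$ furnished by Lemma \ref{th:inverse-1} and the one asserted to be Maurer-Cartan by Lemma \ref{th:inverse-2} are literally the same element, so that the existence, the Maurer-Cartan property, and the uniqueness all refer to a single $\gamma_k$.
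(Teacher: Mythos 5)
Your argument is exactly the paper's: Lemma \ref{th:inverse-1} supplies the unique $\gamma_k$ with $\Pi^r_{\scrC}(\gamma_1,\dots,\gamma_r)=\gamma$, and Lemma \ref{th:inverse-2} then shows this same element is Maurer-Cartan, the uniqueness being inherited from the stronger uniqueness among all of $\scrC^1\hat\otimes N$. The paper proves the statement by precisely this combination of the two preceding lemmas, so your proposal is correct and essentially identical.
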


\begin{proof}
This is simply a combination of Lemmas \ref{th:inverse-1} and \ref{th:inverse-2}.
\end{proof}

\begin{lemma} \label{th:inverse-4}
Suppose that we have Maurer-Cartan elements $\gamma_1,\dots,\gamma_r$ and $\tilde{\gamma}_k$, for some $1 \leq k \leq r$. If $\gamma = \Pi^r_{\scrC}(\gamma_1,\dots,\gamma_r)$ and $\tilde{\gamma} = \Pi^r_{\scrC}(\gamma_1\,\dots,\gamma_{k-1},\tilde{\gamma}_k,\gamma_{k+1},\dots,\gamma_r)$ are equivalent, then so are $\gamma_k$ and $\tilde{\gamma}_k$.
\end{lemma}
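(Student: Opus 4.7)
The plan is to run the argument of Lemma \ref{th:multiproduct} in reverse, exploiting the invertibility of the operator $P^{r,k}_{\scrC}$ established in the proof of Lemma \ref{th:inverse-2}. Throughout, I take $P^{r,k}_{\scrC}$ to be the one associated to the pair $(\gamma_k,\tilde\gamma_k)$.

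\textbf{Step 1: pull back the equivalence cochain.} By \eqref{eq:derivative-of-pi-2}, the endomorphism $P^{r,k}_{\scrC}$ reduces to the identity modulo $N$, hence is an automorphism of $\scrC\hat\otimes N$ (its inverse is built order by order in the $N$-adic filtration). An equivalence between $\gamma$ and $\tilde\gamma$ is an element $h\in\scrC^0\hat\otimes N$ satisfying
\begin{equation*}
\sum_{p,q}\mu^{p+q+1}_{\scrC}(\overbrace{\gamma,\dots,\gamma}^{p},h,\overbrace{\tilde\gamma,\dots,\tilde\gamma}^{q}) \;=\; \gamma-\tilde\gamma.
\end{equation*}
Define $h_k\in\scrC^0\hat\otimes N$ by $h = P^{r,k}_{\scrC}(h_k)$, which is possible by invertibility.

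\textbf{Step 2: transport via the key identities.} Substituting $h = P^{r,k}_{\scrC}(h_k)$ into the left hand side and applying \eqref{eq:inherit-equivalence} gives
\begin{equation*}
\sum_{p,q}\mu^{p+q+1}_{\scrC}(\gamma,\dots,\gamma,P^{r,k}_{\scrC}(h_k),\tilde\gamma,\dots,\tilde\gamma) \;=\; P^{r,k}_{\scrC}\Bigl(\sum_{p,q}\mu^{p+q+1}_{\scrC}(\overbrace{\gamma_k,\dots,\gamma_k}^{p},h_k,\overbrace{\tilde\gamma_k,\dots,\tilde\gamma_k}^{q})\Bigr).
\end{equation*}
On the other hand, by \eqref{eq:forgot} the difference $\gamma - \tilde\gamma$ equals $P^{r,k}_{\scrC}(\gamma_k-\tilde\gamma_k)$. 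Equating the two expressions and using injectivity of $P^{r,k}_{\scrC}$ yields
\begin{equation*}
\sum_{p,q}\mu^{p+q+1}_{\scrC}(\overbrace{\gamma_k,\dots,\gamma_k}^{p},h_k,\overbrace{\tilde\gamma_k,\dots,\tilde\gamma_k}^{q}) \;=\; \gamma_k-\tilde\gamma_k,
\end{equation*}
which is exactly the Maurer-Cartan equivalence \eqref{eq:mc-equivalence} between $\gamma_k$ and $\tilde\gamma_k$.

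\textbf{Remarks on obstacles.} There is essentially no obstacle beyond checking that \eqref{eq:inherit-equivalence} applies in the stated generality (its proof in Lemma \ref{th:multiproduct} is formulated for the particular purpose of transporting $h_k$ to $h$, but the identity itself is purely algebraic, valid for any $x\in\scrC^0\hat\otimes N$). The only point worth double-checking is that the invertibility of $P^{r,k}_{\scrC}$ is robust under the fact that here $\tilde\gamma_k\neq\gamma_k$; but \eqref{eq:derivative-of-pi-2} holds for the $P^{r,k}_{\scrC}$ built from any pair $(\gamma_k,\tilde\gamma_k)$, since both reduce to zero modulo $N$, so the mod-$N$ linearization is still the identity. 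Thus $P^{r,k}_{\scrC}$ remains invertible and the argument goes through.
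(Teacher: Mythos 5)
Your proposal is correct and is essentially the paper's own argument: the paper proves this lemma in one line, as "a consequence of \eqref{eq:inherit-equivalence} and the fact that $P^{r,k}_{\scrC}$ is an automorphism," which is exactly what you have unpacked (pulling $h$ back through the invertible $P^{r,k}_{\scrC}$, applying \eqref{eq:inherit-equivalence} and \eqref{eq:forgot}, and cancelling $P^{r,k}_{\scrC}$). Your side remark that the invertibility from \eqref{eq:derivative-of-pi-2} persists for the operator built from the pair $(\gamma_k,\tilde\gamma_k)$ is also the right justification for the paper's implicit claim.
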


\begin{proof}
This is a consequence of \eqref{eq:inherit-equivalence} and the fact that $P^{r,k}_{\scrC}$ is an automorphism.
\end{proof}

Take the case $r = 1$ of \eqref{eq:multiproduct}. Then \eqref{eq:property-of-beta} says that $(\beta^1_{\scrC} = \mathit{id}, \beta^2_{\scrC},\dots)$ form an $A_\infty$-homomorphism from $\scrC$ to itself (which is not surprising, since the underlying spaces $\mathit{MWW}_d$ are the multiplihedra). The corresponding operation \eqref{eq:multiproduct} is just the action of the $A_\infty$-homomorphism on Maurer-Cartan elements. One can show that this $A_\infty$-homomorphism is always homotopic to the identity, and hence $\Pi^1_{\scrC}(\gamma)$ is equivalent to $\gamma$. (The first piece of the statement about the $A_\infty$-homomorphism is  Example \ref{th:beta-2}, but we won't explain the rest here; as for the action on Maurer-Cartan elements, we will give an indirect argument in Corollary \ref{th:r-1-trivial}). Therefore, that case is essentially trivial. With that in mind, the first nontrivial instance of \eqref{eq:multiproduct} is $r = 2$, which we will denote by
\begin{equation} \label{eq:bullet}
\gamma_1 \bullet \gamma_2 = \Pi^2_{\scrC}(\gamma_1,\gamma_2).
\end{equation}
It will eventually turn out that the $r>2$ cases can be reduced to an $(r-1)$-fold application of this product (Corollary \ref{th:r-at-least-3}), and hence are in a sense redundant. 

\subsection{Well-definedness\label{subsec:hideous}}
Proving that \eqref{eq:multiproduct} is well-defined involves comparing different choices of the underlying $A_\infty$-structures $\mu_{\scrC}$, as well as of the operations $\beta_{\scrC}$. Since the details are lengthy, and the outcome overall not surprising, we will provide only a sketch of the argument.

One can generalize the construction of the operations \eqref{eq:beta-operations}, by allowing the use of different versions of the $A_\infty$-structure (in fact, a different version for each color of input, and another one for the output). Concretely, suppose that we have $(r+1)$ choices of fundamental chains for the Stasheff associahedra, with their associated $A_\infty$-structures $\mu_{\scrC,0}, \dots, \mu_{\scrC,r}$. By choosing fundamental chains on the colored multiplihedra which satisfy an appopriately modified version of \eqref{eq:boundary-mww}, we get generalized operations \eqref{eq:beta-operations}, which then lead to a map
\begin{equation} \label{eq:multiproduct-2}
\Pi^r_{\scrC}: \mathit{MC}(\scrC,\mu_{\scrC,1};N) \times \cdots \times \mathit{MC}(\scrC,\mu_{\scrC,r};N) \longrightarrow \mathit{MC}(\scrC,\mu_{\scrC,0};N).
\end{equation}
For instance, let's look at $r = 1$. Then, what we get from the modified operations \eqref{eq:beta-operations} is an $A_\infty$-homomorphism between two choices of $A_\infty$-structures on $\scrC$, whose linear part is the identity. That gives an alternative proof of the uniqueness result from Section \ref{subsec:interval} (in spite of that, it made sense for us to include the original proof; the reason will become clear shortly).

In \eqref{eq:multiproduct-2}, we want to understand the effect of simultaneously changing $\mu_{\scrC,0}$, one of the other $\mu_{\scrC,k+1}$, $k \geq 0$, and correspondingly also \eqref{eq:multiproduct-2}. Namely, suppose that we have alternative versions $\tilde{\mu}_{\scrC,0}$ and $\tilde{\mu}_{\scrC,k+1}$. Alongside \eqref{eq:multiproduct-2}, we also have another operation which uses the alternative $A_\infty$-structures, as well as different choices of functions \eqref{eq:imaginary-part-is-tau} and fundamental chains on the $\mathit{MWW}$ spaces. Let's denote that version by $\tilde{\Pi}^r_{\scrC}$. The construction from Section \ref{subsec:interval} yields $A_\infty$-structures $\mu_{\scrH,0}$ and $\mu_{\scrH,k+1}$, where $\scrH = \scrC \otimes \scrI$. One can then construct a new operation $\Pi_{\scrH}^{k,1,l}$, where $k+1+l = r$, which fits into the following diagram, with vertical arrows induced by \eqref{eq:two-projections}:
\begin{equation} \label{eq:interpolate}
\xymatrix{
\txt{
$\mathit{MC}(\scrC,\mu_{\scrC,1};N) \times \cdots \times \mathit{MC}(\scrC,\mu_{\scrC,k+1};N)$ \\ 
$ \times \cdots \times \mathit{MC}(\scrC,\mu_{\scrC,r};N)$ 
}
\ar[r]^-{\Pi^r_{\scrC}}
& \mathit{MC}(\scrC,\mu_{\scrC,0};N) \\
\ar[u]^-{\iso} \ar[d]_-{\iso}
\txt{$\mathit{MC}(\scrC,\mu_{\scrC,1};N) \times \cdots \times \mathit{MC}(\scrH, \mu_{\scrH,k+1};N)$ \\ $\times \cdots \times \mathit{MC}(\scrC,\mu_{\scrC,r};N)$
} \ar[r]^-{\Pi_{\scrH}^{k,1,l}}  
& \mathit{MC}(\scrH,\mu_{\scrH,0};N) \ar[u]_-{\iso} \ar[d]^-{\iso} \\
\txt{
$\mathit{MC}(\scrC,\mu_{\scrC,1};N) \times \cdots \times \mathit{MC}(\scrC,\tilde\mu_{\scrC,k+1};N)$
\\ $\times \cdots \times \mathit{MC}(\scrC,\mu_{\scrC,r};N)$} \ar[r]_-{\tilde\Pi^r_{\scrC}}
& \mathit{MC}(\scrC,\tilde\mu_{\scrC,0};N).
}
\end{equation}

Rather than giving the general construction of \eqref{eq:interpolate}, we will only look the $r = 1$ case. This is not terribly interesting in itself, but contains the main complications of the general situation, while allowing us to couch the discussion in more familiar terms. The setup for $r = 1$ is that we are given the following data: 
\begin{itemize} \itemsep.5em
\item four $A_\infty$-structures on $\scrC$, namely $\mu_{\scrC,k}$ and $\tilde{\mu}_{\scrC,k}$ for $k = 0,1$; 
\item two $A_\infty$-structures on $\scrH$, namely $\mu_{\scrH,k}$ for $k = 0,1$, which are constructed with the aim of interpolating between $\mu_{\scrC,k}$ and $\tilde{\mu}_{\scrC,k}$. Their definition, following \eqref{eq:tensor-structure}, involves operations $\phi_{\scrC,k}$ and $\psi_{\scrC,k}$ as in \eqref{eq:fg-operations}, \eqref{eq:fg-operations-2}. 
\item Finally, we have two versions of \eqref{eq:beta-operations}, which are $A_\infty$-homo\-mor\-phisms $\beta_{\scrC}: (\scrC,\mu_{\scrC,1}) \rightarrow (\scrC,\mu_{\scrC,0})$ and $\tilde\beta_{\scrC}: (\scrC,\tilde{\mu}_{\scrC,1}) \rightarrow (\scrC,\tilde{\mu}_{\scrC,0})$. 
\end{itemize}
The aim is to define an $A_\infty$-homomorphism $\beta_{\scrH}$, again having the identity as its linear term, which fits into a commutative diagram
\begin{equation} \label{eq:interpolating-functor}
\xymatrix{
(\scrC,\mu_{\scrC,1}) \ar[rr]^-{\beta_{\scrC}} && (\scrC,\tilde\mu_{\scrC,0}) 
\\
\ar[u] \ar[d]
(\scrH, \mu_{\scrH,1}) \ar[rr]^-{\beta_{\scrH}}
&& (\scrH, \mu_{\scrH,0}) \ar[u] \ar[d]
\\
(\scrC,\tilde\mu_{\scrC,1}) \ar[rr]^-{\tilde\beta_{\scrC}} && (\scrC,\tilde\mu_{\scrC,0}). 
}
\end{equation}
The corresponding special case of \eqref{eq:interpolate} is then defined through the action of $\beta_{\scrH}$ on Maurer-Cartan elements. The definition of $\beta_{\scrH}$ involves two kinds of operations:
\begin{align} \label{eq:sigma-map}
&
\begin{aligned}
& \sigma^{p,1,q}_{\scrC}: \scrC^{\otimes p+1+q} \longrightarrow \scrC[-p-q], \;\; p+q \geq 0, \\
& \qquad \sigma^{p,1,0}_{\scrC} = \beta_{\scrC}^{p+1}, \;\; \sigma^{0,1,q}_{\scrC} = \tilde\beta_{\scrC}^{q+1}, 
\; \text{and in particular $\sigma^{0,1,0}_{\scrC} = \mathit{id}_{\scrC}$},
\end{aligned}
\\
&
\label{eq:tau-map}
\tau^{p,q}_{\scrC}: \scrC^{\otimes p+q} \longrightarrow \scrC[-p-q-1], \;\; p,q > 0, 
\end{align}
These enter into a formula parallel to \eqref{eq:tensor-structure}: the nonzero terms of our $A_\infty$-homomorphism are
\begin{equation} \label{eq:mixed-functor}
\begin{aligned}
& \beta_{\scrH}^d(c_1 \otimes u, \dots, c_d \otimes u) = 
\beta_{\scrC}^d(c_1,\dots,c_d) \otimes u 
, \\
& \beta_{\scrH}^{p+q}(c_1 \otimes u, \dots, c_p \otimes u, c_{p+1} \otimes \tilde{u}, \dots, c_{p+q} \otimes \tilde{u}) \\ & \qquad \qquad = (-1)^{\maltese_{p+q}} \tau^{p,q}_{\scrC}(c_1,\dots,c_p;c_{p+1},\dots,c_{p+q}) \otimes v 
\quad\text{for $p,q>0$}, \\
& \beta_{\scrH}^d(c_1 \otimes \tilde{u},\dots, c_d \otimes \tilde{u}) = 
\tilde\beta_{\scrC}^d(c_1,\dots,c_d) \otimes \tilde{u} 
, \\
& \beta_{\scrH}^{p+q+1}(c_1 \otimes u,\dots, c_p \otimes u, c_{p+1} \otimes v, 
c_{p+2} \otimes \tilde{u}, \dots, c_{p+q+1} \otimes \tilde{u}) \\ & \qquad \qquad
 = (-1)^{\maltese_{p+q+1}-\maltese_{p+1}} \sigma^{p,1,q}_{\scrC}(c_1, \dots, c_p; c_{p+1}; c_{p+2},\dots, c_{p+q+1}) \otimes v.
\end{aligned}
\end{equation}

The fact that \eqref{eq:mixed-functor} satisfies the $A_\infty$-homomorphism relations reduces to certain properties of \eqref{eq:sigma-map}, \eqref{eq:tau-map}. Those for \eqref{eq:sigma-map} are
\begin{equation}
\begin{aligned}
& \sum_{p \geq i+j} (-1)^{\maltese_i} \sigma_{\scrC}^{p-j+1,1,q}(c_1, \dots, c_i, \mu_{\scrC,1}^j(c_{i+1},\dots,c_{i+j}), \dots;c_{p+1}; \dots,c_{p+q+1}) 
\\ & + \sum_{i \leq p \leq i+j} (-1)^{\maltese_i} \sigma_{\scrC}^{i,1,p+q-i+j}(c_1,\dots,c_i; \phi_{\scrC,1}^{p-i,1,i+j-p-1}(c_{i+1},\dots ;c_{p+1}; \dots,c_{i+j}); \\[-.5em]
& \qquad \qquad \qquad \qquad \qquad \qquad \qquad \qquad c_{i+j+1},\dots,c_{p+q+1}) 
\\ & + \;\; \sum_{p<i} (-1)^{\maltese_i} \sigma_{\scrC}^{p,1,q-j+1}(c_1,\dots ; c_{p+1}; \dots, \tilde\mu^j_{\scrC,1}(c_{i+1},\dots,c_{i+j}), \dots, c_{p+q+1})
\\ &
= \sum_{\text{partitions}} \phi^{s,1,t}_{\scrC,0}\big(
\beta_{\scrC}^{d_1}(c_1,\dots,c_{d_1}), \dots, 
\beta_{\scrC}^{d_s}(c_{d_1+\cdots+d_{s-1}+1}, \dots, c_{d_1+\cdots+d_s}); 
\\[-.5em] & \qquad \qquad \qquad
\sigma_{\scrC}^{p-d_1-\cdots-d_s,1,d_1+\cdots+d_{s+1}-p-1}(c_{d_1 + \cdots+ d_s+1}, \dots ; c_{p+1}; \dots, c_{d_1+\cdots + d_{s+1}}); \\
& \qquad \qquad \qquad\cdots,
 \tilde\beta_{\scrC}^{d_{s+1+t}}(c_{p+q+2-d_{s+t+1}},\dots,c_{p+q+1}) \big).
\end{aligned}
\end{equation}
On the right hand side, the sum is over all $(s,t)$ and partitions $d_1 + \cdots + d_{s+1+t} = p+1+q$ such that $d_1 + \cdots +  d_s  < p+1$ and $d_1 + \cdots + d_{s+1} \geq p+1$. In spite of the apparently larger number of terms which appear, this is formally parallel to the $A_\infty$-homomorphism equation, and in fact the nontrivial operations \eqref{eq:sigma-map} are obtained from a choice of fundamental chains on $\mathit{MWW}_{p+q+1}$, as well as functions \eqref{eq:imaginary-part-is-tau}. The trick is that the boundary behaviour of these data is partially determined by the choices underlying $\beta_{\scrC}$ and $\tilde{\beta}_{\scrC}$, just as in our previous discussion of \eqref{eq:f-relation}. The relations for \eqref{eq:tau-map} are:
\begin{equation} \label{eq:omg}
\begin{aligned}
& -\sum_{p \geq i+j} (-1)^{\maltese_i} \tau_{\scrC}^{p-j+1,q}(c_1, \dots, c_i, \mu_{\scrC,1}^j(c_{i+1},\dots,c_{i+j}), \dots ;c_{p+1}, \dots,c_{p+q}) 
\\ & + \sum_{i < p < i+j} \sigma_{\scrC}^{i,1,p+q-i-j}(c_1,\dots,c_i; \psi_{\scrC,1}^{p-i,i+j-p}(c_{i+1},\dots ;c_{p+1}, \dots,c_{i+j}); \\[-.5em]
& \qquad \qquad \qquad \qquad \qquad \qquad \qquad \qquad c_{i+j+1},\dots,c_{p+q}) \\
& - \sum_{p \leq i} (-1)^{\maltese_i} \tau_{\scrC}^{p,q-j+1}(c_1,\dots , c_p; \dots, \tilde\mu^j_{\scrC,1}(c_{i+1},\dots,c_{i+j}), \dots, c_{p+q})
\\
& =  \sum_{\text{partitions}} (-1)^{\maltese_{d_1+\cdots+d_{s}}} \phi_{\scrC,0}^{s,1,t}\big(\beta_{\scrC}^{d_1}(c_1,\dots,c_{d_1}), \dots, 
\beta_{\scrC}^{d_s}(c_{d_1+\cdots+d_{s-1}+1}, \dots, c_{d_1+\cdots+d_s}); 
\\[-.5em] & \qquad \qquad \qquad
\tau_{\scrC}^{p-d_1-\cdots-d_s,d_1+\cdots+d_{s+1}-p}(c_{d_1 + \cdots+ d_s+1}, \dots, c_p; c_{p+1}, \dots, c_{d_1+\cdots + d_{s+1}}); \\
& \qquad \qquad \qquad\dots,
 \tilde\beta_{\scrC}^{d_{s+1+t}}(c_{p+q+2-d_{s+t+1}},\dots,c_{p+q}) \big)
\\ &
+ \sum_{\text{partitions}} \psi^{s,t}_{\scrC,0}\big(
\beta_{\scrC}^{d_1}(c_1,\dots,c_{d_1}), \dots, 
\beta_{\scrC}^{d_s}(c_{d_1+\cdots+d_{s-1}+1}, \dots, c_{d_1+\cdots+d_s}); 
\\[-.5em] & \qquad \qquad \qquad\dots,
 \tilde\beta_{\scrC}^{d_{s+t}}(c_{p+q+2-d_{s+t+1}},\dots,c_{p+q}) \big).
\end{aligned}
\end{equation}
Combinatorially, the difference between the two terms on the right hand side of \eqref{eq:omg} is where the dividing semicolon between the first $p$ and last $q$ inputs comes to lie: in the first case, we require that $d_1 + \cdots + d_s < p < d_1 + \cdots + d_{s+1}$, so that semicolon is inside one of the innermost operations, which becomes a $\tau$ operation; in the second case, we require that $d_1 + \cdots + d_s = p$, so that semicolon separates the two kinds of inputs for the $\psi$ operation. Topologically one realizes \eqref{eq:tau-map} by choosing suitable fundamental chains on $[0,1] \times \mathit{MWW}_{p+q}$, and analogues of \eqref{eq:imaginary-part-is-tau} on that product space.  The second sum in \eqref{eq:omg} contains terms which correspond to the boundary faces $\{0,1\} \times \mathit{MWW}_{p+q}$, just as in \eqref{eq:01-boundary-terms}:
\begin{equation}
\begin{aligned}
& \sigma_{\scrC}^{p-1,1,q}(c_1,\dots,c_{p-1};\psi^{1,0}_{\scrC,1}(c_p);c_{p+1},\dots,c_{p+q+1}) 
\\ & \qquad + \sigma_{\scrC}^{p,1,q-1}(c_1,\dots,c_p;\psi^{0,1}_{\scrC,1}(c_p);c_{p+1},\dots,c_{p+q+1}) \\
& = -\sigma_{\scrC}^{p-1,1,q}(c_1,\dots,c_{p-1};c_p;c_{p+1},\dots,c_{p+q})  + \sigma_{\scrC}^{p,1,q-1}(c_1,\dots,c_p;c_{p+1};c_{p+2},\dots,c_{p+q}). \\
\end{aligned}
\end{equation}

\begin{example}
The first new operation $\tau^{1,1}_{\scrC}$ satisfies (bearing in mind that all versions of the $A_\infty$-structure on $\scrC$ share the same differential $-d = \mu^1_{\scrC} = \phi^{0,1,0}_{\scrC,0} = \phi^{0,1,0}_{\scrC,1}$)
\begin{equation}
\begin{aligned}
& \mu^1_{\scrC}(\tau_{\scrC}^{1,1}(c_1;c_2)) + \tau_{\scrC}^{1,1}(\mu^1_{\scrC}(c_1);c_2) + (-1)^{\|c_1\|} \tau_{\scrC}^{1,1}(c_1;\mu^1_{\scrC}(c_2)) \\
& \quad = \beta_{\scrC}^2(c_1,c_2) - \tilde{\beta}^2_{\scrC}(c_1,c_2) + \psi_{\scrC,1}^{1,1}(c_1;c_2) - \psi_{\scrC,0}^{1,1}(c_1;c_2).
\end{aligned}
\end{equation}
This is exactly what's required for the first nontrivial $A_\infty$-functor equation on $\scrH$: one has
\begin{equation}
\begin{aligned}
& \mu^1_{\scrH,0}\big( \beta_{\scrH}^2(c_1 \otimes u, c_2 \otimes \tilde{u}) \big)
+ \mu^2_{\scrH,0}\big( \beta_{\scrH}^1(c_1 \otimes u), \beta_{\scrH}^1(c_2 \otimes \tilde{u})\big)
\\ & = (-1)^{\|c_1\|+\|c_2\|} \big( \mu^1_{\scrC}(\tau_{\scrC}^{1,1}(c_1;c_2)) + 
  \psi^{1,1}_{\scrC,0}(c_1;c_2) \big) \otimes v, 
\end{aligned}
\end{equation}
while
\begin{equation}
\begin{aligned}
& \beta_{\scrH}^2( \mu^1_{\scrH,1}(c_1 \otimes u), c_2 \otimes \tilde{u})
+ (-1)^{\|c_1\|} \beta_{\scrH}^2( c_1 \otimes u, \mu^1_{\scrH,1}(c_2 \otimes \tilde{u}))  
\\ & \qquad
+ \beta_{\scrH}^1( \mu^2_{\scrH,1}(c_1 \otimes u, c_2 \otimes \tilde{u})) 
\\
& = (-1)^{\|c_1\|+\|c_2\|} \big( -
\tau^{1,1}_{\scrC}(\mu^1_\scrC (c_1); c_2)
+ \beta^2_{\scrC}(c_1,c_2)
\\ & \qquad
- (-1)^{\|c_1\|} \tau_{\scrC}^{1,1}(c_1; \mu^1_{\scrC}(c_2))
- \tilde{\beta}^2_{\scrC}(c_1,c_2)
 + \psi^{1,1}_{\scrC,1}(c_1;c_2)
\big) \otimes v.
\end{aligned}
\end{equation}
\end{example}

To conclude our discussion, let's return to the general context (arbitrary $r$), and note that then, repeated application of \eqref{eq:interpolate} allows one to change all the choices involved. We record the outcome:

\begin{corollary} \label{th:hideous}
Suppose that we have two different choices of fundamental chains on the associahedra and colored multiplihedra, as well as of functions \eqref{eq:imaginary-part-is-tau}, leading to two version of the $A_\infty$-structure and operations \eqref{eq:multiproduct}. These fit into a commutative diagram
\begin{equation}
\xymatrix{
\ar[d]_-{\iso}
\mathit{MC}(\scrC,\mu_{\scrC};N)^r \ar[rr]^-{\Pi^r_{\scrC}} && \ar[d]^-{\iso} \mathit{MC}(\scrC,\mu_{\scrC};N)
\\
\mathit{MC}(\scrC,\tilde\mu_{\scrC};N)^r \ar[rr]^-{\tilde\Pi^r_{\scrC}} && \mathit{MC}(\scrC,\tilde\mu_{\scrC};N)
}
\end{equation}
Here, we have related our $A_\infty$-structures using functors as in \eqref{eq:a-tilde-a}, and the vertical arrows are the induced maps on Maurer-Cartan elements.
\end{corollary}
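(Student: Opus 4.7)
The plan is to reduce to the diagram \eqref{eq:interpolate} and iterate. First I would observe that, by the generalization described at the start of Section \ref{subsec:hideous}, one can define $\Pi^r_{\scrC}$ not only when a single $A_\infty$-structure is used for all inputs and the output, but more flexibly, with $(r+1)$ potentially different structures $\mu_{\scrC,0},\dots,\mu_{\scrC,r}$ on $\scrC$. This enlarged framework is exactly what makes the step-by-step interpolation possible.

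Next I would set up a chain of intermediate versions of $\Pi^r_{\scrC}$ by changing one structure at a time. Starting from $\mu_{\scrC,k}=\mu_{\scrC}$ for all $k$, perform $r$ successive applications of \eqref{eq:interpolate}, each one replacing one input structure $\mu_{\scrC,k+1}$ with $\tilde\mu_{\scrC,k+1}$ while leaving the output structure $\mu_{\scrC,0}$ alone (which is permitted by choosing $\tilde\mu_{\scrC,0}=\mu_{\scrC,0}$ in that step, together with the trivial interpolating structure $\mu_{\scrH,0}$ coming from \eqref{eq:tensor-product}); then finish with one last application that leaves all inputs untouched and changes only the output from $\mu_{\scrC}$ to $\tilde\mu_{\scrC}$. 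Composing the resulting intermediate diagrams stacks into a single commutative diagram in which the vertical arrows are compositions of maps induced on Maurer-Cartan sets by the canonical $A_\infty$-homomorphisms \eqref{eq:a-tilde-a} (all of which are bijections by Lemma \ref{th:mc-bijection}, since their linear parts are chain homotopy equivalences).

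What this reduces to is constructing the operation $\Pi^{k,1,l}_{\scrH}$ at the middle row of \eqref{eq:interpolate}. For general $r$ this is the natural extension of the $r=1$ construction \eqref{eq:mixed-functor}: one chooses fundamental chains on colored multiplihedra $\mathit{MWW}_{d_1,\dots,d_{k+1}+e_{k+1},\dots,d_r}$, together with functions \eqref{eq:imaginary-part-is-tau}, whose boundary behaviour along the faces of type $\{0\}$ and $\{1\}$ matches the data already chosen for $\Pi^r_{\scrC}$ and $\tilde\Pi^r_{\scrC}$. Combinatorially, the extra interval direction in the $(k+1)$-st color plays the role of $\scrI$ in \eqref{eq:nc-interval}, and produces operations $\sigma^{\cdots}_{\scrC}$ and $\tau^{\cdots}_{\scrC}$ parallel to \eqref{eq:sigma-map}--\eqref{eq:tau-map} which are assembled into a well-defined $\Pi^{k,1,l}_{\scrH}$ by the same tensor-product recipe as in \eqref{eq:mixed-functor}. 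Lemma \ref{th:multiproduct} (applied in the $\scrH$ setting) then guarantees that $\Pi^{k,1,l}_{\scrH}$ descends to Maurer-Cartan equivalence classes, and the projections to $\scrC$ (both the $u$- and $\tilde u$-components) realize the two squares of \eqref{eq:interpolate}.

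The main obstacle is not conceptual but bookkeeping. One has to verify that a consistent system of fundamental chains and interpolating functions \eqref{eq:imaginary-part-is-tau} exists on the entire tower of $\mathit{MWW}$ spaces with the prescribed boundary behaviour at every codimension. This is a standard inductive extension argument: given the (contractibility of) the target and the convexity of the constraint \eqref{eq:traffic-rules}, one extends from the boundary into the interior dimension by dimension, the only subtlety being keeping track of the Koszul signs coming from \eqref{eq:boundary-mww} and \eqref{eq:weird-signs}. Once that extension is carried out---which, as noted in the text, we allow ourselves to do without complete detail---the commutativity of each intermediate square in \eqref{eq:interpolate} follows tautologically from the operadic compatibilities, and concatenation yields the statement of the corollary.
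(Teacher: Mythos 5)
Your proposal follows essentially the same route as the paper: it reduces the statement to the interpolation diagram \eqref{eq:interpolate} (constructed via an $A_\infty$-structure on $\scrH = \scrC \otimes \scrI$ with $\sigma$- and $\tau$-type operations generalizing \eqref{eq:mixed-functor}, with the needed fundamental chains and functions \eqref{eq:imaginary-part-is-tau} built by inductive extension over the $\mathit{MWW}$ spaces), and then obtains the corollary by repeated application, which is exactly how the paper records it. Your explicit sequencing (changing one input structure at a time with the output held fixed via the trivial interpolation, then changing the output) is just a concrete instantiation of the paper's "repeated application of \eqref{eq:interpolate}", so there is no substantive difference.
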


\subsection{The $p$-th power operation\label{subsec:multiproduct-p}}
When defining \eqref{eq:beta-operations}, suppose now that we choose our functions \eqref{eq:tau-functions} so that they satisfy \eqref{eq:drop-color-2}. For the fundamental chains, we may also assume that they are chosen to be compatible with the identifications \eqref{eq:drop-color}. In algebraic terms, the outcome is a cancellation property, which allows one to forget colors that do not carry any marked points:
\begin{equation} \label{eq:cancel-0}
\beta_{\scrC}^{d_1,\dots,d_{k-1},0,d_{k+1},\dots,d_r} = \beta_{\scrC}^{d_1,\dots,d_{k-1},d_{k+1},\dots,d_r}.
\end{equation}
Assuming that such a choice has been adopted, we have:

\begin{lemma} \label{th:p-power-beta}
Take a prime number $p$, and the coefficient ring $N = q\bF_p[[q]]/q^{p+1}$. Then, for $\gamma = qc + O(q^2)$, one has
\begin{equation}
\Pi^p_{\scrC}(\gamma,\dots,\gamma) = \beta^{1,\dots,1}_{\scrC}(c;\dots;c) q^p.
\end{equation}
\end{lemma}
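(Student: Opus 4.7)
The plan is to combine two effects: an algebraic collapse coming from the cancellation property \eqref{eq:cancel-0}, which produces binomial-coefficient multiplicities that vanish mod $p$, together with the nilpotence $N^{p+1}=0$ coupled with the fact that every copy of $\gamma$ carries at least one power of $q$. I expect this to peel off all contributions except the single term $\beta^{1,\dots,1}_{\scrC}(c;\dots;c)\,q^p$.

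Concretely, I would start by expanding $\Pi^p_{\scrC}(\gamma,\dots,\gamma)$ according to Definition \ref{th:def-multiproduct}, then regroup the sum over $(d_1,\dots,d_p)$ according to the subset $S=\{k:d_k>0\}\subseteq\{1,\dots,p\}$ together with the ordered tuple $(e_1,\dots,e_{|S|})$ of positive entries. By \eqref{eq:cancel-0}, the operation $\beta^{d_1,\dots,d_p}_{\scrC}$ only depends on that ordered tuple; and for each $s=|S|$, there are $\binom{p}{s}$ subsets giving the same operation. This yields
\begin{equation*}
\Pi^p_{\scrC}(\gamma,\dots,\gamma)=\sum_{s=1}^{p}\binom{p}{s}\sum_{\substack{e_1,\dots,e_s\geq 1}}\beta^{e_1,\dots,e_s}_{\scrC}(\overbrace{\gamma,\dots,\gamma}^{e_1};\dots;\overbrace{\gamma,\dots,\gamma}^{e_s}).
\end{equation*}
Since we are in characteristic $p$, $\binom{p}{s}\equiv 0\pmod{p}$ for $1\leq s\leq p-1$, so only the $s=p$ summand survives. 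Next, because $\gamma\in\scrC^1\hat\otimes q\bF_p[[q]]/q^{p+1}$ is divisible by $q$, the term $\beta^{e_1,\dots,e_p}_{\scrC}(\gamma,\dots,\gamma)$ lies in $q^{e_1+\cdots+e_p}\scrC\hat\otimes\bF_p[[q]]/q^{p+1}$; together with $N^{p+1}=0$ and $e_j\geq 1$, this forces $e_1=\cdots=e_p=1$. Finally, in the remaining $\beta^{1,\dots,1}_{\scrC}(\gamma;\dots;\gamma)$, replacing each $\gamma$ by its leading term $qc$ produces $q^p\beta^{1,\dots,1}_{\scrC}(c;\dots;c)$, while higher-order corrections in $\gamma$ contribute terms of $q$-order $\geq p+1$, hence vanish.

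The main obstacle is minor and essentially bookkeeping: one should be careful that the grouping by $S$ is bijective and that the identification \eqref{eq:cancel-0}, which was engineered via the compatibility conditions \eqref{eq:drop-color-2} between the $\tau$-functions, really makes the operations literally equal (not merely chain-homotopic), so that the binomial coefficients can be summed as scalars in $\bF_p$. Once this is granted, nothing else in the argument is delicate; the statement reduces to the mod-$p$ combinatorics of Fermat and the trivial nilpotence bound.
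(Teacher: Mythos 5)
Your proposal is correct and follows essentially the same route as the paper: the regrouping by the subset of colors with positive entries is exactly the paper's identity \eqref{eq:r-product-with-insertion} with multiplicity $\binom{p}{s}$, the vanishing of $\binom{p}{s}$ mod $p$ for $1\leq s\leq p-1$ kills all but the all-positive terms, and the $q$-adic degree count then isolates $\beta^{1,\dots,1}_{\scrC}(c;\dots;c)q^p$. The point you flag about \eqref{eq:cancel-0} being a literal equality is already built into the standing hypothesis of Section \ref{subsec:multiproduct-p} (choices compatible with \eqref{eq:drop-color} and \eqref{eq:drop-color-2}), so there is no gap.
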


\begin{proof}
This is elementary, along the same lines as in Lemma \ref{th:xi-algebra}. Applying \eqref{eq:cancel-0} allows one to rewrite \eqref{eq:multiproduct} as
\begin{equation} \label{eq:r-product-with-insertion}
\Pi^r_{\scrC}(\gamma,\dots,\gamma) = \sum_{1 \leq k \leq r} {\textstyle \left(\begin{matrix} r \\ k \end{matrix}\right)} \sum_{\substack{ d_1,\dots,d_k > 0 }} \beta_{\scrC}^{d_1,\dots,d_k}(\overbrace{\gamma,\dots,\gamma}^{d_1};\dots;\overbrace{\gamma,\dots,\gamma}^{d_k}),
\end{equation}
where the combinatorial factor reflects the possibilities of inserting $0$ superscripts into each $\beta$ operation. Suppose that our coefficient ring is $N = q\bF_p[[q]]$, and set $r = p$. Then \eqref{eq:r-product-with-insertion} becomes
\begin{equation}
\Pi^p_{\scrC}(\gamma,\dots,\gamma) = \sum_{d_1,\dots,d_p>0} 
\beta_{\scrC}^{d_1,\dots,d_k}(\overbrace{\gamma,\dots,\gamma}^{d_1};\dots;\overbrace{\gamma,\dots,\gamma}^{d_p}).
\end{equation}
Truncating mod $q^{p+1}$ leaves $\beta_{\scrC}^{1,\dots,1}(\gamma;\cdots;\gamma) = \beta_{\scrC}^{1,\dots,1}(c;\dots;c) q^p$ as the only nonzero term.
\end{proof}

\subsection{Deligne-Mumford spaces and commutativity\label{subsec:commutativity}}
Let's consider the question of commutativity of the product \eqref{eq:bullet}. Concretely, this hypothetical commutativity would mean that there is an $h \in \scrC^0 \hat\otimes N$ such that 
\begin{equation} \label{eq:commutative-h}
\sum_{p,q} \mu^{p+q+1}_{\scrC}(\overbrace{\gamma_1 \bullet \gamma_2,\dots,\gamma_1 \bullet \gamma_2}^p, h, \overbrace{\gamma_2 \bullet \gamma_1,\dots, \gamma_2 \bullet \gamma_1}^q) = \gamma_1 \bullet \gamma_2 - \gamma_2 \bullet \gamma_1.
\end{equation}
Let's suppose, to simplify the exposition, that the coefficient ring is $N = q\bZ[[q]]$. Moreover, we choose to define the operations \eqref{eq:beta-operations} as in Section \ref{subsec:multiproduct-p}, so that \eqref{eq:cancel-0} holds. That entails some convenient (but not essential, of course) cancellations in our formulae. Given two Maurer-Cartan elements 
\begin{equation} \label{eq:k-maurer-cartan}
\gamma_k = qc_k + O(q^2)\quad (k = 1,2),
\end{equation}
with leading terms $c_k$ which are cocycles in $\scrC^1$, we have by definition
\begin{equation} \label{eq:gamma12-21}
\gamma_1 \bullet \gamma_2 - \gamma_2 \bullet \gamma_1 = q^2\big(
\beta_{\scrC}^{1,1}(c_1;c_2)
- \beta_{\scrC}^{1,1}(c_2;c_1) 
\big)  + O(q^3);
\end{equation}
In writing down this formula, we have exploited the fact that, due to our choices, $\beta^{2,0}_{\scrC}(c_k,c_k) = \beta_{\scrC}^{0,2}(c_k,c_k)$.
It follows from Example \ref{th:beta-1-1} that 
\begin{equation} \label{eq:beta-difference}
(c_1,c_2) \in \scrC \longmapsto (-1)^{\|c_1\|} \beta_{\scrC}^{1,1}(c_1;c_2) - (-1)^{\|c_1\|\,|c_2|}\beta_{\scrC}^{1,1}(c_2;c_1)
\end{equation}
 is a chain map of degree $-1$. On cohomology, it defines the Lie bracket $[\cdot,\cdot]: H^*(\scrC) \otimes H^*(\scrC) \rightarrow H^{*-1}(\scrC)$ which is part of the Gerstenhaber algebra structure. Geometrically, $\beta^{1,1}$ arises from a one-dimensional chain in $\mathit{FM}_2$ whose boundary points are exchanged by the $\bZ/2$-action. The sum of this chain and its image under the nontrivial element of $\bZ/2$ is a cycle, which generates $H_1(\mathit{FM}_2) \iso H_1(S^1) = \bZ$. If we similarly write $h = qb + O(q^2)$, then \eqref{eq:commutative-h} taken modulo $q^3$ says that $b$ is a cocycle, and that
\begin{equation} \label{eq:commutative-mod-q3}
\mu^2_{\scrC}(c_1+c_2,b) + \mu^2_{\scrC}(b,c_1+c_2) + (\text{\it coboundaries}) = 
\beta_{\scrC}^{1,1}(c_1;c_2) - \beta_{\scrC}^{1,1}(c_2;c_1).
\end{equation}
By \eqref{eq:beta-1-1}, the left hand side of \eqref{eq:commutative-mod-q3} is nullhomologous. Hence, for \eqref{eq:commutative-mod-q3} to be satisfied, the Lie bracket of $[c_1]$ and $[c_2]$ must be zero, which means that commutativity does not hold in this level of generality.

We now switch from Fulton-MacPherson to Deligne-Mumford spaces. One could define the structure of an algebra over $C_{-*}(\mathit{DM}_d)$ on a chain complex $\scrC$ in the same way as before. However, that notion is not well-behaved. For instance, the action of $\mathit{DM}_2 = \mathit{point}$ would yield a strictly commutative product on $\scrC$. The underlying problem is that the $\mathit{Sym}_d$-action on $\mathit{DM}_d$ is not free (from an algebraic viewpoint, $C_{-*}(\mathit{DM}_d)$ is not a projective $\bZ[\mathit{Sym}_d]$-module). There is a simple workaround, by ``freeing up'' the action. Namely, let $(E_d)$ be an $E_\infty$-operad, which means that the spaces $E_d$ are contractible and freely acted on by $\mathit{Sym}_d$. Let's adopt a concrete choice, namely, the analogue of Fulton-MacPherson space for point configurations in $\bR^\infty$. Then 
\begin{equation} \label{eq:freeing-up}
\EuScript{DM}_d = \mathit{DM}_d \times E_d
\end{equation}
is again an operad, which is homotopy equivalent to $\mathit{DM}_d$ but carries a free action of $\mathit{Sym}_d$. The maps \eqref{eq:fm-to-dm} admit lifts
\begin{equation} \label{eq:fm-to-dm-2}
\mathit{FM}_d \longrightarrow \EuScript{DM}_d,
\end{equation}
which are compatible with the operad structure, including the action of $\mathit{Sym}_d$. As an existence statement, this is a consequence of the properties of $E_d$; but for our specific choice, such lifts can be defined explicitly by taking \eqref{eq:fm-to-dm} together with the natural inclusion $\mathit{FM}_d \rightarrow E_d$.

Assume from now on that $\scrC$ carries the structure of an operad over $C_{-*}(\EuScript{DM}_d)$, and hence inherits one over the Fulton-MacPherson operad by \eqref{eq:fm-to-dm-2}. Take the one-cycle in $\mathit{FM}_2$ underlying \eqref{eq:gamma12-21} and map it to (the contractible space) $\EuScript{DM}_2$. Choosing a bounding cochain (which is itself unique up to coboundaries) yields a nullhomotopy
\begin{equation} \label{eq:first-kappa}
\begin{aligned}
& \kappa_{\scrC}^{1,1}: \scrC^{\otimes 2} \longrightarrow \scrC[-2], \\
& \mu^1_{\scrC}(\kappa^{1,1}_{\scrC}(c_1;c_2)) + \kappa^{1,1}_{\scrC}(\mu^1_{\scrC}(c_1);c_2) + (-1)^{\|c_1\|}\kappa^{1,1}_{\scrC}(c_1;\mu^1_{\scrC}(c_2)) 
\\ & \qquad = \beta_{\scrC}^{1,1}(c_1;c_2) - (-1)^{\|c_1\|\,\|c_2\|}\beta_{\scrC}^{1,1}(c_2;c_1).
\end{aligned}
\end{equation}
As a consequence, if we set
\begin{equation} \label{eq:guess-0}
h = q^2\kappa_{\scrC}^{1,1}(c_1;c_2) \in \scrC^0 \hat\otimes q^2\bZ[[q]], 
\end{equation}
then \eqref{eq:commutative-h} is satisfied modulo $q^3$ (on the left hand side, only the $p = q = 0$ term matters at this point). Hence, if we reduce coefficients to $q\bZ[[q]]/q^3$, then \eqref{eq:bullet} is commutative. Nothing we have said so far is in any way surprising: the vanishing of the Lie bracket in the case where the operations come from Deligne-Mumford space is a well-known fact (if one uses the framed little disc operad as an intermediate object, it follows from vanishing of the BV operator).

Let's push our investigation a little further. As special cases of \eqref{eq:property-of-beta}, we have 
\begin{equation} \label{eq:21-relation}
\begin{aligned} &
\mu^1_{\scrC}(\beta_{\scrC}^{2,1}(c_1,c_2;c_3)) -  \beta_{\scrC}^{2,1}(\mu_{\scrC}^1(c_1),c_2;c_3) 
\\ & \qquad
- (-1)^{\|c_1\|} \beta_{\scrC}^{2,1}(c_1,\mu^1_{\scrC}(c_2);c_3) - (-1)^{\|c_1\|+\|c_2\|} \beta_{\scrC}^{2,1}(c_1,c_2;\mu^1_{\scrC}(c_3))
\\ & =
 \beta_{\scrC}^{1,1}(\mu_{\scrC}^2(c_1,c_2);c_3) -
(-1)^{\|c_2\|\,\|c_3\|} \mu_{\scrC}^2(\beta_{\scrC}^{1,1}(c_1;c_3),c_2) - \mu_{\scrC}^2(c_1,\beta_{\scrC}^{1,1}(c_2;c_3)) 
\\ & \qquad - \mu^2_{\scrC}(\beta_{\scrC}^2(c_1,c_2),c_3) - (-1)^{\|c_3\| (\|c_1\|+\|c_2\|)}\mu^2_{\scrC}(c_3,\beta_{\scrC}^2(c_1,c_2)) 
\\ & \qquad  - \mu_{\scrC}^3(c_1,c_2,c_3) - (-1)^{\|c_2\|\,\|c_3\|}\mu_{\scrC}^3(c_1,c_3,c_2) - (-1)^{(\|c_1\|+\|c_2\|)\|c_3\|} \mu_{\scrC}^3(c_3,c_1,c_2);
\end{aligned}
\end{equation}
respectively 
\begin{equation} \label{eq:12-relation}
\begin{aligned} &
\mu^1_{\scrC}( \beta_{\scrC}^{1,2}(c_3;c_1,c_2) )
- \beta_{\scrC}^{1,2}(\mu_{\scrC}^1(c_3);c_1,c_2) 
\\ & \qquad
- (-1)^{\|c_3\|} \beta_{\scrC}^{1,2}(c_3;\mu_{\scrC}^1(c_1),c_2) 
- (-1)^{\|c_3\|+\|c_1\|} \beta_{\scrC}^{1,2}(c_3;c_1,\mu_{\scrC}^1(c_2))
\\ & =
(-1)^{\|c_3\|} \beta_{\scrC}^{1,1}(c_3;\mu^2_{\scrC}(c_1,c_2)) 
- \mu_{\scrC}^2(\beta_{\scrC}^{1,1}(c_3;c_1),c_2) 
- (-1)^{\|c_1\|\,\|c_3\|} \mu_{\scrC}^2(c_1,\beta_{\scrC}^{1,1}(c_3;c_2))
\\ & \qquad 
- \mu^2_{\scrC}(c_3,\beta_{\scrC}^2(c_1,c_2)) 
- (-1)^{\|c_3\|(\|c_1\|+\|c_2\|)}\mu^2_{\scrC}(\beta_{\scrC}^2(c_1,c_2),c_3) 
\\ &  \qquad
- \mu_{\scrC}^3(c_3,c_1,c_2) - (-1)^{\|c_3\|\,\|c_1\|} \mu_{\scrC}^3(c_1,c_3,c_2)
- (-1)^{\|c_3\|(\|c_1\|+\|c_2\|)} \mu_{\scrC}^3(c_1,c_2,c_3).
\end{aligned}
\end{equation}
Therefore, if we consider the map $K_{\scrC}^{2,1}: \scrC^{\otimes 3} \rightarrow \scrC[-2]$ given by
\begin{equation} \label{eq:k21}
\begin{aligned}
& K_{\scrC}^{2,1}(c_1,c_2;c_3) = \beta^{2,1}_{\scrC}(c_1,c_2;c_3) - (-1)^{\|c_3\| (\|c_1\|+\|c_2\|)}\beta_{\scrC}^{1,2}(c_3;c_1,c_2)
\\ & \quad - \kappa_{\scrC}^{1,1}(\mu^2_{\scrC}(c_1,c_2);c_3)  - (-1)^{\|c_2\|\,\|c_3\|}
\mu^2_{\scrC}(\kappa_{\scrC}^{1,1}(c_1;c_3),c_2) - 
(-1)^{\|c_1\|}
\mu^2_{\scrC}(c_1,\kappa_{\scrC}^{1,1}(c_2;c_3)),
\end{aligned}
\end{equation}
then that satisfies
\begin{equation}
\begin{aligned}
&
\mu^1_{\scrC}(K_{\scrC}^{2,1}(c_1,c_2;c_3)) - K_{\scrC}^{2,1}(\mu^1_{\scrC}(c_1),c_2;c_3) \\ & \qquad -
(-1)^{\|c_1\|} K_{\scrC}^{2,1}(c_1,\mu^1_{\scrC}(c_2);c_3) - (-1)^{\|c_1\|+\|c_2\|} 
K_{\scrC}^{2,1}(c_1,c_2;\mu^1_{\scrC}(c_3)) = 0,
\end{aligned}
\end{equation}
which means that $(-1)^{|c_2|} K_{\scrC}^{2,1}(c_1,c_2;c_3)$ is a chain map of degree $-2$. The same observation applies to 
\begin{equation} \label{eq:k12}
\begin{aligned}
& K_{\scrC}^{1,2}(c_1;c_2,c_3) = \beta^{1,2}_{\scrC}(c_1;c_2,c_3) - (-1)^{\|c_1\|(\|c_2\|+\|c_3\|}\beta_{\scrC}^{2,1}(c_2,c_3;c_1) 
\\ & \quad
-  (-1)^{\|c_1\|} \kappa_{\scrC}^{1,1}(c_1;\mu^2_{\scrC}(c_2,c_3)) -
\mu^2_{\scrC}(\kappa_{\scrC}^{1,1}(c_1;c_2),c_3) - (-1)^{\|c_1\|\,\|c_2\|} \mu^2_{\scrC}(c_2,\kappa_{\scrC}^{1,1}(c_1;c_3)).
\end{aligned}
\end{equation}

We now return to the original situation \eqref{eq:k-maurer-cartan}. Suppose that the cocycles $K_{\scrC}^{2,1}(c_1,c_1;c_2)$ and $K_{\scrC}^{1,2}(c_1;c_2,c_2)$ are trivial in $H^*(\scrC)$, and that we have chosen bounding cochains for them:
\begin{equation}
\begin{aligned}
& K_{\scrC}^{2,1}(c_1,c_1;c_2) = \mu^1_{\scrC}(b^{2,1}), \\
& K_{\scrC}^{1,2}(c_1;c_2,c_2) = \mu^1_{\scrC}(b^{1,2}).
\end{aligned}
\end{equation}
Then, \eqref{eq:commutative-h} is satisfied modulo $q^4$ by the following refinement of \eqref{eq:guess-0}:
\begin{equation} \label{eq:guess-1}
h = \kappa_{\scrC}^{1,1}(\gamma_1;\gamma_2) + q^3 (b^{2,1} + b^{1,2}) \in \scrC^0 \hat\otimes q\bZ[[q]]/q^4.
\end{equation}
It remains to look at the geometry underlying \eqref{eq:k21}, \eqref{eq:k12}. Both cases are parallel, so let's focus on $K^{2,1}_{\scrC}$. For  $\beta^{2,1}_{\scrC}(c_1,c_2;c_3)$, take the relevant map and project it to actual Deligne-Mumford space for simplicity, which means considering the composition
\begin{equation}
\mathit{MWW}_{2,1} \longrightarrow \mathit{FM}_3 \longrightarrow \EuScript{DM}_3 \longrightarrow \mathit{DM}_3 \iso S^2.
\end{equation}
Looking at Figure \ref{fig:8-gon}, we see that three boundary sides of the octagon $\mathit{MWW}_{2,1}$, corresponding to the $\mu^3$ terms in \eqref{eq:k21}, are mapped to paths in $\mathit{DM}_3$ which are images of the canonical map $S_3 \rightarrow \mathit{DM}_3$ and two of its permuted versions (those which preserve the ordering between the first and second point in the configuration). The remaining sides are collapsed to ``special points'', meaning the images of the maps $\mathit{DM}_2 \times \mathit{DM}_2 \rightarrow \mathit{DM}_3$. Altogether, we get a relative cycle, whose homology class in $H_2(\mathit{DM}_3, (\mathit{DM}_3)_{\bR}) = H_2(S^2,S^1)$ is independent of all choices involved in the construction. From the assumption \eqref{eq:traffic-rules}, one sees easily that this cycle corresponds to one of the two discs in $S^2$ bounding $S^1$. Correspondingly, for $\beta^{1,2}_{\scrC}(c_3;c_1,c_2)$, we get a relative cycle corresponding to the other disc. The outcome of this discussion is that $K_{\scrC}^{2,1}$ is constructed from a cycle which represents a generator of $H_2(\mathit{DM}_3) = H_2(S^2)$ (the difference between the two discs bounding the same $S^1$, roughly speaking). Since the action of $\mathit{Sym}_3$ on $H_2(\mathit{DM}_3)$ is trivial, the induced map
\begin{equation} \label{eq:induced-k}
([c_1],[c_2],[c_3]) \mapsto [(-1)^{|c_2|}K_{\scrC}^{2,1}(c_1,c_2;c_3)]: H^*(\scrC)^{\otimes 3} \longrightarrow H^{*-2}(\scrC)
\end{equation}
is graded symmetric. In particular, $[K_{\scrC}^{2,1}(c_1,c_1;c_2)] \in H^1(\scrC)$ must be $2$-torsion. If we can rule out such torsion, then the class must necessarily vanish, and similarly for $[K_{\scrC}^{1,2}(c_1;c_2,c_2)]$. We can carry over the argument to other coefficients:

\begin{proposition} \label{th:commutative-2}
The product $\bullet $ on $\mathit{MC}(\scrC;N)$ is commutative if $N^3 = 0$. It is  also commutative if $N^4 = 0$, and additionally, $H^*(\scrC)$ is a free abelian group.
\end{proposition}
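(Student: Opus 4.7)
The plan is to extend the explicit construction carried out in Section \ref{subsec:commutativity} for $N = q\bZ[[q]]/q^{k+1}$ to an arbitrary adic ring $N$ satisfying the nilpotence hypothesis. Throughout, I assume the operations $\beta_{\scrC}^{d_1,\ldots,d_r}$ have been chosen so that the color-cancellation property \eqref{eq:cancel-0} holds; this is possible by Section \ref{subsec:multiproduct-p}, and by Corollary \ref{th:hideous} the resulting product $\bullet$ is canonically identified with the one defined via any other set of choices.

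For the first part ($N^3 = 0$), I set $h = \kappa_{\scrC}^{1,1}(\gamma_1; \gamma_2) \in \scrC^0 \hat\otimes N^2$. Expanding \eqref{eq:multiproduct} and applying \eqref{eq:cancel-0} shows that all contributions to $\gamma_1 \bullet \gamma_2 - \gamma_2 \bullet \gamma_1$ up to order $N^2$ cancel except for $\beta_{\scrC}^{1,1}(\gamma_1;\gamma_2) - \beta_{\scrC}^{1,1}(\gamma_2;\gamma_1)$. Applying $\mu^1_{\scrC}$ to $h$ via \eqref{eq:first-kappa} produces exactly this difference, since the correction terms $\kappa_{\scrC}^{1,1}(\mu^1_{\scrC}\gamma_k; -)$ lie in $\scrC \hat\otimes N^3 = 0$. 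Because $h \in \scrC \hat\otimes N^2$, the remaining terms $\mu_{\scrC}^{p+q+1}(\ldots, h, \ldots)$ with $p+q \geq 1$ on the left-hand side of \eqref{eq:commutative-h} lie in $N^{p+q+2} = 0$, so \eqref{eq:commutative-h} is verified.

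For the second part ($N^4 = 0$ with $H^*(\scrC)$ free), I augment to $h = \kappa_{\scrC}^{1,1}(\gamma_1; \gamma_2) + h_3$ with $h_3 \in \scrC^0 \hat\otimes N^3$. A parallel but lengthier computation — tracking the order-$N^3$ terms for which $\mu^1_{\scrC}\gamma_k \equiv -\mu^2_{\scrC}(\gamma_k,\gamma_k) \bmod N^3$, and regrouping using \eqref{eq:21-relation}, \eqref{eq:12-relation}, \eqref{eq:k21}, \eqref{eq:k12} — reduces the mod-$N^4$ version of \eqref{eq:commutative-h} to an equation of the shape
\begin{equation*}
\mu^1_{\scrC}(h_3) \;=\; \pm K_{\scrC}^{2,1}(\gamma_1,\gamma_1;\gamma_2) \pm K_{\scrC}^{1,2}(\gamma_1;\gamma_2,\gamma_2).
\end{equation*}
By trilinearity of $K_{\scrC}^{2,1}$ and $K_{\scrC}^{1,2}$ together with $N^4 = 0$, the right-hand side depends only on the reductions $\bar\gamma_k = \gamma_k \bmod N^2$, which are cocycles in $\scrC^1 \hat\otimes (N/N^2)$. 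The cohomology-level operation \eqref{eq:induced-k} (and its $K^{1,2}$ analogue) is graded symmetric, so applied to $(c,c,c')$ with $c,c'$ of degree $1$ it takes values in $2$-torsion elements of $H^*(\scrC)$; under torsion-freeness, these vanish. The universal coefficient theorem, which applies since $H^*(\scrC)$ is free, then lets this vanishing lift to nullhomologousness of $K_{\scrC}^{2,1}(\gamma_1,\gamma_1;\gamma_2)$ and $K_{\scrC}^{1,2}(\gamma_1;\gamma_2,\gamma_2)$ in $\scrC \hat\otimes N^3$, producing the required $h_3$.

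The main obstacle is in the second part: verifying that the mod-$N^4$ discrepancy agrees (up to coboundaries) with $\pm K_{\scrC}^{2,1}(\gamma_1,\gamma_1;\gamma_2) \pm K_{\scrC}^{1,2}(\gamma_1;\gamma_2,\gamma_2)$ requires a careful sign and term analysis, combining the expansion of \eqref{eq:multiproduct}, the Maurer-Cartan equations for $\gamma_1,\gamma_2$ expanded through order $N^3$, and the defining identities for $\kappa_{\scrC}^{1,1}$, $K_{\scrC}^{2,1}$, $K_{\scrC}^{1,2}$. A secondary subtlety is passing from the vanishing of the cohomological operation evaluated at $(c,c,c')$ to an actual chain-level bounding cochain that depends multilinearly on $(\gamma_1,\gamma_2)$; this can be handled by symmetrizing $K_{\scrC}^{2,1}$ in its first two entries (and $K_{\scrC}^{1,2}$ in its last two) and choosing a bilinear chain-level nullhomotopy, which exists because both $\scrC$ and $H^*(\scrC)$ are free.
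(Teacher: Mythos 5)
Your proposal is correct and follows essentially the same route as the paper: the nullhomotopy $\kappa_{\scrC}^{1,1}$ handles the $N^3=0$ case exactly as in \eqref{eq:guess-0}, and for $N^4=0$ the obstruction is again the pair of classes $[K_{\scrC}^{2,1}(\gamma_1,\gamma_1;\gamma_2)]$, $[K_{\scrC}^{1,2}(\gamma_1;\gamma_2,\gamma_2)]$, killed by the graded symmetry of \eqref{eq:induced-k} together with torsion-freeness and the universal coefficient identification $H^*(\scrC;G)=H^*(\scrC)\otimes G$, which is precisely the argument given after \eqref{eq:guess-1}. The only cosmetic difference is your extra discussion of a bilinear chain-level nullhomotopy, which is not needed (existence of bounding cochains for the two specific cocycles suffices), and your passage to general coefficients should be phrased via polarization of the integral diagonal vanishing rather than ``2-torsion in $H^*(\scrC;G)$'', but this is exactly the content of the paper's remark that the symmetry argument carries over to arbitrary coefficients.
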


\begin{proof}
The first part is as in \eqref{eq:guess-0}. For the second part, the obstruction is now $[K_{\scrC}^{2,1}(c_1,c_1;c_2)] \in H^{*-2}(\scrC;N^3)$, and similarly for $K_{\scrC}^{1,2}$. From our assumption, it follows that $H^*(\scrC;G) = H^*(\scrC) \otimes G$ for any abelian group $G$. Hence, the symmetry argument that ensures vanishing of \eqref{eq:induced-k} carries over to arbitrary coefficients.
\end{proof}

\subsection{Strip-shrinking spaces and associativity}
Let's assume that $\scrC$ is homologically unital (this assumption is used in the context of geometric stabilization arguments, which add extra marked points).  Our aim is to show:

\begin{proposition} \label{th:semi-associativity}
For any $r \geq 2$ and any $1 \leq m \leq r-1$, $\Pi^r_{\scrC}(\gamma_1,\dots,\gamma_r)$ is equivalent to $\Pi^{r-1}_{\scrC}(\gamma_1,\dots,\gamma_m \bullet \gamma_{m+1},\dots,\gamma_r) = \Pi^{r-1}_{\scrC}(\gamma,\dots,\Pi^2_{\scrC}(\gamma_m,\gamma_{m+1}),\dots,\gamma_r)$.
\end{proposition}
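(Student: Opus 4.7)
The strategy is to use the strip-shrinking spaces $\mathit{SS}_{d_1,\dots,(d_m,d_{m+1}),\dots,d_r}$ of Section~\ref{subsec:strip-shrinking} as parameter spaces for a chain-level homotopy between the two Maurer-Cartan expressions. Recall that $\mathit{SS}$ has dimension $d = d_1+\cdots+d_r$, and that its codimension-one stratification \eqref{eq:glue-tree-6} features a distinguished mid-scale vertex $v_*$: at the $s\to-\infty$ end, $v_*$ carries an $\mathit{MWW}$ with $r$ colors; at the $s\to+\infty$ end, $v_*$ carries an $\mathit{MWW}$ with $r{-}1$ colors (colors $m$ and $m{+}1$ merged), and each color-$m$ leaf of $v_*$ is fed by a small-mid-scale vertex carrying a $2$-colored $\mathit{MWW}_{\|v\|_1,\|v\|_2}$. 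Inductively choose fundamental chains $[\mathit{SS}_{d_1,\dots,(d_m,d_{m+1}),\dots,d_r}]$ whose boundaries are given by \eqref{eq:glue-tree-6} with appropriate signs, and which satisfy a cancellation property for empty colors analogous to \eqref{eq:cancel-0}. Using the stabilized map \eqref{eq:double-stabilization} to $\mathit{FM}_{d+2}$ and feeding the strict unit of $\scrC$ (after reducing to the strictly unital case via Lemma~\ref{th:unit-unit}) into the two auxiliary slots \eqref{eq:stabilization}, these chains yield operations
\begin{equation*}
\eta_{\scrC}^{d_1,\dots,(d_m,d_{m+1}),\dots,d_r}: \scrC^{\otimes d} \longrightarrow \scrC[-d],
\end{equation*}
with Koszul signs as in \eqref{eq:weird-signs}. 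Set
\begin{equation*}
h = \sum_{d_1,\dots,d_r \geq 0} \eta_{\scrC}^{d_1,\dots,(d_m,d_{m+1}),\dots,d_r}\bigl(\overbrace{\gamma_1,\dots,\gamma_1}^{d_1};\dots;\overbrace{\gamma_r,\dots,\gamma_r}^{d_r}\bigr) \in \scrC^0 \hat\otimes N.
\end{equation*}

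Next I would verify that $h$ realizes the equivalence \eqref{eq:mc-equivalence} between $\gamma = \Pi^r_{\scrC}(\gamma_1,\dots,\gamma_r)$ and $\tilde\gamma = \Pi^{r-1}_{\scrC}(\gamma_1,\dots,\gamma_m\bullet\gamma_{m+1},\dots,\gamma_r)$. Apply $\mu^1_{\scrC}$ to $h$ and expand via the boundary formula for $[\mathit{SS}]$; as in the proof of Lemma~\ref{th:multiproduct}, the Maurer-Cartan equations on the $\gamma_k$ allow one to absorb the boundary strata corresponding to $S$-type degenerations on a single color. The surviving contributions partition into three groups. The strata from the $s\to-\infty$ end (where $v_*$ carries an $r$-colored $\mathit{MWW}$) assemble by the very definition of $\Pi^r_{\scrC}$ into $\gamma$. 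The strata from the $s\to+\infty$ end (where $v_*$ carries an $(r{-}1)$-colored $\mathit{MWW}$ and each color-$m$ input of $v_*$ is fed by a small-mid-scale $\mathit{MWW}_{\|v\|_1,\|v\|_2}$) assemble into $\tilde\gamma$: fixing the shape of $v_*$ and summing over all possible shapes and inputs of the small-mid-scale factors reproduces one copy of $\gamma_m \bullet \gamma_{m+1} = \Pi^2_{\scrC}(\gamma_m,\gamma_{m+1})$ at each color-$m$ slot of $v_*$, and then summing over shapes of $v_*$ gives $\Pi^{r-1}_{\scrC}$ applied to these inputs. Finally, the remaining strata (those in which $v_*$ is itself unchanged but a large-scale or small-scale $S$-vertex is expanded) reassemble, using \eqref{eq:property-of-beta}, into precisely the left-hand side $\sum_{p,q}\mu^{p+q+1}_{\scrC}(\gamma,\dots,\gamma,h,\tilde\gamma,\dots,\tilde\gamma)$ of \eqref{eq:mc-equivalence}. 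This yields the required equivalence.

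The main obstacle is combinatorial and corner-theoretic rather than conceptual. The $\mathit{SS}$ spaces are only manifolds with \emph{generalized} corners (Lemmas~\ref{th:corner-1}--\ref{th:corner-3}), so one must define fundamental chains and interpret their boundaries with care, especially near points such as in Example~\ref{th:glue-example-2} where multiple codimension-one strata meet in a non-simplicial pattern. A related subtlety is arranging the compatibility of the stabilization points \eqref{eq:stabilization} and the auxiliary functions \eqref{eq:imaginary-part-is-tau} with the cancellation \eqref{eq:cancel-0} used in the definition of $\Pi^r_{\scrC}$; this requires choices parallel to Section~\ref{subsec:multiproduct-p}, carried out inductively on $d$. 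A final check, to be performed in the spirit of Corollary~\ref{th:hideous}, is that the equivalence class of $h$ so produced is independent of all the auxiliary choices, so that the statement descends to the sets $\mathit{MC}(\scrC;N)$. Once these bookkeeping issues are handled, the verification is a direct extension of the calculation behind Lemma~\ref{th:multiproduct}, with signs determined by \eqref{eq:weird-signs}.
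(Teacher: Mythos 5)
You have found the paper's mechanism: the strip-shrinking spaces, fundamental chains on them compatible with \eqref{eq:cancel-0}, the stabilizing points \eqref{eq:stabilization} fed with the unit, and the resulting boundary identity applied to Maurer-Cartan inputs — this is exactly the proof in the paper. However, the final verification as you describe it does not match the actual boundary structure of $\mathit{SS}$, and that is a genuine (though repairable) gap. Every codimension-one face of $\mathit{SS}_{d_1,\dots,(d_m,d_{m+1}),\dots,d_r}$ still contains the distinguished vertex $v_*$ carrying an $\mathit{SS}$ factor — at worst $\mathit{SS}_{0,\dots,(0,0),\dots,0} = \mathit{point}$, whose associated operation is $\mu^2_{\scrC}(e_{\scrC},e_{\scrC}) = e_{\scrC} + \text{coboundary}$ by \eqref{eq:trivial-alpha}, not an empty insertion. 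Consequently, plugging in the $\gamma_k$ and using their Maurer-Cartan equations, the boundary identity \eqref{eq:property-of-alpha} yields $\sum_{p,q}\mu^{p+q+1}_{\scrC}(\gamma,\dots,\gamma,g,\tilde\gamma,\dots,\tilde\gamma)=0$, where $g$ is the full sum of $\alpha$-operations including its constant term. There are no boundary strata contributing ``bare'' copies of $\gamma$ or $\tilde\gamma$, so your three-group decomposition — with the $s\to-\infty$ and $s\to+\infty$ ends producing the right-hand side $\gamma-\tilde\gamma$ of \eqref{eq:mc-equivalence} — is not literally what the geometry gives; those terms would only emerge after simplifying the insertions of the constant part of $g$ using strict unitality. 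Relatedly, since your $h$ includes the $(d_1,\dots,d_r)=(0,\dots,0)$ term, it does not lie in $\scrC^0\hat\otimes N$, so \eqref{eq:mc-equivalence} is not the equation you can check for it.

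The repair is the paper's own final step: keep $g\in\scrC^0\hat\otimes(\bZ 1\oplus N)$, observe via \eqref{eq:trivial-alpha} that its reduction mod $N$ is a cocycle cohomologous to $e_{\scrC}$, and invoke the unital criterion for equivalence, Lemma \ref{th:alternative-mc}, which is stated for homotopy unital $\scrC$. This makes your proposed detour through Lemma \ref{th:unit-unit} unnecessary — and as written that detour is itself incomplete, since strictifying $\scrC$ would require transporting the operations $\beta_{\scrC}$, $\Pi^r_{\scrC}$ and the $\mathit{SS}$-operations across the strictification, which you have not accounted for. With Lemma \ref{th:alternative-mc} replacing these steps, the remainder of your argument — absorbing the single-color $S_j$-faces \eqref{eq:ss-boundary} by the Maurer-Cartan equations, identifying the entries of the outer $\mu^j$ before $v_*$ with $\gamma=\Pi^r_{\scrC}(\gamma_1,\dots,\gamma_r)$, and the entries after $v_*$ (an $(r-1)$-colored $\mathit{MWW}$ operation with $2$-colored $\mathit{MWW}$ factors feeding the merged color, reproducing $\gamma_m\bullet\gamma_{m+1}$) with $\tilde\gamma$ — coincides with the paper's proof.
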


Before getting into the proof, let's draw some immediate consequences.

\begin{corollary} \label{th:cor-as}
The product $\bullet$ is associative.
\end{corollary}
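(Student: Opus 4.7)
The plan is to deduce associativity directly from Proposition \ref{th:semi-associativity} by specializing to $r = 3$ and comparing the two choices $m = 1$ and $m = 2$. Concretely, given three Maurer-Cartan elements $\gamma_1,\gamma_2,\gamma_3$, both iterated products $(\gamma_1 \bullet \gamma_2) \bullet \gamma_3$ and $\gamma_1 \bullet (\gamma_2 \bullet \gamma_3)$ should be shown to be equivalent to the single ternary operation $\Pi^3_{\scrC}(\gamma_1,\gamma_2,\gamma_3)$; transitivity of equivalence on $\mathit{MC}(\scrC;N)$ then yields the result.

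More precisely, applying Proposition \ref{th:semi-associativity} with $r = 3$, $m = 1$ gives
\begin{equation*}
\Pi^3_{\scrC}(\gamma_1,\gamma_2,\gamma_3) \sim \Pi^2_{\scrC}(\gamma_1 \bullet \gamma_2,\gamma_3) = (\gamma_1 \bullet \gamma_2) \bullet \gamma_3,
\end{equation*}
while applying it with $r = 3$, $m = 2$ gives
\begin{equation*}
\Pi^3_{\scrC}(\gamma_1,\gamma_2,\gamma_3) \sim \Pi^2_{\scrC}(\gamma_1,\gamma_2 \bullet \gamma_3) = \gamma_1 \bullet (\gamma_2 \bullet \gamma_3).
\end{equation*}
Combining these two equivalences proves $(\gamma_1 \bullet \gamma_2) \bullet \gamma_3 \sim \gamma_1 \bullet (\gamma_2 \bullet \gamma_3)$ in $\mathit{MC}(\scrC;N)$, which is the desired associativity of $\bullet$ on equivalence classes.

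There is essentially no obstacle here, since all the content has been absorbed into Proposition \ref{th:semi-associativity}: the latter provides both equivalences from the same intermediate object $\Pi^3_{\scrC}(\gamma_1,\gamma_2,\gamma_3)$, and associativity of equivalence classes is immediate. (One may note that this argument also explains the remark, made earlier in Section \ref{sec:group}, that the higher $\Pi^r_{\scrC}$ with $r \geq 3$ are redundant: iteratively applying Proposition \ref{th:semi-associativity} reduces any $\Pi^r_{\scrC}$, up to equivalence, to an $(r-1)$-fold application of $\bullet$, which is well-defined precisely because $\bullet$ is associative.)
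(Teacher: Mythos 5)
Your argument is correct and is exactly the paper's own proof: the $r=3$ case of Proposition \ref{th:semi-associativity}, taken with $m=1$ and $m=2$, identifies both $(\gamma_1 \bullet \gamma_2)\bullet\gamma_3$ and $\gamma_1\bullet(\gamma_2\bullet\gamma_3)$ with $\Pi^3_{\scrC}(\gamma_1,\gamma_2,\gamma_3)$ up to equivalence, and transitivity finishes it. Nothing further is needed.
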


This is because, by the $r = 3$ case of Proposition \ref{th:semi-associativity}, $\Pi^3_{\scrC}(\gamma_1,\gamma_2,\gamma_3)$ is equivalent to both $\Pi^2_{\scrC}(\gamma_1 \bullet \gamma_2, \gamma_3) = (\gamma_1 \bullet \gamma_2) \bullet \gamma_3$ and $\Pi^2_{\scrC}(\gamma_1, \gamma_2 \bullet \gamma_3) = \gamma_1 \bullet (\gamma_2 \bullet \gamma_3)$.

\begin{corollary}
$\gamma = 0$ is the neutral element, meaning that $\gamma \bullet 0$ and $0 \bullet \gamma$ are equivalent to $\gamma$.
\end{corollary}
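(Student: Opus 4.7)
The plan is to directly unpack the definition of $\bullet$ once we have arranged the defining data so that colors carrying no marked points may be dropped. By Corollary \ref{th:hideous}, the product on Maurer--Cartan sets is independent, up to canonical isomorphism, of the choices of fundamental chains and functions \eqref{eq:imaginary-part-is-tau} used to set up the operations $\beta^{d_1,d_2}_{\scrC}$. So the first step is to reduce to the setup of Section \ref{subsec:multiproduct-p}, in which the color-cancellation identity \eqref{eq:cancel-0} holds; this is the only nontrivial preparation.

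Next, I would just substitute $\gamma_2 = 0$ into the defining formula \eqref{eq:multiproduct} for $\Pi^2_{\scrC}(\gamma,0)$. Every summand with $d_2 \geq 1$ has a $0$ input and vanishes by multilinearity of $\beta_{\scrC}^{d_1,d_2}$, so only terms with $d_2 = 0$ and $d_1 \geq 1$ survive. By \eqref{eq:cancel-0}, these give
\begin{equation}
\gamma \bullet 0 \;=\; \sum_{d \geq 1} \beta^{d,0}_{\scrC}(\gamma,\dots,\gamma) \;=\; \sum_{d \geq 1} \beta^{d}_{\scrC}(\gamma,\dots,\gamma) \;=\; \Pi^1_{\scrC}(\gamma).
\end{equation}
Now I would appeal to the observation made in the paragraph preceding \eqref{eq:bullet}: the operations $(\beta^1_{\scrC} = \mathit{id}, \beta^2_{\scrC}, \dots)$ assemble into an $A_\infty$-homomorphism from $(\scrC,\mu_{\scrC})$ to itself whose linear part is the identity and which is homotopic to the identity homomorphism. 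Combining this with Lemma \ref{th:homotopy-mc}, the induced map on $\mathit{MC}(\scrC;N)$ is the identity, so $\Pi^1_{\scrC}(\gamma)$ is equivalent to $\gamma$. The case $0 \bullet \gamma$ is handled by the symmetric argument, collapsing to the $d_1 = 0$ subsum instead.

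There is no real obstacle here: the entire content is the color-cancellation identity plus the triviality of the $r=1$ operation on Maurer--Cartan classes. The only mildly subtle point is to acknowledge that the full $A_\infty$-homomorphism claim underlying triviality of the $r=1$ case is only sketched (not proved) in the excerpt, and one must appeal to it; alternatively, one could give a direct order-by-order argument that $\Pi^1_{\scrC}(\gamma)$ is gauge-equivalent to $\gamma$, using \eqref{eq:property-of-beta} in the $r=1$ form to produce the bounding cochain recursively in powers of the adic filtration on $N$.
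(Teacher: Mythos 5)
Your reduction $\gamma \bullet 0 = \sum_d \beta^{d,0}_{\scrC}(\gamma,\dots,\gamma) = \Pi^1_{\scrC}(\gamma)$ after arranging \eqref{eq:cancel-0} (and symmetrically for $0 \bullet \gamma$) is fine, and the paper records exactly this identity in \eqref{eq:0-gamma}. The gap is in how you justify $\Pi^1_{\scrC}(\gamma) \simeq \gamma$. Your primary route invokes the claim that $(\beta^1_{\scrC} = \mathit{id}, \beta^2_{\scrC},\dots)$ is homotopic to the identity $A_\infty$-endomorphism; but the paper only asserts this, explicitly declines to prove it beyond the $\beta^2$ case (Example \ref{th:beta-2}), and states that the Maurer--Cartan-level consequence will instead be obtained indirectly in Corollary \ref{th:r-1-trivial} --- which is itself deduced from Proposition \ref{th:semi-associativity} and the inverse lemmas, downstream of the corollary you are proving. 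So this is not an available input; making it one requires extending the geometric nullhomotopy argument of Example \ref{th:beta-2} to all $\beta^d_{\scrC}$, which is precisely the ``more direct geometric argument'' the paper says it chose to avoid. Your fallback does not repair this: the $r=1$ case of \eqref{eq:property-of-beta} only says that $(\mathit{id},\beta^2_{\scrC},\dots)$ is an $A_\infty$-homomorphism, and an $A_\infty$-endomorphism with linear term the identity need not act as the identity on Maurer--Cartan sets. For instance, if all $\mu^d_{\scrC}$ vanish, any choice of higher $\beta$'s satisfies the homomorphism relations, equivalence of Maurer--Cartan elements is equality, and over $N = q\bF_p[q]/q^3$ one has $\Pi^1_{\scrC}(qc) = qc + q^2\beta^2_{\scrC}(c,c) \neq qc$ whenever $\beta^2_{\scrC}(c,c) \neq 0$. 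Hence the proposed order-by-order construction of a bounding cochain cannot succeed from the algebraic relations alone; the obstruction at each order is a genuine cohomological condition whose vanishing is what the homotopy-to-identity data would provide.

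For comparison, the paper's proof avoids the $r=1$ question entirely: it observes $0 \bullet 0 = 0$, uses Lemmas \ref{th:inverse-3} and \ref{th:inverse-4} to see that $\gamma \mapsto \gamma \bullet 0$ is a bijection of $\mathit{MC}(\scrC;N)$, and then uses associativity to conclude that this bijection is idempotent, hence the identity. If you prefer to keep your reduction to $\Pi^1_{\scrC}$, the legitimate way to obtain $\Pi^1_{\scrC}(\gamma) \simeq \gamma$ at this stage is not the homotopy claim but the $r = 2$ case of Proposition \ref{th:semi-associativity} specialized to $\gamma_1 = 0$, combined with Lemma \ref{th:inverse-3} to realize every Maurer--Cartan element as $0 \bullet \gamma_2$ --- which is exactly the paper's Corollary \ref{th:r-1-trivial}, and at that point your argument collapses into the paper's.
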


The definition says that
\begin{equation} \label{eq:0-gamma}
\gamma \bullet 0 = \sum_d \beta_{\scrC}^{d,0}(\gamma,\dots,\gamma),
\end{equation}
so the statement is not immediately obvious. However, it is obvious that $0 \bullet 0 = 0$. By Lemmas \ref{th:inverse-3} and \ref{th:inverse-4}, $\gamma \mapsto \gamma \bullet 0$ is a bijective map from $\mathit{MC}(\scrC;N)$ to itself. By associativity, that bijective map is idempotent, and therefore the identity. (There is a more direct geometric argument which shows that \eqref{eq:0-gamma} is equivalent to $\gamma$,  along the lines of Remark \ref{th:beta-2}, but we have chosen to avoid it.)

\begin{corollary}
$(\mathit{MC}(\scrC;N), \bullet)$ is a group.
\end{corollary}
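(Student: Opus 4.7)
The plan is to combine the three structural facts that are already in hand: associativity of $\bullet$ (just proved), the neutral element property of $0$ (just proved), and the solvability statement of Lemma~\ref{th:inverse-3}. Since associativity and a two-sided neutral element make $\mathit{MC}(\scrC;N)$ into a monoid under $\bullet$, it suffices to exhibit a left inverse and a right inverse for each element; a standard monoid-theoretic argument then forces them to coincide and to be a two-sided inverse.

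First I would apply Lemma~\ref{th:inverse-3} with $r=2$, $k=1$, fixed second entry $\gamma_2$, and target value the zero Maurer--Cartan element $\gamma=0$. This produces a Maurer--Cartan element $\gamma_1^L$, unique up to equality (not just equivalence) of actual MC solutions, satisfying $\Pi^2_\scrC(\gamma_1^L,\gamma_2)=0$, i.e.\ $\gamma_1^L\bullet\gamma_2=0$ in $\mathit{MC}(\scrC;N)$. Dually, taking $k=2$ with fixed first entry $\gamma_2$ and target $0$ produces a Maurer--Cartan element $\gamma_2^R$ with $\gamma_2\bullet\gamma_2^R=0$. Thus every class in $\mathit{MC}(\scrC;N)$ admits both a left and a right inverse with respect to $\bullet$.

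Now the usual calculation in a monoid applies: from associativity and the neutral element $0$,
\begin{equation}
\gamma_1^L \;=\; \gamma_1^L\bullet 0 \;=\; \gamma_1^L\bullet(\gamma_2\bullet\gamma_2^R) \;=\; (\gamma_1^L\bullet\gamma_2)\bullet\gamma_2^R \;=\; 0\bullet\gamma_2^R \;=\; \gamma_2^R,
\end{equation}
so the left and right inverses of $\gamma_2$ coincide, giving a two-sided inverse. Combined with the previous two corollaries, this shows that $(\mathit{MC}(\scrC;N),\bullet)$ is a group, and the construction is functorial in $N$ because all inputs (the operations $\beta_\scrC^{d_1,\dots,d_r}$ and the zero element) are.

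The only place where anything substantial is invoked is Lemma~\ref{th:inverse-3}, so there is no real obstacle here; the argument is a purely formal deduction. The one point worth double-checking is that Lemma~\ref{th:inverse-3} indeed delivers inverses at the level of equivalence classes: the lemma produces an honest Maurer--Cartan solution whose product with $\gamma_2$ equals $0$ on the nose, and passing to equivalence classes preserves this equality, so the image in $\mathit{MC}(\scrC;N)$ is a genuine inverse.
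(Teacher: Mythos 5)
Your argument is correct and is essentially the paper's own: the paper proves this corollary precisely by combining the associativity and neutral-element corollaries with Lemma \ref{th:inverse-3}, exactly as you do. Your spelling out of the left/right inverse and the standard monoid argument is just an explicit version of that one-line combination, and your closing remark about chain-level equality descending to equivalence classes is the right (if minor) point to check.
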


This is a combination of the previous two corollaries and Lemma \ref{th:inverse-3}.

\begin{corollary} \label{th:r-at-least-3}
For any $r \geq 3$, $\Pi^r_{\scrC}(\gamma_1,\dots,\gamma_r)$ is equivalent to $\gamma_1 \bullet \cdots \bullet \gamma_r$.
\end{corollary}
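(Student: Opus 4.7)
The plan is a straightforward induction on $r$, using Proposition \ref{th:semi-associativity} as the inductive step and the associativity of $\bullet$ (already established as the preceding corollary) to rearrange the resulting iterated product.

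For the base case $r = 3$, I would apply Proposition \ref{th:semi-associativity} with $m = 1$ to obtain
\begin{equation*}
\Pi^3_{\scrC}(\gamma_1,\gamma_2,\gamma_3) \;\sim\; \Pi^2_{\scrC}(\gamma_1 \bullet \gamma_2, \gamma_3) \;=\; (\gamma_1 \bullet \gamma_2) \bullet \gamma_3,
\end{equation*}
which by associativity coincides (up to equivalence) with $\gamma_1 \bullet \gamma_2 \bullet \gamma_3$. Here $\sim$ denotes equivalence of Maurer-Cartan elements in $\mathit{MC}(\scrC;N)$.

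For the inductive step, assume the statement holds for $r - 1 \geq 3$. Apply Proposition \ref{th:semi-associativity} with $m = 1$ to get
\begin{equation*}
\Pi^r_{\scrC}(\gamma_1,\dots,\gamma_r) \;\sim\; \Pi^{r-1}_{\scrC}(\gamma_1 \bullet \gamma_2, \gamma_3, \dots, \gamma_r).
\end{equation*}
By the inductive hypothesis, the right hand side is equivalent to the $(r-1)$-fold $\bullet$-product $(\gamma_1 \bullet \gamma_2) \bullet \gamma_3 \bullet \cdots \bullet \gamma_r$, which by associativity equals $\gamma_1 \bullet \gamma_2 \bullet \cdots \bullet \gamma_r$. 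The fact that equivalence classes can be chained in this way uses only that $\bullet$ is well-defined on equivalence classes (Lemma \ref{th:multiproduct} for $r = 2$) and that associativity holds on the level of equivalence classes.

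There is essentially no obstacle here: all the content sits in Proposition \ref{th:semi-associativity}, and the corollary is a formal consequence. The only thing to be careful about is that we invoke associativity of $\bullet$ (the immediately preceding corollary) to identify the iteratively bracketed expression with the unbracketed $r$-fold product; this is legitimate since associativity was derived from Proposition \ref{th:semi-associativity} in the $r=3$ case, independently of the present corollary.
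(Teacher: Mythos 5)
Your proof is correct and follows essentially the same route as the paper: induction on $r$, with Proposition \ref{th:semi-associativity} applied to the first two entries ($m=1$) providing the inductive step, and associativity of $\bullet$ together with its well-definedness on equivalence classes (Lemma \ref{th:multiproduct}) handling the bookkeeping. No gaps.
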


This follows by induction: if $\Pi^{r-1}_{\scrC}(\gamma_1,\dots,\gamma_{r-1})$ is equivalent to $\gamma_1 \bullet \cdots \bullet \gamma_{r-1}$, then $\Pi^r_{\scrC}(\gamma_1,\dots,\gamma_r)$ is equivalent to $\Pi^{r-1}_{\scrC}(\gamma_1 \bullet \gamma_2,\dots,\gamma_r)$, hence to $(\gamma_1 \bullet \gamma_2) \bullet \cdots \bullet \gamma_r$.

\begin{corollary} \label{th:r-1-trivial}
$\Pi^1_{\scrC}(\gamma)$ is always equivalent to $\gamma$.
\end{corollary}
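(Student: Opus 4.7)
The plan is to identify $\Pi^1_{\scrC}(\gamma)$ directly with the product $\gamma \bullet 0$, and then quote the neutral-element corollary proved just above.

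First, I would specialise to the particular choice of operations from Section \ref{subsec:multiproduct-p}, under which the cancellation identity $\beta_{\scrC}^{d_1,\dots,d_{k-1},0,d_{k+1},\dots,d_r} = \beta_{\scrC}^{d_1,\dots,d_{k-1},d_{k+1},\dots,d_r}$ of \eqref{eq:cancel-0} is in force. By Corollary \ref{th:hideous}, this entails no loss of generality, since the induced map on Maurer-Cartan equivalence classes is independent of the choice. Unpacking Definition \ref{th:def-multiproduct} at $r=2$ with $\gamma_2 = 0$, only the $d_2 = 0$ summands survive, and each $\beta^{d_1,0}_{\scrC}$ then collapses to $\beta^{d_1}_{\scrC}$. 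This yields the chain-level equality
\begin{equation*}
\gamma \bullet 0 \;=\; \sum_{d \geq 1} \beta^{d,0}_{\scrC}(\gamma,\dots,\gamma) \;=\; \sum_{d \geq 1} \beta^{d}_{\scrC}(\gamma,\dots,\gamma) \;=\; \Pi^1_{\scrC}(\gamma),
\end{equation*}
not merely an equivalence.

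With that identification in hand, I would simply invoke the preceding corollary, which asserts that $\gamma \bullet 0 \sim \gamma$, to conclude $\Pi^1_{\scrC}(\gamma) \sim \gamma$.

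There is essentially no obstacle here; the argument is algebraic repackaging of facts already in hand. The one thing worth checking is non-circularity: the neutral-element corollary rests on associativity (from Proposition \ref{th:semi-associativity} with $r=3$) together with Lemmas \ref{th:inverse-3}--\ref{th:inverse-4}, none of which appeal to Corollary \ref{th:r-1-trivial}. A mild point of hygiene is that the identification $\gamma \bullet 0 = \Pi^1_{\scrC}(\gamma)$ relies on the specific choice making \eqref{eq:cancel-0} hold, but this is precisely what Corollary \ref{th:hideous} is designed to absorb.
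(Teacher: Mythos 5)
Your argument is correct and non-circular, but it is not the route the paper takes. The paper's own proof is the $r=2$ case of Proposition \ref{th:semi-associativity}: $\gamma_1 \bullet \gamma_2 = \Pi^2_{\scrC}(\gamma_1,\gamma_2)$ is equivalent to $\Pi^1_{\scrC}(\gamma_1 \bullet \gamma_2)$, and one then specializes to $\gamma_1 = 0$ (using the neutral-element statement, or Lemma \ref{th:inverse-3}, to reach an arbitrary $\gamma$). You instead bypass the $r=2$ instance of that proposition entirely: under choices satisfying \eqref{eq:cancel-0} you observe the chain-level identity $\gamma \bullet 0 = \sum_d \beta^{d,0}_{\scrC}(\gamma,\dots,\gamma) = \Pi^1_{\scrC}(\gamma)$, and then quote the neutral-element corollary, whose proof rests only on associativity (the $r=3$ case of Proposition \ref{th:semi-associativity}) and Lemmas \ref{th:inverse-3}--\ref{th:inverse-4}; your non-circularity check is exactly right. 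What each approach buys: yours reduces the corollary to a one-line identification, but at the cost of importing two extra ingredients -- the existence of choices satisfying \eqref{eq:cancel-0} (Section \ref{subsec:multiproduct-p}) and the well-definedness statement Corollary \ref{th:hideous}, which in the paper is only sketched -- whereas the paper's argument works with a single fixed choice, with or without \eqref{eq:cancel-0}, and needs no comparison of choices at all. One small point of precision: Corollary \ref{th:hideous} does not say the induced map on Maurer-Cartan classes is literally ``independent of the choice''; it provides intertwining bijections, so the transfer of the statement $\Pi^1_{\scrC}(\gamma) \sim \gamma$ to an arbitrary choice goes through surjectivity of the vertical arrows (every class for the second choice is the image of one for the first). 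That is exactly what your argument needs, so the substance is fine, but the phrasing should be adjusted.
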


Proposition \ref{th:semi-associativity}, with $r = 2$, says that $\Pi^2_{\scrC}(\gamma_1,\gamma_2) = \gamma_1 \bullet \gamma_2$ is equivalent to $\Pi^1_{\scrC}(\gamma_1 \bullet \gamma_2)$, which implies the desired statement by specializing to $\gamma_1 = 0$ (again, this is a workaround which avoids a direct geometric argument).

\begin{proof}[Proof of Proposition \ref{th:semi-associativity}]
This uses the strip-shrinking moduli spaces from Section \ref{subsec:strip-shrinking}, with their maps \eqref{eq:double-stabilization}. The codimension one boundary faces are images of maps \eqref{eq:glue-tree-4} defined on the following spaces. First, in parallel with the first term of \eqref{eq:boundary-mww}, we have 
\begin{equation} \label{eq:ss-boundary}
\mathit{SS}_{d_1,\dots,d_k-j+1,\dots,d_r} \times S_j.
\end{equation}
where $j$ and $k$ can be arbitrary (in particular, the latter can be $m$ or $m+1$, something that's not entirely reflected in our notation). The analogue of the second term in \eqref{eq:boundary-mww} is less obvious:
\begin{equation} \label{eq:double-partition}
\begin{aligned}
& S_j \times \textstyle \prod_{i=1}^{k-1} \overbrace{\mathit{MWW}_{d_{i,1},\dots,d_{i,r}} \times 
\mathit{SS}_{d_{k,1}, \dots, (d_{k,m}, d_{k,m+1}),\dots,d_{k,r}}}^{\text{$r$ colors}} \\
& \qquad \qquad \textstyle \prod_{i=k+1}^j \big( \underbrace{\mathit{MWW}_{d_{i,1},\dots,d_{i,m-1},a_i,d_{i,m+2},\dots,d_{i,r}}}_{\text{$(r-1)$ colors}} 
\times \prod_{l = 1}^{a_i} \underbrace{\mathit{MWW}_{d_{i,l,1}, d_{i,l,2}}}_{\text{$2$ colors}} \big).
\end{aligned}
\end{equation}
Here, the last kind of $\mathit{MWW}$ factor corresponds to the small-mid vertices in the terminology of \eqref{eq:glue-tree-4}.  Such boundary faces are parametrized by ``double partitions''. One first chooses $j \geq 2$ and a partition $d_1 = d_{1,1} + \cdots + d_{j,1}$, \dots, $d_r = d_{1,r} + \cdots d_{j,r}$. Additionally, there is a distinguished $k \in \{1,\dots,j\}$, for which $(d_{k,1},\dots,d_{k,r})$ can be $(0,\dots,0)$. Finally, for each $i>k$ one chooses a further $a_i$ and partitions $d_{i,m} = d_{i,1,1} + \cdots + d_{i,a_i,1}$, $d_{i,m+1} = d_{i,1,2} + \cdots + d_{i,a_i,2}$.

We fix fundamental chains on the $\mathit{SS}$ spaces, compatible with the boundary structure in the usual sense. We then insert those chains into our operadic structure through \eqref{eq:double-stabilization}, with the additional convention that at the stabilizing marked points \eqref{eq:stabilization}, we will always apply a fixed homology unit $e_{\scrC}$. Denote the resulting operations by 
\begin{equation} \label{eq:alpha-operations}
\alpha_{\scrC}^{d_1,\dots,(d_m,d_{m+1}),\dots,d_r}: \scrC^{\otimes d} \longrightarrow \scrC[-d], \;\;
d = d_1 + \dots + d_r \geq 0.
\end{equation}
The simplest example is $\mathit{SS}_{0,\dots,0} = \mathit{point}$, which is mapped to $\mathit{FM}_2 = S^1$ by taking the configuration \eqref{eq:stabilization}. This coincides with the map $S_2 \rightarrow \mathit{FM}_2$ which is part of our $A_\infty$-structure, and therefore
\begin{equation} \label{eq:trivial-alpha}
\alpha^{0,\dots,(0,0),\dots,0}_{\scrC} = \mu^2_{\scrC}(e_{\scrC},e_{\scrC}) = e_{\scrC} + \text{\it (coboundary)}.
\end{equation}
Generally, the operations \eqref{eq:alpha-operations} satisfy the equation obtained from setting the sum of boundary contributions \eqref{eq:ss-boundary}, \eqref{eq:double-partition} equal to zero:
\begin{equation} \label{eq:property-of-alpha}
\begin{aligned}
& \sum_{ijk} \pm \alpha_{\scrC}^{d_1,\dots,d_k-j+1,\dots,d_r}\big(c_{1,1},\dots,c_{1,d_1};  \dots; 
c_{k,1},\dots, \mu_{\scrC}^j(c_{k,i+1},\dots,c_{k,i+j}),
\\[-1em] & \qquad \qquad \qquad \qquad \dots,c_{k,d_k} ; \dots; c_{r,1},\dots,c_{r,d_r}\big) \\[.5em]
= &
\sum_{\text{double partitions}} \pm \mu_{\scrC}^j\big(\beta_{\scrC}^{d_{1,1},\dots,d_{1,r}}(c_{1,1},\dots,c_{1,d_{1,1}};\dots;c_{r,1},\dots,c_{r,d_{1,r}}),  \dots,
\\ & \qquad 
\alpha_{\scrC}^{d_{k,1},\dots,d_{k,r}}(c_{1,d_{1,1} + \cdots + d_{k-1,1}+1},\dots,c_{1,d_{1,1}+\cdots+d_{k,1}};\dots),\dots,
\\ & \qquad 
\beta_{\scrC}^{d_{j,1},\dots,d_{j,m-1},a_j,d_{j,m+2},\dots,d_{j,r}}(c_{1,d_1-d_{j,1}+1},\dots,c_{1,d_1};
\\ & \qquad \qquad
\beta_{\scrC}^{d_{j,1,1},d_{j,1,2}}(c_{m,d_m-d_{j,m}+1},\dots;
c_{m+1,d_{m+1}-d_{j,m+1}+1},\dots), \dots,
\\ & \qquad \qquad
\beta_{\scrC}^{d_{j,a_j,1},d_{j,a_j,2}}(\dots,c_{m,d_m}; 
\dots,c_{m+1,d_{m+1}});
\\ & \qquad \qquad
c_{m+2,d_{m+2}-d_{m+2,j}+1},\dots,c_{m+2,d_{m+2}};\dots) \big),
\end{aligned}
\end{equation}
with double partitions as in \eqref{eq:double-partition}, the only difference being that we have an additional $\mu^1_{\scrC}(\beta_{\scrC}^{d_1,\dots,d_r})$ term. Given Maurer-Cartan elements $\gamma_1,\dots,\gamma_r$, set 
\begin{equation}
g = \sum_{d_1,\dots,d_r \geq 0} \alpha^{d_1,\dots,(d_m,d_{m+1}),\dots,d_r}_{\scrC}(\overbrace{\gamma_1,\dots,\gamma_1}^{d_1}; \dots; \overbrace{\gamma_r,\dots,\gamma_r}^{d_r}) \in \scrC^0 \hat\otimes (\bZ 1 \oplus N).
\end{equation}
As a direct consequence of \eqref{eq:property-of-alpha}, this satisfies
\begin{equation}
\begin{aligned}
&
\sum_{p,q} \mu^{p+q+1}_{\scrC}\big(\overbrace{\Pi^r_{\scrC}(\gamma_1,\dots,\gamma_r),\dots,\Pi^r_{\scrC}(\gamma_1,\dots,\gamma_r)}^p, g, \\[-.5em] & \qquad \qquad
\underbrace{\Pi^{r-1}_{\scrC}(\gamma_1,\dots,\gamma_m \bullet \gamma_{m+1},\dots,\gamma_r), \dots,
\Pi^{r-1}_{\scrC}(\gamma_1,\dots,\gamma_m \bullet \gamma_{m+1},\dots,\gamma_r)}_q \big) = 0.
\end{aligned}
\end{equation}
In view of \eqref{eq:trivial-alpha} and Lemma \ref{th:alternative-mc}, this is exactly what we need to prove the equivalence of the two Maurer-Cartan elements in question.
\end{proof}

\section{Cohomology operations\label{sec:operations}}

Following Steenrod and (in a more abstract context) May, reduced power operations arise from homotopy symmetries. This general principle can be applied to configuration spaces, as in Cohen's classical work, and also to Deligne-Mumford spaces. After a brief review of the underlying homological algebra, we discuss those two instances, and their relationship.

\subsection{Equivariant (co)homology\label{subsec:equivariant}}
Let $\scrC$ be a complex of vector spaces over a field $\bF$. Given an action of a group $G$ on $\scrC$, one can consider the group cochain complex $C^*_G(\scrC)$ and its cohomology $H^*_G(\scrC)$ (this is mild generalization of the classical concept of equivariant cohomology, where the coefficients lie in a $G$-module, see e.g.\ \cite{brown} for a general account, and \cite[p.\ 115]{maclane} or \cite[p.\ 179]{weibel} for the traditional choice of group cochain complex). If $C^*(X)$ is the cochain complex of a space $X$ carrying a $G$-action, and $V$ is a representation of $G$ over $\bF$, then setting $\scrC = C^*(X) \otimes V$ recovers
\begin{equation} \label{eq:borel-construction-1}
H^*_G(C^*(X) \otimes V) = H^*_G(X;V) = H^*(X \times_G EG;V),
\end{equation}
the equivariant cohomology in the classical sense (with coefficients in the local system over the Borel construction $X \times_G EG$ determined by $V$). A variant of the construction yields the group chain complex $C_*^G(\scrC)$ and group homology $H_*^G(\scrC)$. Recall that in our convention, all chain complexes are cohomologically graded. If we start with the chain complex $C_*(X)$ of a space, and a representation $V$, as before, then setting $\scrC = C_{-*}(X) \otimes V$ gives
\begin{equation} \label{eq:borel-construction-2}
H_*^G(C_{-*}(X) \otimes V) = H^G_{-*}(X;V) = H_{-*}(X \times_G EG;V).
\end{equation}
Group homology and cohomology carry exterior cup and cap products (see e.g.\ \cite[Section V.3]{brown})
\begin{align}
\label{eq:group-cup-product}
& H^*_G(\scrC_1) \otimes H^*_G(\scrC_2) \longrightarrow H^*_G(\scrC_1 \otimes \scrC_2), \\
\label{eq:group-cap-product}
& H_*^G(\scrC_1) \otimes H^*_G(\scrC_2) \longrightarrow H_*^G(\scrC_1 \otimes \scrC_2).
\end{align}

The cases relevant for our purpose are where $G$ is a permutation group $\mathit{Sym}_p$ of prime order, or its cyclic subgroup $\bZ/p$, and $\bF = \bF_p$. For the cyclic group, there are particularly simple complexes computing equivariant (co)homology. The cohomology version is
\begin{equation}
\label{eq:cyclic-cochains}
\left\{
\begin{aligned}
& C^*_{\bZ/p}(\scrC) = \scrC[[t]] \oplus \theta \scrC[[t]], \quad |t| = 2, \; |\theta| = 1, \\
& d_{\bZ/p}(t^k c) = t^k\, dc + \theta t^k(T c - c), \\
& d_{\bZ/p}(\theta t^k c) = -\theta t^k\,dc + t^{k+1}(c + T c + \cdots + T^{p-1} c),
\end{aligned}
\right.
\end{equation}
where $T: \scrC \rightarrow \scrC$ is the generator of the $\bZ/p$-action. In the case of trivial coefficients $\scrC = \bF_p$, the differential vanishes. The ring structure \eqref{eq:group-cup-product}, for $\scrC_1 = \bF_p$ and general $\scrC_2 = \scrC$, satisfies $[t] \cdot [t^k c] = [t^{k+1} c]$ and $[t] \cdot [t^k \theta c] = [t^{k+1} \theta c]$. However, for $p = 2$ and $\scrC_1 = \scrC_2 = \bF_p$ one has $[\theta] \cdot [\theta] = [t]$, while for $p>2$ that expression would be zero. Indeed, in the case $p = 2$ it is more convenient to write $\theta = t^{1/2}$ (for a more precise discussion of the choices of generators used here in relation to topology, we refer to Section \ref{subsec:sign-of-t}). The group homology version is
\begin{equation}
\label{eq:cyclic-chains}
\left\{
\begin{aligned}
& C_*^{\bZ/p}(\scrC) = \scrC[s] \oplus \sigma \scrC[s], \quad |s| = -2, \; |\sigma| = -1, \\
& d^{\bZ/p}(s^k c) = s^k \, dc - \sigma s^{k-1} (c + Tc + \cdots + T^{p-1} c), \\
& d^{\bZ/p}(\sigma s^k c) = -\sigma s^k \, dc - s^k (Tc - c),
\end{aligned}
\right.
\end{equation}
and under \eqref{eq:group-cap-product}, $t$ acts on equivariant homology by cancelling one power of $s$ in \eqref{eq:cyclic-chains} (by convention, $s^{-1}$ is set to zero). Because the index of $\bZ/p \subset \mathit{Sym}_p$ is coprime to $p$, 
\begin{align}
& H^*_{\mathit{Sym}_p}(\scrC) \longrightarrow H^*_{\bZ/p}(\scrC) \quad \text{is injective,}
\label{eq:restrict-to-cyclic} \\
\label{eq:project-to-symmetric}
& H_*^{\bZ/p}(\scrC) \longrightarrow H_*^{\mathit{Sym}_p}(\scrC) \quad \text{is surjective}
\end{align}
for every complex $\scrC$ with $\mathit{Sym}_p$-action. Let $\bF_p(l)$, $l \in \bZ$, be the one-dimensional representations which are: trivial if $l$ is even; and associated to $\mathrm{sign}: \mathit{Sym}_p \rightarrow \{\pm 1\} \subset \bF_p^\times$ if $l$ is odd. Note that the restriction of the $\mathrm{sign}$ homomorphism to $\bZ/p$ is trivial. The relevant special case of \eqref{eq:restrict-to-cyclic} can be made explicit as follows (see e.g. \cite[Lemma 1.4]{may70}):
\begin{align} 
\label{eq:symmetric-group-cohomology-1}
& 
H^*_{\mathit{Sym_p}}(\bF_p) = \bF_p[t^{p-1}] \oplus \theta t^{p-2} \bF_p[t^{p-1}]  \subset H^*_{\bZ/p}(\bF_p), 
\\
\label{eq:symmetric-group-cohomology-2}
&
H^*_{\mathit{Sym_p}}(\bF_p(1)) = t^{(p-1)/2} \bF_p[t^{p-1}] \oplus \theta t^{(p-3)/2} \bF_p[t^{p-1}]
\subset H^*_{\bZ/p}(\bF_p).
\end{align}

Let $\scrC$ be a general complex of $\bF_p$-vector spaces, and consider $\mathit{Sym}_p$ acting on its tensor power $\scrC^{\otimes p}$, by permuting the factors with Koszul signs. In this situation, there is a canonical equivariant diagonal map
\begin{equation} \label{eq:equivariant-diagonal}
H^l(\scrC) \longrightarrow H^{pl}_{\mathit{Sym}_p}(\scrC^{\otimes p} \otimes \bF_p(l)),
\end{equation}
which lifts the standard diagonal $H^l(\scrC) \rightarrow H^{pl}(\scrC^{\otimes p})$ (see e.g.\ \cite[Lemma 1.1(iv)]{may70} for its well-definedness). The equivariant diagonal is compatible with multiplication by elements of $\bF_p$, but not additive. 
It is sometimes convenient to simplify the discussion of \eqref{eq:equivariant-diagonal} by restricting to the cyclic subgroup,
\begin{equation}
\label{eq:equivariant-diagonal-2}
\xymatrix{
& H^{pl}_{\mathit{Sym}_p}(\scrC^{\otimes p} \otimes \bF_p(l)) \ar@{^{(}->}[d], \\
H^k(\scrC) \ar[ur]^-{\eqref{eq:equivariant-diagonal}} \ar[r] 
& H^{pl}_{\bZ/p}(\scrC^{\otimes p}).
}
\end{equation}
By explicit computation in \eqref{eq:cyclic-cochains}, one sees that the equivariant diagonal for $\bZ/p$ becomes additive after multiplying with $t$. It follows that \eqref{eq:equivariant-diagonal} becomes additive after multiplying with $t^{p-1}$, since that lies in the subgroup \eqref{eq:symmetric-group-cohomology-1}.

\subsection{Cohen operations\label{subsec:cohen}}
Let $\scrC$ be a complex of $\bF_p$-vector spaces, which has the structure of an algebra over the $\bF_p$-coefficient version of the Fulton-MacPherson operad. Recall that the action of $\mathit{Sym}_p$ on $\mathit{FM}_p$ is free. The associated Cohen operation is a map
\begin{equation} \label{eq:cohen-operations} 
\mathit{Coh}_p: H^l(\scrC) \longrightarrow (H^*(\mathit{FM}_p/\mathit{Sym}_p;\bF_p(l)) \otimes H^*(\scrC))^{pl},
\end{equation}
defined as follows:
\begin{equation} \label{eq:cohen-operations-2} 
\xymatrix{
\ar@{-->}[dddd]
H^l(\scrC) \ar[r]^-{\eqref{eq:equivariant-diagonal}}
& 
H^{pl}_{\mathit{Sym}_p}(\scrC^{\otimes p} \otimes \bF_p(l))  \ar[d]_-{\text{operad structure}}
\\
& 
H^{pl}_{\mathit{Sym}_p}(\mathit{Hom}(C_{-*}(\mathit{FM}_p), \scrC) \otimes \bF_p(l)) \ar@{=}[d] 
\\
&
H^{pl}_{\mathit{Sym}_p}(\mathit{Hom}(C_{-*}(\mathit{FM}_p) \otimes \bF_p(l), \scrC)) \ar@{=}[d]_-{\text{Kunneth}}
\\ &
\mathit{Hom}^{pl}(H_{-*}^{\mathit{Sym}_p}(\mathit{FM}_p;\bF_p(l)), H^*(\scrC))
\ar@{=}[d]_-{\text{freeness of the action}}
\\
\big( H^*(\scrC) \otimes H^*(\mathit{FM}_p/\mathit{Sym}_p;\bF_p(l)) \big)^{pl} \ar@{=}[r]
& 
\mathit{Hom}^{pl}(H_{-*}(\mathit{FM}_p/\mathit{Sym}_p;\bF_p(l)), H^*(\scrC)).
}
\end{equation}
On the middle lines, the $\mathit{Sym}_p$-action is trivial on the $\scrC$-factor. Because their definition involves \eqref{eq:equivariant-diagonal}, these operations are not expected to be additive. Note that we could also have defined our operations using $\bZ/p$, but of course, the resulting operations would still lie in the subspace $H^*(\mathit{FM}_p/\mathit{Sym}_p;\bF_p(l)) \subset H^*(\mathit{FM}_p/(\bZ/p);\bF_p)$. For computational purposes, let's spell out what happens when one decodes \eqref{eq:cohen-operations-2}:

\begin{lemma} \label{th:explicit-cohen}
Suppose that we have an cycle $c \in \scrC$ of degree $l$. Then $c^{\otimes p} \in \scrC^{\otimes p}$ is a cycle which is $\mathit{Sym}_p$-invariant up to an $\bF_p(l)$-twist, and which therefore represents a class in $H^*_{\mathit{Sym}_p}(\scrC^{\otimes p} \otimes \bF_p(l))$. Similarly, suppose that we have a chain $B \in C_*(\mathit{FM}_p;\bF_p)$ with the property that $\partial B$ goes to zero in $C_*(\mathit{FM}_p) \otimes_{\mathit{Sym}_p} \bF_p(l)$. Such a chain represents a class $[B] \in H_*(\mathit{FM}_p/\mathit{Sym}_p;\bF_p(l))$. As a consequence of the properties of $B$ and $c$, the image of $B \otimes c^{\otimes p}$ under the operad action is a cycle in $\scrC$. That cycle represents the image of $[c]$ under \eqref{eq:cohen-operations}, paired with $[B]$.
\end{lemma}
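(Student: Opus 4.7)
The proof plan is to verify the three assertions in sequence, each of which essentially unpacks one layer of the definition \eqref{eq:cohen-operations-2}.

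For the first assertion, the computation of Koszul signs for permuting $p$ copies of a degree-$l$ element shows that a transposition of adjacent factors acts on $c^{\otimes p}$ by $(-1)^{l^2} = (-1)^l$, so an arbitrary $\sigma \in \mathit{Sym}_p$ acts by $\mathrm{sign}(\sigma)^l$. This is exactly the factor by which $\bF_p(l)$ is twisted, so $c^{\otimes p}$ is invariant as an element of $\scrC^{\otimes p} \otimes \bF_p(l)$; it is also closed because $c$ is. Since $\mathit{Sym}_p$ acts freely on $\mathit{FM}_p$, the quotient complex $C_*(\mathit{FM}_p) \otimes_{\mathit{Sym}_p} \bF_p(l)$ computes $H_*(\mathit{FM}_p/\mathit{Sym}_p; \bF_p(l))$, so the condition that $\partial B$ vanishes there is precisely the condition that $B$ defines a class $[B]$ in that group.

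The main step is to chase the class $[c^{\otimes p}] \in H^{pl}_{\mathit{Sym}_p}(\scrC^{\otimes p} \otimes \bF_p(l))$ through the diagram \eqref{eq:cohen-operations-2}. By the characterization of the equivariant diagonal (see e.g.\ \cite[Lemma 1.1]{may70}, which is what legitimized the middle arrow in \eqref{eq:cohen-operations-2} in the first place), $[c^{\otimes p}]$ is the image of $[c]$ under \eqref{eq:equivariant-diagonal}. Applying the operad action, which at the chain level is a $\mathit{Sym}_p$-equivariant map $C_{-*}(\mathit{FM}_p) \otimes \scrC^{\otimes p} \to \scrC$ and hence (by tensor-Hom adjunction) a map $\scrC^{\otimes p} \to \mathit{Hom}(C_{-*}(\mathit{FM}_p), \scrC)$, sends $c^{\otimes p}$ to the operator $B' \mapsto (\text{operad action on } B' \otimes c^{\otimes p})$. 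After twisting by $\bF_p(l)$ and applying the Kunneth identification $\mathit{Hom}^*_{\mathit{Sym}_p}(C_{-*}(\mathit{FM}_p) \otimes \bF_p(l), \scrC) \simeq \mathit{Hom}^*(H_{-*}(\mathit{FM}_p/\mathit{Sym}_p; \bF_p(l)), H^*(\scrC))$, the pairing with $[B]$ becomes exactly evaluation on $B$, which yields the operad-image of $B \otimes c^{\otimes p}$.

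The main obstacle I anticipate is sign/twist bookkeeping: one must verify that the isomorphism $\mathit{Hom}(C_{-*}(\mathit{FM}_p), \scrC) \otimes \bF_p(l) \iso \mathit{Hom}(C_{-*}(\mathit{FM}_p) \otimes \bF_p(l), \scrC)$ used in \eqref{eq:cohen-operations-2} is compatible with the $\mathit{Sym}_p$-actions in the sense that the class $[c^{\otimes p}] \otimes 1_{\bF_p(l)}$ on the left (which is $\mathit{Sym}_p$-invariant, by the first step) corresponds on the right to the operator that consumes the $\bF_p(l)$-twist in $B$. Once this identification is in place, the remainder is formal, including the compatibility of the Kunneth isomorphism (which uses that $\bF_p$ is a field and $C_{-*}(\mathit{FM}_p)$ has finitely generated homology in each degree, via the standard model for the chain complex with free $\mathit{Sym}_p$-action) with the pairing $H_*^{\mathit{Sym}_p} \otimes H^*_{\mathit{Sym}_p} \to H^*$ of \eqref{eq:group-cap-product}.
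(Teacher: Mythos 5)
Your proposal is correct and follows essentially the route the paper intends: the paper offers no separate proof, presenting the lemma as a direct decoding of \eqref{eq:cohen-operations-2}, which is exactly what you do (Koszul sign check $\mathrm{sign}(\sigma)^l$ matching the $\bF_p(l)$-twist, freeness of the $\mathit{Sym}_p$-action identifying coinvariant chains with chains on the quotient, then adjunction plus the Kunneth/evaluation identification). The twist-compatibility point you flag is indeed the only bookkeeping to check, and it is unproblematic since $\bF_p(l)\otimes\bF_p(l)$ is canonically trivial.
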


The structure of Cohen operations was determined in \cite[Theorems 5.2 and 5.3]{cohen76}. The group relevant for operations on the even degree cohomology of $\scrC$ is
\begin{equation}  \label{eq:configuration-space-untwisted}
H^*(\mathit{FM}_p/\mathit{Sym}_p;\bF_p) \iso 
\begin{cases} \bF_p & \ast = 0, \, 1, \\ 0 & \text{otherwise.} 
\end{cases}
\end{equation}
For $\ast = 0$, that just recovers the $p$-fold power for the product that is part of the Gerstenhaber algebra structure on $H^*(\scrC)$. If we suppose that $p>2$, the operation obtained from the $\ast = 1$ group can again be described as part of the Gerstenhaber structure, as $x \mapsto [x,x] x^{p-2}$. The twisted counterpart is more interesting:
\begin{equation}
\label{eq:configuration-space-twisted}
H^*(\mathit{FM}_p/\mathit{Sym}_p;\bF_p(1)) \iso 
\begin{cases} \bF_p & \ast = p-1, \, p-2, \\ 0 & \text{otherwise.} \end{cases}
\end{equation}
Moreover, still as part of \cite[Theorem 5.3]{cohen76}, the pullback map
\begin{equation} \label{eq:pullback-from-point}
H^*_{\mathit{Sym}_p}(\bF_p(1)) = H^*(B\mathit{Sym}_p;\bF_p(1)) \longrightarrow H^*(\mathit{FM}_p/\mathit{Sym}_p;\bF_p(1))
\end{equation}
is onto. Therefore, the groups \eqref{eq:configuration-space-twisted} can be thought of as generated by $t^{(p-1)/2}$ and $\theta t^{(p-3)/2}$, the lowest degree generators in \eqref{eq:symmetric-group-cohomology-2}. Note that for $p>2$, $t^{(p-1)/2}$ is the image of $\theta t^{(p-3)/2}$ under the Bockstein $\beta$.

\subsection{Quantum Steenrod operations\label{subsec:quantum-steenrod}}
The same idea works for any operad, and in particular, Deligne-Mumford spaces. Concretely, this means that we consider the action of $\mathit{Sym}_p$ on $\mathit{DM}_p$ which keeps the marked point $z_0$ fixed, and permutes $(z_1,\dots,z_p)$. Given an algebra $\scrC$ over the $\bF_p$-coefficient Deligne-Mumford operad, one gets operations analogous to \eqref{eq:cohen-operations},
\begin{equation} \label{eq:equivariant-dm}
H^l(\scrC) \longrightarrow \big( H^*(\scrC) \otimes H^*_{\mathit{Sym}_p}(\mathit{DM}_p;\bF_p(l))\big)^{pl}. 
\end{equation}
In principle, the same caveat as in Section \ref{subsec:commutativity} applies, which means that we should replace $\mathit{DM}_p$ by a homotopy equivalent space \eqref{eq:freeing-up}. However, that makes no difference for the present discussion, since only the equivariant cohomology of the space will be involved.

Unfortunately, the equivariant mod $p$ cohomology of Deligne-Mumford space is not known (to this author, at least), but there are simplified versions of this construction which are easier to understand. Because $p$ is assumed to be prime, the $\mathit{Sym}_p$-action on $\mathit{DM}_p$ has a unique orbit $O_p$ with isotropy subgroups isomorphic to $\bZ/p$ (all other isotropy subgroups have orders not divisible by $p$). For concreteness, we just look at one specific point $\diamond \in O_p$, whose  isotropy subgroup is the standard cyclic subgroup $\bZ/p \subset \mathit{Sym}_p$: 
\begin{equation} \label{eq:fixed-point}
\diamond = \big(C = \bar{\bC} = \bC \cup \{\infty\}, \, z_0 = \infty, \, z_k = e^{2\pi \sqrt{-1} k/p} \;\; \text{for $k = 1,\dots,p$}\big).
\end{equation}

\begin{lemma} \label{th:orbit}
Restriction to $\diamond \in O_p$ yields isomorphisms
\begin{equation} \label{eq:include-isomorphism}
\xymatrix{
\ar@/_1.5pc/[rr]_-{\iso}
H^*_{\mathit{Sym}_p}(O_p;\bF_p(l)) \ar[r] & H^*_{\bZ/p}(O_p;\bF_p) \ar[r] & H^*_{\bZ/p}(\diamond;\bF_p) = H^*_{\bZ/p}(\bF_p).
}
\end{equation}
\end{lemma}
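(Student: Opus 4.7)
The plan is to recognize $O_p$ as a transitive $\mathit{Sym}_p$-set isomorphic to $\mathit{Sym}_p/(\bZ/p)$ and apply the standard change-of-groups (Shapiro) identification. By choice of $\diamond$, its $\mathit{Sym}_p$-stabilizer is precisely the standard cyclic subgroup $\bZ/p \subset \mathit{Sym}_p$, so the orbit map $g \mapsto g \cdot \diamond$ gives an $\mathit{Sym}_p$-equivariant bijection $\mathit{Sym}_p/(\bZ/p) \xrightarrow{\sim} O_p$. This reduces the computation to equivariant cohomology of a homogeneous space.

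Next, I would use this identification to evaluate the Borel construction: for any $\mathit{Sym}_p$-module $M$,
\[
O_p \times_{\mathit{Sym}_p} E\mathit{Sym}_p \;\cong\; E\mathit{Sym}_p/(\bZ/p),
\]
which is a model for $B(\bZ/p)$. Consequently $H^*_{\mathit{Sym}_p}(O_p; M) \cong H^*_{\bZ/p}(\mathrm{pt}; M|_{\bZ/p})$, and unwinding the model shows that this isomorphism is induced by restriction along $\{\diamond\} \hookrightarrow O_p$ (pullback to the chosen fixed point). Specializing to $M = \bF_p(l)$, it remains only to check that the restriction $\bF_p(l)|_{\bZ/p}$ is the trivial module $\bF_p$. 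For odd $p$, a generator of $\bZ/p$ is a $p$-cycle, which has sign $(-1)^{p-1}=+1$, so $\bZ/p \subset A_p$ acts trivially on $\bF_p(l)$; for $p=2$ the twist is invisible since $-1 = 1$ in $\bF_2$. Either way, $\bF_p(l)|_{\bZ/p} = \bF_p$.

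Combining these three steps produces the curved arrow in the diagram: $H^*_{\mathit{Sym}_p}(O_p; \bF_p(l)) \xrightarrow{\sim} H^*_{\bZ/p}(\diamond; \bF_p) = H^*_{\bZ/p}(\bF_p)$. By construction this isomorphism factors through the intermediate group $H^*_{\bZ/p}(O_p; \bF_p)$, as the composition of the change-of-equivariance (from $\mathit{Sym}_p$ to $\bZ/p$, together with the identification $\bF_p(l)|_{\bZ/p} = \bF_p$) and the geometric pullback to the fixed point $\diamond$; note that neither of these intermediate maps is individually an isomorphism in general (the fixed locus $O_p^{\bZ/p}$ has more than one point for $p>2$), but their composition is.

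There is no real obstacle here; the lemma is a formal consequence of the defining property of $O_p$ as the unique $\mathit{Sym}_p$-orbit with isotropy conjugate to $\bZ/p$. The only bookkeeping required is matching the algebraic Shapiro isomorphism with the geometric restriction to $\{\diamond\}$, and verifying the triviality of $\bF_p(l)|_{\bZ/p}$ uniformly in $p$ — both of which are immediate.
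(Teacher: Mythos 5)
Your proof is correct and is essentially the paper's argument: the paper likewise observes that the composite of Borel constructions $E\mathit{Sym}_p \times_{\bZ/p} \diamond \to E\mathit{Sym}_p \times_{\mathit{Sym}_p} O_p$ is a homeomorphism (your Shapiro/change-of-groups identification of $O_p$ with $\mathit{Sym}_p/(\bZ/p)$) and that the local system attached to $\bF_p(1)$ is canonically trivial there (your observation that a $p$-cycle is an even permutation). Your extra remarks, e.g.\ that the two intermediate maps are not individually isomorphisms for $p>2$, are accurate but not needed.
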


\begin{proof}
This is elementary: the underlying map of Borel constructions, obtained by composing
\begin{equation}
E\mathit{Sym}_p \times_{\bZ/p} \diamond \hookrightarrow E\mathit{Sym}_p \times_{\bZ/p} O_p \twoheadrightarrow E\mathit{Sym}_p \times_{\mathit{Sym}_p} O_p
\end{equation}
is a homeomorphism. Moreover, the local system on $E\mathit{Sym}_p \times_{\mathit{Sym}_p} O_p$ associated to $\bF_p(1)$ is canonically trivial.
\end{proof}
 
%
Quantum Steenrod operations are obtained by replacing $\mathit{DM}_p$ in \eqref{eq:equivariant-dm} by its subspace $O_p$. In view of Lemma \ref{th:orbit}, we can equivalently define them using the $\bZ/p$-equivariant cohomology of a point. Written in that way, they have the form
\begin{equation} \label{eq:quantum-steenrod-2}
\mathit{QSt}_p: H^l(\scrC) \longrightarrow (H^*(\scrC) \otimes H^*_{\bZ/p}(\bF_p))^{pl}.
\end{equation}
Following our discussion of \eqref{eq:equivariant-diagonal}, we know that \eqref{eq:quantum-steenrod-2} becomes additive after multiplication with $t^{p-1}$. Since that multiplication acts injectively on $H^*_{\bZ/p}(\bF_p)$, one sees that \eqref{eq:quantum-steenrod-2} is already additive. 

As an intermediate object between the two spaces considered so far, take $\mathit{DM}^\circ_p$ be the moduli space of smooth genus zero curves with $(p+1)$ marked points, or equivalently \eqref{eq:configuration-space-3}, which is an open subset of $\mathit{DM}_p$ containing $O_p$. Similarly, let $\mathit{FM}_p^\circ$ be the configuration space \eqref{eq:configuration-space-1}, which is the interior of $\mathit{FM}_p$ and hence homotopy equivalent to the whole space. The forgetful map $\mathit{FM}_p \rightarrow \mathit{DM}_p$ restricts to a circle bundle $\mathit{FM}_p^\circ \rightarrow \mathit{DM}^{\circ}_p$.  

\begin{lemma} \label{th:open-moduli-space}
Restriction to $O_p \subset \mathit{DM}_p^\circ$, together with Lemma \ref{th:orbit}, yields isomorphisms
\begin{align} \label{eq:include-fixed-locus}
& H^*_{\mathit{Sym}_p}(\mathit{DM}^\circ_p;\bF_p) \iso H^*_{\bZ/p}(\mathit{point};\bF_p), \\
\label{eq:include-fixed-locus-2}
& H^*_{\mathit{Sym}_p}(\mathit{DM}^\circ_p;\bF_p(1)) \iso
\begin{cases} 
0 & * < p-2, \\
H^*_{\bZ/p}(\mathit{point};\bF_p) & * \geq p-2. 
\end{cases}
\end{align}
Moreover, the pullback map is an isomorphism 
\begin{equation} \label{eq:two-degrees}
H^*_{\mathit{Sym}_p}(\mathit{DM}_p^\circ;\bF_p(1)) \longrightarrow H^*_{\mathit{Sym}}(\mathit{FM}^\circ_p;\bF_p(1)),
\quad * = p-2, \, p-1.
\end{equation}
\end{lemma}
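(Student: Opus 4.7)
The overall strategy is to convert the $\mathit{Sym}_p$-equivariant cohomology of $\mathit{DM}_p^\circ$ into the $S^1$-equivariant cohomology of the unordered configuration space $B(p) := \mathit{FM}_p^\circ/\mathit{Sym}_p$, and then analyze a Serre spectral sequence using Cohen's computations \eqref{eq:configuration-space-untwisted}--\eqref{eq:configuration-space-twisted}. Since $\mathit{FM}_p^\circ \to \mathit{DM}_p^\circ$ is a principal $S^1$-bundle equivariant for the free $\mathit{Sym}_p$-action, quotienting in the two orders gives $H^*_{\mathit{Sym}_p}(\mathit{DM}_p^\circ;\bF_p(l)) \iso H^*_{S^1}(B(p);\bF_p(l))$. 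Under this identification, $O_p$ corresponds to the $S^1$-orbit of $[\diamond] \in B(p)$, which is a single circle with constant isotropy $\bZ/p$; its Borel cohomology is thus $H^*(B\bZ/p;\bF_p)$, consistent with Lemma~\ref{th:orbit}.

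Next I would study the Borel fibration $B(p) \to B(p)_{hS^1} \to BS^1$. The $E_2$-page $\bF_p[t_{S^1}] \otimes H^*(B(p);\bF_p(l))$ is, by Cohen, concentrated in rows $\{0,1\}$ (for $l=0$) or $\{p-2,p-1\}$ (for $l=1$), and one-dimensional in each total degree $\geq 0$ (resp.\ $\geq p-2$). The only potentially nontrivial differential is $d_2$ between the two rows. For $l=0$, pulling back along the inclusion $j : S^1 \hookrightarrow B(p)$ of a generic free $S^1$-orbit kills the Arnold class $\alpha \in H^1(B(p);\bF_p)$ — the constant-rotation loop represents a braid with no crossings — while $j^* t_{S^1} \neq 0$; naturality then forces $d_2(\alpha) = 0$. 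For $l=1$ this direct argument collapses once $p \geq 4$; instead, \eqref{eq:pullback-from-point} realizes the generators $\eta^{p-2},\eta^{p-1}$ as pullbacks from $H^*(B\mathit{Sym}_p;\bF_p(1))$, and the collapse map $B(p)_{hS^1} \simeq E\mathit{Sym}_p \times_{\mathit{Sym}_p} \mathit{DM}_p^\circ \to B\mathit{Sym}_p$ lifts them to permanent cycles in $H^*_{S^1}(B(p);\bF_p(1))$. The spectral sequence therefore degenerates at $E_2$ in both cases.

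To identify the restriction, I would check that each fiber generator restricts non-trivially to the orbit of $[\diamond]$. For $l=0$, traversing the orbit represents the braid $\sigma_1\sigma_2\cdots\sigma_{p-1}$ with $p-1$ crossings, so $\alpha$ maps to $(p-1) \equiv -1 \pmod p$ times the generator of $H^1(B\bZ/p;\bF_p)$. For $l=1$, the restriction factors as $O \to B\bZ/p \to B\mathit{Sym}_p$; combined with \eqref{eq:symmetric-group-cohomology-2} this sends $\eta^{p-2} \mapsto \theta t^{(p-3)/2}$ and $\eta^{p-1} \mapsto t^{(p-1)/2}$. With $t_{S^1}^r \mapsto t^r$, the image sweeps out exactly $\bF_p[t,\theta]$ in degrees $\geq p-2$. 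Since source and target have matching one-dimensional components in each relevant degree, surjectivity upgrades to isomorphism, giving \eqref{eq:include-fixed-locus} and \eqref{eq:include-fixed-locus-2}. The claim \eqref{eq:two-degrees} is the statement that the edge map $E_\infty^{0,*} = E_2^{0,*} \hookrightarrow H^*(B(p);\bF_p(1))$ is an iso in the two rows $p-2, p-1$, which is immediate from the degeneration.

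The main obstacle is the spectral sequence degeneration for $l = 1$: the easy free-orbit argument no longer produces information, and one must instead use the global pullback from $B\mathit{Sym}_p$ via \eqref{eq:pullback-from-point} to exhibit permanent cycles. The same pullback mechanism controls the image of the restriction in the final step, tying both halves of the lemma together.
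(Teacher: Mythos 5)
Your argument is correct, but it takes a genuinely different route from the paper. The paper works with the $\mathit{Sym}_p$-equivariant Gysin sequence of the circle bundle $\mathit{FM}_p^\circ \rightarrow \mathit{DM}_p^\circ$, compared via restriction with the same sequence over the point $\diamond$ (diagram \eqref{eq:gysin}), and pins down the unknown groups by induction on degree: upward induction for the untwisted case, and for the twisted case a low-degree argument via \eqref{eq:use-gysin-0}, \eqref{eq:pullback-from-point}, \eqref{eq:restrict-to-cyclic} combined with a downward induction starting from the high-degree vanishing of $H^*_{\mathit{Sym}_p}(\mathit{DM}^\circ_p,O_p;\bF_p(1))$, which rests on the coprime-to-$p$ isotropy away from $O_p$, see \eqref{eq:borel-vs-quotient}. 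You instead trade the $\mathit{Sym}_p$-Borel construction of $\mathit{DM}_p^\circ$ for the $S^1$-Borel construction of the unordered configuration space $B(p)$, so that the unknown sits as the total space of a two-row Serre spectral sequence over $BS^1$ whose fibre is known by Cohen \eqref{eq:configuration-space-untwisted}, \eqref{eq:configuration-space-twisted}; you then prove collapse directly (free orbit for $l=0$, permanent cycles pulled back from $B\mathit{Sym}_p$ via \eqref{eq:pullback-from-point} for $l=1$) and identify the map of the lemma by explicit braid-theoretic restriction computations at the exceptional orbit. What this buys is that you never need the relative-cohomology/localization input \eqref{eq:borel-vs-quotient} nor the downward induction; what the paper's comparison argument buys is that it never needs to establish degeneration as a separate step, and it treats $p=2$ and the untwisted case uniformly. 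Note also that your degeneration mechanism for $l=1$ doubles as the proof of \eqref{eq:two-degrees}, which in the paper is extracted at the end from the Euler-class map being an isomorphism.

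Two small repairs. First, the rigid full rotation of a generic configuration is not a braid ``with no crossings'': it is the full twist, with crossing number $p(p-1)$; since $p(p-1)\equiv 0 \bmod p$, your conclusion $j^*\alpha=0$ and hence $d_2(\alpha)=0$ still stands, but the justification should be stated this way. Second, for $p=2$ the $S^1$-action on $B(2)$ has no free orbits (every two-point configuration has the half-turn symmetry), so the free-orbit argument is unavailable there; this is harmless because $\bF_2(1)=\bF_2$ and your $l=1$ mechanism (pullback from $B\mathit{Sym}_2$) covers that case, but it should be said explicitly so that \eqref{eq:include-fixed-locus} is not left ungrounded at $p=2$.
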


\begin{proof}
Consider the Gysin sequence and its restriction to \eqref{eq:fixed-point}:
\begin{equation} \label{eq:gysin}
\xymatrix{
\cdots \rightarrow
H_{\mathit{Sym}_p}^{*-2}(\mathit{DM}_p^\circ;\bF_p(l)) \ar[r] \ar[d]
&
H^*_{\mathit{Sym}_p}(\mathit{DM}^\circ_p;\bF_p(l)) \ar[r] \ar[d]
&
H^*_{\mathit{Sym}_p}(\mathit{FM}^\circ_p;\bF_p(l)) \rightarrow \cdots
\ar[d]
\\
\cdots \rightarrow H^{*-2}_{\bZ/p}(\diamond;\bF_p) \ar[r]^-{-t}
&
H^*_{\bZ/p}(\diamond;\bF_p) \ar[r] 
& 
H^*_{\bZ/p}(S^1;\bF_p) \rightarrow \cdots
}
\end{equation}
Over $\diamond$, the fibre of the circle bundle $\mathit{FM}_p^\circ \rightarrow \mathit{DM}_p^\circ$ can be identified with the representation of $\bZ/p$ with weight $-1$. In other words, the $S^1$ in \eqref{eq:gysin} carries the action of $\bZ/p$ by clockwise rotation. The $-t$ appearing in the sequence is the associated equivariant Euler class. For $l = 0$, inspection of \eqref{eq:configuration-space-untwisted} shows that the rightmost $\downarrow$ in \eqref{eq:gysin} is always an isomorphism. One can therefore prove \eqref{eq:include-fixed-locus} by upwards induction on degree. 

For $l = 1$, we use a variant of the same argument. The Gysin sequence and \eqref{eq:configuration-space-twisted} imply that
\begin{equation} \label{eq:use-gysin-0}
H^*_{\mathit{Sym}_p}(\mathit{DM}^\circ_p;\bF_p(1)) \iso \begin{cases}
0 & * < p-2, \\
H^*_{\mathit{Sym}_p}(\mathit{FM}^\circ_p;\bF_p(1)) 
& * = p-2.
\end{cases}
\end{equation}
Let's look at the first nontrivial degree, and the maps
\begin{equation}
H^{p-2}_{\mathit{Sym}_p}(\mathit{point};\bF_p(1)) \xrightarrow{\text{pullback}} H^{p-2}_{\mathit{Sym}_p}(\mathit{DM}^\circ_p;\bF_p(1)) \xrightarrow{\text{restriction}} H^{p-2}_{\bF_p}(\diamond;\bF_p).
\end{equation}
From \eqref{eq:use-gysin-0} and \eqref{eq:pullback-from-point}, it follows that the first map is an isomorphism. The composition of the two maps is just \eqref{eq:restrict-to-cyclic}, hence an isomorphism. It follows that the second map must be an isomorphism as well, which is part of \eqref{eq:include-fixed-locus-2}. On the other hand, since the $\mathit{Sym}_p$-action has isotropy groups of order coprime to $p$ outside $O_p$,
\begin{equation} \label{eq:borel-vs-quotient}
H^*_{\mathit{Sym}_p}(\mathit{DM}^\circ_p,O_p;\bF_p(1)) \iso H^*(\mathit{DM}^\circ_p/\mathit{Sym}_p, O_p/\mathit{Sym}_p;\bF_p(1)),
\end{equation}
and the right hand side vanishes in high degrees. From that and Lemma \ref{th:orbit}, one sees that 
the restriction map $H^*_{\mathit{Sym}_p}(\mathit{DM}^\circ_p;\bF_p(1)) \rightarrow H^*_{\bZ/p}(\mathit{point};\bF_p)$ is an isomorphism in high degrees. By downward induction on degree, using \eqref{eq:configuration-space-twisted} and \eqref{eq:gysin}, one obtains the degree $\geq p-1$ part of \eqref{eq:include-fixed-locus-2}. From \eqref{eq:include-fixed-locus-2}, it also follows that the map $H_{\mathit{Sym}_p}^{p-2}(\mathit{DM}^\circ_p;\bF_p(1)) \rightarrow H_{\mathit{Sym}_p}^p(\mathit{DM}^\circ_p;\bF_p(1))$ in the top row of \eqref{eq:gysin} is an isomorphism, which then implies \eqref{eq:two-degrees} in degree $(p-1)$; the degree $(p-2)$ part of the same statement has been derived before, in \eqref{eq:use-gysin-0}.
\end{proof}

As an immediate consequence, suppose that $\scrC$ is an algebra over the chain level Deligne-Mumford operad. Consider its induced structure as an algebra over the Fulton-MacPherson operad. By definition, the associated operations \eqref{eq:cohen-operations}, \eqref{eq:quantum-steenrod-2} fit into a commutative diagram
\begin{equation} \label{eq:comparison}
\xymatrix{
& \big(H^*(\scrC) \otimes H^*(\mathit{FM}_p^\circ/\mathit{Sym}_p;\bF_p(l)) \big)^{pl} \\
H^l(\scrC) \ar[ur]^-{\text{Cohen}\;\;} \ar[r] \ar[dr]_-{\text{quantum Steenrod}\;\;\;\;\;\;\;\;\;\;}
& \big(H^*(\scrC) \otimes H^*_{\mathit{Sym}_p}(\mathit{DM}_p^\circ;\bF_p(l)) \big)^{pl} 
\ar[u] \ar[d]
\\
& \big(H^*(\scrC) \otimes H^*_{\bZ/p}(\bF_p) \big)^{pl}
}
\end{equation}
If $l$ is odd, then Lemma \ref{th:open-moduli-space} shows that both vertical arrows are isomorphisms on the degree $(p-2)$ or $(p-1)$ cohomology groups of the moduli spaces. Those cohomology groups are one-dimensional, and their generators can be identified with $\theta t^{(p-3)/2}$ and $t^{(p-1)/2}$, respectively. To put it more succinctly:

\begin{lemma} \label{th:cohen-and-steenrod}
The Cohen and quantum Steenrod operations 
\begin{equation}
H^l(\scrC) \longrightarrow H^{pl-k}(\scrC), \quad \text{for $l$ odd, and $k = p-2$ or $k = p-1$}
\end{equation}
coincide.
\end{lemma}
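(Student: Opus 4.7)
The plan is to deduce the result from the commutative diagram \eqref{eq:comparison} together with Lemma \ref{th:open-moduli-space}. First I would unpack what commutativity of \eqref{eq:comparison} actually says: all three operations are defined by the same two-step recipe (equivariant diagonal, then composition with the operad action), the only difference being which moduli space houses the equivariant cohomology class that gets paired with $c^{\otimes p}$. The top triangle commutes because the $\mathit{FM}_p$-algebra structure on $\scrC$ is pulled back from the $\mathit{DM}_p$-structure via \eqref{eq:fm-to-dm-2}, so pulling back a $\mathit{DM}$-chain and then applying the $\mathit{FM}$-operad action gives the same result as applying the $\mathit{DM}$-operad action directly; the bottom triangle commutes because the quantum Steenrod operation was defined by restricting to $O_p$, and this factors through restricting to $\mathit{DM}_p^\circ$.

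Next, in the two degrees $k = p-2$ and $k = p-1$ singled out in the statement, Lemma \ref{th:open-moduli-space} tells us that the cohomology groups appearing in the three corners of \eqref{eq:comparison} are all one-dimensional, and moreover that the vertical maps between them (pullback $\mathit{DM}_p^\circ \leftarrow \mathit{FM}_p^\circ$ by \eqref{eq:two-degrees}, and restriction to $\diamond$ by \eqref{eq:include-fixed-locus-2}) are isomorphisms on these groups. Hence, fixing generators (for instance the pullbacks to each space of $\theta t^{(p-3)/2}$, respectively $t^{(p-1)/2}$, from $H^*_{\mathit{Sym}_p}(\mathit{point};\bF_p(1))$, all of which correspond to one another under the vertical maps), the three operations produce \emph{identical} components in $H^{pl-k}(\scrC)$.

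The one thing to verify carefully is that the chosen generators are genuinely compatible—i.e., that the vertical isomorphisms carry generator to generator. But this is essentially already built into the proof of Lemma \ref{th:open-moduli-space}: the generators in both $H^{p-2}_{\mathit{Sym}_p}(\mathit{DM}^\circ_p;\bF_p(1))$ and $H^{p-2}_{\mathit{Sym}_p}(\mathit{FM}^\circ_p;\bF_p(1))$ are seen there to be the images of the class $\theta t^{(p-3)/2}$ under pullback from $B\mathit{Sym}_p$, via \eqref{eq:pullback-from-point} and the surjection of \eqref{eq:two-degrees} onto generators of \eqref{eq:configuration-space-twisted}, while the generator of $H^{p-2}_{\bZ/p}(\mathit{point};\bF_p)$ is $\theta t^{(p-3)/2}$ itself (and similarly in degree $p-1$ with $t^{(p-1)/2}$). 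Since these generators match, the components of Cohen and quantum Steenrod in the respective degrees agree, and there is no real obstacle—the argument is a matter of bookkeeping about which generators are chosen on each side.
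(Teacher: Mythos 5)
Your proposal is correct and follows essentially the same route as the paper: the paper likewise deduces the lemma directly from the commutative diagram \eqref{eq:comparison}, using Lemma \ref{th:open-moduli-space} to see that both vertical arrows are isomorphisms on the one-dimensional degree $(p-2)$ and $(p-1)$ groups, with generators identified with $\theta t^{(p-3)/2}$ and $t^{(p-1)/2}$. The extra bookkeeping you do about compatibility of generators is exactly the content the paper compresses into that identification.
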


\section{Prime power maps\label{sec:power}}

This section brings together the lines of thought from Sections \ref{sec:group} (formal group structure) and \ref{sec:operations} (cohomology operations). Our first task is to make part of the discussion in Section \ref{subsec:cohen} more concrete, by introducing an explicit cocycle which generates $H^{p-1}(\mathit{FM}_p/\mathit{Sym}_p;\bF_p(1))$. By looking at the relation between that cocycle and the map $\mathit{MWW}_{1,\dots,1} \rightarrow \mathit{FM}_p$, we obtain an abstract analogue of Theorem \ref{th:p-power} in the operadic context.

\subsection{A cocycle in unordered Fulton-MacPherson space} 
Take \eqref{eq:s-into-fm}, modify it by rotation by $i$ so as to put the resulting configurations on the imaginary axis in $\bC$, and then compose that with projection to $\mathit{FM}_p/\mathit{Sym}_p$ (recall that the $\mathit{Sym}_p$-action is free, so the quotient is again a smooth manifold with corners, or topologically a manifold with boundary). The outcome is a submanifold (a copy of the associahedron $S_p$)
\begin{equation} \label{eq:embed-s}
Z_p \subset \mathit{FM}_p/\mathit{Sym}_p,
\end{equation}
with $\partial Z_p \subset \partial \mathit{FM}_p/\mathit{Sym}_p$. By definition, \eqref{eq:embed-s} has a preferred lift to $\mathit{FM}_p$, and therefore, the local system $\bF_p(1)|Z_p$ has a canonical trivialization. Using that and the orientations of $S_p$ and $\mathit{FM}_p$, we get a class 
\begin{equation} \label{eq:dual-cocycle}
[Z_p] \in H_{p-2}(\mathit{FM}_p/\mathit{Sym}_p, \partial \mathit{FM}_p/\mathit{Sym}_p; \bF_p(1))  \iso
H^{p-1}(\mathit{FM}_p/\mathit{Sym}_p;\bF_p(1)).
\end{equation}
In terms of the previous computations \eqref{eq:symmetric-group-cohomology-2}, \eqref{eq:configuration-space-twisted}, this can be expressed as follows:

\begin{lemma} \label{th:dual-cocycle}
For $p = 2$, \eqref{eq:dual-cocycle} is the image of $\theta = t^{1/2}$ under \eqref{eq:pullback-from-point}; for $p>2$, it is the image of 
\begin{equation}
(-1)^{\frac{p-1}{2}} {\textstyle \big(\frac{p-1}{2}!\big)} t^{\frac{p-1}{2}} \in H^{p-1}(B\mathit{Sym}_p;\bF_p(1)).
\end{equation}
\end{lemma}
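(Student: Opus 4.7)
Both cases reduce to a scalar computation, since by Cohen's result (cited just after \eqref{eq:pullback-from-point}) the pullback \eqref{eq:pullback-from-point} is surjective, and the target $H^{p-1}(\mathit{FM}_p/\mathit{Sym}_p;\bF_p(1))$ is one-dimensional by \eqref{eq:configuration-space-untwisted}--\eqref{eq:configuration-space-twisted}. Hence $[Z_p]$ is some scalar multiple of the image of $t^{(p-1)/2}$ (or of $\theta$ in the $p=2$ case), and the task is only to identify that scalar.

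The $p=2$ case is handled by direct inspection: parametrize $\mathit{FM}_2 \cong S^1$ via $(z_1,z_2)=(0,e^{i\phi})$, so that $\mathit{Sym}_2$ acts as the antipodal map and $\mathit{FM}_2/\mathit{Sym}_2 \cong \mathbb{RP}^1$. The submanifold $Z_2$ is then the single point $[i]$, whose Poincar\'e dual is the generator of $H^1(\mathbb{RP}^1;\bF_2)$. The classifying map $\mathbb{RP}^1 \to B(\bZ/2)=\mathbb{RP}^\infty$ for the double cover $\mathit{FM}_2 \to \mathit{FM}_2/\mathit{Sym}_2$ is the standard inclusion, which pulls $\theta \in H^1(\mathbb{RP}^\infty;\bF_2)$ back to this generator.

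For $p>2$ the strategy is a Thom-class computation. The embedding $S_p \hookrightarrow \mathit{FM}_p$ (imaginary-axis configurations) has normal bundle of rank $p-1$, canonically trivialized by horizontal perturbations modulo overall horizontal translation. Since these perturbations preserve the ordering of imaginary parts, the quotient map $\mathit{FM}_p \to \mathit{FM}_p/\mathit{Sym}_p$ is a local diffeomorphism on a tubular neighborhood, and the normal bundle of $Z_p$ is identified with that of $S_p$. The twist $\bF_p(1)$ on $\mathit{FM}_p/\mathit{Sym}_p$ restricts, under the classifying map to $B\mathit{Sym}_p$, to the sign local system; and $[Z_p]$ (with the trivialization of $\bF_p(1)|Z_p$ coming from the canonical lift) becomes, after pullback, the mod-$p$ Euler class in $H^{p-1}(B\mathit{Sym}_p;\bF_p(1))$ of the standard real representation $V = \bR^p/\bR(1,\dots,1)$ of $\mathit{Sym}_p$. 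The signs $\bF_p(1)$ appear precisely because $V$ is not $\mathit{Sym}_p$-equivariantly orientable.

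To extract the scalar, pull back further to $B(\bZ/p)$ using the injectivity \eqref{eq:restrict-to-cyclic}. The restriction of $V$ to $\bZ/p$ splits as $\bigoplus_{k=1}^{(p-1)/2} L_k$, where $L_k$ is the 2-dimensional real rotation representation of weight $k$, with Euler class $kt \in H^2(B(\bZ/p);\bF_p)$. The total Euler class is therefore $\prod_{k=1}^{(p-1)/2} (kt) = \tfrac{p-1}{2}!\, t^{(p-1)/2}$; the remaining overall sign $(-1)^{(p-1)/2}$ comes from comparing the orientation of $\mathit{FM}_p$ fixed in Section \ref{subsec:fm} (ordered by rotating $z_2$ around $z_1$) with the complex orientations used implicitly when computing the Chern/Euler classes of the $L_k$. \textbf{The main obstacle} is precisely this sign bookkeeping, i.e., matching the chosen generator $t \in H^2(B(\bZ/p);\bF_p)$ from \eqref{eq:t-theta} with its interpretation as a first Chern class, and reconciling the orientation of the normal bundle of $S_p \subset \mathit{FM}_p$ (coming from the canonical lift) with the ambient orientation. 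Once these normalizations are fixed, the Euler-class calculation above produces exactly the constants claimed in the statement.
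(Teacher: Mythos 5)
Your skeleton is in fact the paper's own argument: identify the class dual to $Z_p$ with the equivariant Euler class of the reduced permutation representation $V=\bR^p/\bR$, restrict to $\bZ/p\subset\mathit{Sym}_p$ (using injectivity as in \eqref{eq:restrict-to-cyclic}), split $V$ into the weight-$k$ rotation representations and multiply the classes $kt$; and your $p=2$ argument (one point in $\mathbb{RP}^1$, classifying map the standard inclusion) is a correct variant of the paper's transversal-loop check. The problem is that you stop exactly where the lemma's content begins. The statement is nothing but a normalization, and you explicitly defer the normalization: the sign $(-1)^{(p-1)/2}$ is asserted, not derived, and the source you propose for it is not right. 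The orientation of $\mathit{FM}_p$ fixed in Section \ref{subsec:fm} is by construction compatible, via \eqref{eq:split-orientations}, with the complex orientation of $\mathit{Conf}_p(\bC)$, so ``comparing the orientation of $\mathit{FM}_p$ with the complex orientations used for the $L_k$'' produces no sign at all. In the paper the sign arises elsewhere: working upstairs on $\mathit{Conf}_p(\bC)$ with the $\mathit{Sym}_p$-equivariant map to $\bR^p/\bR$ given by the real parts, the fibre over $0$ is $\tilde{Z}_p=\bR\times\mathit{Conf}_p(\bR)$, oriented by $(s,t_1,\dots,t_p)$; comparing this with its orientation as a fibre (equivalently, reordering the interleaved coordinates $(x_1,y_1,\dots,x_p,y_p)$ into a real block followed by an imaginary block) gives the Koszul sign $(-1)^{p(p-1)/2}=(-1)^{(p-1)/2}$, after which one descends to $\mathit{FM}_p$ using \eqref{eq:split-orientations}.

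A second point you pass over in silence: writing $e_{\bZ/p}(V)=\prod_{k=1}^{(p-1)/2}(kt)=\big(\tfrac{p-1}{2}!\big)t^{(p-1)/2}$ presupposes that the isomorphism $V\iso V_1\oplus\cdots\oplus V_{(p-1)/2}$ is orientation-preserving for the orientation of $V$ you are actually using (the one induced by the ordering of the real coordinates, which is what enters the normal bundle of $Z_p$). This is exactly what Example \ref{th:cyclic-representation} verifies, via the determinant of the discrete Fourier basis; without it, your product could be off by a further sign, which in this lemma is the whole point. So: right route, correct final constants, but the two verifications that constitute the proof --- the Koszul/orientation comparison giving $(-1)^{(p-1)/2}$, and the orientation-compatibility of the Fourier splitting giving $\tfrac{p-1}{2}!$ with no extra sign --- are missing, and the first is misattributed.
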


\begin{proof}
Let's consider the more interesting case $p>2$ first. Take the map
\begin{equation} \label{eq:real-part}
\mathit{Conf}_p(\bC) \longrightarrow \bR^p/\bR = \bR^{p-1}
\end{equation}
which projects ordered configurations to their real part, and then quotients out by the diagonal $\bR$ subspace. This map is $\mathit{Sym}_p$-equivariant, and the fibre at $0$ is the subspace $\tilde{Z}_p = \bR \times \mathit{Conf}_p(\bR) \subset \mathit{Conf}_p(\bC)$ of configurations with common real part, $(z_1 = s+\sqrt{-1}\,t_1, \dots, z_p = s+\sqrt{-1}\,t_p)$. Let's orient that by using the coordinates $(s,t_1,\dots,t_p)$ in this order. This differs from its orientation as a fibre of \eqref{eq:real-part} by a Koszul sign $(-1)^{p(p-1)/2} = (-1)^{(p-1)/2}$. On the other hand, the fibre at $0$ represents the pullback via \eqref{eq:real-part} of the equivariant Euler class of the $\bZ/p$-representation $\bR^p/\bR$. From this and \eqref{eq:cyclic-euler}, we get
\begin{equation} \label{eq:preimage-0}
[\tilde{Z}_p] = (-1)^{\frac{p-1}{2}} {\textstyle \big( \frac{p-1}{2}! \big)} t^{\frac{p-1}{2}} \in H^{p-1}_{\bZ/p}(\mathit{Conf}_p(\bC);\bF_p).
\end{equation}
The corresponding relation must hold in $H^{p-1}_{\mathit{Sym}_p}(\mathit{Conf}_p(\bC);\bF_p(1))$ as well, since both classes involved live in that group, and the map from there to $\bZ/p$-equivariant cohomology is injective. Note that $\tilde{Z}_p$ is the preimage of $Z_p$ under the quotient map $\mathit{Conf}_p(\bC) \rightarrow \mathit{FM}_p$. Moreover, inspection of \eqref{eq:split-orientations} shows that the orientations of $\mathit{FM}_p$, $Z_p$ and $\tilde{Z}_p$ we have used are compatible with that relation. Since the quotient map is equivariant and a homotopy equivalence, \eqref{eq:preimage-0} implies the corresponding property for $[Z_p]$.

One could follow the same strategy for $p = 2$, but we can be even more explicit. The generator of $H_1(\mathit{FM}_2/\mathit{Sym}_2;\bZ/2) \iso \bZ/2$ consists of a loop of configurations where two points rotate around each other, and its image in $H_1(B\mathit{Sym}_2;\bZ/2) \iso \bZ/2$ is obviously nontrivial. On the other hand, that loop intersects $Z_p$ transversally at exactly one point, which proves the desired statement.
\end{proof}

\subsection{A cycle in unordered Fulton-MacPherson space\label{subsec:unordered}}
Consider the space $\mathit{MWW}_{1,\dots,1}$ with $d$ colors, denoted here by $\mathit{MWW}_{\howmany{d}}$ for the sake of brevity. As a special case of \eqref{eq:glue-tree-3}, its codimension $1$ boundary faces are of the form
\begin{equation} \label{eq:111-faces}
\mathit{MWW}_{\howmany{d_1}} \times \cdots \times \mathit{MWW}_{\howmany{d_r}} \times S_r \xrightarrow{T_{I_1,\dots,I_r}} \mathit{MWW}_{\howmany{d}};
\end{equation}
there is one such face for each decomposition of $\{1,\dots,d\}$ into $r \geq 2$ nonempty subsets $(I_1,\dots,I_r)$, with $d_k = |I_k|$. In describing the boundary faces, we have used the identifications \eqref{eq:drop-color}. Suppose that we choose maps \eqref{eq:mww-into-fm} so as to be compatible with \eqref{eq:drop-color}, as in Section \ref{subsec:multiproduct-p}. Consider two decompositions $(I_1,\dots,I_d)$ and $(\tilde{I}_1,\dots,\tilde{I}_d)$, which correspond to the same ordered partition $d_k = |I_k| = |\tilde{I}_k|$. Then, the associated maps \eqref{eq:111-faces} and \eqref{eq:mww-into-fm} fit into a commutative diagram
\begin{equation} \label{eq:two-decompositions}
\xymatrix{
\mathit{MWW}_{\howmany{d}} \ar[d]
&& \mathit{MWW}_{\howmany{d_1}} \times \cdots \times \mathit{MWW}_{\howmany{d_r}} \times S_r 
\ar[rr]^-{T_{\tilde{I}_1,\dots,\tilde{I}_r}}
\ar[ll]_-{T_{I_1,\dots,I_r}} &&
\mathit{MWW}_{\howmany{d}} \ar[d]
\\
\mathit{FM}_d && \mathit{FM}_d \ar[ll]_-{\sigma_{I_1,\dots,I_r}}^-{\iso} \ar[rr]^-{\sigma_{\tilde{I}_1,\dots,\tilde{I}_r}}_-{\iso} && \mathit{FM}_d.
}
\end{equation}
Here, $\sigma_{I_1,\dots,I_r} \in \mathit{Sym}_d$ is the unique permutation which maps $\{1,\dots,d_1\}$ order-preservingly to $I_1$, $\{d_1+1,\dots,d_1+d_2\}$ order-preservingly to $I_2$, and so on; and correspondingly for $\sigma_{\tilde{I}_1,\dots,\tilde{I}_r}$. Suppose that we choose fundamental chains to be also compatible with \eqref{eq:drop-color}. Then, \eqref{eq:boundary-mww} simplifies to
\begin{equation} \label{eq:boundary-howmany}
\partial [\mathit{MWW}_{\howmany{d}}] = \sum_{(I_1,\dots,I_r)} \pm T_{I_1,\dots,I_r,*} ([\mathit{MWW}_{\howmany{d_1}}] \times \cdots \times [\mathit{MWW}_{\howmany{d_r}}] \times [S_r]). 
\end{equation}
Thinking of the homology of $\mathit{FM}_p/\mathit{Sym}_p$ as in Lemma \ref{th:explicit-cohen}, we get:

\begin{lemma} \label{th:quotient-cycle}
Suppose that $d = p$ is prime. Then, the image of $[\mathit{MWW}_{\howmany{p}}]$ under \eqref{eq:mww-into-fm} is a chain, denoted here by $B_p \in C_*(\mathit{FM}_p;\bF_p)$, with the property that $\partial B_p$ goes to zero in $C_*(\mathit{FM}_p;\bF_p) \otimes _{\mathit{Sym}_p} \bF_p(1)$. Therefore, it represents a class $[B_p] \in H_{p-1}(\mathit{FM}_p/\mathit{Sym}_p;\bF_p(1))$.
\end{lemma}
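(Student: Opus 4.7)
The plan is to work directly from the boundary formula \eqref{eq:boundary-howmany}: the boundary of $[\mathit{MWW}_{\howmany{p}}]$ is a signed sum over ordered decompositions $(I_1,\ldots,I_r)$ of $\{1,\ldots,p\}$ with $r\geq 2$ parts. I must show that after pushing forward to $\mathit{FM}_p$ via \eqref{eq:mww-into-fm} and reducing modulo the twisted $\mathit{Sym}_p$-action, all these contributions cancel. The strategy is to group the contributions by the ordered size sequence $(d_1,\ldots,d_r)=(|I_1|,\ldots,|I_r|)$ and show that each such group vanishes mod $p$.

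By the commutative diagram \eqref{eq:two-decompositions}, any two decompositions with the same ordered sizes differ by an element $\sigma_{I_1,\ldots,I_r}\in \mathit{Sym}_p$, and the corresponding pushforwards to $\mathit{FM}_p$ are related by this permutation. Since $\mathit{Sym}_p$ acts orientation-preservingly on $\mathit{FM}_p$ (Section \ref{subsec:fm}) and the local system $\bF_p(1)$ is defined via the sign character, the image of each decomposition's boundary stratum equals $\mathrm{sign}(\sigma_{I_1,\ldots,I_r})$ times a ``standard'' chain determined only by the sizes, once we pass to $C_*(\mathit{FM}_p;\bF_p)\otimes_{\mathit{Sym}_p}\bF_p(1)$.

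The key combinatorial step is to identify the sign $(-1)^\diamondsuit$ appearing in \eqref{eq:boundary-mww}. In our setting all of the original color multiplicities are $1$, so $d_{i,k}\in\{0,1\}$ with $I_i=\{k : d_{i,k}=1\}$. A direct unpacking shows that the first summand $\sum_{i_1<i_2,\,k_1>k_2}d_{i_1,k_1}d_{i_2,k_2}$ of $\diamondsuit$ counts precisely the inversions of $\sigma_{I_1,\ldots,I_r}$ (the number of pairs $k_2<k_1$ such that $k_1$ lies in an earlier part than $k_2$), and hence contributes $\mathrm{sign}(\sigma_{I_1,\ldots,I_r})$. The remaining summand $(r-1)(|I_1|-1)+(r-2)(|I_2|-1)+\cdots$ depends only on the sizes $(d_1,\ldots,d_{r-1})$. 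Combined with the $\mathrm{sign}(\sigma_{I_1,\ldots,I_r})$ introduced by the $\bF_p(1)$-twist, the two sign factors square to $+1$, so every decomposition with given sizes contributes the \emph{same} element of the twisted quotient.

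The sum over decompositions with a fixed ordered size sequence $(d_1,\ldots,d_r)$ therefore collapses to $\binom{p}{d_1,\ldots,d_r}$ times a single chain. For $r\geq 2$ and all $d_i\geq 1$ (so automatically $d_i\leq p-1$), the multinomial coefficient $\binom{p}{d_1,\ldots,d_r}=p!/(d_1!\cdots d_r!)$ is divisible by the prime $p$, so the contribution vanishes with $\bF_p$-coefficients. Summing over all ordered partitions of $p$ into $r\geq 2$ positive parts yields $\partial B_p=0$ in $C_{p-2}(\mathit{FM}_p;\bF_p)\otimes_{\mathit{Sym}_p}\bF_p(1)$, which is what is required. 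The main obstacle is the sign identification in the third step: carefully unpacking the opaque formula for $\diamondsuit$ and verifying that its leading summand is exactly the inversion count of $\sigma_{I_1,\ldots,I_r}$; once that is in hand, the vanishing is a one-line multinomial argument.
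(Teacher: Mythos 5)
Your proof is correct and follows essentially the same route as the paper's: you group boundary faces by their ordered partition, use \eqref{eq:two-decompositions} to see that the sign coming from the $\bF_p(1)$-twist cancels against the sign in the boundary formula (you verify this explicitly via the inversion-count interpretation of the first summand of $\diamondsuit$ in \eqref{eq:boundary-mww}, a detail the paper merely asserts), so all faces with the same size sequence contribute equally in the twisted quotient. The only cosmetic difference is the final counting step: you invoke $p \mid \binom{p}{d_1,\dots,d_r}$, whereas the paper uses the free action of $\bZ/p \subset \mathit{Sym}_p$ on decompositions with a fixed ordered partition---two formulations of the same mod $p$ cancellation.
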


\begin{proof}
Consider two codimension $1$ boundary faces as in \eqref{eq:two-decompositions}. The resulting chains in $\mathit{FM}_d$ differ by applying the permutation $\sigma_{\tilde{I}_1,\dots,\tilde{I}_r}\sigma_{I_1,\dots,I_r}^{-1}$. Hence, when mapped to $C_*(\mathit{FM}_d) \otimes_{\mathit{Sym}_p} \bF_p(1)$, they differ by the sign of that permutation. On the other hand, their entries in \eqref{eq:boundary-howmany} differ by the same sign. When computing $\partial B_p$ in $C_*(\mathit{FM}_p) \otimes_{\mathit{Sym}_p} \bF_p(1)$, the two kinds of signs cancel, which means that the contributions are the same. Now, the cyclic group $\bZ/p \subset \mathit{Sym}_p$ acts freely on ordered  decompositons corresponding to the same ordered partition, and this provides the required cancellation mod $p$ for the terms of $\partial B_p$.
\end{proof}


\begin{lemma} \label{th:one-point}
The canonical pairing between the cohomology class of \eqref{eq:dual-cocycle} and the homology class from Lemma \eqref{th:quotient-cycle} is $[Z_p] \cdot [B_p] = (-1)^{p(p-1)/2}$.
\end{lemma}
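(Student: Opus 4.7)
The plan is to compute the pairing by an explicit transverse intersection of $Z_p$ and $B_p$ inside $\mathit{FM}_p/\mathit{Sym}_p$. First I would identify the intersection points. A point of $Z_p$ is an unordered configuration supported on (a translate and dilate of) the imaginary axis, while a point of $B_p$, by the map \eqref{eq:imaginary-part-is-k} used to define \eqref{eq:mww-into-fm}, has its $k$-th colored marked point at imaginary height $k$. These two conditions coincide precisely when all real parts $s_{k,1}$ are equal, which after fixing the $\bR$-translation leaves the single configuration $z_* = \{i, 2i, \dots, pi\}$. A dimension count $(p-2) + (p-1) = 2p-3 = \dim \mathit{FM}_p/\mathit{Sym}_p$ shows that transversality at this point is a nontrivial but checkable condition.

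Next I would reduce to a computation in $\mathit{FM}_p$. Both submanifolds carry canonical trivializations of $\bF_p(1)$ via preferred lifts: for $Z_p$, ordering the marked points by increasing imaginary part; for $B_p$, ordering by color. At $z_*$ these orderings agree, so the $\bF_p(1)\otimes\bF_p(1)\to\bF_p$ contribution is $+1$, and since the $\mathit{Sym}_p$-action on $\mathit{FM}_p$ is orientation-preserving, the pairing reduces to the signed transverse intersection of the lifts $\tilde Z_p, \tilde B_p$ inside the oriented manifold $\mathit{FM}_p$ at the single lifted point $z_* = (i, 2i, \dots, pi)$.

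Then I would carry out the sign computation in the slice coordinates of Section~\ref{subsec:fm}. Fixing $z_1 = i$ and $|z_2 - z_1| = 1$, take local coordinates $(\phi, x_3, y_3, \dots, x_p, y_p)$ on $\mathit{FM}_p$ with orientation $d\phi \wedge dx_3 \wedge dy_3 \wedge \cdots \wedge dx_p \wedge dy_p$. Tracking tangent spaces: $T\tilde Z_p$ is spanned in order by $\partial_{y_3}, \dots, \partial_{y_p}$ (the $S_p$-orientation fixes $y_1, y_2$); for $T\tilde B_p$, the pushforwards of $\partial_{s_{2,1}}, \dots, \partial_{s_{p,1}}$ are $\partial_{x_2}, \dots, \partial_{x_p}$ in $T\mathit{Conf}_p(\bC)$, and from $z_2 = z_1 + e^{i\phi}$ at $\phi=\pi/2$ one reads off $\partial_\phi = -\partial_{x_2}$, so $T\tilde B_p$ becomes $(-\partial_\phi, \partial_{x_3}, \dots, \partial_{x_p})$ in order. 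Forming $\omega_Z \wedge \omega_B$, pulling $-\partial_\phi$ to the front contributes $(-1)^{p-1}$, and the shuffle of $(\partial_{y_3}, \dots, \partial_{y_p}, \partial_{x_3}, \dots, \partial_{x_p})$ into the interleaved order $(\partial_{x_3}, \partial_{y_3}, \dots, \partial_{x_p}, \partial_{y_p})$ contributes $(-1)^{(p-2)(p-1)/2}$ (the number of inversions is $\sum_{a=1}^{p-2} a$). The total exponent is
\begin{equation*}
(p-1) + \tfrac{(p-2)(p-1)}{2} = \tfrac{(p-1)(p-2+2)}{2} = \tfrac{p(p-1)}{2},
\end{equation*}
which gives the claimed sign.

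The main obstacle is the bookkeeping of orientations through the quotient $T\mathit{Conf}_p(\bC) \to T\mathit{FM}_p$ by $\bC \oplus \bR$: in particular, one must correctly detect that $\partial_{x_2}$ projects to $-\partial_\phi$ rather than $+\partial_\phi$, since reversing this sign would shift the final answer by $(-1)^{p-1}$. A minor additional point is to check that using a more general choice of auxiliary functions \eqref{eq:imaginary-part-is-tau} in place of \eqref{eq:imaginary-part-is-k} does not affect $[B_p]$, which is automatic by homotopy invariance of the constructed chain.
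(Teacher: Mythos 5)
Your proposal is correct and follows essentially the same route as the paper: it identifies the unique transverse intersection point (the vertical, equally spaced configuration), observes that horizontal motion of $z_2$ is clockwise relative to the orientation convention for $\mathit{FM}_p$, and extracts the sign $(-1)^{p(p-1)/2}$ from the Koszul reordering — exactly the "standard Koszul signs" the paper leaves implicit. Your version just spells out the slice coordinates, the agreement of the two $\bF_p(1)$-trivializations at the intersection point, and the shuffle count in more detail.
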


\begin{proof}
We think of this Poincar{\'e}-dually as an intersection number. The relevant cycles intersect at exactly one point of $\mathit{FM}_p$, which is a configuration $(z_1;\dots;z_p)$ with $\mathrm{re}(z_1) = \cdots = \mathrm{re}(z_d) = 0$ and $\mathrm{im}(z_1) < \cdots < \mathrm{im}(z_d)$. The tangent space of $Z_d \subset \mathit{FM}_d$ at that point can be thought of as keeping $(z_1,z_2)$ fixed, and moving $(z_3,\dots,z_d)$ infinitesimally in imaginary direction. The tangent space to the image of $\mathit{MWW}_{\howmany{p}}$ at the same point consists of keeping $z_1$ fixed, but moving $(z_2,\dots,z_d)$ infinitesimally in real direction. Note that positive horizontal motion of $z_2$ yields a clockwise motion of the angular component of $z_2-z_1$. This observation, when combined with standard Koszul signs, yields the desired local intersection number.
\end{proof}
%

To see the implications of Lemma \ref{th:dual-cocycle} and \ref{th:one-point}, note that by the dual of \eqref{eq:configuration-space-twisted} and \eqref{eq:pullback-from-point},
\begin{equation} \label{eq:hp-1}
H_{p-1}(\mathit{FM}_p/\mathit{Sym}_p;\bF_p(1)) \iso H_{p-1}(\mathit{BSym}_p;\bF_p(1)) \iso \bF_p.
\end{equation}
In those terms, what we have shown is:

\begin{lemma} \label{th:cycle-found}
For $p = 2$, the homology class of the cycle from Lemma \ref{th:quotient-cycle} is the unique nontrivial element in \eqref{eq:hp-1}. For $p>2$, it is $(\frac{p-1}{2}!)^{-1}$ times the standard generator (dual to $t^{(p-1)/2}$) .
\end{lemma}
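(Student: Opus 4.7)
The plan is to reduce the identification of $[B_p]$ to a straightforward duality computation, using the ingredients already assembled in Lemmas \ref{th:dual-cocycle} and \ref{th:one-point}. The key point is that by (the homology dual of) \eqref{eq:configuration-space-twisted} and \eqref{eq:pullback-from-point}, the pushforward
\begin{equation}
H_{p-1}(\mathit{FM}_p/\mathit{Sym}_p;\bF_p(1)) \longrightarrow H_{p-1}(B\mathit{Sym}_p;\bF_p(1)) \iso \bF_p
\end{equation}
is an isomorphism, and the pairing with the cohomology direction is nondegenerate. Writing $\tau \in H^{p-1}(B\mathit{Sym}_p;\bF_p(1))$ for the generator $t^{(p-1)/2}$ (or $\theta$ when $p=2$) and $\tau^\vee$ for its dual homology class, my task is thus to determine the scalar $c \in \bF_p$ such that $[B_p] = c\,\tau^\vee$.

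Writing $[Z_p] = a\,\tau$ with $a$ read off from Lemma \ref{th:dual-cocycle}, and invoking Lemma \ref{th:one-point}, the pairing gives the single equation
\begin{equation}
a \cdot c = (-1)^{p(p-1)/2}.
\end{equation}
For $p=2$ we have $a = 1$ and $(-1)^{p(p-1)/2} = -1 = 1$ in $\bF_2$, whence $c = 1$, confirming that $[B_2]$ is the nontrivial class. For $p>2$ we have $a = (-1)^{(p-1)/2}\bigl(\tfrac{p-1}{2}!\bigr)$, so
\begin{equation}
c = (-1)^{p(p-1)/2 + (p-1)/2} \bigl(\tfrac{p-1}{2}!\bigr)^{-1} = (-1)^{(p-1)(p+1)/2}\bigl(\tfrac{p-1}{2}!\bigr)^{-1}.
\end{equation}

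The only remaining arithmetic observation is that for odd $p$ the exponent $(p-1)(p+1)/2 = \tfrac{p-1}{2}(p+1)$ is always even, since $p+1$ is even. Hence the sign disappears and we get $c = \bigl(\tfrac{p-1}{2}!\bigr)^{-1}$, which is the asserted coefficient. I do not anticipate any real obstacle here: all the geometric input (the embedded cycle $Z_p$, the chain $B_p$, their intersection number, and the cohomological identification with $B\mathit{Sym}_p$) has already been established in the preceding lemmas, and what remains is only a sign bookkeeping that collapses cleanly thanks to the parity of $(p-1)(p+1)/2$.
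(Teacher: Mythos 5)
Your argument is correct and is essentially the paper's own: the lemma is obtained there exactly by combining Lemma \ref{th:dual-cocycle} with the intersection number of Lemma \ref{th:one-point}, using the identification \eqref{eq:hp-1} to reduce everything to one scalar, and your sign bookkeeping (including the parity of $(p-1)(p+1)/2$ and the $p=2$ case in $\bF_2$) matches.
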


Take $[c] \in H^{\mathrm{odd}}(\scrC)$ and our $B_p$, and consider the image of $B_p \otimes c^{\otimes p}$ under the operad action. This defines a map $H^{\mathrm{odd}}(\scrC) \rightarrow H^{\mathrm{odd}}(\scrC)$, which by Lemma \ref{th:explicit-cohen} is a certain component of the Cohen operation applied to $[c]$. Lemma \ref{th:cycle-found} tells us exactly what it is:
\begin{equation} \label{eq:which-cohen}
\begin{cases} \text{the $t^{\frac12}$ (or $\theta$) component of \eqref{eq:cohen-operations-2}} & p = 2, \\
\textstyle{\big(\frac{p-1}{2} !\big)}^{-1} \,\, \text{times the $t^{\frac{p-1}{2}}$-component of \eqref{eq:cohen-operations-2}}
& p > 2.
\end{cases}
\end{equation}
If $c$ has degree $1$, the same process computes $\beta_{\scrC}^{1,\dots,1}(c;\dots;c)$, by definition of \eqref{eq:beta-operations}. Lemma \ref{th:p-power-beta} shows that this is the leading order term of $\Pi^p_{\scrC}(\gamma,\dots,\gamma)$ for a Maurer-Cartan element $\gamma = qc + O(q^2)$, and Corollary \ref{th:r-at-least-3} identifies that with the $p$-fold product of $\gamma$ under our formal group law. The consequence, under the assumption of homological unitality inherited from the proof of Proposition \ref{th:semi-associativity}, is:

\begin{theorem} \label{th:abstract-counterpart}
Take the group law $\bullet$ on $\mathit{MC}(\scrC;q\bF_p[q]/q^{p+1})$. The $p$-th power map for that group fits into a commutative diagram like \eqref{eq:p-power-diagram}, with the operation \eqref{eq:which-cohen} at the bottom.
\end{theorem}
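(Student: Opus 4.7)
The plan is to identify the three ingredients already assembled in the section and read off the theorem. First, given $[c] \in H^{1}(\scrC;\bF_p)$, pick a cocycle representative $c$ and lift it to a Maurer-Cartan element $\gamma \in \scrC^{1} \hat\otimes q\bF_p[q]/q^{p+1}$ with $\gamma = qc \bmod q^2$ (such a lift exists, and any two lifts represent the same class in $\mathit{MC}(\scrC;q\bF_p[q]/q^{2})$, which is the starting data in the bottom row of the diagram). Then by Corollary \ref{th:r-at-least-3}, the $p$-fold formal group product $\gamma\bullet\cdots\bullet\gamma$ is equivalent to $\Pi^{p}_{\scrC}(\gamma,\dots,\gamma)$, so the top arrow is computed (up to equivalence) by the multiproduct.

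Next, Lemma \ref{th:p-power-beta} collapses the multiproduct to the single term
\begin{equation*}
\Pi^{p}_{\scrC}(\gamma,\dots,\gamma) \;=\; \beta^{1,\dots,1}_{\scrC}(c;\dots;c)\,q^{p} \;\in\; \scrC^{1}\hat\otimes q^{p}\bF_p[q]/q^{p+1}.
\end{equation*}
This already implies that the $p$-th power of $\gamma$ lives (up to equivalence) in the subgroup $\mathit{MC}(\scrC;q^p\bF_p[q]/q^{p+1})$, so the right-hand vertical inclusion of the diagram captures it. What remains is to identify the resulting cohomology class in $H^{1}(\scrC;\bF_p)$, namely $[\beta^{1,\dots,1}_{\scrC}(c;\dots;c)]$, with the operation \eqref{eq:which-cohen} applied to $[c]$.

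For this last step, I will invoke the chain-level description of \eqref{eq:beta-operations}: by construction $\beta^{1,\dots,1}_{\scrC}$ is the operation of the singular chain $B_p \in C_*(\mathit{FM}_p;\bF_p)$ obtained by pushing forward $[\mathit{MWW}_{\howmany{p}}]$ along \eqref{eq:mww-into-fm}, acting on $c^{\otimes p}$. By Lemma \ref{th:quotient-cycle}, $B_p$ descends to a cycle in $\mathit{FM}_p/\mathit{Sym}_p$ with $\bF_p(1)$-coefficients, and by Lemma \ref{th:explicit-cohen} its pairing with $c^{\otimes p}$ computes the component of the Cohen operation \eqref{eq:cohen-operations} dual to the class $[B_p]$. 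Lemma \ref{th:cycle-found} pins down that class in $H_{p-1}(\mathit{FM}_p/\mathit{Sym}_p;\bF_p(1))$: it is the generator dual to $\theta$ when $p=2$, and $\bigl(\tfrac{p-1}{2}!\bigr)^{-1}$ times the generator dual to $t^{(p-1)/2}$ when $p>2$, exactly matching the normalization in \eqref{eq:which-cohen}.

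The main obstacle is bookkeeping rather than genuine difficulty: one must check that the normalizations and orientation conventions line up --- in particular that the scalar produced by Lemma \ref{th:cycle-found} is the one appearing in the definition of \eqref{eq:which-cohen}, and that the natural trivializations of $\bF_p(1)$ over $Z_p$ versus $B_p$ agree with the Koszul signs used throughout Section \ref{subsec:unordered}. Once these are reconciled, chasing $[c]$ through the diagram gives exactly the stated identity, and the theorem follows.
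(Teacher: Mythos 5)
Your argument is correct and is essentially the paper's own proof: Corollary \ref{th:r-at-least-3} together with Lemma \ref{th:p-power-beta} reduces the $p$-fold $\bullet$-power of $\gamma = qc + O(q^2)$ to the single term $\beta^{1,\dots,1}_{\scrC}(c;\dots;c)\,q^p$, and Lemmas \ref{th:explicit-cohen}, \ref{th:quotient-cycle} and \ref{th:cycle-found} identify $[\beta^{1,\dots,1}_{\scrC}(c;\dots;c)]$ with the operation \eqref{eq:which-cohen} applied to $[c]$, which is exactly how the paper concludes. The only slip is the parenthetical claim that every $[c]$ lifts to $\mathit{MC}(\scrC;q\bF_p[q]/q^{p+1})$ --- such lifts are obstructed in general in characteristic $p$ (compare the remark following Lemma \ref{th:xi-algebra}) --- but this is harmless here, since the diagram only asserts something about elements of the top-left group; one should simply start with $\gamma$ and obtain $c$ from its projection, which is precisely the form in which Lemma \ref{th:p-power-beta} is stated and applied.
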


To conclude our discussion, note that if the operad structure is induced from one over Deligne-Mumford spaces, as in \eqref{eq:fm-to-dm-2}, then the relevant operation \eqref{eq:which-cohen} can also be written as a quantum Steenrod operation, by Lemma \ref{th:cohen-and-steenrod}.

\section{Constructions using pseudo-holomorphic curves\label{sec:floer}}
We will now translate the previous arguments into more specifically symplectic terms. The choice of singular chains on parameter spaces, and its application to a general operadic structure, is replaced by a choice of perturbations which make the moduli spaces regular, followed by counting-of-solutions to extract the algebraic operations. For technical convenience, we use Hamiltonian Floer theory (with a small time-independent Hamiltonian) as a model for cochains on our symplectic manifold. Correspondingly, all the operations are defined using inhomogeneous Cauchy-Riemann equations on punctured surfaces. This makes no difference with respect to Theorem \ref{th:p-power}, since the Floer-theoretic version of quantum Steenrod operations agrees with that defined using ordinary pseudo-holomorphic curves (for $p = 2$, see \cite{wilkins18b}; the same strategy works for general $p$).

\subsection{Floer-theoretic setup\label{subsec:floer-setup}}
Let $(X,\omega_X)$ be a closed symplectic manifold, which is monotone \eqref{eq:monotone}. We fix a $C^2$-small Morse function $H \in \smooth(X,\bR)$, with its Hamiltonian vector field $Z_H$. We also fix a compatible almost complex structure $J$, and the associated metric $g_J$. We require:

\begin{properties} \label{th:floer-properties}
(i) All spaces of Morse flow lines for $(H,g_J)$ are regular.

(ii) All one-periodic orbits of $Z_H$ are constant, hence critical points of $H$. Moreover, the linearized flow at each such point $x$ is nondegenerate for all times $t \in (0,1]$, which implies that the Conley-Zehnder index is equal to the Morse index $\mu(x)$.

(iii) No $J$-holomorphic sphere $v$ with $c_1(v) = 1$ passes through a critical point of $H$ or intersects an isolated Morse flow line. Here, $c_1(v)$ is the usual shorthand for $c_1(X)$ integrated over $[v] \in H_2(X)$.
\end{properties}

Consider the autonomous Floer equation, where as usual $S^1 = \bR/\bZ$:
\begin{equation} \label{eq:floer}
\left\{
\begin{aligned}
& u: \bR \times S^1 \longrightarrow X, \\
& \partial_s u + J(\partial_t u - Z_H) = 0, \\
& \textstyle \lim_{s \rightarrow \pm\infty} u(s,t) = x_{\pm},
\end{aligned}
\right.
\end{equation}
where the limits $x_{\pm}$ are critical points of $H$. This equation has an $(\bR \times S^1)$-symmetry by translation in both directions. For $S^1$-invariant solutions, meaning $t$-independent maps $u = u(s)$, it reduces to the negative gradient flow equation $du/ds + \nabla_{g_J} H = 0$. Denote by $D_u$ the linearized operator at a solution of \eqref{eq:floer}. Its Fredholm index can be computed as
\begin{equation}
\mathrm{index}(D_u) = \mu(x_-) - \mu(x_+) + 2 c_1(u),
\end{equation}
where in the last term, we have extended $u$ to the compactification $(\bR \times S^1) \cup \{\pm \infty\} = S^2$.  As a consequence of transversality results in \cite{hofer-salamon95, floer-hofer-salamon94} (see in particular \cite[Theorem 7.3]{hofer-salamon95} and \cite[Theorem 7.4]{floer-hofer-salamon94}), we may further require:

\begin{properties} \label{th:floer-properties-2}
(i) All solutions of Floer's equation with $\mathrm{index}(D_u) \leq 1$ are independent of $t$. Concretely, there are none with negative index; the only ones with index zero are constant; and those with $\mathrm{index}(D_u) = 1$ are isolated 
Morse flow lines for $(H,g_J)$. 

(ii) For the last-mentioned $u$, all solutions of $D_u \xi = 0$ are independent of $t$, hence lie in the kernel of the corresponding linearized operator from Morse theory. In view of Property \ref{th:floer-properties}(i) and (ii), this implies that such Floer solutions are regular.
\end{properties}

Define $\scrC$ to be the standard Morse complex for $(H,g_J)$, meaning that
\begin{equation}
\scrC = \bigoplus_x \bZ_x[-\mathrm{\mu(x)}],
\end{equation}
where $\bZ_x$ is the orientation line (the rank one free abelian group whose two generators correspond to orientations of the descending manifold of $x$), with a differential $d_{\scrC}$ that counts isolated gradient flow lines. When considered as a $\bZ/2$-graded space, this is equal to the Floer complex of $(H,J)$, thanks to the properties above. Our conventions are cohomological, meaning that with notation as in \eqref{eq:floer}, $d_{\scrC}$ takes ``$x_+$ to $x_-$''.

\subsection{Operations}
Take $\bar{\bC} = \bC P^1$, with marked points $z_0 = \infty$ and $z_1,\dots,z_d \in \bC$. Consider the resulting punctured surface, 
\begin{equation} \label{eq:surface}
C = \bar{\bC} \setminus \{z_0,\dots,z_d\} = \bC \setminus \{z_1,\dots,z_d\}. 
\end{equation}
An inhomogeneous term $\nu_C$ is a $(0,1)$-form on $C$ with values on vector fields on $X$:
\begin{equation} \label{eq:inhomogeneous-term}
\nu_C \in \Omega^{0,1}(C, \smooth(X,TX)) = \smooth(C \times X, \mathit{Hom}_{\bC}(\overline{TC},TX)),
\end{equation}
where the $(0,1)$ part is taken with respect to $J$. We require that our inhomogeneous terms should have a special structure near the marked points:
\begin{equation} \label{eq:kappa-at-zk}
\nu_C = 
\begin{cases}
\big(Z_H \otimes \mathrm{re} (d\log(z-z_k)/2\pi \sqrt{-1}) \big)^{0,1} & \text{ near $z_k$ for $k>0$,} \\
\big(Z_H \otimes \mathrm{re} (d\log(z-\xi_C)/2\pi \sqrt{-1}) \big)^{0,1} & \text{ near $z_0 = \infty$,}
\end{cases}
\end{equation}
where $\xi_C \in \bC$ is an auxiliary datum that we consider as part of $\nu_C$. In cylindrical coordinates 
\begin{equation} \label{eq:cylindrical}
z = \begin{cases}
z_k + \exp(-2\pi (s+\sqrt{-1}t)) & \text{ near $z_k$ for $k>0$, where $(s,t) \in \bR^{\geq 0} \times S^1$}, \\
\xi_C + \exp(-2\pi (s+\sqrt{-1}t)) & \text{ near $z_0 =\infty$, where $(s,t) \in \bR^{\leq 0} \times S^1$},
\end{cases}
\end{equation}
what \eqref{eq:kappa-at-zk} says is $\nu_C = (Z_H \otimes \mathit{dt})^{0,1}$. Consider the inhomogenous Cauchy-Riemann equation
\begin{equation} \label{eq:floer-2}
\left\{
\begin{aligned}
& u: C \longrightarrow X, \\
& \bar\partial u = \nu_C(u), \\
& \textstyle \lim_{z \rightarrow z_k} u(z) = x_k,
\end{aligned}
\right.
\end{equation}
where the limits $x_k$ are again critical points of $H$. When written in coordinates \eqref{eq:cylindrical} near the marked points, \eqref{eq:floer-2} reduces to \eqref{eq:floer}, explaining why this convergence condition makes sense. The linearization of \eqref{eq:floer-2} has 
\begin{equation}
\mathrm{index}(D_u) = \mu(x_0) - \textstyle \sum_{j=1}^d \mu(x_j) + 2c_1(u).
\end{equation}
We will not explain the compactness and transversality theory for moduli spaces of solutions of \eqref{eq:floer-2}, both being standard (the first due to monotonicity, the second because we have complete freedom in choosing $\nu_C$ on a compact part of $C$). For a single surface $C$ and a generic choice of $\nu_C$, counting solutions of \eqref{eq:floer-2} will give rise to a chain map $\scrC^{\otimes d} \rightarrow \scrC$ which preserves the $\bZ/2$-degree (and which represents the $d$-fold pair-of-pants product).

We need to review briefly the gluing process for surfaces, to see how it fits in with inhomogeneous terms. Suppose that we have two surfaces $C_k = \bC \setminus \{z_{k,1},\dots,z_{k,d_k}\}$, $k = 1,2$, which also come with inhomogeneous terms $\nu_{C_k}$, and in particular $\xi_{C_k} \in \bC$. Fix some $0 \leq i < d_1$. The gluing process produces a family of surfaces $C_{\lambda} = \bC \setminus \{z_{\lambda,1},\dots,z_{\lambda,d}\}$, where $d = d_1+d_2  - 1$, depending on a sufficiently small parameter $\lambda>0$. Namely, take the affine transformation
\begin{equation}
\phi_\lambda(z) = \lambda (z - \xi_{C_2}) + z_{1,i+1};
\end{equation}
then
\begin{equation} \label{eq:new-points}
z_{\lambda,k} = \begin{cases} 
z_{1,k} & k \leq i, \\
\phi_\lambda(z_{2,k-i}) & i < k \leq i+d_2, \\
z_{1,k-d_2+1} & k > i+d_2.
\end{cases} 
\end{equation} 
We want to equip the glued surfaces with inhomogeneous terms $\nu_{C_\lambda}$ which are smoothly dependent on $\lambda$ and have the following property. Fix some sufficiently small $r>0$ and large $R>0$. First,
\begin{equation} \label{eq:glue-nu}
\left\{
\begin{aligned} 
&
\nu_{C_\lambda} = \nu_{C_1} \;\; \text{where $|z| \geq R$, and also where $|z - z_{1,k}| \leq r$, for $0 < k \neq i+1$}; 
\\ &
\nu_{C_\lambda} = \phi_{\lambda,*}\nu_{C_2} \;\; \text{where $\lambda R \leq |z-z_{1,i+1}| \leq r$}; 
\\ &
\phi_\lambda^* \nu_{C_\lambda} = \nu_{C_2} \;\; \text{where $|z-z_{2,k}| \leq r$, for any $k>0$}.
\end{aligned}
\right.
\end{equation}
The upshot is that $\nu_{C_\lambda}$ is completely prescribed in certain (partly $\lambda$-dependent) neighbourhoods of the marked points on $C_\lambda$, as well as on an annular ``gluing region'', see Figure \ref{fig:glue-nu}. On each such region, $\nu_{C_\lambda}$ is given by a similar expression as in \eqref{eq:kappa-at-zk}. In particular, the middle line of \eqref{eq:glue-nu} really says that
\begin{equation}
\begin{aligned}
& \nu_{C_\lambda} = \big(Z_H \otimes \mathrm{re} (d\log(z-z_{1,i+1})/2\pi \sqrt{-1}) \big)^{0,1} 
\;\;  \text{where $\lambda R \leq |z-z_{1,i+1}| \leq r$} \\
 \Longleftrightarrow \;\; &
\phi_\lambda^*\nu_{C_\lambda} = \big( Z_H \otimes \mathrm{re}(d\log(z-\xi_{C_2})/2\pi \sqrt{-1}) \big)^{0,1}
\;\; \text{where $R \leq |z-\xi_{C_2}| \leq \lambda^{-1} r$.}
\end{aligned}
\end{equation}
\begin{figure}
\begin{centering}
\includegraphics{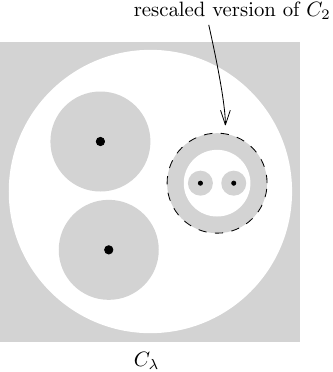}
\caption{\label{fig:glue-nu} Gluing punctured planes equipped with inhomogeneous terms, as in \eqref{eq:glue-nu}. The shaded regions are where the inhomogeneous term on the glued surface is prescribed.}
\end{centering}
\end{figure}
Additionally, there are asymptotic conditions as $\lambda \rightarrow 0$:
\begin{equation}
\left\{
\begin{aligned}
&
\text{on $|z-z_{1,i+1}| \geq r$, the family $\nu_{C_\lambda}$ can be smoothly extended to $\lambda = 0$, }
\\
& \hspace{20em} \text{by setting that extension equal to $\nu_{C_1}$;} \\
&
\text{on $|z| \leq R$, the family $\phi_{\lambda}^*\nu_{C_\lambda}$ can be smoothly extended to $\lambda =0$,}
\\
& \hspace{20em} \text{by setting that extension equal to $\nu_{C_2}$.}
\end{aligned}
\right.
\end{equation}
Given that, it makes sense for a sequence of solutions of \eqref{eq:floer-2} on $C_{\lambda_k}$, $\lambda_k \rightarrow 0$, to converge to a ``broken solution'' which consists of corresponding solutions on $C_1$ and $C_2$; and conversely, the gluing process for broken solutions applies; as used, for instance, in proving associativity of the pair-of-pants product. Again, we omit the details, which are standard. Thanks to our use of a autonomous Hamiltonian, there is also a version where $C_2$ is rotated before being glued in, meaning that we use a small $\lambda \in \bC^*$ (inserting absolute values wherever the size of $\lambda$ appears in the formulae above). 

A process such as \eqref{eq:new-points}, in which the $\lambda$-dependence of the marked points follows a specific pattern, is simple to describe, but far more rigid than the analytic arguments require. Here is a more appropriate formulation, where the first part describes the ingredient for compactness arguments, and the second part addresses gluing of solutions:

\begin{definition} \label{th:smoothing}
Take $C_1$ and $C_2$ as before. Choose arbitrary families of surfaces with inhomogeneous terms $C_{1,r}$ and $C_{2,r}$, depending on $r \in \bR^m$ for some $m$, and which reduce to the given ones for $r = 0$. Apply the previously described notion of gluing in a parametrized way, which means that we have a family $C_{\lambda,r}$. 

(i) Suppose that $C_k$ is a sequence of surfaces with inhomogeneous terms $\nu_{C_k}$, which for $k \gg 0$ are isomorphic to $C_{\lambda_k,r_k}$ for $\lambda_k > 0$ and $(\lambda_k,r_k) \rightarrow (0,0)$. We then say that the $C_k$ degenerate to $(C_1,C_2)$.

(ii) Suppose that $C_\sigma$ is a smooth family of surfaces with inhomogeneous terms $\nu_{C_\sigma}$, depending on a parameter $\sigma>0$. Suppose that for small $\sigma$, these are isomorphic to $C_{\lambda(\sigma),r(\sigma)}$, where $(\lambda(\sigma),r(\sigma))$ satisfies $(\lambda(0),r(0)) = (0,0)$ and $\lambda'(0) > 0$. We then say that the family $C_\sigma$ is obtained by smoothing $(C_1,C_2)$.
\end{definition}

To clarify the notation, is might be useful to look slightly ahead to our first application.
When defining an $A_\infty$-structure, one deals with $C_1$ and $C_2$ which depend, respectively, on moduli in $S_{d_1} \setminus \partial S_{d_1}$ and $S_{d_2} \setminus \partial S_{d_2}$. In these terms, $r$ is a local coordinate on the product of those spaces, while $\lambda$ is the transverse coordinate to $S_{d_1} \times S_{d_2} \subset \partial S_d$, $d = d_1+d_2-1$. As this example shows, our discussion has been limited to the simplest process of gluing two surfaces together; a complete description would include the generalization to arbitrarily many surfaces.

To round off the discussion of inhomogeneous terms, let's mention an obvious generalization, which is to equip \eqref{eq:surface} with a family of compatible almost complex structures $(J_z)_{z \in C}$, which reduce to the given $J$ outside a compact subset. When defining the associated notion of inhomogeneous term, one uses those structures to define the $(0,1)$ part, and similarly for \eqref{eq:floer-2}. It is straightforward to extend the gluing process to this situation. Usually, this generalization is not required, since the freedom to choose $\nu_C$ is already enough to achieve transversality of moduli spaces. However, there are situations such as the construction of continuation maps, where varying almost complex structures occur necessarily (because one is trying to relate different choices of $J$).

\subsection{The quantum $A_\infty$-structure\label{subsec:quantum-ainfty}}
This is the most familiar application. Given $(s_1,\dots,s_d)$ as in \eqref{eq:configuration-space-1}, we think of them as complex points $z_k = s_k$, and then equip the resulting surface \eqref{eq:surface} with an inhomogeneous term $\nu_C$, which should vary smoothly in dependence on the points, and be invariant under the symmetries in \eqref{eq:configuration-space-1}; one can think of this as a fibrewise inhomogeneous term on the universal family of surfaces over $S_d \setminus \partial S_d$. Along the boundary of the moduli space, we want the family to extend along the lines indicated in Definition \ref{th:smoothing}(ii). Of course, on a boundary stratum of codimension $k$, one has $k$ components that are being glued together, and the definition should be adapted accordingly. The outcome is a parametrized moduli space, which consist of points of $S_d \setminus \partial S_d$ together with a a solution of \eqref{eq:floer-2} on the associated surface. For generic choices, these parametrized moduli spaces are regular. Moreover, they are oriented relative to the orientation spaces at limit points, meaning that a choice of isomorphism $\bZ_{x_k} \iso \bZ$, $k = 0,\dots,d$, determines an orientation of the parametrized moduli spaces. A signed count of points in the zero-dimensional moduli spaces, with auxiliary signs as in \eqref{eq:weird-signs}, yields operations $\mu^d_{\scrC}$ for $d \geq 2$, which one combines with the Floer differential $\mu^1_{\scrC} = -d_{\scrC}$ to form the ($\bZ/2$-graded) quantum $A_\infty$-structure. 

One  can adapt the arguments from Section \ref{subsec:interval} to show that the quantum $A_\infty$-structure is, in a suitable homotopical sense, independent of all choices, including the Floer differential. Suppose that we have $(H,J)$ and $(\tilde{H},\tilde{J})$, leading to chain complexes $(\scrC,d_{\scrC})$ and $(\tilde{\scrC},d_{\tilde{\scrC}})$. For each of the two, we make the choices of inhomogeneous terms required to build the $A_\infty$-structures, denoted by $\mu_{\scrC}$ and $\mu_{\tilde\scrC}$. To relate them, we start by picking a third version of the chain complex, denoted by $(\check\scrC,d_{\check\scrC})$, based on some $(\check{H},\check{J})$. Next, we introduce maps 
\begin{equation} \label{eq:make-a-bimodule}
\phi^{p,1,q}_{\check{\scrC}}: \scrC^{\otimes p} \otimes \check{\scrC} \otimes \tilde{\scrC}^{\otimes q} \longrightarrow \check{\scrC}[1-p-q],
\end{equation}
with $\phi^{0,1,0}_{\check{\scrC}} = -d_{\check{\scrC}}$, which turn $\check{\scrC}[1]$ into an $A_\infty$-bimodule, with $\mu_{\scrC}$ acting on the left and $\mu_{\tilde\scrC}$ on the right. This is analogous to \eqref{eq:fg-operations}, except that the conditions on $\phi^{p,1,0}$ and $\phi^{0,1,q}$ which we imposed there are no longer satisfied. The geometric construction of \eqref{eq:make-a-bimodule} involves another family of inhomogeneous terms over $S_d \setminus \partial S_d$, $d = p+1+q$. Those terms are modelled on $H$ near $(z_1,\dots,z_p)$, on $\tilde{H}$ near $(z_{p+2},\dots,z_d)$, and on $\check{H}$ near the remaining points $(z_0,z_{p+1})$. Similarly, our surfaces carry varying families of complex structures. The behaviour under degeneration to $\partial S_d$ follows the pattern from Figure \ref{fig:marked-tree}, with some components of the limit carrying the inhomogeneous terms that define the two $A_\infty$-ring structures, and others, the $A_\infty$-bimodule structure; we have represented this in a more geometric way in Figure \ref{fig:colored-bubble}.
\begin{figure}
\begin{centering}
\includegraphics{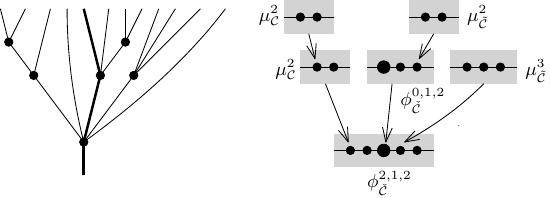}
\caption{\label{fig:colored-bubble}The $A_\infty$-bimodule operations \eqref{eq:make-a-bimodule}. We show the behaviour of the inhomogeneous terms on a sample (codimension $5$) boundary stratum, for $(p,q) = (4,7)$.}
\end{centering}
\end{figure}%

At this point, we add continuation maps to the mix. These arise from the configuration $(z_0 = \infty,\, z_1 = 0)$, meaning the surface $C = \bC^*$. In our application, the behaviour at $z_0$ is always given by $(\check{H},\check{J})$, and that at $z_1$ by either $(H,J)$ or $(\tilde{H},\tilde{J})$. The outcome are two chain maps
\begin{equation} \label{eq:two-continuation}
\xymatrix{
\scrC \ar[rr]^-{\psi^{1,0}_{\check\scrC}} && \check{\scrC} &&
\ar[ll]_-{\psi^{0,1}_{\check\scrC}} \tilde{\scrC}.
}
\end{equation}
Our sign conventions are nonstandard: on cohomology, $\psi^{0,1}$ induces the canonical isomorphism between Floer cohomology groups, whereas $\psi^{1,0}$ has the opposite sign. Extending \eqref{eq:two-continuation}, we want to build operations 
\begin{equation} \label{eq:new-psi}
\psi_{\check\scrC}^{p,q}: \scrC^{\otimes p} \otimes \tilde{\scrC}^{\otimes q} \longrightarrow \check{\scrC}[1-p-q], \;\; d = p+q > 0,
\end{equation}
which unlike their counterparts in \eqref{eq:fg-operations-2} are defined even if $p$ or $q$ are zero, and (that being taken into account) satisfy the same kind of relation \eqref{eq:g-relation-algebra}. Geometrically, the parameter space underlying \eqref{eq:new-psi} is no longer $[0,1] \times S_d$ as in Section \ref{subsec:interval}, but instead $S_{d+1}$, where we think of having inserted an additional point $s_\dag$, with $s_p < s_\dag < s_{p+1}$, into \eqref{eq:configuration-space-1}. The orientation is that associated to ordered configurations $(s_1,\dots,s_p,s_\dag,\dots,s_{p+q})$ multiplied by $(-1)^p$. When forming the associated surface \eqref{eq:surface}, we do not equip it with a puncture corresponding to $s_\dag$: the position of that point just serves as an additional modular variable. The inhomogeneous terms and almost complex structures are determined by $(H,J)$ near $z_1,\dots,z_p$; by $(\tilde{H},\tilde{J})$ near $z_{p+1},\dots,z_{p+q}$; and by $\check{H}$ near $\tilde{z}_0$. In the limit as we approach a point of $\partial S_{d+1}$, the screen containing $s_\dag$ corresponds to a component surface which carries data underlying \eqref{eq:new-psi}, while the other components have data underlying the $A_\infty$-ring structures or the $A_\infty$-bimodule structure.

\begin{example}
Consider the cases where $p+q = 2$. The algebraic relations are
\begin{equation}
\left.
\begin{aligned}
\psi^{1,0}_{\check\scrC}(\mu^2_{\scrC}(c_1,c_2))  - \phi^{1,1,0}_{\check\scrC}(c_1; \psi^{1,0}_{\check\scrC}(c_2)) 
\\ 
-\phi^{0,1,1}_{\check\scrC}(\psi^{1,0}_{\check\scrC}(c_1);\tilde{c}_2) - \phi^{1,1,0}_{\check\scrC}(c_1; \psi^{0,1}_{\check\scrC}(\tilde{c}_2)) 
\\  
\psi^{0,1}_{\check\scrC}(\mu^2_{\tilde\scrC}(\tilde{c}_1,\tilde{c}_2)) - \phi^{0,1,1}_{\check\scrC}(\psi^{0,1}_{\check\scrC}(\tilde{c}_1);\tilde{c}_2) 
\end{aligned}
\right\} =
(\text{\it terms involving differentials}).
\end{equation}
On the cohomology level, consequence is that if the classes $[c_k]$ and $[\tilde{c}_k]$ correspond to each other under canonical isomorphisms, meaning that $[\psi^{1,0}_{\check\scrC}(c_k)] = -[\psi^{0,1}_{\check\scrC}(\tilde{c}_k)]$, then their products inherit the same property:
\begin{equation}
\begin{aligned}
&
[\psi_{\check\scrC}^{1,0}(\mu^2_{\scrC}(c_1,c_2))] = [\phi_{\check\scrC}^{1,1,0}(c_1;\psi_{\check\scrC}^{1,0}(c_2))] = -[\phi_{\check\scrC}^{1,1,0}(c_1;\psi_{\check\scrC}^{0,1}(\tilde{c}_2))]
\\ & \qquad = [\phi_{\check\scrC}^{0,1,1}(\psi_{\check\scrC}^{1,0}(c_1);\tilde{c}_2)] =
-[\phi_{\check\scrC}^{0,1,1}(\psi_{\check\scrC}^{0,1}(\tilde{c}_1);\tilde{c}_2)] =
-[\psi_{\check\scrC}^{0,1}(\mu^2_{\tilde\scrC}(\tilde{c}_1,\tilde{c}_2))].
\end{aligned}
\end{equation}
\end{example}

\begin{example} \label{th:new-psi-12}
For $(p,q) = (1,2)$, the algebraic relation is
\begin{equation}
\begin{aligned}
& -\phi_{\check{\scrC}}^{0,1,2}(\psi^{1,0}_{\check\scrC}(c_1);\tilde{c}_2,\tilde{c}_3)
-\phi_{\check{\scrC}}^{0,1,1}(\psi^{1,1}_{\check\scrC}(c_1;\tilde{c}_2);\tilde{c}_3)
-\phi_{\check{\scrC}}^{1,1,1}(c_1;\psi^{0,1}_{\check\scrC}(\tilde{c}_2);\tilde{c}_3) \\ &
-\phi_{\check{\scrC}}^{1,1,0}(c_1;\psi^{0,2}_{\check\scrC}(\tilde{c}_2,\tilde{c}_3))
+ (-1)^{\|c_1\|} \psi_{\check\scrC}^{1,1}(c_1;\mu^2_{\tilde{\scrC}}(\tilde{c}_2,\tilde{c}_3))
= (\text{\it terms involving differentials}).
\end{aligned}
\end{equation}
Figure \ref{fig:psi-geometric} shows the relevant degenerations, corresponding to the boundary faces of $S_4$.
\end{example}
\begin{figure}
\begin{centering}
\includegraphics{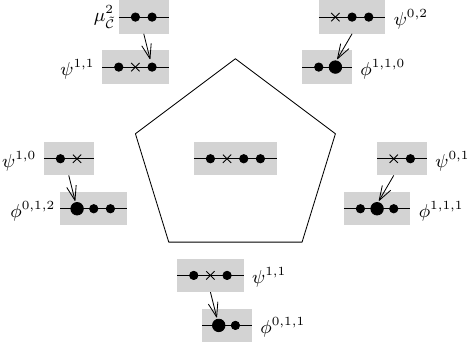}
\caption{\label{fig:psi-geometric}The construction of the maps \eqref{eq:new-psi}, in the case from Example \ref{th:new-psi-12}. The cross marks the additional point (not a puncture of the associated surface).}
\end{centering}
\end{figure}%

Using \eqref{eq:make-a-bimodule} and \eqref{eq:new-psi}, we define an $A_\infty$-structure on 
\begin{equation} \label{eq:new-h}
\scrH = (\scrC \otimes \bZ u) \oplus (\tilde{\scrC} \otimes \bZ\tilde{u}) \oplus (\check{\scrC} \otimes \bZ v),
\end{equation}
where the symbols $u$, $\tilde{u}$ and $v$ have degrees as in \eqref{eq:nc-interval}. The definition is a modified version of \eqref{eq:tensor-structure}. The differential is
\begin{equation}
\begin{aligned}
& \mu^1_{\scrH}(c \otimes u) = \mu^1_{\scrC}(c) \otimes u + (-1)^{\|c\|} \psi_{\check\scrC}^{1,0}(c) \otimes v, \\
& \mu^1_{\scrH}(\tilde{c} \otimes \tilde{u}) = \mu^1_{\tilde\scrC}(\tilde{c}) \otimes \tilde{u} +
(-1)^{\|\tilde{c}\|} \psi_{\check\scrC}^{0,1}(\tilde{c}) \otimes v, \\
& \mu^1_{\scrH}(\check{c} \otimes v) = \phi_{\check\scrC}^{0,1,0}(\check{c}) \otimes v.
\end{aligned}
\end{equation}
and we similarly change the higher $A_\infty$-operations in the case when the input consists of only terms from either $\scrC$ or $\tilde{\scrC}$:
\begin{equation}
\begin{aligned}
& \mu^d_{\scrH}(c_1 \otimes u, \dots, c_d \otimes u) = \mu^d_{\scrC}(c_1,\dots,c_d) \otimes u + (-1)^{\maltese_d} \psi^{d,0}_{\check\scrC}(c_1,\dots,c_d) \otimes v, \\
& \mu^d_{\scrH}(\tilde{c}_1 \otimes \tilde{u}, \dots, \tilde{c}_d \otimes \tilde{u}) = \mu^d_{\tilde\scrC}(\tilde{c}_1,\dots,\tilde{c}_d) \otimes \tilde{u} + (-1)^{\maltese_d} \psi^{0,d}_{\check\scrC}(\tilde{c}_1,\dots,\tilde{c}_d) \otimes v.
\end{aligned}
\end{equation}
As before, the projection maps from \eqref{eq:new-h} to $\scrC$ or $\tilde{\scrC}$ are compatible with $A_\infty$-ring structures, and are chain homotopy equivalences. This implies the desired well-definedness statement for the $A_\infty$-structure, as in \eqref{eq:a-tilde-a}.

\subsection{An alternative strategy for proving independence\label{subsec:alt}}
The approach to well-definedness of the quantum $A_\infty$-structure adopted above involves additional families of Riemann surfaces, leading to the larger $A_\infty$-ring $\scrH$ which serves as an intermediate object. Alternatively, as we will now explain, one can enlarge the target symplectic manifold.

Let's start by looking at a toy model, namely the symplectic manifold $S^2$.
\begin{equation} \label{eq:s2-terms}
\parbox{38em}{
Choose $(H_{S^2},J_{S^2})$ as in Section \ref{subsec:floer-setup}, satisfying the following additional technical condition. At a local maximum or minimum of $H_{S^2}$, the Hessian is $J_{S^2}$-invariant. This means that there are local $J_{S^2}$-holomorphic coordinates centered at that point, in which $H_{S^2}(y) = (\text{\it constant}) \pm |y|^2 + O(|y|^3)$. When choosing inhomogeneous terms, we also require that they should be zero at the local maxima and minima of $H_{S^2}$.
}
\end{equation}
As a consequence of the condition on inhomogeneous terms, the constant map at a local minimum or maximum will be a solution of \eqref{eq:floer-2}. One can use a counting-of-zeros argument for solutions of linear Cauchy-Riemann type operators on line bundles to show that the constant maps at minima have injective linearizations, and hence (since they have index $0$) are regular. A similar argument, applied to \eqref{eq:floer-2} itself rather than its linearization, shows the following:
 
\begin{lemma} \label{th:u-degree}
(i) Let $p \in S^2$ be a local maximum. For any solution of \eqref{eq:floer-2} on $S^2$, which is not constant equal to $p$, the homology class $[u] \in H_2(S^2) = \bZ$ satisfies
\begin{equation}
[u] \geq \# \{1 \leq i \leq d\;:\; x_i = p \} + \# \{ u^{-1}(p) \}.
\end{equation}

(ii) Let $p \in S^2$ be a local minimum. For any solution of \eqref{eq:floer-2} on $S^2$, with $x_0 = p$, and which is not constant equal to $p$, we have
\begin{equation}
[u] \geq 1 + \# \{u^{-1}(p) \}.
\end{equation}
\end{lemma}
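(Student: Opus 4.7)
The strategy is to interpret both parts as statements about local mapping degrees, and to compare the sum of those degrees against the topological degree $[u]$ of the continuous extension $\bar u: \bar C = S^2 \to S^2$. Two facts are key: the similarity principle for linear Cauchy-Riemann operators with zeroth-order coefficients, and the exponential convergence analysis of Floer trajectories at nondegenerate critical points; the condition \eqref{eq:s2-terms} ensures that both are available near $p$.

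First I would treat interior preimages. Fix $J_{S^2}$-holomorphic coordinates $y \in \bC$ centered at $p$, in which $H_{S^2} = H_{S^2}(p) \mp |y|^2 + O(|y|^3)$. Because $\nu_C$ vanishes at $p$, one can write $\nu_C(z,y) = M_1(z,y)\, y$ with $M_1$ smooth, and $Z_H(y) = M_2(y)\, y$ since $p$ is a critical point. Consequently, at any interior point $z_* \in C$ with $u(z_*) = p$, the equation \eqref{eq:floer-2} expressed in the $y$-coordinate takes the form $\bar\partial u = M(z,u)\, u$. The similarity principle (e.g.\ the appendix of \cite{mcduff-salamon}) then gives unique continuation, so $u \not\equiv p$ for nonconstant $u$, forces preimages of $p$ to be isolated, and attaches to each a positive local mapping degree $\geq 1$. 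This accounts for the $\#\{u^{-1}(p)\}$ terms in both (i) and (ii).

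Next I would analyze a puncture $z_k$ with $x_k = p$. Cylindrical coordinates \eqref{eq:cylindrical} turn Floer's equation into $\partial_s u + J(\partial_t u - Z_H(u)) = 0$ on a half-cylinder. Linearizing $Z_H$ at $p$ using the normal form for $H$ and the $J$-invariance of the Hessian, the equation reduces to $(\partial_s + i \partial_t) u + A u = O(|u|^2)$, where $A$ is a nonzero real scalar whose sign is determined by whether $p$ is a max or a min (and the sign of the puncture's end). The asymptotic theory of \cite{hofer-salamon95, floer-hofer-salamon94} gives exponential decay $u(s,t) = e^{-\lambda s}(c\, e^{i n t} + o(1))$; coupled with the similarity principle, $u$ extends continuously across $z_k$ with a well-defined integer local mapping degree, whose sign matches the sign of the rotation in the asymptotic operator. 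For part (i), $p$ is a local max and every puncture $z_k$ with $x_k = p$ is a negative puncture ($k \geq 1$), and the signs align so that the local degree there is $\geq 1$; summing over all such punctures and the interior preimages yields the claimed bound. For part (ii), $p$ is a local min, so the signs at the negative punctures reverse and those contributions become nonpositive (and are discarded from the estimate), while the positive puncture $z_0$ with $x_0 = p$ now contributes a local degree $\geq 1$, producing the ``$+1$'' term.

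The principal obstacle is the bookkeeping in the last paragraph: one must verify that, under the sign conventions built into \eqref{eq:cylindrical} and the orientation of $S^2$, the asymptotic operator at a given puncture has the spectral behavior that forces the local mapping degree to be $\geq 1$ in precisely the cases enumerated above. This is where the normalization in \eqref{eq:s2-terms} --- $J_{S^2}$-invariant Hessian at max/min, plus vanishing of $\nu_C$ there --- is indispensable, since without it $u$ is not modeled on a $J_{S^2}$-holomorphic map near $p$, and no positivity of local intersections is available.
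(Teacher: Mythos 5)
Your overall strategy---extend $u$ continuously over the punctures, observe that near $p$ both $Z_H$ and $\nu_C$ vanish so that in the holomorphic coordinate of \eqref{eq:s2-terms} the map is governed by a perturbed Cauchy--Riemann equation, and then compare $[u]$ with the sum of local mapping degrees over $\bar u^{-1}(p)$ (interior preimages isolated and positive by the similarity principle, punctures handled by the asymptotic winding)---is exactly the counting-of-zeros argument the paper indicates, so the framework is the right one. The genuine gap sits in the step you yourself flag as ``bookkeeping'' and then state backwards. In part (ii) you say the contributions of the inputs $z_k$, $k\geq 1$, with $x_k=p$ ``become nonpositive (and are discarded from the estimate)''. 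You cannot discard nonpositive terms from a lower bound: $[u]$ equals the sum of \emph{all} local degrees, so if those contributions really were negative the claimed inequality could fail. What rescues the argument is that they are in fact nonnegative, and this must be checked; likewise in part (i) you never address the output puncture $z_0$ when $x_0=p$, whose local degree must also be shown to be $\geq 0$ before you may omit it from the count.

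The verification goes as follows, and it is the actual content of the lemma. Since \eqref{eq:floer} reduces to negative gradient flow for $t$-independent maps, one has $Z_H=J\nabla H$, so in the coordinate of \eqref{eq:s2-terms} the linearization at $p$ is $Z_H(y)\approx i\lambda y$ with $\lambda<0$ at a local maximum, $\lambda>0$ at a local minimum, and $0<|\lambda|<2\pi$ by Property \ref{th:floer-properties}(ii). In the cylindrical coordinates \eqref{eq:cylindrical} the decaying solutions of the linearized equation are spanned by $e^{(2\pi n-\lambda)s}e^{2\pi i n t}$, subject to $2\pi n<\lambda$ at an input ($s\to+\infty$) and $2\pi n>\lambda$ at the output ($s\to-\infty$); converting winding into local mapping degree (note the orientation reversal $\arg(z-z_k)=-2\pi t$ at inputs, versus $+2\pi t$ in the coordinate at $\infty$) gives local degree $-n\geq 1$ at an input and $n\geq 0$ at the output when $p$ is a maximum, and $-n\geq 0$ at an input and $n\geq 1$ at the output when $p$ is a minimum. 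Together with the nonvanishing of the leading asymptotic coefficient (so that punctures with $x_k=p$ are isolated points of $\bar u^{-1}(p)$), these four inequalities, summed against the positive interior contributions, give precisely (i) and (ii). So the route is correct and coincides with the paper's, but the sign analysis you deferred is where the statement lives, and the one concrete assertion you make about it is inverted.
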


Take the graded abelian group obtained by using only critical points of $H_{S^2}$ which have index $\leq 1$ as generators. We want to equip this with a version of the quantum $A_\infty$-structure, which uses inhomogeneous terms as in \eqref{eq:s2-terms}, but only considers solutions with degree $[u] = 0$. The argument showing that this works consists of three steps. First, Lemma \ref{th:u-degree}(i) implies that for all solutions, one has $[u] \geq 0$. It follows that if we consider a sequence of maps of degree zero which converges to a limit with several pieces, then each piece must again have degree zero. Suppose that our original sequence consisted of maps whose limits are critical points of index $\leq 1$. Our second point is that then, no critical point of index $2$ can appear in the limit, since it would cause one of the pieces to have positive degree, again by Lemma \ref{th:u-degree}(i). Thirdly, transversality of moduli spaces is unproblematic except possibly for the constant solutions at local minima; but we already know that such solutions are regular (in the ordinary sense of considering a fixed $C$, and therefore in the parametrized sense as well). We want to point out two properties of this $A_\infty$-structure: the maps involved stay away from the local maxima, because of Lemma \ref{th:u-degree}(i); and if $u$ is a map that contributes to it, and whose limit $x_0$ is a local minimum, the map must actually be constant, by Lemma \ref{th:u-degree}(ii) and the degree requirement.

Take a monotone symplectic manifold $X$. For each minimum or maximum $p$ of $H_{S^2}$, we choose a Morse function and almost complex structure on $X$, written as $(H_{X,p}, J_{X,p})$. On the product $X \times S^2$, we then proceed as follows.
\begin{equation}
\parbox{38em}{
Take a Hamiltonian $H_{X \times S^2}$ and almost complex structure $J_{X \times S^2}$, satisfying our usual conditions, and with the following additional properties. In a local $J_{S^2}$-holomorphic coordinate on $S^2$ around a local minimum or maximum $p$, we have $H_{X \times S^2} = H_{S^2} + H_{X,p} + O(|y|^3)$, and similarly $J_{X \times S^2} = J_{S^2} \times J_{X,p} + O(|y|^2)$. When we choose inhomogeneous terms, they should have the property that, when restricted to $X \times \{p\}$, they take values in vector fields tangent to that submanifold.
}
\end{equation}
As a consequence of this, we can have solutions of the associated equation \eqref{eq:floer-2} which are contained in $X \times \{p\}$. If $p$ is a local minimum, then any such solution is regular in $X \times S^2$ iff it is regular inside $X \times \{p\}$. The counterpart of Lemma \ref{th:u-degree} for $X \times S^2$, proved by projecting to $S^2$ and arguing as before, is:

\begin{lemma} \label{th:u-degree-2}
(i) Let $p \in S^2$ be a local maximum. For any solution of \eqref{eq:floer-2} on $X \times S^2$, which is not contained in $X \times \{p\}$, the homology class $[u] \in H_2(X \times S^2)$ satisfies
\begin{equation}
[X] \cdot [u] \geq \# \{1 \leq i \leq d\;:\; x_i \in X \times \{p\} \} + \# \{ u^{-1}(X \times \{p\}) \}.
\end{equation}

(ii) Let $p \in S^2$ be a local minimum. For any solution of \eqref{eq:floer-2} on $X \times S^2$, with $x_0 \in X \times \{p\}$, and which is not contained in $X \times \{p\}$, we have
\begin{equation}
[X] \cdot [u] \geq 1 + \# \{u^{-1}(X \times \{p\}) \}.
\end{equation}
\end{lemma}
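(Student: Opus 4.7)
The strategy is to project $u$ to the $S^2$-factor and reproduce the Cauchy-Riemann zero-counting argument used for Lemma~\ref{th:u-degree}. Set $v := \pi_{S^2} \circ u : C \to S^2$; continuously extending over the punctures by $z_k \mapsto \pi_{S^2}(x_k)$ gives a map $\bar v : \bar C = S^2 \to S^2$ of degree $[X \times \{p\}] \cdot [u] = [X] \cdot [u]$. The plan is to bound this degree from below by counting local multiplicities of $\bar v$ at points of $\bar v^{-1}(p)$, showing that each such multiplicity is a positive integer, and then identifying the contributions coming from interior intersections and from marked points separately.

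The key observation is that $X \times \{p\}$ behaves as a pseudo-holomorphic invariant hypersurface for the equation. In a $J_{S^2}$-holomorphic coordinate $y$ centred at $p$, the assumptions $J_{X \times S^2} = J_{S^2} \times J_{X,p} + O(|y|^2)$, $H_{X \times S^2} = H_{S^2} + H_{X,p} + O(|y|^3)$, together with the tangency of $\nu_{X \times S^2}$ to $X \times \{p\}$ along that submanifold, combine to show that the $y$-component of $u$ satisfies a perturbed Cauchy-Riemann equation of the form $\bar\partial_{J_{S^2}} y + A(z)\,y + B(z,y)\,y = 0$, with $A(z)$ a smooth $(0,1)$-form-valued coefficient and $B(z,y) = O(|y|)$. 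By the similarity (Carleman / Hartman--Wintner) principle, either $y$ vanishes identically on a connected component of $u^{-1}(X \times \{p\})$, which forces $u$ to lie entirely in $X \times \{p\}$ and is excluded by hypothesis, or every zero of $y$ in the interior of $C$ has a well-defined positive integer order of vanishing; each interior point of $u^{-1}(X \times \{p\})$ then contributes $\geq 1$ to the local intersection number of $\bar v$ with $p$.

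Near a marked point $z_k$ with $x_k \in X \times \{p\}$, cylindrical coordinates $(s,t)$ convert the equation into a Floer equation for $(H_{X \times S^2}, J_{X \times S^2})$, and the local model $H_{S^2}(y) = \textrm{const} \mp |y|^2 + O(|y|^3)$ at $p$ combined with Properties~\ref{th:floer-properties}(ii), applied now to $H_{X \times S^2}$, gives nondegeneracy of the linearised Floer operator in the normal $y$-direction. Hence $y(s,t)$ decays exponentially as $s \to +\infty$ (or as $s \to -\infty$ at $z_0$), and under the conformal identification $|z - z_k| \sim e^{-2\pi s}$ this becomes an integer-order zero of $y$ at $z_k$ in $\bar C$, contributing $\geq 1$ to the local intersection multiplicity of $\bar v$ with $p$. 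Summing over $\bar v^{-1}(p) \subset \bar C$ yields the bound in part~(i) (interior intersections together with the input marked points $z_k$, $k \geq 1$, having $x_k \in X \times \{p\}$); for part~(ii) one additionally includes the contribution $\geq 1$ coming from $z_0$ under the assumption $x_0 \in X \times \{p\}$.

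The main obstacle is the applicability of the similarity principle in the present inhomogeneous setting: it is precisely the tangency of $\nu_{X \times S^2}$ to $X \times \{p\}$ that eliminates any term in the equation for $y$ failing to vanish at $y = 0$, leaving a genuine linear Cauchy-Riemann type operator for which the Carleman principle applies. The asymptotic analysis at the marked points is standard (as in~\cite{floer-hofer-salamon94}), but its conclusion, an integer order of vanishing after conformal reparametrisation, again depends on this tangency together with the nondegeneracy guaranteed by Properties~\ref{th:floer-properties}(ii).
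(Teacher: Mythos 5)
Your interior analysis (projecting to the $S^2$-factor near $X\times\{p\}$, using the tangency of the inhomogeneous term and the $O(|y|^2)$, $O(|y|^3)$ conditions to get a perturbed Cauchy-Riemann equation for the normal coordinate, and then the similarity principle to get isolated zeros of positive multiplicity, with the alternative case forcing $u$ to lie in $X\times\{p\}$) is exactly the counting-of-zeros argument the paper has in mind, so that part is fine.

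The gap is in the contributions of the punctures. Exponential decay of $y(s,t)$ at an end asymptotic into $X\times\{p\}$ does give a zero of the extended map $\bar v$ there, but it does \emph{not} give a local degree $\geq 1$: the local degree at the puncture is the winding number of the leading eigenmode of the asymptotic (linearized) operator at the constant orbit $p$, and this winding can be $0$ -- for instance along a $t$-independent (Morse flow line) solution the decaying mode has winding $0$. Your justification (``nondegeneracy $\Rightarrow$ exponential decay $\Rightarrow$ integer-order zero $\Rightarrow$ contributes $\geq 1$'') is insensitive to whether $p$ is a maximum or a minimum and to whether the end is an input ($s\to+\infty$) or the output ($s\to-\infty$); taken literally it would prove that \emph{every} end asymptotic into $X\times\{p\}$ contributes $\geq 1$, which is false (a gradient trajectory of $H_{X\times S^2}$ converging at an input to a point of $X\times\{p\}$ with $p$ a local minimum has $[X]\cdot[u]=0$). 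The asymmetry between parts (i) and (ii) comes precisely from the spectral analysis of the asymptotic operator in the local model $H_{S^2}=\mathrm{const}\mp|y|^2+O(|y|^3)$: the Fourier modes that are allowed to decay at a given end are shifted by the sign of the Hessian, and one checks that all decaying modes have winding $\geq 1$ exactly for (maximum, input end) and (minimum, output end), while at the remaining ends one only gets winding $\geq 0$ (this non-negativity is also needed, and is not addressed in your write-up, since otherwise an uncounted puncture could contribute negatively to the degree). Without this mode/winding computation the stated inequalities do not follow from your argument.
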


For each local minimum $p$, we make choices of inhomogeneous terms which, building on the previously chosen $(H_{X,p}, J_{X,p})$, yield a quantum $A_\infty$-structure $\scrC_p$. On $X \times S^2$, we then make corresponding choices, which restrict to the previous ones on $X \times \{p\}$ for each local minimum $p$. When building the corresponding version of the $A_\infty$-structure on $X \times S^2$, denoted by $\scrK$, we use only those critical points of $H_{X \times S^2}$ which do not lie on $X \times \{p\}$ for a local maximum $p$; and only maps $u$ with $[X] \cdot [u] = 0$. This works for exactly the same reasons as in the previously considered toy model case. Moreover, the following two properties hold: those maps that contribute avoid the subsets $X \times \{p\}$, where $p$ is a local maximum; and projection to the subgroup generated by critical points in $X \times \{p\}$, where $p$ is a local minimum, is a map
\begin{equation} \label{eq:p-projection}
\scrK \longrightarrow \scrC_p
\end{equation}
compatible with the $A_\infty$-structure. At this point, we specialize to functions $H_{S^2}$ that have exactly one local maximum, but possibly several local minima. By looking at the Morse theory of $H_{X \times S^2}$, one sees that the projections \eqref{eq:p-projection} are chain homotopy equivalence. By looking at those maps for two local minima, one relates the $A_\infty$-structures $\scrC_p$ for different $p$.

\subsection{The formal group structure}
Take the parameter spaces \eqref{eq:mww-spaces}. We think of the interior of this space as parametrizing a family of punctured planes, which degenerate along the boundary. This is essentially constructed as in \eqref{eq:mww-into-fm}, but with two differences. First of all, we do include the spaces $\mathit{MWW}_{0,\dots,1,\dots,0}$, to which we associate a once-punctured plane (a cylinder) with an inhomogeneous term, which is that defining the Floer differential. Since we are not dividing by translation, the only isolated point in the associated moduli space is a stationary solution at a critical point of the Hamiltonian, and that is the geometric origin of \eqref{eq:010}. Hence, to each ``screen'' in the limit corresponds a surface (unlike our original construction \eqref{eq:mww-into-fm}, where some of the screens were collapsed). The second difference is that we need everything to depend smoothly on parameters (the original construction was purely topological, hence allowed us to get away with continuity). More precisely, near the codimension $1$ boundary points of $\mathit{MWW}_{d_1,\dots,d_r}$, we really need a situation as in Definition \ref{th:smoothing}(ii); but along the codimension $>1$ points, all we need is the situation from Definition \ref{th:smoothing}(i), since those points only appear in compactness arguments. In any case, given the structure of $\mathit{MWW}_{d_1,\dots,d_r}$ as a smooth manifold with generalized corners, it is unproblematic to define the required notion of smoothness, and to construct families of inhomogeneous terms satisfying it (by induction on dimension). The outcome are operations as in \eqref{eq:beta-operations}, with the difference that \eqref{eq:010} is now a geometric statement rather than a separately imposed condition. One can therefore define \eqref{eq:multiproduct} for the quantum $A_\infty$-structure, and Lemmas \ref{th:multiproduct}--\ref{th:inverse-4} carry over immediately. What's important for applications is that we can, if desired, choose the inhomogenous terms to be compatible with forgetting any color that has no marked points belonging to it; and therefore, to make our operations satisfy \eqref{eq:cancel-0}.

Well-defineness of \eqref{eq:multiproduct} can be proved by a combination of the approaches from Sections \ref{subsec:hideous} and \ref{subsec:quantum-ainfty}. Namely, suppose first that we have $A_\infty$-rings $\scrC_0,\dots,\scrC_r$, each defined by a separate choice of function and other data. One can generalize \eqref{eq:beta-operations} to obtain a map
\begin{equation} \label{eq:beta-again}
\mathit{MC}(\scrC_1;N) \times \cdots \times \mathit{MC}(\scrC_r;N) \longrightarrow \mathit{MC}(\scrC_0;N).
\end{equation}
Now, we want to change the $A_\infty$-structure on $\scrC_0$ and one of the $\scrC_{k+1}$. The new versions are related to the old ones by larger $A_\infty$-rings $\scrH_0$ and $\scrH_{k+1}$, constructed as in the uniqueness argument from Section \ref{subsec:quantum-ainfty}. The main tool in analyzing this change is an analogue of the middle $\rightarrow$ in \eqref{eq:interpolate}, which is a map
\begin{equation}
\mathit{MC}(\scrC_1;N) \times \cdots \times \mathit{MC}(\scrH_{k+1};N)
\times \cdots \times \mathit{MC}(\scrC_r;N) \longrightarrow \mathit{MC}(\scrH_0;N).
\end{equation}
The definition of this involves two kinds of parameter spaces. The first ones are again the $\mathit{MWW}_{d_1,\dots,d_r}$, but where we single out one of the $d_{k+1}$ points of color $k+1$, as in the definition of \eqref{eq:make-a-bimodule}, for special treatement when constructing the inhomogeneous terms and almost complex structures. The second class of parameter spaces are $\mathit{MWW}_{d_1,\dots,d_{k+1}+1,\dots,d_r}$, which have an additional point of color $k+1$ (more precisely, there is one such space for every possible position of the additional point with respect to the other $d_k$). That point will not correspond to a puncture of the resulting Riemann surface; we just use its position as a modular variable, following the idea from \eqref{eq:new-psi}. Of course, the additional marked point can in principle split off by itself into a mid-scale screen; when constructing the Riemann surface, that screen will not correspond to a component. We omit the details entirely. 

\begin{example} \label{th:comp-cont}
The simplest example of a parameter space of the second kind is $\mathit{MWW}_{1+1}$, where the additional marked point could be placed either on the left or right. The context in this case is that we have two versions of \eqref{eq:new-h}, namely $\scrH_k = \scrC_k u \oplus \tilde{\scrC}_k \tilde{u} \oplus \check{\scrC}_k v$ for $k = 0,1$. As part of their $A_\infty$-structure, we have continuation maps $\scrC_k \rightarrow \check{\scrC}_k$. On the other hand, as part of the $r = 1$ case of \eqref{eq:beta-again}, we have constructed continuation maps $\scrC_1 \rightarrow \scrC_0$, $\tilde{\scrC}_1 \rightarrow \tilde{\scrC}_0$, $\check{\scrC}_1 \rightarrow \check{\scrC}_0$. The two versions of our moduli space then yield chain homotopies between compositions of those continuation maps, drawn as dashed arrows here:
\begin{equation}
\xymatrix{
\ar@{-->}[dr]
\scrC_1 \ar[d] \ar[r] & \scrC_0 \ar[d] \\
\check{\scrC}_1 \ar[r] & \check{\scrC}_0 
}
\qquad \qquad
\xymatrix{
\ar@{-->}[dr]
\tilde\scrC_1 \ar[d] \ar[r] & \tilde\scrC_0 \ar[d] \\
\check{\scrC}_1 \ar[r] & \check{\scrC}_0 
}
\qquad
\end{equation}
The geometry behind the construction on the left is shown schematically in Figure \ref{fig:compose-continuation}.
\end{example}
\begin{figure}
\begin{centering}
\includegraphics{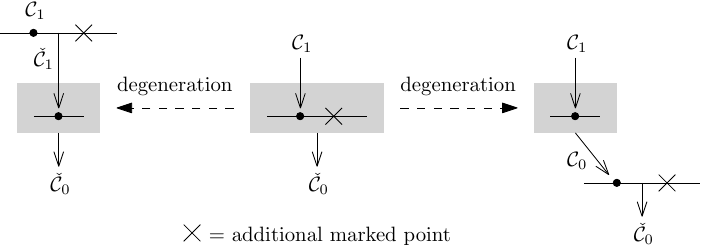}
\caption{\label{fig:compose-continuation}The construction from Example \ref{th:comp-cont}.}
\end{centering}
\end{figure}

\begin{remark}
Alternatively, one could prove the well-definedness of the maps $\Pi^r_{\scrC}$ using the approach from Section \ref{subsec:alt}, which means defining a corresponding structure using $X \times S^2$, which comes with quasi-isomorphic projections to different copies of $X$.
\end{remark}

The spaces $\mathit{SS}$ from \eqref{eq:strip-shrinking}, while more complicated, show the same geometric behaviour as $\mathit{MWW}$. Hence, the same kind of argument allows the proof of Proposition \ref{th:semi-associativity} to carry over, which completes our discussion of Proposition \ref{th:main}. There is a minor point which may be worth mentioning: in Section \ref{subsec:strip-shrinking}, we added two marked points in the definition of the map \eqref{eq:double-stabilization}, whose purpose was to break the symmetries of Fulton-MacPherson space. In a pseudo-holomorphic curve context, we treat the extra points as in the well-definedness arguments above, meaning that while their position gives additional modular variables on which the inhomogeneous term depends.

\subsection{Commutativity}
Adapting the arguments from Section \ref{subsec:commutativity}, we will now prove Proposition \ref{th:commutative}. This is the first time that one of the features of our Floer-theoretic setup, namely the time-independence of the Hamiltonians and almost complex structures, and the resulting $S^1$-symmetry of \eqref{eq:floer}, will be used in a substantial manner.

Throughout the following discussion, it is assumed that choices of inhomogeneous terms have been done so as to satisfy \eqref{eq:cancel-0}. Let's start with the moduli space underlying $\beta_{\scrC}^{1,1}$. It involves a family of surfaces depending on one parameter, which we denote by $C_s = \bC \setminus \{z_1(s),z_2(s)\}$, $s \in \bR$. One can assume that this family is symmetric outside a compact parameter range, in the sense that for some $S > 0$,
\begin{equation}
(z_1(-s),z_2(-s)) = (z_2(s),z_1(s)) \;\; \text{if $|s| \geq S$,}
\end{equation}
and that the inhomogeneous terms are chosen compatibly with this symmetry. As a consequence, there is partial cancellation between the two moduli spaces that enter into \eqref{eq:beta-difference}, with the parts having $|s| \geq S$ contributing only cancelling pairs of points. One can therefore say that \eqref{eq:beta-difference} is computed by a single parametrized moduli space, whose compact parameter space is a circle, obtained by gluing together the endpoints of two intervals $[-S,S]$. If we parametrize this circle by $r \in \bR/2\pi\bZ$ compatibly with its orientation, then that family of surfaces can be deformed to the simple form
\begin{equation} \label{eq:independent}
(z_1(r), z_2(r)) = (\exp(r\sqrt{-1}), -\exp(r\sqrt{-1})).
\end{equation}
Up to rotation, this is independent of $r$, and (it is here that we use time-independence) one can choose an inhomogeneous term to be compatible with that; in which case, the moduli space cannot have any isolated points, hence contributes zero. The deformation which ends up with \eqref{eq:independent}, which can be thought as a family of surfaces parametrized by a compact two-dimensional disc, therefore gives rise to a nullhomotopy \eqref{eq:first-kappa}. As in our previous discussion of  \eqref{eq:gamma12-21}, this implies commutativity of the formal group structure mod $N^3$, which is the first part of Proposition \ref{th:commutative}.
\begin{figure}
\begin{centering}
\includegraphics{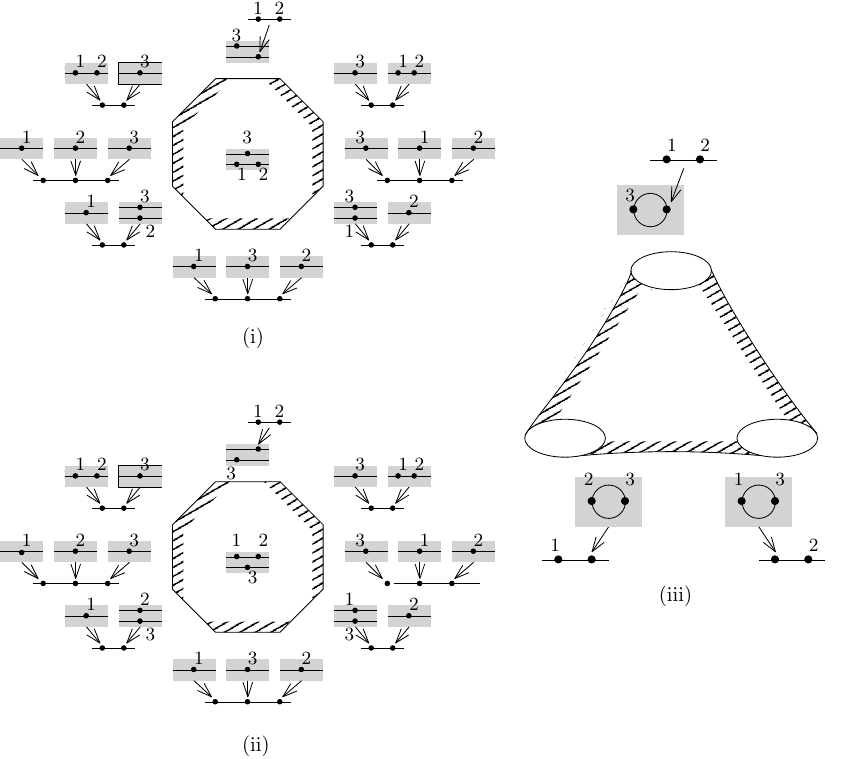}
\caption{\label{fig:kindergarten}The geometry underlying \eqref{eq:beta-difference-2}: (i) $\beta_{\scrC}^{2,1}(c_1,c_2;c_3)$, and (ii) $\beta_{\scrC}^{1,2}(c_3;c_1;c_2)$. Since part of the structure agrees, we can remove the hatched regions and join the rest together, with the outcome shown in (iii) after removing the ``trivial screens'' that have only one marked point. The pairs of points drawn as lying on a circle rotate around each other once in dependence on the parameter.s}
\end{centering}
\end{figure}%

Next, let's look at part of the formula \eqref{eq:k21},
\begin{equation} \label{eq:beta-difference-2}
\beta_{\scrC}^{2,1}(c_1,c_2;c_3) - (-1)^{\|c_3\|(\|c_1\|+\|c_2\|)} \beta_{\scrC}^{1,2}(c_3;c_1,c_2). 
\end{equation}
The underlying moduli spaces are two copies of the octagon from Figure \ref{fig:8-gon}. Five of the boundary sides of those octagons match up in pairs which carry the same inhomogeneous term. We may assume that this extends to a neighbourhood of those sides. As far as counting points in zero-dimensional moduli spaces is concerned, we can then cut out suitably matching neighbourhoods and glue the rest together. The outcome of this process, shown in Figure \ref{fig:kindergarten}, is that our expression can be computed by a single moduli space parametrized by a compact pair-of-pants surface. Moreover, along each boundary circle, we find that one of the components is a copy of the family of surfaces underlying \eqref{eq:beta-difference}, and which can be therefore filled in with a family parametrized by a disc. As a consequence, we find that the operation $K^{2,1}_{\scrC}$ defined in \eqref{eq:k21} is given by a family of surfaces parametrized by $S^2$. One can further deform that family so that degenerations happen only along three points (instead of the previous three discs) in the parameter space.

The outcome is that we have a family of four-punctured spheres, parametrized by $S^2$. Inspection of Figure \ref{fig:kindergarten} shows that this family has degree $1$ in $H_2(\mathit{DM}_3) \iso \bZ$. This is a Floer-theoretic implementation of the four-pointed Gromov-Witten invariant, which we can relate to the standard version by a gluing argument as in \cite{piunikhin-salamon-schwarz94}. As a consequence, identifying $H^*(\scrC) = H^*(X;\bZ)$, we have that on the cohomology level,
\begin{equation} \label{eq:gw4}
\int_X x_0\, K^{2,1}_{\scrC}(x_1,x_2;x_3) = \langle x_0,x_1,x_2,x_3 \rangle_4, \quad
x_k \in H^*(X;\bZ)
\end{equation}
(see \eqref{eq:gw} for our notational conventions in Gromov-Witten theory). For the particular case of $K^{2,1}_{\scrC}(x_1,x_1;x_2)$, where $x_1$ has odd degree, the graded symmetry of Gromov-Witten invariants means that $\langle x_0,x_1,x_1,x_2 \rangle_4 = 0$. Assuming additionally that $H^*(X;\bZ)$ is torsion-free, it follows that $K^{2,1}_{\scrC}(x_1,x_1;x_2)$ itself is zero. As in Proposition \ref{th:commutative-2}, this and the corresponding argument for $K^{1,2}$ imply the desired commutativity statement modulo $N^4$.

\subsection{The $p$-th power map}
We now carry over the required arguments from Sections \ref{sec:operations} and \ref{sec:power}, leading to the proof of Theorem \ref{th:p-power} as the Floer-theoretic analogue of Theorem \ref{th:abstract-counterpart}.

We can bring Floer-theoretic constructions closer to the abstract operadic framework, by making a generic choice of inhomogeneous terms which are parametrized by $\mathit{FM}_d$. In the interior, this means that for every complex configuration $(z_1,\dots,z_d)$ we choose an inhomogeneous term $\nu_C$ on the resulting surface \eqref{eq:surface}, in a way which is compatible with the action of the automorphisms which appear in \eqref{eq:configuration-space-2}. We then ask that this should extend to the ``screens'' associated to points in $\partial \mathit{FM}_d$, in a way which enables compactness arguments for boundary strata of any dimension, see Definition \ref{th:smoothing}(i), and gluing for codimension $1$ boundary strata, see Definition \ref{th:smoothing}(ii). We also ask that our choices should be $\mathit{Sym}_d$-equivariant (recall that the symmetric group acts freely on $\mathit{FM}_d$).

Suppose that we have maps \eqref{eq:mww-into-fm} based on smooth functions \eqref{eq:tau-functions}. By pullback, our previous choice induces a family of inhomogeneous terms parametrized by $\mathit{MWW}_{d_1,\dots,d_r}$. For a point in $\partial \mathit{MWW}_{d_1,\dots,d_r}$, any vertices that are collapsed under \eqref{eq:mww-into-fm} correspond to cylindrical components $C = \bC \setminus \{z_1\}$, which we equip with the standard inhomogeneous term $\nu_C = (Z_H \otimes \mathrm{re} (d\log(z-z_1)/2\pi \sqrt{-1}) \big)^{0,1}$. Moreover, a generic choice of \eqref{eq:mww-into-fm} ensures transversality for the parameterized moduli spaces associated to all $\mathit{MWW}_{d_1,\dots,d_r}$, and we may then use that choice to build the operations $\beta_{\scrC}^{d_1,\dots,d_r}$. Additionally, we may assume that the maps \eqref{eq:mww-into-fm} are chosen so that \eqref{eq:drop-color-2} holds, which means that the resulting operations satisfy \eqref{eq:cancel-0}. 

We will be specifically interested in $\mathit{MWW}_{\howmany{p}}$, in the notation from Section \ref{subsec:unordered}, and the associated operation $\beta_{\scrC}^{\underline{p}} = \beta_{\scrC}^{1,\dots,1}$, with $p$ prime. At this point, we fix an odd degree cocycle
\begin{equation}
c \in \scrC \otimes \bF_p, 
\end{equation}
which will remain the same throughout the subsequent discussion. Applying $\beta_{\scrC}^{\underline{p}}$ to $p$ copies of $c$ yields another such cocycle, hence a cohomology class
\begin{equation} \label{eq:beta-p-fold}
[\beta_{\scrC}^{\underline{p}}(c;\dots;c)] \in H^{\mathit{odd}}(\scrC;\bF_p).
\end{equation}
The underlying geometric phenomenon was explained in Section \ref{subsec:unordered}: the codimension $1$ boundary faces of $\mathit{MWW}_{\howmany{p}}$ correspond to nontrivial decompositions of $\{1,\dots,p\}$ into nonempty subsets $(I_1,\dots,I_r)$, for any $r \geq 2$. If we act by an element of $\bZ/p$ on such a decomposition, we get a new decomposition $(\tilde{I}_1,\dots,\tilde{I}_r)$, and the corresponding boundary faces, when mapped to $\mathit{FM}_p$, are related by the action of a suitable element of $\mathit{Sym}_p$, see \eqref{eq:two-decompositions}. The cohomology class in \eqref{eq:beta-p-fold} is independent of our choice of inhomogeneous terms.
 
Our first point is that we can realize \eqref{eq:beta-p-fold} using a family of surfaces without degenerations. To do that, let's choose a $\mathit{Sym}_p$-equivariant isotopy that pushes Fulton-MacPherson space into its interior,
\begin{equation}
\begin{aligned}
& \phi_r: \mathit{FM}_p \longrightarrow \mathit{FM}_p, \; r \in [0,\epsilon], \\
& \phi_0 = \mathit{id}, \quad
\phi_r(\mathit{FM}_p) \subset \mathit{FM}_p^\circ = \mathit{FM}_p \setminus \partial \mathit{FM}_p \; \text{for $r>0$.}
\end{aligned}
\end{equation}
Write $\iota_{\howmany{p}}$ for the original map $\mathit{MWW}_{\howmany{p}} \rightarrow \mathit{FM}_p$. The perturbed version
\begin{equation} \label{eq:push-1}
\tilde\iota_{\howmany{p}} = \phi_r \circ \iota_{\howmany{p}}: \mathit{MWW}_{\howmany{p}} \longrightarrow \mathit{FM}_p \setminus \partial \mathit{FM}_p,
\;\;\text{for some $r>0$,}
\end{equation}
will retain the same $\bZ/p$-action on codimension one boundary faces as $\iota_{\howmany{p}}$. 
Going back to the choice of inhomogeneous terms over $\mathit{FM}_p$, we want to also assume that the pullback of that family by \eqref{eq:push-1} should lead to a regular parametrized moduli space. Given that, from \eqref{eq:push-1} for some $r>0$ we get a new operation $\tilde{\beta}_{\scrC}^{\howmany{p}}$, which again yields a cohomology class
\begin{equation} \label{eq:beta-tilde-class}
[\tilde{\beta}_{\scrC}^{\howmany{p}}(c;\dots;c)] \in H^{\mathit{odd}}(\scrC;\bF_p).
\end{equation}
A similar construction, where one interpolates between $\iota_{\howmany{p}}$ and $\tilde{\iota}_{\howmany{p}}$, shows that this cohomology class agrees with \eqref{eq:beta-p-fold}. At this point, we no longer need to compactify configuration space: to define \eqref{eq:beta-tilde-class}, one can use families of perturbation data which are only defined on $\mathit{FM}_p^\circ$ (and still $\mathit{Sym}_p$-equivariant).
%

In the same vein as in \eqref{eq:freeing-up}, take
\begin{equation} \label{eq:freeing-up-2}
\EuScript{DM}^\circ_p = \mathit{DM}^\circ_p \times E^\circ_p,
\end{equation}
where $E^\circ_p$ is the interior of Fulton-MacPherson space for $\bR^\infty$, meaning point configurations up to translation and rescaling. More precisely, we think of this as the direct limit of the corresponding spaces in each finite-dimensional Euclidean space. There is an embedding
\begin{equation} \label{eq:extended-embedding}
\mathit{FM}^\circ_p \longrightarrow \EuScript{DM}^\circ_p,
\end{equation}
which takes each point configuration to the pair formed by its quotient in Deligne-Mumford space and its image in $E_p^\circ$. In this context, classes in $H_*^{\mathit{Sym}_p}(\mathit{DM}^\circ_p; \bF_p(1))$ are realized by smooth simplicial chains in \eqref{eq:freeing-up-2}, having $\bF_p$-coefficients, and quotiented out by the relation that acting on a chain by some $\sigma \in \mathit{Sym}_p$ is the same as multiplying the chain with $(-1)^{\mathrm{sign}(\sigma)}$. Let's write $C_*^{\mathit{Sym}_p}(\mathit{DM}^\circ_p; \bF_p(1))$ for this chain complex.

Choose a family of inhomogeneous terms on the family of surfaces pulled back by the projection $\EuScript{DM}^\circ_p \rightarrow \mathit{DM}^\circ_p$, and which is $\mathit{Sym}_p$-equivariant. For every smooth map from a simplex to $\EuScript{DM}^\circ_p$, that family of inhomogeneous terms gives rise to a parametrized moduli space. If that space is regular, we get an operation $(\scrC \otimes \bF_p)^{\otimes p} \rightarrow \scrC \otimes \bF_p$. By adding up those operations with coefficients, we extend the construction to chains. Let's specialize to using $p$ copies of our cocycle $c$ as input. Morally, this can be thought of as giving rise to a $\bZ/2$-graded chain map
\begin{equation} \label{eq:all-chains}
C_*^{\mathit{Sym}_p}(\mathit{DM}^\circ_p; \bF_p(1)) \longrightarrow \scrC^{p|c|+*} \otimes \bF_p.
\end{equation}
The cautionary ``morally'' figures here because of the regularity condition for moduli spaces, which makes it impossible to define such a map on the entire chain complex. However, any argument involving a relation between specific chains, such as the one we are about to give, only involves finitely many terms, and one can assume that the chains involved are embedded into the infinite-dimensional space $\EuScript{DM}^\circ_p$. One can a posteriori make a choice of perturbation terms over $\EuScript{DM}^\circ_p$ which makes the finitely many spaces involved regular. Hence, for all practical purposes, the consequence is the same as if we had a map \eqref{eq:all-chains}. In particular, we do get a map
\begin{equation} \label{eq:c-c-map}
H_*^{\mathit{Sym}_p}(\mathit{DM}^\circ_p;\bF_p(1)) \longrightarrow H^{p+*}(\scrC;\bF_p).
\end{equation}
One can think of \eqref{eq:beta-tilde-class} as an instance of this general construction, by smoothly triangulating the spaces $\mathit{MWW}_{\underline{p}}$, in a way which is compatible with the $\bZ/p$-action on codimension $1$ boundary strata, and then using the embedding \eqref{eq:extended-embedding}. Using Lemma \ref{th:open-moduli-space}, one identifies the relevant homology class with that underlying the $t^{(p-1)/2}$ coefficient of the quantum Steenrod operation, up to a coefficient which is spelled out in Lemma \ref{th:cycle-found}. This equality, applied to \eqref{eq:c-c-map}, implies Theorem \ref{th:p-power}.

\section{An alternative approach\label{sec:fukaya}}

The approach outlined in this section was pointed out to the author by Fukaya. It is an application of the results from \cite{fukaya17} (taking the Lagrangian correspondence to be the diagonal, but with a general bounding cochain, which is our Maurer-Cartan element). The basic building blocks are parameter spaces from \cite{mau10b}, which are close cousins of Stasheff associahedra (and in particular, are manifolds with corners in the classical sense). One can use them to define the composition law on Maurer-Cartan elements, a little indirectly, following \cite[Theorem 1.7]{fukaya17}; and to prove its associativity, following \cite[Theorem 1.8]{fukaya17}. On the other hand, it's not clear that there is a easier route from there to Theorem \ref{th:p-power}, which is one reason why we have not given first billing to this approach. Because of its complementary nature, our discussion will be quite sparse: not only are proofs omitted, we won't even make the distinction between the implementation of these arguments in an abstract operadic context (as in Section \ref{sec:group}, assuming homological unitality) or a concrete Floer-theoretic one (as in Section \ref{sec:floer}).

\subsection{The moduli spaces}
We start with basically the same configuration space as in \eqref{eq:configuration-space-4}, except that the ordering of points in the last color is reversed:
\begin{equation} \label{eq:configuration-space-x}
\frac{
\{ (s_{1,1},\dots,s_{1,d_1}; \dots ;s_{r,1},\dots,s_{r,d_r}), \;\; s_{k,1} <  \cdots < s_{k,d_k} \text{ for $k<r$, and }
s_{r,1} > \cdots > s_{r,d_r}
\} 
}{ 
\{ s_{k,i} \sim s_{k,i} + \mu \; \text{for $\mu \in \bR$}\} 
}.
\end{equation}
More importantly, we now consider a compactification of \eqref{eq:configuration-space-x} which is smaller than its counterpart from Section \ref{subsec:mww}. This compactification will be denoted by 
\begin{equation} \label{eq:f-space}
Q_{d_1,\dots,d_r}, \;\; \text{where } r \geq 2, \; d_1,\dots,d_r \geq 0, \; d = d_1 + \cdots + d_r > 0,
\end{equation}
partly following the ``quilted strips'' terminology from \cite{mau10b}. The recursive structure of boundary strata is expressed by maps
\begin{equation} \label{eq:glue-tree-x}
\prod_{j=1}^m Q_{\|v_{\circ,j}\|_1,\dots,\|v_{\circ,j}\|_r} \times \prod_{
\substack{v \text{ in } T_j \\ v \neq v_{\circ,j}}} S_{\|v\|} \xrightarrow{(T_1,\dots,T_m)} Q_{d_1,\dots,d_r}.
\end{equation}
Here, $T_1,\dots,T_m$ (for any $m \geq 1$) are trees of the following kind. In each $T_j$, denote by $v_{\circ,j}$ the vertex closest to the root. Then, the incoming edges at that vertex should carry one of $r$ colors, and are ordered within their color. The parts of the tree lying above $v_{\circ,j}$ have planar embeddings, and inherit a single color. The whole thing is arranged, of course, so that the total number of leaves of each respective color add up to $(d_1,\dots,d_r)$. Geometrically, what happens is that as groups of points move to $\pm \infty$, we split them up into separate screens, which correspond to the $Q$ factors in \eqref{eq:glue-tree-x} (in the terminology of Section \ref{subsec:mww}, these would be called mid-scale, since there is no rescaling involved, just translation); but we do not keep track of the relative speeds at which this divergence happens (no large-scale screens). The remaining factors in \eqref{eq:glue-tree-x} are small-scale screens, which describe the limit of points converging towards each other. The image of \eqref{eq:glue-tree-x} has codimension equal to the overall number of factors (vertices) minus one. This is related to the fact that $Q_{d_1,\dots,d_r}$ is a smooth manifold with corners.

Let's map our points to radial half-lines in the punctured plane,
\begin{equation} \label{eq:punctured-point}
z_{k,i} = \exp(-s_{k,i} - \textstyle\frac{2\pi k}{r} \sqrt{-1}) \in \bC^*,
\end{equation}
and add a marked point at $0$ (the $z_{k,i}$ are ordered lexicographically, and the extra point is inserted between the last two colors). This extends to a continuous map
\begin{equation} \label{eq:f-to-fm}
Q_{d_1,\dots,d_r} \longrightarrow \mathit{FM}_{d+1}.
\end{equation}
In terms more familiar from pseudo-holomorphic curve theory, one can think of the configurations \eqref{eq:punctured-point} as lying on parallel lines on a cylinder. In the limit, this breaks up into several cylinders, plus spheres (copies of $\bar{\bC}$ with a marked point at infinity, and other marked points lying on the real line) attached to them; see Figures \ref{fig:cylinder}(i) and \ref{fig:jumpsuit}(i). On the combinatorial level, the map \eqref{eq:f-to-fm} works as follows: starting with trees as in \eqref{eq:glue-tree-x}, one adds an incoming edge to each vertex $v_{\circ,j}$ except the last one, and then identifies those edges with the root edges of $T_{j+1}$, thereby combining all our trees into a single $T$, which is what appears in \eqref{eq:glue-tree-2}; see Figure \ref{fig:cylinder}(ii).
\begin{figure}
\begin{centering}
\includegraphics{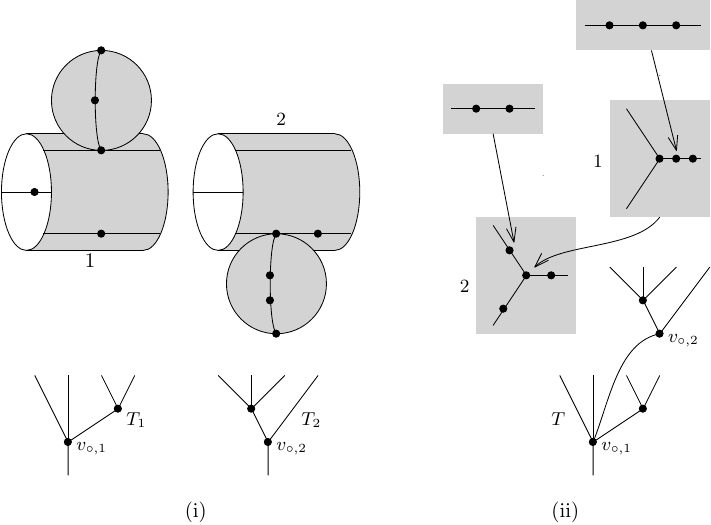}
\caption{\label{fig:cylinder}(i) A boundary point in \eqref{eq:f-space}, drawn in the way familiar from pseudo-holomorphic curve theory; and (ii) the corresponding picture in Fulton-MacPherson space.}
\end{centering}
\end{figure}

\subsection{The operations}
Algebraically, the outcome of using \eqref{eq:f-space} and \eqref{eq:f-to-fm} are operations
\begin{equation} \label{eq:multiop}
\chi_{\scrC}^{d_1,\dots,d_{r-1},1,d_r}:  \scrC^{d_1 + \cdots + d_r+1} \longrightarrow \scrC[1-d_1-\cdots-d_r].
\end{equation}
Additionally, we set
\begin{equation} \label{eq:10000}
\chi_{\scrC}^{0,\dots,1,0} = \mu^1_{\scrC}.
\end{equation}
The property of these operations, derived as usual from the structure of codimension $1$ boundary strata, and including \eqref{eq:10000}, is that 
\begin{equation}
\begin{aligned}
& \sum_{\substack{k<r \\ ij}} \pm \chi_{\scrC}^{d_1,\dots,d_k-j+1,\dots,1,d_r}\big(c_{1,1},\dots,c_{1,d_1};  \dots; 
c_{k,1},\dots, \mu_{\scrC}^j(c_{k,i+1},\dots,c_{k,i+j}), 
\\[-1.5em] & \qquad \qquad \qquad \qquad \dots,c_{k,d_k} ; \dots; c; c_{r,1},\dots,c_{r,d_r}\big) \\[.5em]
& + \sum_{p_1,\dots,p_r} \pm \chi_{\scrC}^{d_1-p_1,\dots,1,d_r-p_r}\big(c_{1,1},\dots,c_{1,p_1};\dots;
c_{r-1,1},\dots,c_{r,p_{r-1}}; 
\\[-1em] & \qquad \qquad \qquad 
\chi_{\scrC}^{p_1,\dots,1,p_r}(c_{1,p_1+1},\dots,c_{1,d_1};\dots;c;c_{r,p_1},\dots,c_{r,p_r});
c_{r,p_r+1},\dots,c_{r,d_r} \big) 
\\[.5em] & + 
\sum_{ij} \pm \chi_{\scrC}^{d_1,\dots,1,d_r-j+1}\big(c_{1,1},\dots;c;c_{r,1},\dots,c_{r,i},\mu_{\scrC}^j(c_{r,i+1},\dots,c_{r,i+j}),\dots,c_{r,d_r})
= 0.
\end{aligned}
\end{equation}
In terminology similar to \cite{mau10b}, these define the structure of an $A_\infty$-$(r-1,1)$-module, with the first $(r-1)$ factors acting on the left, and the last one on the right.

\begin{example} \label{th:up-down-0}
Suppose that $d_1 = \cdots = d_{r-1} = 0$. Then, the points \eqref{eq:punctured-point}, with the origin added as usual, lie on a half-line in $\bC$. One can use that to identify $Q_{0,\dots,0,d} \iso S_{d+1}$. The auxiliary data involved in defining \eqref{eq:multiop} can be chosen to be compatible with that, in which case one gets $\chi_{\scrC}^{0,\dots,1,d} = \mu_{\scrC}^{d+1}$.
\end{example}

\begin{example} \label{th:up-down}
For $r = 2$, the points \eqref{eq:punctured-point} still lie on $\bR \subset \bC$, hence $Q_{d_1,d_2} \iso S_{d_1+d_2+1}$ and, for suitable choices, $\chi_{\scrC}^{d_1,1,d_2} = \mu_{\scrC}^{d_1+d_2+1}$.
\end{example}

\begin{example} \label{th:its-the-product}
The operations with two inputs, $\chi_{\scrC}^{0,\dots,1,\dots,1,0}$ and $\chi_{\scrC}^{0,\dots,1,1}$, are all chain homotopic to the multiplication $\mu^2_{\scrC}$, simply because they come from a single two-point configuration in the plane.
\end{example}

Our purpose in defining these operations is the following:

\begin{definition} \label{th:trivial-tuple}
Let $\gamma_1,\dots,\gamma_r \in \scrC^1 \hat\otimes N$ be Maurer-Cartan elements. We say that $\gamma_r$ is the product of $(\gamma_1,\dots,\gamma_{r-1})$ if there is a $k \in \scrC^0 \hat\otimes (\bZ 1 \oplus N)$ which modulo $N$ reduces to a cocycle representing the unit $[e_{\scrC}]$, and such that 
\begin{equation} \label{eq:proof-of-triviality}
\sum_{d_1,\dots,d_r} \chi_{\scrC}^{d_1,\dots,d_{r-1},1,d_r}(\overbrace{\gamma_1,\dots,\gamma_1}^{d_1};\dots;
\overbrace{\gamma_{r-1},\dots,\gamma_{r-1}}^{d_{r-1}}; k;
\overbrace{\gamma_r,\dots,\gamma_r}^{d_r}) = 0.
\end{equation}
\end{definition}

The expression \eqref{eq:proof-of-triviality} includes a term $\mu^1_{\scrC}(k)$, corresponding to $(d_1,\dots,d_r) = (0,\dots,0)$. If write $k = e_{\scrC} + (\text{\it coboundary}) + h$ with $h \in \scrC^0 \hat\otimes N$, then (keeping Example \ref{th:its-the-product} in mind) the next order term in the equation says that
\begin{equation}
\begin{aligned}
& [\chi_{\scrC}^{1,0,\dots,1,0}(\gamma_1,k)] + \cdots + [\chi_{\scrC}^{0,\dots,1,1,0}(\gamma_{r-1},k)] + [\chi_{\scrC}^{0,\dots,1,1}(k,\gamma_r)] \\
& \qquad \qquad = [\mu^2_{\scrC}(\gamma_1,e_{\scrC})] + \cdots + 
[\mu^2_{\scrC}(\gamma_{r-1},e_{\scrC})] + [\mu^2_{\scrC}(e_{\scrC},\gamma_r)] \\ & \qquad \qquad
= [-\gamma_1 - \cdots - \gamma_{r-1} + \gamma_r] = 0 \;\; \text{ in } H^1(\scrC \otimes N/N^2).
\end{aligned}
\end{equation}
For $r = 2$, and assuming the choices have been made as in Example \ref{th:up-down}, the condition in \eqref{eq:proof-of-triviality} reduces to the criterion for equivalence of $\gamma_1$ and $\gamma_2$ given in Lemma \ref{th:alternative-mc}.

\begin{lemma}
The notion of product from Definition \ref{th:trivial-tuple} only depends on the equivalence class of the Maurer-Cartan elements involved.
\end{lemma}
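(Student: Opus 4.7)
The plan is to exploit the fact that \eqref{eq:proof-of-triviality} is the cocycle condition in a chain complex (or more precisely, in a deformed $A_\infty$-$(r-1,1)$-module) whose quasi-isomorphism type should depend only on the equivalence classes of the Maurer-Cartan elements. The strategy is the same ``noncommutative interval'' trick used already in Section 2a: interpolate between equivalent Maurer-Cartan elements over $\scrC \otimes \scrI$, then extract the desired witness $\tilde k$ by projection.

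First, I would reduce to changing a single coordinate: by transitivity, it suffices to show that if $\gamma_i$ is replaced by an equivalent $\tilde\gamma_i$ (for some fixed $i$), with all other $\gamma_j$ unchanged, then the existence of $k$ satisfying \eqref{eq:proof-of-triviality} is preserved. Given an equivalence $h_i \in \scrC^0 \hat\otimes N$ between $\gamma_i$ and $\tilde\gamma_i$, form the element $\Gamma_i = \gamma_i \otimes u + \tilde\gamma_i \otimes \tilde u + h_i \otimes v$, which is a Maurer-Cartan element in $\scrC \otimes \scrI$ as spelled out after \eqref{eq:decompose-gamma}; and extend each $\gamma_j$ ($j \neq i$) to $\scrC \otimes \scrI$ via $\gamma_j \otimes (u + \tilde u) = \gamma_j \otimes 1$, which is automatically Maurer-Cartan.

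Next, extend the operations $\chi_{\scrC}^{d_1,\dots,1,d_r}$ to $\scrC \otimes \scrI$ by the analogue of \eqref{eq:tensor-product}, multiplying the $\scrI$-components and applying $\chi$ to the $\scrC$-components with appropriate Koszul signs. By construction, the two projections $\scrC \otimes \scrI \rightarrow \scrC$ in \eqref{eq:two-projections} are compatible with these extended operations. Hence a solution $K \in (\scrC \otimes \scrI)^0 \hat\otimes (\bZ 1 \oplus N)$ to the $\scrI$-version of \eqref{eq:proof-of-triviality}, whose $u$-component is the given $k$, will have its $\tilde u$-component furnish the desired $\tilde k$ for $(\gamma_1,\dots,\tilde\gamma_i,\dots,\gamma_r)$. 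Writing $K = k \otimes u + \tilde k \otimes \tilde u + L \otimes v$, the equation decomposes into the original one (already satisfied), the desired one for $\tilde k$, and a coupling equation whose leading term (in view of \eqref{eq:10000}) expresses $\mu^1_{\scrC}(L)$ as the discrepancy between the two sides.

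The main obstacle is the existence of $K$. I would produce it by induction on the $N$-adic filtration. Modulo $N$, take $\tilde k = e_{\scrC}$ and $L = 0$, which works because both sides of the coupling equation vanish mod $N$. At each subsequent order $N^m/N^{m+1}$, the equation for $\tilde k$ amounts to prescribing a cocycle whose cohomology class is forced, while the equation for $L$ is then solvable precisely when that class vanishes; and the latter vanishing follows inductively because, after subtracting the $k$-equation from the $\tilde k$-equation via the $v$-part, the obstruction is exhibited as a coboundary of the preceding order of $L$. This parallels the construction of the operations \eqref{eq:fg-operations-2} via fundamental chains on $[0,1] \times S_d$: the role of the interval there is played here by $\scrI$, and the role of the fundamental chain is played by the order-by-order choice of $L$.
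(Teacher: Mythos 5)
Your reduction to changing one $\gamma_i$ at a time, and the idea of interpolating via the noncommutative interval $\scrI$, are reasonable, but the proposal has a genuine gap at its central step: the existence of $K = k \otimes u + \tilde k \otimes \tilde u + L \otimes v$. Note that the $u$-component of your $\scrI$-version of \eqref{eq:proof-of-triviality} only records the already-known equation for $k$; the equations for the unknowns $(\tilde k, L)$ are the desired equation $\sum\chi(\dots;\tilde\gamma_i,\dots;\tilde k;\dots)=0$ together with a coupling equation saying that the ``insert $h$ once'' term applied to $\tilde k$ is a coboundary for the $\gamma$-twisted differential. Solving these order by order in the $N$-adic filtration meets an obstruction class at every step, and your stated reason for its vanishing (``the obstruction is exhibited as a coboundary of the preceding order of $L$'') is not an argument: nothing in your scheme, as written, actually feeds the data $(k,h)$ into the construction of $\tilde k$, and the vanishing of these obstructions is essentially equivalent to the lemma itself. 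A way to repair this within your framework is to show that the $u$-projection from the $\scrI$-twisted complex is a surjective quasi-isomorphism (its kernel is, modulo $N$, a cone of the identity, hence acyclic by completeness of the filtration), and then lift the cocycle $k$; but that argument is absent from the proposal. A second, unverified point is your claim that the $\chi$-operations extend to $\scrC\otimes\scrI$ ``by the analogue of \eqref{eq:tensor-product}'' compatibly with the $A_\infty$-$(r-1,1)$-module relations: in the structure equation following \eqref{eq:10000}, the inner $\chi$ consumes a block of inputs that is \emph{not} consecutive in the linear ordering of arguments (tails of all left colors, the distinguished element, and part of the right color, interleaved with the outer operation's inputs), so the naive prescription of multiplying $\scrI$-components in argument order does not automatically respect the relations, $\scrI$ being noncommutative; one must check that for your specific insertions (non-unit $\scrI$-components only in color $i$ and the distinguished slot) the relevant products still factor correctly, and this is needed even to know that the twisted differential upstairs squares to zero.

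For comparison, the paper's proof is a one-line explicit construction, in the spirit of Lemma \ref{th:multiproduct}: given $k$ and an equivalence $h$ between $\gamma_j$ and $\tilde\gamma_j$, it sets $\tilde k = k + \sum \chi_{\scrC}(\dots;\gamma_j,\dots,\gamma_j,h,\tilde\gamma_j,\dots,\tilde\gamma_j;\dots;k;\dots)$, i.e.\ it composes $k$ with the closed morphism $e+h$ in the Maurer-Cartan category; the verification uses only the module relations, the Maurer-Cartan equations and \eqref{eq:mc-equivalence}. Your route could in principle be completed as indicated above, but the explicit formula is both shorter and avoids the two issues raised.
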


This is the analogue of Lemma \ref{th:multiproduct}, and is proved in a similar way. Given $(\gamma_1,\dots,\gamma_r)$ and $k$ as in \eqref{eq:proof-of-triviality}, and an element $h \in \scrC^0 \hat\otimes N$ which provides an equivalence between $\gamma_j$ and $\tilde{\gamma}_j$, we can construct an explicit $\tilde{k}$ which shows that $(\gamma_1,\dots,\tilde{\gamma}_j,\dots,\gamma_r)$ satisfy the same condition:
\begin{equation}
\tilde{k} = k + \sum_{d_1,\dots,d_r,i} \chi_{\scrC}^{d_1,\dots,1,d_r}(\dots;\overbrace{\gamma_j,\dots,\gamma_j}^i,h,\overbrace{\tilde{\gamma}_j,\dots,\tilde{\gamma}_j}^{d_j-i-1};\dots;k;\dots)
\end{equation}
(the formula as written is for $j<r$, but the $j = r$ case is parallel).

\begin{lemma}
Given $(\gamma_1,\dots,\gamma_{r-1})$, there is a unique equivalence class $\gamma_r$ which satisfies Definition \ref{th:trivial-tuple}.
\end{lemma}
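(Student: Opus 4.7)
The plan is to establish both existence and uniqueness by induction on the filtration $\{N^m\}$ of the adic ring. We view \eqref{eq:proof-of-triviality} together with the Maurer-Cartan equation for $\gamma_r$ as a coupled nonlinear system in the unknowns $(\gamma_r, k)$, and show it can be solved order by order with the leading obstruction living in a group that is automatically trivial.

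For existence, first fix the leading terms: reduce Definition \ref{th:trivial-tuple} mod $N^2$. Writing $k = e_{\scrC} + k'$ with $k' \in \scrC^0 \hat\otimes N$ and using the identification $\chi^{0,\dots,1,0}_{\scrC} = \mu^1_{\scrC}$ together with Example \ref{th:its-the-product}, the $N/N^2$ reduction of \eqref{eq:proof-of-triviality} forces $[\gamma_r] = [\gamma_1] + \cdots + [\gamma_{r-1}] \in H^1(\scrC; N/N^2)$. This determines $\gamma_r^{[1]}$ uniquely up to coboundary, giving the base case. Now suppose $(\gamma_r^{[m]}, k^{[m]})$ have been chosen so that $\gamma_r^{[m]}$ satisfies the Maurer-Cartan equation mod $N^{m+1}$ and $F(\gamma_r^{[m]}, k^{[m]}) \equiv 0$ mod $N^{m+1}$, where $F$ denotes the left-hand side of \eqref{eq:proof-of-triviality}. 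Write perturbations $\gamma_r = \gamma_r^{[m]} + x$ and $k = k^{[m]} + y$, with $x \in \scrC^1 \hat\otimes N^{m+1}$ and $y \in \scrC^0 \hat\otimes N^{m+1}$. Modulo $N^{m+2}$, the change in $F$ is computed using only the $\chi^{0,\dots,1,0}$ and $\chi^{0,\dots,1,1}$ operations applied to $(x, y, e_{\scrC})$; by Examples \ref{th:up-down-0} and \ref{th:its-the-product} this leading variation equals $\mu^1_{\scrC}(y) + \mu^2_{\scrC}(e_{\scrC}, x)$ up to coboundary, which on the cohomology level is the identity on $x$ plus $\mu^1_{\scrC}$ applied to $y$. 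In parallel, the Maurer-Cartan defect of $\gamma_r^{[m]}$ modulo $N^{m+2}$ is some $\eta \in \scrC^2 \hat\otimes N^{m+1}$, and varying it by $x$ changes it by $\mu^1_{\scrC}(x)$. The key calculation, which parallels \eqref{eq:inherit-mc}, is that applying the $A_\infty$-$(r-1,1)$-module relations to \eqref{eq:proof-of-triviality} at level $m$, together with the Maurer-Cartan equations for $\gamma_1, \dots, \gamma_{r-1}$, forces $\mu^1_{\scrC}(F(\gamma_r^{[m]}, k^{[m]})) = $ an explicit expression vanishing in $\scrC^2 \hat\otimes N^{m+2}$ modulo $\mu^1_{\scrC}$-boundaries and modulo the Maurer-Cartan defect $\eta$. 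This shows that the pair of obstructions lies in a subspace where the linearized problem can be solved.

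For uniqueness, suppose two pairs $(\gamma_r, k)$ and $(\tilde\gamma_r, \tilde k)$ satisfy Definition \ref{th:trivial-tuple}. We construct an element $h \in \scrC^0 \hat\otimes N$ realizing an equivalence between $\gamma_r$ and $\tilde\gamma_r$ in the sense of \eqref{eq:mc-equivalence}, again inductively. Having matched them mod $N^m$ via $h^{[m-1]}$, the obstruction to extending is a cocycle in $\scrC^1 \hat\otimes N^m/N^{m+1}$ whose class is forced to vanish because both $k$ and $\tilde k$ reduce to the unit modulo $N$: combining the two instances of \eqref{eq:proof-of-triviality} with the $A_\infty$-$(r-1,1)$-module relation yields an explicit expression exhibiting that class as $\mu^1_{\scrC}$ of an element constructible from $k, \tilde k$ and previous stages of $h$. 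This mirrors the mechanism of Lemma \ref{th:inverse-4}, and is implemented using the same $\chi$-operations appearing in the definition.

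The main obstacle is verifying the ``cocycle for free'' property of the obstruction at the inductive step: spelling out precisely how the $A_\infty$-$(r-1,1)$-module relation, evaluated on an input consisting of the $\gamma_i$ repeated with one slot replaced by the Maurer-Cartan defect, packages into an identity showing the $F$-defect is a coboundary. This is a somewhat intricate bookkeeping exercise in the operations $\chi^{d_1,\dots,d_{r-1},1,d_r}$, analogous to the derivation of \eqref{eq:inherit-mc} but one step more involved because we must track both the Maurer-Cartan defect of $\gamma_r$ and the $F$-defect simultaneously. Once this identity is established, both existence and uniqueness follow formally, exactly as in the proofs of Lemmas \ref{th:inverse-1}--\ref{th:inverse-4} in the simpler context of Section \ref{sec:group}.
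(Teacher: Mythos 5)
Your leading-order computation and your general plan (solve \eqref{eq:proof-of-triviality} together with the Maurer--Cartan equation for $\gamma_r$ order by order in the filtration $\{N^m\}$) are consistent with what the lemma requires, but as written the proposal has a genuine gap, and you name it yourself: the whole content of the inductive step is the compatibility identity relating $\mu^1_{\scrC}$ of the defect of \eqref{eq:proof-of-triviality} to the Maurer--Cartan defect of $\gamma_r^{[m]}$ (the analogue, in the $\chi$-setting, of \eqref{eq:inherit-mc}), and this is asserted rather than proved. Without it you cannot conclude that the MC-defect class in $H^2(\scrC;N^{m+1}/N^{m+2})$ vanishes, nor that the residual defect of \eqref{eq:proof-of-triviality} can be absorbed by adjusting $x$ by a cocycle and $y$ by a primitive; so existence does not yet ``follow formally,'' and the uniqueness half of your argument leans on the same unproven identity. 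A second point you gloss over is the unit: $e_{\scrC}$ is only a homotopy unit, and $k$ is only required to reduce mod $N$ to a cocycle cohomologous to $e_{\scrC}$, so the linearized variation of your $F$ is not $\mu^1_{\scrC}(y)+\mu^2_{\scrC}(e_{\scrC},x)$ with $\mu^2_{\scrC}(e_{\scrC},\cdot)=\mathrm{id}$, but only the identity up to homotopy; those correction terms must be carried through the induction (compare the care taken in Lemma \ref{th:alternative-mc}).

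For comparison, the paper's proof sidesteps exactly this bookkeeping by a reformulation: inserting $\gamma_1,\dots,\gamma_{r-1}$ into the $\chi$-operations defines a right $A_\infty$-module structure on $\scrC$ which, by Example \ref{th:up-down-0}, is a deformation of the free module; Definition \ref{th:trivial-tuple} then asks precisely for a Maurer--Cartan element $\gamma_r$ whose insertions undo that deformation, with $k$ the trivializing closed morphism reducing to the unit mod $N$. Existence and uniqueness of $\gamma_r$ up to equivalence is then the standard statement about deformations of the rank-one free module (this is why the paper calls it ``roughly analogous to Lemma \ref{th:inverse-3}''), and, crucially, this purely algebraic problem admits transfer, so one may assume strict unitality---which is exactly what kills the homotopy-unit corrections above. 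To complete your route, either prove the compatibility identity by evaluating the $A_\infty$-$(r-1,1)$-module equations on the inputs $(\gamma_1,\dots;\dots;\gamma_{r-1},\dots;k^{[m]};\gamma_r^{[m]},\dots)$ and tracking the unit corrections, or first reduce to the strictly unital case by transfer and then run your induction there.
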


This is roughly analogous to Lemma \ref{th:inverse-3}. It is maybe helpful to reformulate the issue as follows. We have a right $A_\infty$-module structure, defined by 
\begin{equation}
(c;c_1,\dots,c_d) \longmapsto \sum_{d_1,\dots,d_{r-1}} \chi_{\scrC}^{d_1,\dots,d_{r-1},1,d}(\overbrace{\gamma_1,\dots,\gamma_1}^{d_1};\dots;
\overbrace{\gamma_{r-1},\dots,\gamma_{r-1}}^{d_{r-1}}; c; c_1,\dots,c_d),
\end{equation}
which (thanks to Example \ref{th:up-down-0}) is a deformation of the free module $\scrC$. One then wants to modify that module structure through $\gamma$ insertions, so as to ``undo'' the deformation, rendering it trivial. This is a purely algebraic question, which can be reduced to the strictly unital situation if desired (using Lemma \ref{th:unit-unit}).


\begin{proposition} \label{th:jumpsuit}
In the sense of Definition \ref{th:trivial-tuple}, if $\gamma$ is the product of $(\gamma_i,\gamma_{i+1})$ for some $i<r-1$, and $\gamma_r$ is the product of $(\gamma_1,\dots,\gamma_{i-1},\gamma,\gamma_{i+2},\dots,\gamma_{r-1})$, then $\gamma_r$ is also the product of $(\gamma_1,\dots,\gamma_{r-1})$.
\end{proposition}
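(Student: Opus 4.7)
The approach is a direct adaptation of the associativity argument \cite[Theorem~1.8]{fukaya17} to the setting of $Q$-spaces. The central ingredient is a one-parameter family of moduli spaces
\[
\tilde Q^{[0,1]}_{d_1,\dots,d_r}
\]
interpolating between the standard $r$-line geometry of $Q_{d_1,\dots,d_r}$ at $t = 1$ and, at $t = 0$, a degenerate stratum in which lines $i$ and $i+1$ have collided. Geometrically this follows the strip-shrinking and witch-ball pattern of Section~\ref{subsec:strip-shrinking} and \cite{bottman17, bottman-wehrheim18}. The $t = 0$ stratum decomposes as a union of fibered products
\[
Q_{a_1,a_2,a_3}\;\times\;Q_{d_1,\dots,d_{i-1},a_3,d_{i+2},\dots,d_r},
\]
the first factor (three lines) absorbing the color-$i$ and color-$(i+1)$ points and producing a bubble output, and the second factor ($r - 1$ lines) receiving that output in the merged slot. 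Choose perturbation data on $\tilde Q^{[0,1]}$ extending the ones already fixed for $Q_{d_1,\dots,d_r}$ at $t = 1$ and for both factors at $t = 0$, and regularize.

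The parametrized moduli count yields operations $\tilde\chi^{d_1,\dots,d_{r-1},1,d_r}_{\scrC}$ of one degree higher than the $\chi$'s. Set
\[
k \;=\; k_2 \;+\; \sum\,\pm\,\tilde\chi^{\dots,1,\dots}_{\scrC}\!\bigl(\gamma_1;\dots;\overbrace{\gamma_i}^{a_1};\overbrace{\gamma_{i+1}}^{a_2};k_1;\overbrace{\gamma}^{a_3};\gamma_{i+2};\dots;k_2;\dots;\gamma_r\bigr),
\]
placing $k_1$ at the internal node produced by the colliding lines and keeping $k_2$ in its original slot. Modulo $N$ only the lowest-order term contributes, so $k$ reduces to $k_2 \bmod N$ and the unit-class condition in Definition~\ref{th:trivial-tuple} is inherited from the second hypothesis.

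Substituting this $k$ into $\sum\chi^{d_1,\dots,d_{r-1},1,d_r}_{\scrC}(\gamma_1;\dots;\gamma_{r-1};k;\gamma_r)$, the codimension-one boundary analysis of $\tilde Q^{[0,1]}$ organizes the outcome into three kinds of terms. The $t = 0$ contributions factorize through $Q_{a_1,a_2,a_3}$ equipped with the $k_1$-insertion, and vanish by the first hypothesis applied to $(\gamma_i,\gamma_{i+1},\gamma)$; the outer $(r - 1)$-line contributions collapse against the vanishing provided by the second hypothesis for $(\gamma_1,\dots,\gamma,\dots,\gamma_{r-1},\gamma_r)$; and the remaining internal boundary strata cancel after invoking that $\gamma_1,\dots,\gamma_{r-1},\gamma,\gamma_r$ are all Maurer--Cartan. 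In effect, the cobordism realizes at the chain level the replacement of $\gamma$ by the $k_1$-bounded pair $(\gamma_i,\gamma_{i+1})$, turning the second hypothesis into the conclusion.

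The main obstacle is the moduli-theoretic foundation: constructing $\tilde Q^{[0,1]}_{d_1,\dots,d_r}$ as a smooth space with the appropriate (possibly generalized) corner structure near $t = 0$, and proving the gluing theorem which identifies the $t = 0$ stratum correctly. This parallels, and is technically simpler than, the witch-ball analysis of \cite{bottman-oblomkov19}, because only two of the $r$ lines collide rather than an entire family of spacings degenerating. Once that is in place, the sign bookkeeping and the algebraic cancellations follow the same blueprint as the rest of Section~\ref{sec:fukaya}.
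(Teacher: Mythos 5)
There is a genuine gap: the parameter space you construct cannot support the operations you then use, and its boundary does not have the structure your cancellation argument needs. The family $\tilde Q^{[0,1]}_{d_1,\dots,d_r}$ is an angle-shrinking family of the $r$-line geometry and carries a single special point (at the origin). When only the lines of colors $i$ and $i+1$ collide, the limit is governed by the same mechanism as \eqref{eq:pq-glue} for the spaces \eqref{eq:pq-space}: the points of the two colliding colors bubble off into two-colored $\mathit{MWW}$-type screens attached at points of the merged line, while the special point stays on the main $(r-1)$-line component. In particular the $t=0$ stratum is not a fibred product $Q_{a_1,a_2,a_3}\times Q_{d_1,\dots,a_3,\dots,d_r}$; the bubbles carry no special marked point and no third line, so there is no slot for $k_1$, no geometric home for the intermediate element $\gamma$, and the first hypothesis --- whose defining equation \eqref{eq:proof-of-triviality} needs $k_1$ inserted at a special point together with $\gamma$-insertions on the remaining line --- cannot be invoked at that boundary. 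The same mismatch appears in your formula for $k$: it uses operations with two special inputs ($k_1$ and $k_2$) and an extra family of $\gamma$-insertions, i.e.\ $r+1$ colors and two interior special points, which the one-special-point family $\tilde Q^{[0,1]}$ does not provide; a codimension-one degeneration of that family cannot create the second special point.

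What is needed --- and what the paper does --- is to change the underlying surface rather than deform the line configuration: one takes configurations of points on lines drawn on the twice-punctured plane, conveniently pictured as a pair-of-pants (half of the double pants diagram of \cite[Section 11.2]{fukaya17}), see Figure~\ref{fig:jumpsuit}(ii). The two punctures are the two special points: $k_1$ is inserted at one and $k_2$ at the other, and the lines carry $\gamma_1,\dots,\gamma_{r-1}$, the intermediate $\gamma$, and $\gamma_r$. The count of rigid configurations directly defines the new element $k$ (mod $N$ it reduces to a product of the reductions of $k_1$ and $k_2$, hence again represents the unit class, so no additive $k_2$-correction is needed), and the boundaries of the one-dimensional moduli spaces yield \eqref{eq:proof-of-triviality} for $(\gamma_1,\dots,\gamma_{r-1};k;\gamma_r)$: the stratum where the whole pair-of-pants region splits off into the special slot of an outer $Q_{d_1,\dots,d_r}$-screen, as in \eqref{eq:glue-tree-x}, produces the desired sum, while the strata concentrating near the two punctures vanish by the defining equations of $k_1$ and $k_2$, and the remaining strata by the Maurer--Cartan equations. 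Your cobordism idea can only be rescued by building in these two special points and the $\gamma$-line from the start, at which point it becomes the pair-of-pants construction.
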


This is the associativity statement for our notion of product.
The proof uses a moduli space of points lying on certain lines in the punctured plane. It is convenient to draw that plane as a pair-of-pants, see Figure \ref{fig:jumpsuit}(ii), which is half of the ``double pants diagram'' in \cite[Section 11.2]{fukaya17}. The two known statements about products come with their respective elements $k$ as in \eqref{eq:proof-of-triviality}. One inserts those elements at the two bottom ends, and the Maurer-Cartan elements at points on the respective lines (the arrows denote the ordering of the points), the outcome being another element $k$ which establishes the desired statement.
\begin{figure}
\begin{centering}
\includegraphics{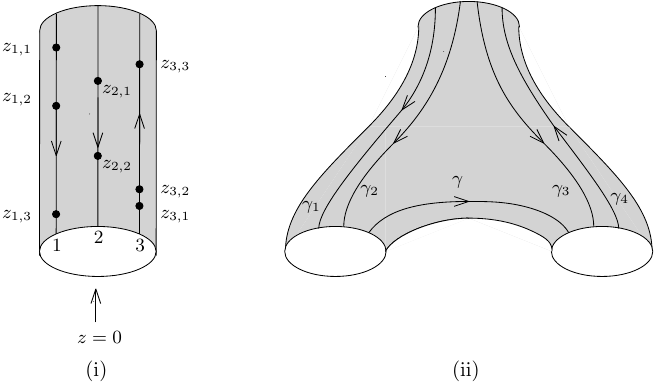}
\caption{\label{fig:jumpsuit}(i) Another picture of a point configuration \eqref{eq:punctured-point}, with correct ordering of the lines and points. (ii) An analogous picture of the moduli spaces that enter into the proof of Proposition \ref{th:jumpsuit} ($r = 4, \, i = 1$).}
\end{centering}
\end{figure}

\begin{remark} \label{th:tuple-algebra}
Let's briefly discuss the counterpart of Definition \ref{th:trivial-tuple} in homological algebra, in the spirit of Section \ref{sec:mc}. Take a homologically unital $A_\infty$-ring $\scrA$, and its Hochschild complex $\scrC$, see \eqref{eq:hochschild-complex}. Given a Maurer-Cartan element $\gamma \in \scrC^1 \hat\otimes N$, one can define two $A_\infty$-bimodules $\scrL_\gamma$ and $\scrR_\gamma$, whose underlying space is $\scrA \hat\otimes (\bZ 1 \oplus N)$, with bimodule structure:
\begin{align} 
\label{eq:insert-left}
&
\begin{aligned}
& \mu_{\scrL_\gamma}^{p;1;q}(a_1,\dots,a_p;a_{p+1};a_{p+2},\dots,a_{p+q+1}) = \pm \mu_{\scrA}^{p+q+1}(a_1,\dots,a_{p+q+1})
\\ & \qquad \qquad
+ \sum_{ij} \pm \mu_{\scrA}^{p+q+2-j}(a_1,\dots,\dots,a_i,\gamma^j(a_{i+1},\dots,a_{i+j}),\dots,a_{p+1}, \dots, a_{p+q+1}), 
\\ & \qquad \qquad
+ \sum_{i_1 j_1 i_2 j_2} \pm \mu_{\scrA}^{p+q+3-j_1-j_2}(a_1,\dots,a_{i_1},\gamma^{j_1}(a_{i_1+1},\dots,a_{i_1+j_1}), \dots
\\[-1em]
& \qquad \qquad \qquad \qquad \qquad a_{i_2},\gamma^{j_2}(a_{i_2+1},\dots,a_{i_2+j_2}),\dots,a_{p+1},\dots,a_{p+q+1}) 
\\ & \qquad \qquad + \cdots
\end{aligned}
\\
& \label{eq:insert-right}
\begin{aligned}
& \mu_{\scrR_\gamma}^{p;1;q}(a_1,\dots,a_p;a_{p+1};a_{p+2},\dots,a_{p+q+1}) = \pm \mu_{\scrA}^{p+q+1}(a_1,\dots,a_{p+q+1})
\\ & \qquad \qquad
+ \sum_{ij} \pm \mu_{\scrA}^{p+q+2-j}(a_1,\dots,a_{p+1},\dots,a_i,\gamma^j(a_{i+1},\dots,a_{i+j}),\dots,a_{p+q+1})
\\ & \qquad \qquad
+ \sum_{i_1 j_1 i_2 j_2} \pm \mu_{\scrA}^{p+q+3-j_1-j_2}(a_1,\dots,a_{p+1},\dots,a_{i_1},\gamma^{j_1}(a_{i_1+1},\dots,a_{i_1+j_1}), \dots
\\[-1em]
& \qquad \qquad \qquad \qquad \qquad a_{i_2},\gamma^{j_2}(a_{i_2+1},\dots,a_{i_2+j_2}),\dots,a_{p+q+1}) 
\\ & \qquad \qquad + \cdots
\end{aligned}
\end{align}
Here, the rule is that an arbitrary number of $\gamma$ terms are inserted, but always to the left \eqref{eq:insert-left}, or right \eqref{eq:insert-right}, of $a_{p+1}$. For $\gamma = 0$, this reduces to $\scrA$ with the diagonal bimodule structure extended to $\scrA \hat\otimes (\bZ 1 \oplus N)$, which will denote by $\scrL_0 = \scrR_0 = \scrD$. More generally, $\scrL_\gamma$ and $\scrR_\gamma$ can be viewed as pullbacks of $\scrD$ by the formal automorphism \eqref{eq:gamma-to-phi} acting on one of the two sides. Using that, one sees easily that the bimodules are inverses: there are bimodule homotopy equivalences
\begin{equation} \label{eq:inverse-bimodule}
\scrR_{\gamma} \otimes_\scrA \scrL_{\gamma} \htp \scrL_{\gamma} \otimes_{\scrA} \scrR_{\gamma} \htp \scrD. 
\end{equation}
Here, the tensor product notation is shorthand: we are really taking the tensor product of $A_\infty$-bimodules relative to $\scrA \hat\otimes (\bZ 1 \oplus N)$, and making sure that completion with respect to the filtration of $N$ is taken into account. Modulo $N$, all our bimodules reduce to the diagonal bimodule. Consider the Hochschild complex of $\scrA$ with coefficients in a bimodule $\scrB$, denoted here by $\mathit{CC}^*(\scrB)$, see e.g.\ \cite[Section 2.9]{ganatra13}. We say that $\gamma_r$ is the product of $(\gamma_1,\dots,\gamma_{r-1})$ if there is a cocycle
\begin{equation} \label{eq:bimodule-tensor}
k \in \mathit{CC}^0(\scrR_{\gamma_1} \otimes_{\scrA} \cdots \otimes_{\scrA} \scrR_{\gamma_{r-1}}
\otimes_{\scrA} \scrL_{\gamma_r})
\end{equation}
which, after reduction modulo $N$, represents the identity in $\mathit{HH}^0(\scrA)$. For $r = 1$, one can use \eqref{eq:inverse-bimodule} to show that this is the case if and only if $\gamma_1,\gamma_2$ are equivalent.
On the other hand, for $\bullet$ defined as in \eqref{eq:gamma-composition}, there are homotopy equivalences
\begin{equation}
\begin{aligned}
& \scrL_{\gamma_1} \otimes_{\scrA} \cdots \otimes_{\scrA} \scrL_{\gamma_{r-1}} \htp \scrL_{\gamma_{r-1} \bullet \cdots \bullet \gamma_1}, \\
& \scrR_{\gamma_1} \otimes_{\scrA} \cdots \otimes_{\scrA} \scrR_{\gamma_{r-1}} \htp \scrR_{\gamma_1 \bullet \cdots \bullet \gamma_{r-1}}.
\end{aligned}
\end{equation}
Combining that with the previous observation shows that our definition of product is the same as saying that $\gamma_r$ is equivalent to $\gamma_1 \bullet \cdots \bullet \gamma_{r-1}$.
\end{remark}

\subsection{Relating the two approaches}
To conclude our discussion, we'll mention a possible way to connect the construction in this section to the rest of the paper, or more precisely: to prove that, if $\gamma_r$ is the product of $(\gamma_1,\dots,\gamma_{r-1})$ in the sense of Definition \ref{th:trivial-tuple}, then $\gamma_r$ is also equivalent to $\Pi^{r-1}_{\scrC}(\gamma_1,\dots,\gamma_{r-1})$, which is the product from Definition \ref{th:def-multiproduct}. The reader is exhorted to treat this as what it is, a suggestion: it relies on new moduli spaces whose structure has not been fully developed. 

We begin by introducing an additional parameter $t \in [0,1)$, and changing \eqref{eq:punctured-point} by letting the first $(r-1)$ radial lines collide in the limit $t \rightarrow 1$:
\begin{equation} \label{eq:punctured-point-2}
z_{k,i} = \begin{cases} 
\exp(-s_{k,i} - (1-t) \textstyle\frac{2\pi k}{r} \sqrt{-1} - t \pi \sqrt{-1}) \in \bC^* & k = 1,\dots,r-1, \\
\exp(-s_{r,i}) & k = r.
\end{cases}
\end{equation}
This leads to a compactification of $[0,1)$ times \eqref{eq:configuration-space-x}, which we write as
\begin{equation} \label{eq:pq-space}
t: \mathit{PQ}_{d_1,\dots,d_r} \longrightarrow [0,1], \;\; r \geq 3.
\end{equation}
Over each $t \in [0,1)$, the fibre is a copy of $Q_{d_1,\dots,d_r}$. In the limit $t \rightarrow 1$, points of the first $(r-1)$ colors bubble off into screens which have the structure of $\mathit{MWW}$ spaces (Figure \ref{fig:moveable-strips}). This means that we have maps
\begin{equation} \label{eq:pq-glue}
Q_{j,r_d} \times \prod_{i=1}^j \mathit{MWW}_{d_{1,i},\dots,d_{r-1,i}} \longrightarrow \mathit{PQ}_{d_1,\dots,d_r}.
\end{equation}
for each partition $d_1 = d_{1,1} + \cdots + d_{j,1}, \dots, d_{r-1} = d_{r-1,1} + \cdots + d_{r-1,j}$, whose images are the top-dimensional parts of the fibre of \eqref{eq:pq-space} over $t = 1$. The space \eqref{eq:pq-space} comes with a map to $\mathit{FM}_{d+1}$, which over the fibre $t = 0$ reduces to \eqref{eq:f-to-fm}.
\begin{figure}
\begin{centering}
\includegraphics{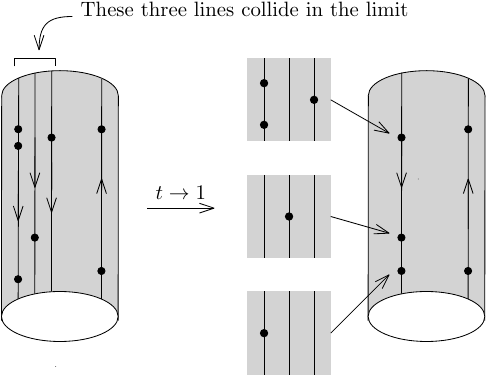}
\caption{\label{fig:moveable-strips}A limit in the space $\mathit{PQ}_{3,1,1,2}$ which lies in the image of \eqref{eq:pq-glue}. For compatibility with Figure \ref{fig:jumpsuit}, we have drawn the $\mathit{MWW}$ components rotated by $90$ degrees.}
\end{centering}
\end{figure}

The definition \eqref{eq:punctured-point-2} suffers from the usual disadvantage of parametrized spaces, meaning that its compactification contains strata that are fibre products over $[0,1]$. To bypass that difficulty, one can try to use those spaces in a ``time-ordered'' form (the same strategy as in \cite[Section 10e]{seidel04}), which means that we consider $k$-tuples of points in $\mathit{PQ}_{d_{1,1},\dots,d_{1,r}} \times \cdots \times \mathit{PQ}_{d_{k,1},\dots,d_{k,r}}$, where $(d_{i,k})$ is again a partition of $(d_k)$, such that the associated parameters satisfy $t_1 \leq \cdots \leq t_k$. Pairs of points with equal parameters now occur in the boundary of two different such spaces (when $t_i = t_{i+1}$ for some $i$; and as a boundary face of one of the $PQ$ factors involved).  If we insert Maurer-Cartan elements $(\gamma_1,\dots,\gamma_r)$ at the marked points in $\bC^*$, and add a trivial term which is the identity map, the outcome should be a map
\begin{equation} \label{eq:relating-map}
\Phi_{\scrC}: \scrC \longrightarrow \scrC
\end{equation}
which, setting $\gamma = \Pi^{r-1}_{\scrC}(\gamma_1,\dots,\gamma_{r-1})$, satisfies
\begin{equation}
\begin{aligned}
& \Phi_{\scrC} \big( \sum_{d_1,\dots,d_r} \chi_{\scrC}^{d_1,\dots,d_{r-1},1,d_r}(\overbrace{\gamma_1,\dots,\gamma_1}^{d_1};\dots;
\overbrace{\gamma_{r-1},\dots,\gamma_{r-1}}^{d_{r-1}}; c; \overbrace{\gamma_r,\dots,\gamma_r}^{d_r}) \big)
\\[-1em] & \qquad \qquad \qquad = \sum_{d_1,d_2} \chi_{\scrC}^{d_1,1,d_2}(\underbrace{\gamma,
\dots, \gamma}_{d_1}; \Phi_{\scrC}(c); \underbrace{\gamma_r,\dots,\gamma_r}_{d_2}).
\end{aligned}
\end{equation}
The left hand side of this equation represents what happens for $t_1 = 0$ (the $t_1 = 0$ component then gives $\chi_{\scrC}$, and the other components give $\Phi_{\scrC}$), while the right hand side represents what happens for $t_k = 1$ (with the $\gamma$ factors coming from the collision of the first $r-1$ lines). Suppose that $\gamma_r$ is the product of $(\gamma_1,\dots,\gamma_{r-1})$, see Definition \ref{th:trivial-tuple}. Then, inserting the associated element $k$ into \eqref{eq:relating-map} produces another element, which shows that ``$\gamma_r$ is the product of $(\gamma)$'' in the same sense. As pointed out before, in that special case, the definition just amounts to saying that $\gamma_r$ and $\gamma$ are equivalent.

\section{Computing quantum Steenrod operations\label{sec:quantum}}

By definition, quantum Steenrod operations belong to genus zero enumerative geometry. Generally speaking, it's an open question what their role is within that theory. However, for low degree contributions one can give a satisfactory answer, in terms of the usual Gromov-Witten invariants. After explaining this, we will turn to specific example computations.

\subsection{Gromov-Witten theory background\label{subsec:gw}}
Let's start in a context which is a little different than the rest of the paper. Take $X$ to be a closed symplectic $2n$-manifold, with the only restriction (for notational simplicity, since we only want to use power series in the Novikov variable $q$) that the symplectic form must lie in an integral cohomology class, denoted here by $\Omega_X \in \mathit{Im}(H^2(X;\bZ) \rightarrow H^2(X;\bQ))$. Genus zero Gromov-Witten invariants for $m$-pointed curves, and their generalizations that include gravitational descendants, will be written as
\begin{equation} \label{eq:gw}
\langle \psi^{r_1} x_1, \dots, \psi^{r_m} x_m \rangle_m = \sum_A q^{\Omega_X \cdot A} \langle \psi^{r_1} x_1,\dots, \psi^{r_m} x_m \rangle_{m,A} \in \bQ[[q]], \quad x_i \in H^ *(X;\bQ),
\end{equation}
where the sum is over $A \in H_2(X;\bZ)$. For the contribution of $A$ to be potentially nonzero, one should either consider classes with positive symplectic area $\Omega_X \cdot A = \int_A \omega_X > 0$, or take $A = 0$ (the case of constant curves) and $m \geq 3$. For expositions of Gromov-Witten that include the properties used in this paper, see e.g.\ \cite[Section 1]{pandharipande} or \cite[Chapter 26]{clay}.

We introduce another formal variable $t$, so that the coefficient ring for our algebraic considerations will be $\bQ[t^{\pm 1}][[q]]$. The small quantum product, and the small quantum connection, on the $\bZ/2$-graded space $H^*(X;\bQ)[t^{\pm 1}][[q]]$ are defined by 
\begin{equation}
\int_X (y_1 \ast y_2)\, y_3 = \langle y_1,y_2, y_3 \rangle_3
\end{equation}
and
\begin{equation} \label{eq:quantum-connection}
\nabla y = q\partial_q y + t^{-1} \Omega_X \ast y.
\end{equation}
We will consider endomorphisms $\Phi$ of $H^*(X;\bQ)[t^{\pm 1}][[q]]$ which are (linear over the coefficient ring and) covariantly constant with respect to $\nabla$. Concretely, this means that
\begin{equation} \label{eq:covariant-constancy}
(q \partial_q \Phi)(y) + t^{-1} \Omega_X \ast \Phi(y) - t^{-1} \Phi(\Omega_X \ast y) = 0.
\end{equation}
If we expand $\Phi = \Phi^{(0)} + q \Phi^{(1)} + q^2 \Phi^{(2)} + \cdots$, \eqref{eq:covariant-constancy} becomes
\begin{align} \label{eq:phi-0}
& \Phi^{(0)}(\Omega_X y) = \Omega_X \Phi^{(0)}(y), \\
&\Phi^{(k)}(y) = t^{-1} k^{-1} (\Phi^{(k)}(\Omega_X y) - \Omega_X \Phi^{(k)}(y) ) + \text{\it (recursive terms)}, \quad k>0, \label{eq:expand-phi-2}
\end{align}
where ``recursive terms'' is a generic name for expressions involving only $\Phi^{(0)},\dots,\Phi^{(k-1)}$. By repeatedly inserting \eqref{eq:expand-phi-2} into itself, we get
\begin{equation} \label{eq:expand-phi-3}
\Phi^{(k)}(y) = t^{-m} k^{-m} \sum_i (-1)^i \begin{pmatrix} m \\ i \end{pmatrix} \Omega_X^i\Phi^{(k)}(\Omega^{m-i}_X y) + \text{\it (recursive terms)}.
\end{equation}
Setting $m>2n$ means that in the sum we have $i>n$ or $m-i>n$, so all those terms vanish. One therefore gets explicit recursive formulae, which show that the constant term $\Phi^{(0)}$, subject to \eqref{eq:phi-0}, determines all of $\Phi$. The case we are interested in is where $\Phi^{(0)}(y) = xy$ is the cup product with a given class $x \in H^*(X;\bQ)$. There is a formula for the resulting $\Phi = \Phi_x$ in terms of gravitational descendants, closely related to the standard formula for solutions of the quantum differential equation:
\begin{equation} \label{eq:phi-map}
\begin{aligned}
& 
\int_X y_0\,\Phi_x(y_1) = \int_X y_0 x y_1 \\ & \qquad \qquad
- t^{-1} 
\langle y_0, (1+t^{-1}\psi)^{-1} x y_1 \rangle_2
+ t^{-1}
\langle (1 - t^{-1}\psi)^{-1} y_0 x, y_1 \rangle_2
\\ & \qquad \qquad
- t^{-2} 
\sum_k 
\langle y_0, (1+t^{-1}\psi)^{-1} x e_k \rangle_2
\langle (1-t^{-1}\psi)^{-1} e_k^\vee, y_1 \rangle_2.
\end{aligned}
\end{equation}
In principle, the terms $(1 + \cdots)^{-1}$ are supposed to be expanded into geometric series; but for degree reasons, only one term in this series is nonzero for each class $A$ that contributes to the expressions in \eqref{eq:phi-map}. The $(e_k)$, $(e_k^\vee)$ are Poincar{\'e} dual bases in $H^*(X;\bQ)$, meaning that in the Kunneth decomposition,
\begin{equation}
\sum_k e_k \otimes e_k^\vee = [\text{\it diagonal}] \in H^{2n}(X \times X;\bQ).
\end{equation}
Checking that $\Phi_x$ satisfies \eqref{eq:covariant-constancy} is an exercise using basic properties (divisor equation and TRR) of Gromov-Witten invariants. Using the string equation, one can write the special case $y_0 = y$, $y_1 = 1$ as
\begin{equation} \label{eq:string}
\begin{aligned}
& 
\int_X y\,\Phi_x(1) = \int_X yx 
- t^{-1} 
\langle y, (1+t^{-1}\psi)^{-1} x \rangle_2 
+ t^{-2} 
 \langle (1 - t^{-1}\psi)^{-1} y x \rangle_1
\\ & 
\qquad \qquad
- t^{-3} \sum_k \langle y, (1+t^{-1}\psi)^{-1} x e_k \rangle_2
\langle (1-t^{-1}\psi)^{-1} e_k^\vee \rangle_1.
\end{aligned}
\end{equation}

Let's modify the context slightly, and assume that $X$ is weakly monotone. Moreover, choose an integer lift of the symplectic cohomology class, again denoted by $\Omega_X$. Then, one can define mod $p$ versions of Gromov-Witten invariants counting curves in $A \in H_2(X;\bZ)$, for which we use the same notation:
\begin{equation} \label{eq:gw-mod-p}
\langle x_1,\dots,x_m \rangle_{m,A} \in \bF_p, \quad x_i \in H^ *(X;\bF_p), \quad
\text{provided that }\left\{\begin{aligned} &
m \geq 3, \text{ or: }\\
&
\text{any $m$ and } 0 <\Omega_X \cdot A<p.
\end{aligned}
\right.
\end{equation}
For $m \geq 3$, this is the classical definition in terms of an inhomogenous $\bar\partial$-equation (Gromov's trick). The definition in the second case can be reduced to the first case by taking the divisor equation as an axiom, where the class inserted is always (the mod $p$ reduction of) $\Omega_X$. Alternatively, one could argue more geometrically: if $\Omega_X \cdot A < p$, then no stable map in class $A$ can have an automorphism group whose order is a multiple of $p$. This should allow one to define virtual fundamental classes in homology with $\bF_p$-coefficients (we say ``should'' since this has not, to our knowledge, been carried out in the literature). The discussion of the second case also applies to gravitational descendants, with the same assumption $0 <\Omega_X \cdot A < p$. Geometrically, this uses the fact that orbifold line bundles whose isotropy groups have orders coprime to $p$ have Chern classes in mod $p$ cohomology; algebraically, one can use the formula (involving the divisor relation and TRR) that reduces invariants involving gravitational descendants to ordinary Gromov-Witten invariants.

The quantum product and connection can be considered as acting on $H^*(X;\bF_p)[t^{\pm 1}][[q]]$, where one now thinks of $t$ as in \eqref{eq:t-theta}. Formal linear differential equations in characteristic $p$ have a much larger space of solutions than their characteristic $0$ counterparts, simply because $\frac{d}{dx} x^p = 0$. As an instance of that, the uniqueness statement derived from \eqref{eq:expand-phi-3} now holds only up to order $q^{p-1}$, because of the division by $k^m$. If one truncates the formula \eqref{eq:phi-map} modulo $q^p$, then all terms appearing in it are defined with $\bF_p$-coefficients; and it yields the unique solution modulo $q^p$ of \eqref{eq:covariant-constancy}, whose $q^0$ term equals the cup product with $x$.

\subsection{Application to quantum Steenrod operations}
We adapt our previous definition of quantum Steenrod operations to the weakly monotone context, by adding the variable $q$. This means that, with $(t,\theta)$ as in \eqref{eq:t-theta} (and omitting the manifold $X$ for the sake of brevity),
\begin{equation} \label{eq:qst-q}
Q\mathit{St}_{p} = \sum_A q^{\Omega_X \cdot A} \, Q\mathit{St}_{p,A}: 
H^*(X;\bF_p) \longrightarrow H^*(X;\bF_p)[t,\theta][[q]].
\end{equation}
We find it convenient to introduce a minor generalization, which is a bilinear map on cohomology. More precisely, for each $x \in H^*(X;\bF_p)$ one gets an endomorphism of $H^*(X;\bF_p)[t,\theta][[q]]$, denoted by
\begin{equation} \label{eq:q-sigma}
Q\Sigma_{p,x} = \sum_A q^{\Omega_X \cdot A} Q\Sigma_{p,x,A}: H^*(X;\bF_p)[t,\theta][[q]] \longrightarrow H^*(X;\bF_p)[t,\theta][[q]].
\end{equation}
Geometrically, while quantum Steenrod operations are obtained from holomorphic maps which have \eqref{eq:fixed-point} as a domain, we use the remaining $\bZ/p$-fixed point ($z = 0$) on that curve as an additional input point to define \eqref{eq:q-sigma}. In other words, one can view it as an equivariant version of the ``quantum cap product'', obtained from the $\bZ/p$-equivariant curve in Figure \ref{fig:equivariant-cylinder}. On a technical level, the definition is entirely parallel to that of quantum Steenrod operations, by looking at moduli spaces parametrized by cycles in the classifying space $B\bZ/p$ \cite{seidel-wilkins21}. The $q^0$ term of \eqref{eq:q-sigma} is the cup product with the classical Steenrod operation,
\begin{equation} \label{eq:st-and-sigma}
Q\Sigma_{p,x,0}(y) = \mathit{St}_p(x)y.
\end{equation}
The relation between \eqref{eq:qst-q} and \eqref{eq:q-sigma} is that
\begin{equation} \label{eq:quantum-sigma-constant}
Q\mathit{St}_p(x) = Q\Sigma_{p,x}(1).
\end{equation}
\begin{figure}
\begin{centering}
\includegraphics{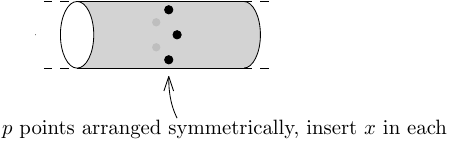}
\caption{\label{fig:equivariant-cylinder}The Riemann surface underlying the definition of $Q\Sigma_{p,x}(\cdot)$, see \eqref{eq:q-sigma}.}
\end{centering}
\end{figure}

\begin{remark}
It is natural to extend the definition of \eqref{eq:q-sigma} to $x \in H^*(X;\bF_p)[[q]]$ in a Frobenius-twisted way, meaning that $\Sigma_{p,qx} = q^p\Sigma_{p,x}$. Then, 
\begin{equation} \label{eq:compose-sigma}
Q\Sigma_{p,x_1} \circ Q\Sigma_{p,x_2} = (-1)^{\frac{p(p-1)}{2} |x_1|\,|x_2|} Q\Sigma_{p,x_1 \ast x_2}.
\end{equation}
Note that as a consequence of \eqref{eq:st-and-sigma} and \eqref{eq:compose-sigma},
\begin{equation} \label{eq:double-sigma}
\begin{aligned} &
Q\Sigma_{p,x}(Q\mathit{St}_p(y)) = Q\Sigma_{p,x} \circ Q\Sigma_{p,y}(1) = 
(-1)^{\frac{p(p-1)}{2} |x|\,|y|} Q\Sigma_{p,x \ast y}(1) \\
& \quad = (-1)^{\frac{p(p-1)}{2} |x|\,|y|} Q\mathit{St}_p(x \ast y).
\end{aligned}
\end{equation}
Every class in $H^*(X;\bF_p)[t,t^{-1},\theta]$ can be written as $\mathit{St}_p(y)$ for some $y$. An analogous statement holds for quantum Steenrod operations, and by combining that with \eqref{eq:double-sigma}, one sees that $Q\mathit{St}_p$ actually determines $Q\Sigma_p$.
\end{remark}

For our purpose, the key point is the following result:

\begin{theorem} \cite[Theorem 1.4]{seidel-wilkins21} \label{th:flat}
For any $x$, the endomorphism $Q\Sigma_{p,x}$ is covariantly constant for the small quantum connection \eqref{eq:quantum-connection}, meaning that it satisfies \eqref{eq:covariant-constancy} (we have tacitly extended the coefficient ring to include $\theta$).
\end{theorem}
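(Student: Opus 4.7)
The strategy is to derive \eqref{eq:covariant-constancy} for $\Phi = Q\Sigma_{p,x}$ from a $\bZ/p$-equivariant version of the divisor equation, applied to the moduli spaces underlying $Q\Sigma_{p,x,A}$. The key geometric fact is that the domain curve in Figure \ref{fig:equivariant-cylinder} is $\bar\bC$ with the standard rotation action of $\bZ/p$, whose two fixed points are exactly $z_0 = \infty$ (the output) and $z_* = 0$ (the $y$-input), while the $p$ insertions of $x$ occupy a \emph{free} orbit. This asymmetry is what should channel the divisor equation into exactly two localized corrections, one at each fixed point, and it explains why no additional terms involving $x$ appear.

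First I would enlarge the operation by inserting an extra marked point $w$ carrying $\Omega_X$, producing an equivariant operation $\widetilde{Q\Sigma}_{p,x,A}$. The classical divisor axiom identifies $\widetilde{Q\Sigma}_{p,x,A}$ with $(\Omega_X\cdot A)\,Q\Sigma_{p,x,A}$, since integrating $\mathit{ev}_w^*\Omega_X$ over the universal curve reproduces the degree. The point is to now evaluate the same integral by $\bZ/p$-equivariant localization on the fibre $\bar\bC$: the generic $\bZ/p$-orbit of $w$ is free, and its $p$ copies would sum to zero modulo $p$, so the only surviving contributions come from the fixed locus $\{0,\infty\}$. With the sign and weight conventions spelled out in Section \ref{sec:signs}, the tangent weight is $+t$ at $0$ and $-t$ at $\infty$, leading to a schematic identity
\begin{equation*}
(\Omega_X\cdot A)\,Q\Sigma_{p,x,A}(y) \;=\; -t^{-1}\,\Omega_X\,Q\Sigma_{p,x,A}(y) \;+\; t^{-1}\,Q\Sigma_{p,x,A}(\Omega_X\,y) \;+\; (\text{boundary terms}),
\end{equation*}
where the two explicit contributions come from inserting $\Omega_X$ at the output fixed point $\infty$ and the input fixed point $0$, respectively.

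The boundary terms are where the main geometric work lies. When $w$ collides with $0$ or $\infty$, a sphere bubble of some degree $B$ breaks off, and the resulting node contributes a two-pointed Gromov-Witten invariant $\langle \Omega_X, e_k \rangle_{2,B}$ together with a gluing sum over Poincar{\'e}-dual bases $(e_k)$, $(e_k^\vee)$. A standard reorganization, essentially the mechanism that produces the matrix $\Phi_x$ in \eqref{eq:phi-map}--\eqref{eq:string}, then upgrades the bare cup products above to the full quantum products $\Omega_X \ast$. Summing over $A$ weighted by $q^{\Omega_X\cdot A}$ and using $q\partial_q q^n = n\, q^n$ finally yields \eqref{eq:covariant-constancy}.

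The hardest step will be controlling the localization contributions from stable maps whose domains have nontrivial $\bZ/p$-isotropy, for instance $p$-fold cyclic covers fixed by the rotation action. There the equivariant virtual fundamental class picks up extra factors whose behaviour mod $p$ is delicate, and one must check that these either cancel or fit into the same quantum-product package. I would verify this either by Graber-Pandharipande virtual localization in the equivariant setting, or more concretely in the Floer-theoretic model of Section \ref{sec:floer}, where $w$ becomes an auxiliary modular parameter whose two limits $w \to 0$ and $w \to \infty$ are analyzed via one-parameter families of Riemann surfaces, in the same spirit as the commutativity and associativity arguments of Propositions \ref{th:commutative} and \ref{th:semi-associativity}.
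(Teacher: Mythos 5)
You should first be aware that the paper contains no proof of Theorem \ref{th:flat} to compare against: it is imported as an unpublished Seidel--Wilkins result, and the nearest in-paper argument is the direct localization of Section \ref{subsec:direct-localization} (Lemma \ref{th:fixed-locus}, Corollary \ref{th:elementary-localise}), which is deliberately restricted to holomorphically indecomposable classes. Your overall strategy --- an equivariant divisor equation for the operation of Figure \ref{fig:equivariant-cylinder}, with corrections localized at the two $\bZ/p$-fixed points $0,\infty$ of the domain and with bubbling upgrading cup products to quantum products --- is plausible and is consistent with the shape of \eqref{eq:covariant-constancy} and \eqref{eq:low-degree}. But as written it is a plan rather than a proof, and the places where it is vague are exactly the places where the theorem has content.

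Concretely: (i) ``the $p$ copies of a free orbit sum to zero mod $p$'' is a pointwise counting heuristic, not an argument, once $w$ varies in a two-dimensional family on the universal curve; it must be run either via the localization theorem \eqref{eq:localization-theorem} after inverting $t$ (and then descended using the fact that $H^*(X;\bF_p)[t,\theta][[q]]$ is $t$-torsion-free), or as a chain-level statement in the Borel-construction model --- and in either case the divisor equation for the \emph{equivariant} operation has to be established first, which is not automatic since the extra marked point must be handled compatibly with the $\bZ/p$-equivariant family. (ii) More seriously, the picture ``only two localized corrections, one at each fixed point'' is correct only in the range $\Omega_X\cdot A<p$, i.e.\ precisely where Corollary \ref{th:low-degree} already suffices. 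Beyond that range the $\bZ/p$-fixed locus of the compactified moduli space (with or without the extra point) acquires additional components built from $p$-fold covered curves --- the phenomenon the paper flags at the end of Section \ref{subsec:direct-localization} as the obstruction to pushing the localization method further --- and deferring these to ``Graber--Pandharipande virtual localization'' does not resolve anything as stated: that is a torus-localization statement with rational coefficients, whereas you need finite-group localization with $\bF_p$-coefficients on a virtually-smooth space. Note also that the mod $p$ invariants \eqref{eq:gw-mod-p} are not even defined for $\Omega_X\cdot A\geq p$, so in that range the statement cannot be reduced to classical Gromov--Witten data and must be proven inside the equivariant Floer framework; the analysis of the one-parameter degenerations there (with all equivariant parameters, weights and signs, cf.\ Section \ref{sec:signs}) is exactly the content you have left as a promissory note. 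Until that step is carried out, the covariant constancy to all orders in $q$ remains unproved by your argument.
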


As a consequence of that and the discussion at the end of Section \ref{subsec:gw}, $Q\Sigma_{p,x}$ is determined modulo $q^p$ by the classical term \eqref{eq:quantum-sigma-constant}. More explicitly, comparison with \eqref{eq:phi-map} shows that
\begin{equation}
Q\Sigma_{p,x} = \Phi_{\mathit{St}_p(x)} \; \text{modulo } q^p.
\end{equation}
Specializing to \eqref{eq:st-and-sigma} and using \eqref{eq:string} leads to:

\begin{corollary} \label{th:low-degree}
The low degree contributions to the quantum Steenrod operation are:
\begin{equation} \label{eq:low-degree}
\begin{aligned}
& 
\sum_{\!\!\Omega_X \cdot A < p\!\!} q^{\Omega_X \cdot A} \int_X y\,\mathit{QSt}_{p,A}(x) = \int_X y \,\mathit{St}_p(x)
\\ & 
- t^{-1} \sum_{\!\!0< \Omega_X \cdot A < p\!\!} q^{\Omega_X \cdot A}
\langle y, (1+t^{-1}\psi)^{-1} \mathit{St}_p(x) \rangle_{2,A} \\ &
+ t^{-2} \sum_{\!\!0 < \Omega_X \cdot A < p\!\!} q^{\Omega_X \cdot A}
 \langle (1 - t^{-1}\psi)^{-1} y\, \mathit{St}_p(x) \rangle_{1,A}
\\ & 
- t^{-3} \sum_{\substack{\Omega_X \cdot A_0 > 0 \\ \Omega_X \cdot A_1 > 0 \\
\Omega_X \cdot (A_0 + A_1) < p}} \sum_k q^{\Omega_X \cdot (A_0+A_1)}
\langle y, (1+t^{-1}\psi)^{-1} \mathit{St}_p(x)\, e_k \rangle_{2,A_0}
\langle (1-t^{-1}\psi)^{-1} e_k^\vee \rangle_{1,A_1}.
\end{aligned}
\end{equation}
\end{corollary}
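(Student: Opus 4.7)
The plan is to combine Theorem \ref{th:flat} with the covariant-constancy uniqueness sketched around \eqref{eq:expand-phi-3}, and then specialize the explicit formula \eqref{eq:string}. First, by \eqref{eq:quantum-sigma-constant} one has $Q\mathit{St}_p(x) = Q\Sigma_{p,x}(1)$, so it suffices to compute $Q\Sigma_{p,x}$ modulo $q^p$ and then evaluate at $1$. By Theorem \ref{th:flat}, $Q\Sigma_{p,x}$ satisfies \eqref{eq:covariant-constancy}; by \eqref{eq:st-and-sigma}, its $q^0$ term is cup product with $\mathit{St}_p(x)$. Expanding in powers of $q$, the recursion \eqref{eq:expand-phi-3} determines each $\Phi^{(k)}$ from $\Phi^{(0)},\dots,\Phi^{(k-1)}$ after inverting $k$; since $k \in \{1,\dots,p-1\}$ is invertible in $\bF_p$, this fixes $Q\Sigma_{p,x}$ uniquely modulo $q^p$ from its $q^0$ term. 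But the formally defined $\Phi_{\mathit{St}_p(x)}$ from \eqref{eq:phi-map} also satisfies covariant constancy with the same $q^0$ term, so
\begin{equation*}
Q\Sigma_{p,x} \equiv \Phi_{\mathit{St}_p(x)} \pmod{q^p}.
\end{equation*}

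Second, I would substitute $x \mapsto \mathit{St}_p(x)$ and $y_1 = 1$ into \eqref{eq:string}, pair with a test class $y$, and read off the sum over classes $A$ with $\Omega_X \cdot A < p$. The four summands on the right hand side of \eqref{eq:low-degree} are in direct correspondence with the four terms of \eqref{eq:string}: the classical cup product, the two-point descendant, the one-point descendant obtained via the string equation, and the product of two-point and one-point descendants coupled through a Poincar\'e dual basis. The restriction to $\Omega_X \cdot A < p$ (respectively $\Omega_X \cdot (A_0+A_1) < p$) is exactly the effect of truncating modulo $q^p$.

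Third, I would verify that every descendant invariant that survives the truncation is well-defined with $\bF_p$-coefficients. For the two-point and one-point descendants, this is the content of \eqref{eq:gw-mod-p} together with the subsequent remarks, applied to each class $A$ with $0 < \Omega_X \cdot A < p$. For the final product term, each factor is independently well-defined under its separate positivity constraint, and both factors are nonzero only when $\Omega_X \cdot A_0 > 0$ and $\Omega_X \cdot A_1 > 0$, as indicated in the indexing set of \eqref{eq:low-degree}. The $A = 0$ contribution reduces, by the vanishing of $\langle\cdot\rangle_{2,0}$ and $\langle\cdot\rangle_{1,0}$, to the classical $\int_X y\,\mathit{St}_p(x)$.

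The only step that requires care is the uniqueness argument: one must confirm that \eqref{eq:expand-phi-3}, originally derived over $\bQ$, genuinely applies over $\bF_p$ in the range $k < p$, and that the bound $p$ is sharp (the argument visibly breaks when $k = p$, consistent with the truncation in \eqref{eq:low-degree}). Beyond that, the proof is a bookkeeping exercise: matching the terms of \eqref{eq:string}, tracked degree by degree in $q$, against those written out in \eqref{eq:low-degree}, and invoking \eqref{eq:gw-mod-p} to legitimize each coefficient.
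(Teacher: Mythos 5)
Your proposal follows the paper's own argument: invoke Theorem \ref{th:flat}, use the recursion \eqref{eq:expand-phi-2}--\eqref{eq:expand-phi-3} (valid mod $q^p$ since only $k \in \{1,\dots,p-1\}$ need be inverted) to conclude $Q\Sigma_{p,x} \equiv \Phi_{\mathit{St}_p(x)}$ modulo $q^p$, and then read off \eqref{eq:low-degree} from \eqref{eq:string} with $\mathit{St}_p(x)$ inserted, with \eqref{eq:gw-mod-p} legitimizing the mod $p$ descendant invariants in the truncated range. This matches the paper's proof in both structure and detail, so no changes are needed.
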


Note that even though there are negative powers of $t$ in the formula, we know a priori that none of them can appear in $\mathit{QSt}_p$, so all terms involving them must cancel. 

\subsection{A localization argument\label{subsec:direct-localization}}
The approach to quantum Steenrod operations via Theorem \ref{th:flat} is formally slick, but maybe somewhat indirect; we will therefore suggest a possible alternative. For simplicity, we will work out only the most elementary case. Namely, let's assume that our symplectic manifold $X$ is an algebraic variety, and that we use the given complex structure. We fix some $A \in H_2(X;\bZ)$ which is holomorphically indecomposable: this means that one can't find nonzero classes $A_1,\dots,A_r$, $r \geq 2$, each of them represented by a holomorphic map $\bC P^1 \rightarrow X$, such that $A_1 + \cdots + A_r = A$. 
This implies that the space of unparametrized rational curves in class $A$ is compact, and contains no multiple covers. We further assume that this space is regular. Let's recall some notation,
\begin{equation}
\xymatrix{
& \!\!\!\!\!\!\!L_{p+1,0},\dots,L_{p+1,p}\!\!\!\!\!\!\! \ar[d] \\
\bar\scrM_{p+1}
& \ar[l]
\bar\scrM_{p+1}(X;A)
\ar[rrrr]^-{\mathit{ev}_{p+1} = (\mathit{ev}_{p+1,0},\dots,\mathit{ev}_{p+1,p})}
&&&& X^{p+1}.
}
\end{equation}
Here, $\bar\scrM_{p+1}$ is genus zero Deligne-Mumford space (we prefer to use the conventional algebro-geometric notation rather than that in the rest of the paper); $\bar\scrM_{p+1}(X;A)$ is the space of stable maps; and the $L_{p+1,k}$ are the tautological line bundles (cotangent bundles of the curve) at the marked points. Our assumption was that $\bar\scrM_0(X;A)$ is regular, hence smooth of complex dimension $n+c_1(A)-3$. As a consequence, all $\bar\scrM_{p+1}(X;A)$ are smooth of dimension $n+c_1(A)+(p-2)$, and actually fibre bundles over $\bar\scrM_0(X;A)$ with fibre $\bar\scrM_{p+1}(\bC P^1;1)$. The $\mathit{Sym}_p$-action on Deligne-Mumford space has a canonical lift to $\bar\scrM_{p+1}(X;A)$. 

At this point, we (re)impose the assumption that $p$ is prime. Let $\bar\scrM_{p+1}^\diamond(X;A) \subset \bar\scrM_{p+1}(X;A)$ be the subset of stable maps which, under the forgetful map to Deligne-Mumford space, are mapped to the point from \eqref{eq:fixed-point}. Clearly, that subset is invariant under $\bZ/p \subset \mathit{Sym}_p$. As before, one can describe its geometry explicitly: $\bar\scrM_{p+1}^\diamond(X;A)$ is  a fibre bundle over $\bar\scrM_0(X;A)$ with three-dimensional fibre $\bar\scrM_{p+1}^\diamond(\bC P^1;1)$, and $\bZ/p$ acts in a fibre-preserving way.

\begin{lemma} \label{th:fixed-locus}
The fixed point set $F  \subset \bar\scrM_{p+1}^\diamond(X;A)$ of the $\bZ/p$-action is the disjoint union of:
\begin{itemize} \itemsep.5em
\item[(i)] A copy of $\bar\scrM_2(X;A)$. The restriction of $\mathit{ev}_{p+1}$ to that component can be identified with $(\mathit{ev}_{2,0},\mathit{ev}_{2,1},\dots,\mathit{ev}_{2,1})$. Moreover, the normal bundle $N$ of this component is the dual of the tautological bundle $L_{2,1} \rightarrow \bar\scrM_2(X;A)$, and the $(\bZ/p)$-action on it has weight $-1$.

\item[(ii)] A copy of $\bar\scrM_1(X;A)$. The restriction of $\mathit{ev}_{p+1}$ to that component can be identified with $(\mathit{ev}_{1,0},\dots,\mathit{ev}_{1,0})$. Topologically, the normal bundle $N$ of this component is a direct sum of a trivial line bundle and the dual of $L_{1,0}$, with the $(\bZ/p)$-action having weight $1$ on each component. 
\end{itemize}
\end{lemma}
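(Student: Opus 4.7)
The plan is to classify the $\bZ/p$-fixed stable maps in $\bar\scrM^\diamond_{p+1}(X;A)$ explicitly, and then read off the normal bundles by standard nodal deformation theory. A fixed point admits an automorphism $\phi$ of its source $\tilde C$ fixing $\tilde z_0$, cycling $\tilde z_1,\dots,\tilde z_p$, and satisfying $u\circ\phi=u$. The stabilization of $\tilde C$ is $\diamond$, so there is a distinguished main component $C_0\cong\bar\bC$ on which $\phi$ must act as the rotation $z\mapsto\zeta z$ (with $\zeta=e^{2\pi\sqrt{-1}/p}$), with bubble trees attached at finitely many points of $C_0$.

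First I would show that $u|_{C_0}$ is constant. Any non-constant $\bZ/p$-equivariant map $C_0\to X$ factors as $v(z^p)$, giving $[u|_{C_0}]=p\cdot[v]$ with $v$ holomorphic, and combined with any other non-constant components this would provide a decomposition of $A$ into at least $p\geq 2$ holomorphically represented pieces, contradicting indecomposability. The class $A$ therefore lives entirely in the bubble components, and indecomposability forces exactly one non-constant bubble $C'$ of class $A$. $\bZ/p$-equivariance then forces $C'$ to be attached to $C_0$ at a fixed point of the rotation, namely $0$ or $\infty$. A short case analysis using stability of each component (every rational component must be either non-constant or carry at least three special points) rules out intermediate constant components in the tree containing $C'$, constant auxiliary bubbles at the $\zeta^{k-1}$ positions (they would have only two special points), and constant bubbles in $\bZ/p$-orbits at non-fixed positions. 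This yields the two disjoint configurations stated: (i) $C'$ attached at $\infty$ with $\tilde z_0$ carried onto $C'$, parametrized by $\bar\scrM_2(X;A)$ with $w_0=\tilde z_0$ and $w_1=\mathrm{node}$; (ii) $C'$ attached at $0$ with all $\tilde z_k$ on the constant $C_0$, parametrized by $\bar\scrM_1(X;A)$ with $w_0=\mathrm{node}$. The prescribed evaluation maps drop out from the constancy of $u|_{C_0}$.

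For the normal bundles I would invoke the standard identification of the smoothing parameter at a node with $T_xC_1\otimes T_yC_2$. In case (i), $C_0$ together with its $p$ marked points and the node at $\infty$ is a rigid marked curve, so the only transverse direction is the smoothing, with parameter in $T_{w_1}C\otimes T_\infty C_0$; this gives $N\cong L_{2,1}^\vee$ after noting that $T_\infty C_0$ is a canonically trivial line. The rotation $z\mapsto\zeta z$ becomes $w\mapsto\zeta^{-1}w$ in the coordinate $w=1/z$, so its linearization at $\infty$ has weight $-1$; the action on $C$ is trivial since stable $2$-pointed maps admit no non-trivial automorphism preserving $v$ and the marked points, hence $N$ has weight $-1$. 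In case (ii) there are two transverse directions: smoothing the node at $0$, with parameter in $T_{w_0}C'\otimes T_0 C_0\cong L_{1,0}^\vee$; and moving the bubble attachment point along $C_0$, with parameter in $T_0 C_0$, a trivial line bundle over $\bar\scrM_1(X;A)$. Linearizing $z\mapsto\zeta z$ at $0$ gives weight $+1$ on $T_0 C_0$ and, together with the trivial action on $C'$, produces weight $+1$ on both summands.

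The main obstacle I anticipate is the stability/indecomposability bookkeeping in the second step: ruling out chains of constant bubbles between $C_0$ and $C'$, constant bubbles carrying a single marked point at some $\zeta^{k-1}$, and $\bZ/p$-orbits of size $p$ of constant bubbles at generic points of $C_0$ requires examining each potential configuration and checking both that the stable-map stability holds and that the projection to $\bar\scrM_{p+1}$ remains $\diamond$. The remaining steps, namely the equivariance analysis on $C_0$, the identification of $F_1,F_2$ with the stated Deligne-Mumford moduli, and the weight computations at the two fixed points $0,\infty$ of the rotation, are comparatively short.
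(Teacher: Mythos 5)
Your overall strategy coincides with the paper's: describe the $\bZ/p$-fixed configurations explicitly and read off the normal bundle from the standard node-smoothing parameter $T_xC_1\otimes T_yC_2$, with the weights $-1$ at $\infty$ and $+1$ at $0$ coming from linearizing the rotation at the two fixed points of $C_0$. Your weight computations, evaluation-map identifications, and the treatment of case (ii) all agree with the paper (which, if anything, is terser on the classification than you are).

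There is, however, a concrete error in your case analysis for configuration (i): stability does \emph{not} rule out an intermediate constant component between $C_0$ and $C'$. A constant component $B$ carrying the marking $\tilde z_0$ together with its two nodes (to $C_0$ at $\infty$ and to $C'$) has three special points, hence is stable; the three-component curve $C'\cup B\cup C_0$ stabilizes to $\diamond$ (contract $C'$, then $B$, so $\tilde z_0$ lands at $\infty$), and it is $\bZ/p$-fixed via the automorphism which rotates $C_0$ and is the identity on $B$ and $C'$. These points are exactly the fixed points lying over the boundary stratum of $\bar\scrM_2(X;A)$ where $w_0$ and $w_1$ have bubbled off together onto a constant component. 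If your exclusion were correct, component (i) of the fixed locus would be the non-compact open stratum $\scrM_2(X;A)$ rather than the compact $\bar\scrM_2(X;A)$ you assert -- which is also impossible, since a fixed-point set is closed. So as written the classification both misses fixed points and contradicts your own parametrization. The repair is short: in the tree attached at $\infty$, the constant components allowed are precisely those that make the attached object a two-pointed stable map in class $A$; with $A$ indecomposable the only extra stratum is the single bubble $B$ above, and the identification $N\cong L_{2,1}^\vee$ with weight $-1$ persists there because $L_{2,1}$ is the cotangent line at $w_1$, which at such points lives on $B$, while smoothing the other node $B\cap C'$ is tangent to the fixed locus. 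Note that your stability exclusion \emph{is} correct in case (ii), where no marking is available to stabilize an intermediate component (equivalently, for indecomposable $A$ the space $\bar\scrM_1(X;A)$ has irreducible domains only), so only case (i) needs this amendment.
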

\begin{figure}
\begin{centering}
\includegraphics{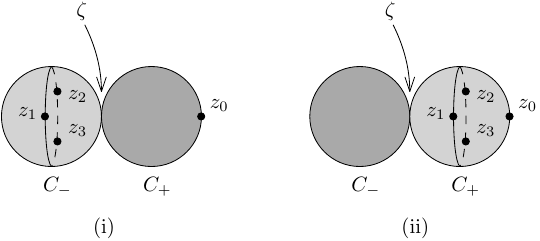}
\caption{\label{fig:fixed-loci}The fixed loci from Lemma \ref{th:fixed-locus} (with $p = 3$). The lighter shaded components are those where the map is constant.}
\end{centering}
\end{figure}%

\begin{proof}
(i) The relevant rational curves have two components $C = C_- \cup_{\zeta} C_+$ (see Figure \ref{fig:fixed-loci}). $C_-$ carries marked points $z_1,\dots,z_p$ and also the node $\zeta$, and can be identified with \eqref{eq:fixed-point}, in such a way that $\zeta$ corresponds to $\infty$; the stable map is constant on that component. The other component $C_+$ carries the node $\zeta$ and the marked point $z_0$; the stable map on that component represents $A$. The fibre of the normal bundle to the fixed locus at such a point can be canonically identified with $T_\zeta C_- \otimes T_\zeta C_+$ (see e.g.\ \cite[Proposition 3.31]{harris-morrison}). The identification of $C_-$ with \eqref{eq:fixed-point} mentioned above provides a distinguished isomorphism $T_\zeta C_- \iso \bC$ over the entire stratum, and also shows that the action of $\bZ/p$ on $T_\zeta C_-$ has weight $-1$. By definition, $T_\zeta C_+$ is the dual of the cotangent line which is the fibre of $L_{2,1}$, and carries the trivial $\bZ/p$-action.

(ii) Here, the curves also have two components $C = C_- \cup_{\zeta} C_+$, with details as follows (see again Figure \ref{fig:fixed-loci}). There are no marked points on $C_-$, and $u|C_-$ represents $A$. The other component is isomorphic to \eqref{eq:fixed-point}, compatibly with all marked points and so that the node $\zeta$ corresponds to $0 \in \bar{\bC}$; and $u|C_+$ is constant. The fibre of the normal bundle to the fixed locus at such a point can be written as an extension
\begin{equation} \label{eq:normal-extension}
0 \rightarrow T_{\zeta}(C_+) \longrightarrow N \longrightarrow T_{\zeta}(C_-) \otimes T_{\zeta}(C_+) \rightarrow 0.
\end{equation}
The tensor product in the right term again expresses gluing together the two components. This time, because $\zeta$ is identified with the point $0$ in \eqref{eq:fixed-point}, the $\bZ/p$-action has weight $1$ on $T_{\zeta}(C_+) \iso \bC$. The subspace on the left in \eqref{eq:normal-extension} corresponds to staying inside the stratum of nodal curves, but moving the position of the node on $C_+$. Topologically \eqref{eq:normal-extension} splits, even compatibly with the $(\bZ/p)$-action, leading to the desired statement.
\end{proof}

Reformulating the definition of quantum Steenrod operations, one can say that the pairing $(y,x) \mapsto \int_X y Q\mathit{St}_{p,A}(x)$ is obtained as follows:
\begin{equation} \label{eq:dual-quantum-steenrod}
\xymatrix{
\ar[dd]
H^j(X;\bF_p) \otimes H^l(X;\bF_p) \ar[rr] && H^{pj}_{\bZ/p}(X^p;\bF_p) \otimes H^l(X;\bF_p) \ar@{=}[d]
\\
&& H^{pj+l}_{\bZ/p}(X^{p+1};\bF_p) \ar[d]_-{\mathit{ev}_{p+1}^*} \\
H^{pj+l-2n-2c_1(A)+6}_{\bZ/p}(\mathit{point};\bF_p)
&& H^{pj+l}_{\bZ/p}(\bar{\scrM}_{p+1}^\diamond(X;A)) \ar[ll]^-{\displaystyle\int_{\bar\scrM_{p+1}^\diamond(X;A)}^{\bZ/p}}
}
\end{equation}
The $\rightarrow$ is (the topological version of) the equivariant diagonal map \eqref{eq:equivariant-diagonal}, and the integration map $\leftarrow$ is the pairing with the equivariant fundamental class of the moduli space. The localization theorem for $\bZ/p$-actions \cite[Proposition 5.3.18]{allday-puppe} shows that this integral can be computed in terms of the fixed locus:
\begin{equation} \label{eq:localization-theorem}
\int_{\bar{\scrM}_{p+1}^\diamond(X;A)}^{\bZ/p} w = \int_F (w|F) e_{\bZ/p}(N)^{-1}
\quad \text{for $w \in H^*_{\bZ/p}(\bar{\scrM}_{p+1}^{\diamond}(X;A))$},
\end{equation}
where $e_{\bZ/p}(N)$ is the equivariant Euler class of the normal bundle. Applying this to the description from Lemma \ref{th:fixed-locus}, with $w$ being a class pulled back by evaluation, we get:

\begin{corollary} \label{th:elementary-localise}
The contribution of a holomorphically indecomposable class $A$, which has regular moduli spaces, to the quantum Steenrod operation is
\begin{equation} \label{eq:indecomposable-descendants}
\int_X y\, \mathit{QSt}_{p,A}(x)
 = -t^{-1}\langle y, (1 + t^{-1}\psi)^{-1} \mathit{St}_p(x) \rangle_{2,A}  + t^{-2}\langle (1 - t^{-1}\psi)^{-1} y\, \mathit{St}_p(x) \rangle_{1,A}.
\end{equation}
\end{corollary}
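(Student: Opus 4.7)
The plan is to substitute the description of the $\bZ/p$-fixed locus from Lemma \ref{th:fixed-locus} into the localization formula \eqref{eq:localization-theorem}, applied to the equivariant pairing in \eqref{eq:dual-quantum-steenrod}. Since $A$ is holomorphically indecomposable, $\bar\scrM_{p+1}^\diamond(X;A)$ is a smooth compact manifold (of the expected dimension), and the hypotheses of the $\bZ/p$-localization theorem are satisfied over $\bF_p$. The class $w \in H^*_{\bZ/p}(\bar{\scrM}_{p+1}^\diamond(X;A))$ to which we apply the localization is $\mathit{ev}_{p+1}^*(y \boxtimes \tilde{x}^{\otimes p})$, where $y \boxtimes \tilde{x}^{\otimes p}$ denotes the equivariant class on $X^{p+1}$ whose first factor is $y$ (carrying trivial $\bZ/p$-action) and whose remaining factors are the equivariant diagonal lift of $x$ from \eqref{eq:equivariant-diagonal}.

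The two components of $F$ in Lemma \ref{th:fixed-locus} are then treated in parallel. For component (i), a copy of $\bar\scrM_2(X;A)$ with evaluation $(\mathit{ev}_{2,0}, \mathit{ev}_{2,1}, \dots, \mathit{ev}_{2,1})$, the restriction of $w$ factors through the diagonal $X \hookrightarrow X^p$ in the last $p$ entries; the key input is that pullback of the equivariant diagonal lift of $x$ along the actual diagonal is, by definition, the classical Steenrod operation $\mathit{St}_p(x) \in H^*(X;\bF_p)[t,\theta]$. Hence $w|_{\bar\scrM_2(X;A)} = \mathit{ev}_{2,0}^*(y) \cdot \mathit{ev}_{2,1}^*(\mathit{St}_p(x))$. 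The normal bundle is $L_{2,1}^\vee$ with weight $-1$, so its equivariant Euler class is $-\psi_1 - t = -t(1 + t^{-1}\psi_1)$; inverting and integrating (and using that the relevant geometric series terminates by dimension) contributes $-t^{-1}\langle y,(1+t^{-1}\psi)^{-1}\mathit{St}_p(x)\rangle_{2,A}$, which is the first term of \eqref{eq:indecomposable-descendants}.

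For component (ii), a copy of $\bar\scrM_1(X;A)$ with evaluation $(\mathit{ev}_{1,0},\dots,\mathit{ev}_{1,0})$, the same diagonal identification gives $w|_{\bar\scrM_1(X;A)} = \mathit{ev}_{1,0}^*(y \cdot \mathit{St}_p(x))$. By Lemma \ref{th:fixed-locus}(ii), the normal bundle is topologically $\bC \oplus L_{1,0}^\vee$ with both summands carrying weight $+1$, so its equivariant Euler class is $t(t-\psi_0) = t^2(1 - t^{-1}\psi_0)$; inverting and integrating yields the second term $t^{-2}\langle (1-t^{-1}\psi)^{-1} y\,\mathit{St}_p(x)\rangle_{1,A}$. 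Summing the two contributions gives \eqref{eq:indecomposable-descendants}.

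The principal obstacle is the bookkeeping of signs and conventions in two places simultaneously: first, verifying that the equivariant diagonal, composed with the diagonal embedding $X \to X^p$, really recovers $\mathit{St}_p(x)$ with the precise normalization used in \eqref{eq:classical-steenrod} (this is essentially built into our sign choices, but still needs to be checked against the generators of $H^*_{\bZ/p}(\bF_p)$ fixed in Section \ref{sec:signs}); and second, confirming that the equivariant Euler class of a line bundle with $\bZ/p$-weight $w$ is $c_1 + wt$ with the correct sign for our chosen generator $t$ (the topological subtlety being that $N$ in case (ii) is only topologically, not holomorphically, a direct sum, but since equivariant Euler classes are topological invariants this does not affect the computation). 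Once these sign conventions are nailed down, each contribution is a short equivariant integration.
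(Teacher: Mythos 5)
Your proposal follows exactly the paper's route: it applies the localization formula \eqref{eq:localization-theorem} to the equivariant pairing \eqref{eq:dual-quantum-steenrod}, restricts the pulled-back equivariant diagonal class to the two fixed components of Lemma \ref{th:fixed-locus} (where it becomes $\mathit{St}_p(x)$ via the diagonal), and inverts the equivariant Euler classes $-t(1+t^{-1}\psi)$ and $t^2(1-t^{-1}\psi)$ to produce the two terms of \eqref{eq:indecomposable-descendants}. The sign and convention checks you flag (weight conventions for $t$, topological versus holomorphic splitting of the normal bundle in case (ii)) are exactly the points the paper's setup in Section \ref{sec:signs} and Lemma \ref{th:fixed-locus} is designed to handle, so there is no gap.
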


The use of an integrable complex structure is not really necessary. It was convenient for expository purposes, because it makes the moduli spaces into differentiable manifolds, so that we can talk about the normal bundle to the fixed locus. However, \eqref{eq:localization-theorem} also applies to actions on topological manifolds, provided that they are linear in local topological charts near the fixed locus (thereby defining a notion of normal bundle). One can prove that property for regular moduli spaces of pseudo-holomorphic curves by standard gluing methods. 

Clearly, \eqref{eq:indecomposable-descendants} is compatible with the formula \eqref{eq:low-degree} which we obtained by other means (conversely, one could specialize our earlier argument to only use the class $A$, and thereby recover \eqref{eq:indecomposable-descendants} from it). In principle, this localisation method should apply more generally to classes $A$ that are $p$-indecomposable: by that, we mean that one can't write $A = pA_1 + A_2 + \cdots + A_r$, for $r \geq 1$ and nonzero classes $A_1,\dots,A_r$ which are represented by holomorphic curves (this is always satisfied if $\Omega_X \cdot A < p$). In that situation, there is another component to the fixed locus, which corresponds to the final sum in \eqref{eq:low-degree}. However, note that there are significant technical issues: one can no longer assume that the moduli spaces under consideration are smooth, hence presumably needs to apply a virtual analogue of localisation, analogous to \cite{graber-pandharipande99}. Maybe the most salient argument in favor of the direct approach is that it shows how, when going beyond the situations we have considered so far, the existence of $p$-fold covered curves complicate the situation: such curves yield yet more components of the fixed locus of the $\bZ/p$-action, whose contributions would need to be studied separately.

\subsection{Basic examples}
From this point onwards, we return to the monotone context from the main part of the paper (actually, our first two examples are only spherically monotone, meaning that the symplectic class and first Chern class are positive proportional on $\pi_2(X)$, but that's just as good for our purpose). In terms of formulae such as \eqref{eq:low-degree}, this simply means that all expressions are polynomials in $q$ for degree reason, and hence it is permitted to remove the formal variable by setting $q = 1$.

\begin{example} \label{th:cp2}
Let $X = T^2 \times \bC P^2$. We consider $p = 2$, take any $x \in H^1(T^2;\bF_2)$, and let $l \in H^2(\bC P^2;\bF_2)$ be the generator. The classical contribution is
\begin{equation} \label{eq:p2-classical}
\left\{
\begin{aligned}
& \Xi_2(x \otimes 1) = x \otimes 1, \\
& \Xi_2(x \otimes l) = x \otimes l^2, \\
& \Xi_2(x \otimes l^2) = 0.
\end{aligned}
\right.
\end{equation} 
For degree reasons, the only other contribution comes from the class $A$ of a line in $\bC P^2$, and in view of the symplectic automorphism invariant, that contribution must be that $Q\Xi_{2,A}(x \otimes l^2)$ is some multiple of $x \otimes l$. Using the notation $\theta = t^{1/2}$, we have $\mathit{QSt}(x \otimes l^2) = t^{5/2}  (x \otimes l^2)$. Take some $y \in H^1(T^2;\bF_2)$. Then \eqref{eq:indecomposable-descendants} says that
\begin{equation}
\int_X (y \otimes l)\, \mathit{QSt}_{2,A}(t^{5/2} (x \otimes l^2)) = t^{1/2}\langle y \otimes l, \psi\, (x \otimes l^2) \rangle_{2,A} = t^{1/2} \Big(\int_{T^2} yx\Big) \langle l, \psi l^2 \rangle_{\bC P^2,2,A}.
\end{equation} 
In the rightmost term, the Gromov-Witten invariant is taken in $\bC P^2$, and we have marked that notationally (to get to that expression, we have used the fact that all our curves are constant in $T^2$-direction). Geometrically, what we are considering in that term is the space of all lines in $\bC P^2$ going through a specific line $Z$ (representing $l$) and a point $q \notin Z$ (representing $l^2$). That moduli space can be identified with $Z$, and the normal bundle to $Z$ is the dual to the line bundle which gives rise to the gravitational descendant in the formula. Hence, $\langle l, \psi l^2 \rangle_{\bC P^2,2,A} = 1$.

Let's pass to the algebraic closure $\bar{\bF}_2$, with a nontrivial third root of unity $\zeta \in \bar{\bF}_2$. This yields a splitting of $1 \in \mathit{QH}^*(\bC P^2;\bar{\bF}_2)$ into idempotents $u_j = 1 + \zeta^j l + \zeta^{2j} l^2$. The natural extension of $Q\Xi_2$ to $H^{\mathrm{odd}}(X;\bar{\bF}_2)$ is linear in a Frobenius-twisted sense, meaning that $Q\Xi_2(\zeta \cdot) = \zeta^2 Q\Xi_2(\cdot)$. From our previous computations, it then follows that
\begin{equation} \label{eq:idem}
Q\Xi_2(x \otimes u_j) = x \otimes u_j.
\end{equation}
\end{example}

Let's see how this might look from a categorical viewpoint, along the lines of Remark \ref{th:new-remark}. The Fukaya category of $\bC P^2$ over $\bar{\bF}_2$ is more properly described as a collection of three categories, each of which is semisimple, and which correspond to the idempotent summands $\bar{\bF}_2 u_j$ in quantum cohomology. Unfortunately, we cannot introduce a meaningful version of the Fukaya category of $T^2$ without Novikov parameters, because there are no nontrivial monotone Lagrangian submanifolds. However, the diagonal in the Fukaya category of $X \times X$ makes sense in a monotone context, so we can frame the discussion in that language. Algebraically, the diagonal splits as the direct sum of three objects, again corresponding to the $u_j$. We can take one of those summands and equip it with a flat line bundle, or formal family of flat line bundles, in the $T^2$-direction. The convolution of such Lagrangian correspondences gives us $H^1(T^2;\bG_m) = \bG_m^2$, respectively its formal completion. Since we can do that independently for all three summands, we see the formal completion of $\bG_m^6$ appearing, which is consistent with \eqref{eq:idem}. If one wanted to work over $\bF_2$ itself, only the idempotents $u_0$ and $u_1+u_2$ would be defined, giving rise to a more complicated picture of the Fukaya category.

\begin{example}
Along the same lines, take $X = T^2 \times \bC P^1 \times \bC P^1$, but now with $p = 3$. Take $x,y \in H^1(T^2;\bF_3)$ and $k,l \in H^2(\bC P^1 \times \bC P^1;\bF_3)$. The only classes $A$ which contribute to $Q\Xi_3(x \otimes k)$ are those which yield the two rulings of $X$, and hence satisfy $\langle [\mathit{point}] \rangle_{1,A} = 1$. For each such class, \eqref{eq:indecomposable-descendants} yields
\begin{equation}
\int_X (y \otimes l) Q\Xi_{3,A}(x \otimes k) = 
 - \langle (y \otimes l)(x \otimes k)\rangle_{1,A} = -\int_X (y \otimes l)(x \otimes k).
\end{equation}
Adding up their contributions yields
\begin{equation} \label{eq:xi-3-product}
Q\Xi_3 = -2 \, \mathit{id} = \mathit{id} \quad \text{on $H^3(X;\bF_3)$.}
\end{equation}
The same holds on $H^1(X;\bF_3)$, see \eqref{eq:deg1}. The corresponding question for $H^5(X;\bF_3)$ is just outside the reach of our methods, because there is a potential contribution from classes that are $3$ times that of a ruling.
%
%
%
%
\end{example}

This time, the Fukaya category of $\bC P^1 \times \bC P^1$ splits into four semisimple pieces over $\bF_3$, so one expects to see the product of four copies of the formal group associated to $T^2$, meaning a total of $\hat\bG_m^8$, which is compatible with our (partial) computation.
%

\subsection{Fano threefolds\label{subsec:3d}}
The remaining examples will be monotone symplectic six-manifolds, which have $H_1(X;\bZ) = 0$ and $H_*(X;\bZ)$ torsion-free (in fact, they will be algebraic, meaning Fano threefolds). We will assume that there is some $\lambda \in \bZ$ such that 
\begin{equation} \label{eq:lambda-x}
c_1(X) \ast x = \lambda x \text{ for $x \in H^3(X)$, or equivalently }
 \langle y, x \rangle_2 = \lambda \int_X yx \text{ for $x,y \in H^3(X)$.}
\end{equation}

From now on, we fix a prime $p$, and our notation will be that $x,y \in H^3(X;\bF_p)$. The classical Steenrod operations applied to $x$ have potentially nontrivial components in degrees $3$, $4$ and $6$. The degree $4$ component is the Bockstein $\beta$, which is zero because all our classes come from $H^3(X;\bZ)$. The degree $6$ component is the $t^{(3p-7)/2}\theta$-part of $\mathit{Sq}(x)$. For $p = 2$, this is just the cup square, which again is zero by lifting to $H^3(X;\bZ)$; and for $p>2$, it involves the Bockstein, see \eqref{eq:classical-steenrod}, hence is again zero. The outcome is that only the degree $3$ component survives. Taking the constants in \eqref{eq:classical-steenrod} into account (and omitting $X$ from the notation), this says that
\begin{equation} \label{eq:classical-steenrod-3}
\mathit{St}_p(x) = \begin{cases} x t^{3/2} & p = 2, \\
- \big( {\textstyle\frac{p-1}{2}!} \big) x t^{\frac{(3p-3)}{2}} & p>2.
\end{cases}
\end{equation}
As a consequence of \eqref{eq:lambda-x} (and the Divisor and TRR relations in Gromov-Witten theory), we have 
for $0 < d \leq p-2$,
\begin{equation} \label{eq:odd-psi}
\begin{aligned}
\langle y, \psi^d x \rangle_2 = 
\frac{\lambda}{d+1} \langle y, \psi^{d-1} x \rangle_2 = \cdots = \frac{\lambda^{d+1}}{(d+1)!} \int_X yx.
\end{aligned}
\end{equation}

Let's look at $\int_X y\, Q\Xi_p(x)$. For degree reasons, the curves that contribute to this lie in classes $A$ with $c_1(A) = p-1$, hence (setting $\Omega_X = c_1(X)$ in view of monotonicity), Corollary \ref{th:low-degree} applies. At first sight, the outcome reads as follows:
\begin{equation}
\begin{aligned}
& \int_X y Q\Xi_p(x) = -\sum_{\Omega \cdot A = p-1} \langle y, \psi^{p-2} x \rangle_{2,A}
- \int_X yx \,\sum_{\Omega \cdot A = p-1} \langle \psi^{p-3} [\mathit{point}] \rangle_{1,A} 
\\ & \qquad \qquad
+ \sum_{\substack{d_0 = \Omega \cdot A_0 > 0 \\ d_1 = \Omega \cdot A_1 > 0\\ d_0+d_1 = p-1}} \langle y,  (-1)^{d_0-1} \psi^{d_0-1} x \rangle_{2,A_0} 
\langle \psi^{d_1-2} [\mathit{point}] \rangle_{1,A_1}.
\end{aligned}
\end{equation}
Using \eqref{eq:odd-psi}, one simplifies this to
\begin{equation} \label{eq:threefold}
\begin{aligned}
\int_X y\, Q\Xi_p(x) = \big(\int_X xy \big) \big(-{\textstyle\frac{\lambda^{p-1}}{(p-1)!}} + \sum_{2 \leq d \leq p-1} (-1)^{d-1} {\textstyle\frac{\lambda^{p-1-d}}{(p-1-d)!}} \langle \psi^{d-2}[\mathit{point}] \rangle_1 \big).
\end{aligned}
\end{equation}
Besides $\lambda$, the enumerative ingredient that enters is the quantum period (see \cite{coates-corti-galkin-golyshev-kasprzyk14}, where the notation is $G_X$)
\begin{equation} 
\Pi = 1 + \sum_{d \geq 2} q^d \langle \psi^{d-2} [\mathit{point}] \rangle_{1,A},
\end{equation}
or more precisely, what's obtained from it by truncating mod $q^p$ and then considering the coefficients as lying in $\bF_p$. In that notation, one can also write \eqref{eq:threefold} as
\begin{equation} \label{eq:threefold-2}
Q\Xi_p = -\big(\text{\it $q^{p-1}$-coefficient of }e^{-\lambda q}\Pi \big)
\mathit{id}.
\end{equation}
All the examples that we will consider are instances of \cite[Theorem 4.7]{coates-corti-galkin-golyshev-kasprzyk14}, itself based on Givental's work.

\begin{example}
Let $X$ be the intersection of two quadrics in $\bC P^5$, which is also a moduli space of stable bundles (with rank two and fixed odd degree determinant) on a genus two curve. This has
\begin{equation}
H_l(X;\bZ) = \begin{cases} \bZ^4 & l = 3, \\ \bZ & l = 0,2,4,6, \\ 0 & \text{otherwise.} \end{cases}
\end{equation}
The first Chern class is twice a generator of $H^2(X;\bZ)$. For degree reasons, this implies that $Q\Xi_2 = 0$. This is not necessarily indicative of the general picture, since we already know that the prime $p = 2$ is exceptional \cite[p.\ 137]{donaldson93}: the small quantum cohomology ring has $\mathit{QH}^{\mathit{even}}(X) \iso \bZ[h]/h^2(h^2-16)$, hence does not split into summands if one reduces coefficients to $\bF_2$.

Let's look at odd primes. We have $\lambda = 0$ since there are no classes $A$ with $\Omega_X \cdot A = 1$. The quantum period is \cite[p.\ 135]{coates-corti-galkin-golyshev-kasprzyk16}
\begin{equation} 
\Pi = \sum_{d \geq 0}\frac{(2d)!^2}{(d!)^6} q^{2d} = 
1 + 2^2 q^2 + 3^2 q^4 + \textstyle\big(\frac{10}{3}\big)^2 q^6 + \big(\frac{35}{12}\big)^2 q^8 + \cdots
\end{equation}
Applying \eqref{eq:threefold} yields
\begin{equation} \label{eq:strange}
Q\Xi_p = - \frac{ (p-1)!^2}{(\frac{p-1}{2}!)^6} \mathit{id} = (-1)^{\frac{p-1}{2}} \mathit{id}
\quad \text{for odd $p$.}
\end{equation}
We should point out that the first nontrivial case $p = 3$, where the enumerative geometry is that of lines on $X$, is amenable to the more direct method of Section \ref{subsec:direct-localization}. The space of lines is regular \cite[Theorem 2.6]{reid72} (it is isomorphic to the Jacobian of the genus two curve associated to $X$ \cite[Theorem 4.7]{reid72}), and there are $4$ lines passing through a generic point \cite[p.\ 135]{donaldson92}. That information enables one to apply \eqref{eq:indecomposable-descendants} and obtain $Q\Xi_3 = -4\mathit{id} = -\mathit{id}$.
\end{example}

When thinking about the outcome of this computation, it may be useful to know that there is an algebraic group whose formal completion shows the same behaviour, namely
\begin{equation} \label{eq:rotation-group}
G = \big\{ x+iy \;:\; x^2 + y^2 = 1\big\},
\end{equation}
where $i$ is an abstract symbol such that $i^2 = -1$ (some readers may feel more comfortable writing this as a group of 2x2 rotation matrices). It is a non-split torus, which becomes isomorphic to $\bG_m$ over any coefficient ring that contains an actual root of $-1$. To write down the group law for the completion $\hat{G}$, one can use the rational parametrization $z = \frac{y}{1+x}$, in which it is given by
\begin{equation} \label{eq:t-parametrization}
z_1 \bullet z_2 = \frac{z_1+z_2}{1-z_1z_2}.
\end{equation}
The $p$-th power map, for primes $p>2$, is $(x+iy)^p \equiv x^p + (-1)^{\frac{p-1}{2}} iy^p\; \mathrm{mod}\; p$, or for \eqref{eq:t-parametrization},
\begin{equation}
\overbrace{z \bullet \cdots \bullet z}^p \equiv \frac{(-1)^{\frac{p-1}{2}} y^p}{1+x^p} = (-1)^{\frac{p-1}{2}} z^p = (-1)^{\frac{p-1}{2}} z \quad \text{for $z \in \bF_p$.}
\end{equation}
A natural conjecture would be that the formal group associated to $X$ is $\hat{G}^4$.
Note that in \cite{smith10}, a direct summand of the Fukaya category of $X$ was shown to be equivalent to the Fukaya category of the genus two curve. This seems to suggest a role for $\hat{\bG}_m^4$ rather than $\hat{G}^4$ (compare Remark \ref{th:new-remark}). However, \cite{smith10} works with complex number coefficients. To the author's best knowledge, we do not have a version of that argument that would work over $\bZ$ or $\bF_p$, and hence, it remains an open question to interpret the computation above in terms of mirror symmetry.


\begin{example}
Let $X \subset \bC P^4$ be a cubic threefold. This situation is parallel to the previous example, except that $H^3(X)$ is ten-dimensional. The quantum period is \cite[p.\ 134]{coates-corti-galkin-golyshev-kasprzyk16}
\begin{equation}
\Pi = \sum_d \frac{(3d)!}{(d!)^5} q^{2d} = \textstyle 1 + 6 q^2 + \frac{45}{2} q^4 + \frac{140}{3} q^6 + \frac{1925}{32} q^8 + \cdots
\end{equation}
One again has $Q\Xi_2 = 0$ and $\lambda = 0$ for degree reasons, but this time 
\begin{equation}
Q\Xi_p = - \frac{ (\frac{3p-3}{2})!}{(\frac{p-1}{2})!^5} \mathit{id} = 0 \quad \text{for $p>2$.}
\end{equation}
\end{example}

\begin{example}
The quartic threefold has \cite[p.\ 136]{coates-corti-galkin-golyshev-kasprzyk16}
\begin{equation}
\Pi = e^{-24 q} \sum_d \frac{(4d)!}{(d!)^5} q^d.
\end{equation}
We have $\lambda = -24$, the ``big eigenvalue'' in the terminology of \cite[Corollary 1.14]{sheridan16}. The $e^{-24 q}$ cancels out the corresponding term in \eqref{eq:threefold-2}, and as a result we again have $Q\Xi_p = 0$.
\end{example}

In both of the previous examples, homological mirror symmetry is known to hold \cite{sheridan16}, but again, arithmetic aspects are not addressed in that paper. Even assuming that the answer given there is true arithmetically, it takes the form of an orbifold LG (Landau-Ginzburg) model with a highly degenerate singular point. For our purpose, one would need to understand the formal completion near the identiy of the derived automorphism group of such an LG model, which is a purely algebro-geometric problem, but one whose answer is not known to this author. This means that we do not have a mirror symmetry interpretation for the vanishing of $Q\Xi_p$.

\begin{example} \label{th:blowup-example-2}
(Previously mentioned in Example \ref{th:blowup-example})
Let $X$ be a hypersurface of bidegree $(1,2)$ in $\bC P^1 \times \bC P^3$; equivalently, this is obtained by blowing up the intersection of two quadrics (which is an elliptic curve) in $\bC P^3$. It is a Fano threefold satisfying
\begin{equation}
H_l(X;\bZ) = \begin{cases} \bZ^2 & l = 2,3,4, \\ \bZ & l = 0,6, \\ 0 & \text{otherwise.} \end{cases}
\end{equation}
More precisely, we have $H_2(X;\bZ) \iso H_2(\bC^1 \times \bC^3;\bZ)$ by inclusion, and for the exceptional divisor $T^2 \times \bC P^1 \subset X$, we similarly have $H_3(T^2 \times \bC P^1) \iso H_3(X;\bZ)$. The classes potentially represented by holomorphic curves are
\begin{equation} \label{eq:d-classes}
A = (d_1,d_2), \quad \text{with $d_1,d_2 \geq 0$; and}\quad \Omega_X \cdot A = d_1 + 2d_2.
\end{equation}
Curves in the unique class $A = (1,0)$ with $\Omega_X \cdot A = (1,0)$ form the ruling of the exceptional divisor. From that, one easily sees that $\lambda = -1$. We have (\cite[p. 183]{coates-corti-galkin-golyshev-kasprzyk16})
\begin{equation}
e^{-\lambda q} \Pi = e^q \Pi = \sum_{d_1,d_2} \frac{(d_1+2d_2)!}{(d_1!)^2 (d_2!)^4} q^{d_1+2d_2},
\end{equation}
and hence
\begin{equation} \label{eq:combinatorics}
Q\Xi_p = \mathit{id}
\sum_{d_1+2d_2 = p-1} \frac{1}{(d_1!)^2 (d_2!)^4},
\end{equation}
see \eqref{eq:numbers} for the first few terms. As before, the lowest degree case $p = 2$ is amenable to more direct methods, and was in fact determined in \cite{wilkins18}.
\end{example}

One can write
\begin{equation} \label{eq:two-quadrics}
\begin{aligned}
& \sum_{d_1+2d_2 = m} \frac{m!^2}{(d_1!)^2 (d_2!)^4} = 
\sum_{d_1+2d_2 = m} \binom{m}{d_1\;d_2\; d_2}^2 =
\text{\it constant coefficient of } \tilde{W}^m, \\
& \qquad \qquad \text{where }
\tilde{W}(x_0,x_1,x_2,x_3) = \frac{(x_0^2+x_1^2+x_2x_3)(x_0x_1 + x_2^2 + x_3^2)}{x_0x_1x_2x_3}.
\end{aligned}
\end{equation}
This is an elementary combinatorial argument: when we expand $(x_0^2+x_1^2+x_2x_3)^m(x_0x_1+x_2^2+x_3^2)^m$, the monomial $(x_0x_1x_2x_3)^m$ arises by picking each $x_k^2$ term an equal number ($d_2$ in our formula) of times, which leads to the multinomial coefficients. Setting $m=p-1$ allows us to apply that to \eqref{eq:combinatorics}.
%
Next, consider the intersection of the two quadrics that appear in \eqref{eq:two-quadrics},
\begin{equation} \label{eq:two-quadrics-2}
C = \{ x_0^2 + x_1^2 + x_2x_3 = 0, \; x_0x_1 + x_2^2 + x_3^2 = 0\} \subset {\mathbb P}^3.
\end{equation}
There is an elementary number theory argument which allows one to count the number of points of $C(\bF_p)$ modulo $p$; it goes as follows. By little Fermat,
\begin{equation} \label{eq:0123}
\sum_{x_0,\dots,x_3} (1-x_0^2-x_1^2-x_2x_3)^{p-1}(1-x_0x_1-x_2^2-x_3^2)^{p-1} \in \bF_p
\end{equation}
counts the number of points in $\bF_p^4$ lying on the intersection of our quadrics. On the other hand, 
\begin{equation} \label{eq:0123-2}
\sum_{x_0,\dots,x_3} x_0^{i_0}\cdots x_3^{i_3} = (\sum_{x_0} x_0^{i_0})(\sum_{x_1} x_1^{i_1})(\sum_{x_2} x_2^{i_2})(\sum_{x_3} x_3^{i_3}) = 
\begin{cases} 
1 & (i_0,\dots,i_3) = (p-1,\dots,p-1), \\
0 & \text{all other } 0 \leq i_0,\dots,i_3 \leq p-1.
\end{cases}
\end{equation}
If we expand \eqref{eq:0123} and apply \eqref{eq:0123-2} to the resulting terms, the outcome is that \eqref{eq:0123} is the $x_0^{p-1}x_1^{p-1}x_2^{p-1}x_3^{p-1}$-coefficient of $(x_0^2-x_1^2-x_2x_3)^{p-1}(x_0x_1-x_2^2-x_3^2)^{p-1}$, which is what appears in \eqref{eq:two-quadrics}. Adjusting that to the point-count in projective space, we get
\begin{equation}
1-\# C(\bF_p) \equiv \text{\it constant coefficient of } \tilde{W}^m \text{ mod $p$}.
\end{equation}

The isogeny class of the elliptic curve $\mathit{Jac}(C)$ is listed as \cite[Isogeny class 15.a]{modular-forms-database}, and its associated modular form is \eqref{eq:modular-form}. Point-counting becomes relevant for us through a theorem of Honda \cite{honda68, olson76, hazewinkel86}, which says that $1 - \# C(\bF_p)$ mod $p$ can be identified with the $p$-th coefficient of the $p$-th power map for the formal group which is the completion of $\mathit{Jac}(C)$ (this coefficient is sometimes called the Hasse invariant of the mod $p$ reduction of $C$; maybe more precisely, it is a special case of the Hasse-Witt matrix of an algebraic curve). The natural interpretation of this in terms of mirror symmetry is the following:

\begin{conjecture} \label{th:isogeny}
Take $X$ as in Examples \ref{th:blowup-example}, \ref{th:blowup-example-2}. The Fukaya category of $X$ contains a direct summand equivalent to the derived category of sheaves on a genus one curve, whose Jacobian is isogenous to that of $C$. (The remaining summands are expected to be semi-simple: they do not contribute to $H^{\mathit{odd}}(X)$ or to our formal group.)
\end{conjecture}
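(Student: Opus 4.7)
The strategy is to construct the conjectured elliptic curve summand geometrically by exploiting the blow-up description of $X$. The bidegree $(1,2)$ hypersurface can be realized as the blow-up $\pi: X \to \bC P^3$ along the smooth elliptic curve $C$ in \eqref{eq:two-quadrics-2}, with exceptional divisor a ruled surface $\bP(N_{C/\bC P^3})$ over $C$. Following the Orlov-type picture for Fukaya categories of blow-ups, one expects a semiorthogonal decomposition
\begin{equation*}
\scrF(X) \simeq \langle \scrF(\bC P^3), \scrF(C) \rangle,
\end{equation*}
with a single copy of $\scrF(C)$ because $C$ has codimension two. Over $\bF_p$ the $\scrF(\bC P^3)$ summand splits into four orthogonal semisimple lines, contributing only tautological $\hat\bG_m$-type behaviour as in \eqref{eq:deg1} to $Q\Xi_{X,p}$; the interesting content must therefore be carried by $\scrF(C)$.

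The second step is to identify $\scrF(C)$ with coherent sheaves on a mirror elliptic curve. The Lekili-Perutz refinement \cite{lekili-perutz12} of Polishchuk-Zaslow supplies an equivalence $\scrF(C) \simeq D^b\mathrm{Coh}(E)$ for a Tate elliptic curve $E$ whose Tate parameter is the exponential of the symplectic area of $C$; crucially, this equivalence is defined over $\bZ$, which is exactly what is needed to preserve the $\bF_p$-information visible in $Q\Xi_{X,p}$.

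The third step reconciles this picture with our formal group. By naturality of $\mathit{MC}(-)$ under categorical splittings, the elliptic summand of $\scrF(X)$ contributes the formal group of $\mathrm{Aut}^0(D^b\mathrm{Coh}(E))$ as a direct factor of $\mathit{MC}(X)$; its identity component is $E \times E$ (translations and $\mathrm{Pic}^0$), matching the rank of $H^3(X;\bZ)$. Theorem \ref{th:p-power} then identifies the $p$-th power map on this factor with the Hasse invariant of the mod $p$ reduction of $E$ via Honda's theorem \cite{honda68}, and comparison with \eqref{eq:numbers} forces $E$ to lie in the isogeny class of $\mathrm{Jac}(C)$. Conversely, the computation in Example \ref{th:blowup-example-2} can be read as strong numerical evidence that the decomposition above does exist with the conjectured summand.

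The main obstacle is the first step: a semiorthogonal decomposition for the monotone Fukaya category of a blow-up along a positive-dimensional center with nontrivial odd cohomology is not established in the literature, and existing work on blow-ups of points does not transfer. A second route, perhaps more tractable, is to construct the Hori-Vafa Landau-Ginzburg mirror of $X$ directly by combining the toric mirror of $\bC P^1 \times \bC P^3$ with the restriction procedure for bidegree $(1,2)$ hypersurfaces; the formula \eqref{eq:two-quadrics} strongly suggests that the resulting superpotential is a pencil whose generic fibre is itself the intersection of two quadrics defining $C$, so that its critical locus supports a one-dimensional $B$-model of genus one. Homological mirror symmetry for Fano hypersurfaces in toric varieties would then yield the required summand. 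Either route must ultimately lift Hochschild-level identifications to genuine $A_\infty$-equivalences defined over $\bZ$, so as to preserve the arithmetic content of the formal group; this is the serious technical challenge any proof of Conjecture \ref{th:isogeny} will have to confront.
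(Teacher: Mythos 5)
First, note that the statement you are addressing is a \emph{conjecture}: the paper does not prove it, and offers only supporting evidence, namely (a) the computation of Example \ref{th:blowup-example-2}, which via Honda's theorem \cite{honda68} matches $Q\Xi_{X,p}$ with the Hasse invariant of $\mathrm{Jac}(C)$ mod $p$, and (b) the observation that the database superpotential $W$ for $X$ is related to $\tilde{W}$ of \eqref{eq:two-quadrics} by the monomial change of variables \eqref{eq:v-w}, so that the critical locus of $W$ contains an affine part of a $\bZ/4$-quotient of $C$ inside the fibre $W^{-1}(-1)$, with nondegenerate transverse Hessian; under the expected equivalence of $\scrF(X)$ with $D^b_{\mathit{sing}}$ of a compactification of $W$, this singles out a candidate genus one curve in the stated isogeny class. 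Your second route is essentially this heuristic, with two inaccuracies worth flagging: the genus one curve appears as (a quotient of) the \emph{critical locus inside one specific fibre} of $W$, not as the generic fibre of a pencil, and it is a $\bZ/4$-quotient of $C$ rather than $C$ itself --- which is precisely why the conjecture is phrased up to isogeny.

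Your first route has a genuine gap beyond the missing $A$-side blow-up decomposition that you acknowledge. Even granting $\scrF(X) \simeq \langle \scrF(\bC P^3), \scrF(C)\rangle$, the summand you would obtain is the Fukaya category of $C$ as a flat symplectic $2$-torus; its mirror under the Lekili--Perutz equivalence \cite{lekili-perutz12} is a Tate curve whose parameter is governed by the symplectic area (a Novikov-type variable), and nothing in that construction ties its mod $p$ reductions, or its isogeny class after any specialization, to the complex curve $C \subset \bC P^3$ that happens to be the blow-up center. Your attempt to close this gap --- invoking Theorem \ref{th:p-power} and comparing with the table \eqref{eq:numbers} to ``force'' $E$ into the isogeny class of $\mathrm{Jac}(C)$ --- is circular as a proof strategy: those numbers are exactly the evidence the conjecture is meant to explain, so using them to pin down $E$ reproduces the paper's own plausibility argument rather than deriving the isogeny class independently. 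A proof along route one would have to produce the arithmetic identification of the elliptic summand (for instance through an integral, $A_\infty$-level mirror statement for the exceptional divisor geometry) without appealing to the $Q\Xi_{X,p}$ computation; as it stands, neither of your routes, nor the paper, does this, and the statement remains a conjecture.
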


It turns out that this is compatible with predictions coming from ``classical'' enumerative mirror symmetry. There (see e.g.\ \cite[Definition 4.9]{coates-corti-galkin-golyshev-kasprzyk14}), a mirror superpotential $W \in \bZ[y_1^{\pm 1},y_2^{\pm 1},y_3^{\pm 1}]$ for $X$ needs to have the property that
\begin{equation} \label{eq:oscillating}
\Pi = \int_{|y_1| = |y_2| = |y_3| = 1} e^{qW} \frac{dy_1 \wedge dy_2 \wedge dy_3}{y_1y_2y_3} = \sum_{d=0}^{\infty}
\frac{\text{\it constant coefficient of } W^d}{d!} q^d.
\end{equation}
There can be infinitely many different superpotentials for the same $X$, related by certain birational changes of variables. Assuming that the anticanonical linear system for $X$ contains a smooth divisor, then the actual mirror of $X$, formed relative to that divisor, should come with a proper (fibres are compact) function that specializes to those superpotentials in different Zariski charts. 

Getting back to our example: the function $\tilde{W}(1,x_1,x_2,x_3)-1$ satisfies the property \eqref{eq:oscillating}, as a consequence of \eqref{eq:two-quadrics}, but fails another requirement for mirror superpotentials, that of having a reflexive Newton polyhedron. Instead, the precise relation is as follows. One of the superpotentials for our specific $X$, given in \cite[Polytope 198]{fanosearch}, is
\begin{equation}
W = y_1 + y_2 + y_3 + y_1^{-1} y_2 y_3 + y_1 y_3^{-1} + y_2^{-1} + y_1^{-1} + y_2^{-1}y_3^{-1}.
\end{equation}
One can then write
\begin{equation} \label{eq:v-w}
\tilde{W}(1,x_1,x_2,x_3) - 1 = W(x_1^{-1}x_2x_3^{-1}, x_1^{-1}x_2^{-1}x_3, x_1^{-1}x_2^2).
\end{equation}
The monomial coordinate change in \eqref{eq:v-w} is a $\bZ/4$-cover of the $(y_1,y_2,y_3)$-torus by the $(x_1,x_2,x_3)$-torus; such coordinate changes do not affect oscillating integrals as in \eqref{eq:oscillating}. It is clear from the definition \eqref{eq:two-quadrics} that the critical locus of $\tilde{W}(1,x_1,x_2,x_3)$ contains an affine part of $C$, lying in the fibre $\tilde{W}^{-1}(0)$. Hence, the critical locus of $W$ contains an affine part of a $\bZ/4$-quotient of $C$, lying in the fibre $W^{-1}(-1)$. Moreover, the Hessian in transverse direction to those critical loci is nondegenerate over $\bQ$ (or over $\bF_p$, provided that $p$ is large). In view of the expected correspondence between the Fukaya category of $X$ and the category $D^b_{\mathit{sing}}$ associated to a compactification of $W$, this provides strong support for Conjecture \ref{th:isogeny}, and also gives a specific candidate genus one curve (within the specified isogeny class).

\section{Sign conventions\label{sec:signs}}
Signs are important for some of our example computations. This section clarifies the conventions used for $\bZ/p$-equivariant (and therefore $\mathit{Sym}_p$-equivariant) cohomology, and for the Steenrod operations.

\subsection{Equivariant cohomology\label{subsec:sign-of-t}}
Take the standard classifying space $BS^1 = S^\infty/S^1 = \bC P^\infty$. Let $t \in H^2_{S^1}(\mathit{point}) = H^2(\bC P^\infty)$ be the Chern class of ${\mathcal O}(-1)$. Given a representation $V$ of $S^1$, form the associated vector bundle
\begin{equation} \label{eq:associated}
(V \times S^\infty)/S^1 \longrightarrow \bC P^\infty, \quad \text{where }  g \cdot (v,z) = (gv,g^{-1}z).
\end{equation}
In this way, the representation $V_k$ of weight $k$ corresponds to the line bundle ${\mathcal O}(-k)$. We use the same convention as in \eqref{eq:associated} when forming the Borel construction (the equivariant cohomology of a space), and similarly for equivariant Euler classes of vector bundles.

Given a representation $V_{k_1} \oplus \cdots \oplus V_{k_d}$, its equivariant Euler class, defined as the Euler class of the associated bundle \eqref{eq:associated}, is therefore
\begin{equation} \label{eq:equivariant-euler-class}
e_{S^1}(V) = k_1 \cdots k_d \, t^d.
\end{equation}
We embed $\bZ/p \subset S^1$ in the obvious way, and take the mod $p$ reduction of $t$ to be the generator of $H^2_{\bZ/p}(\mathit{point};\bF_p)$, leading to a corresponding version of \eqref{eq:equivariant-euler-class}. We take $\theta \in H^1_{\bZ/p}(\mathit{point};\bF_p)$ to be the tautological generator, meaning the one associated to the identity map $\bZ/p = \pi_1(B(\bZ/p)) \rightarrow \bF_p$. Then, the Bockstein satisfies
\begin{equation} \label{eq:theta-bockstein}
\beta(\theta) = -t.
\end{equation}

\begin{example} \label{th:cyclic-representation}
Fix some odd $p$. Take the fundamental representation of $\bZ/p$ on $\bR^p$, by cyclic permutations, and let $V$ be its quotient by the trivial subspace $\bR (1,\dots, 1)$. Our orientation convention is that taking first $(1,\dots,1)$, and then after that lifts of an oriented basis of $V$, yields an oriented basis of $\bR^p$. If we temporarily ignore orientations, then clearly
\begin{equation} \label{eq:discrete-fourier}
V \iso V_1 \oplus V_2 \oplus \cdots V_{\frac{p-1}{2}}.
\end{equation}
This decomposition can be made explicit in terms of a discrete Fourier basis. A computation of the determinant of that basis (compare e.g. \cite{mcclellan-parks72}) shows that \eqref{eq:discrete-fourier} is in fact orientation-preserving. As a consequence,
\begin{equation} \label{eq:cyclic-euler}
e_{\bZ/p}(V) =  \textstyle (\frac{p-1}{2}!)\, t^{\frac{p-1}{2}}.
\end{equation}
\end{example}

\begin{example}
To check the sign in \eqref{eq:theta-bockstein}, let's replace the infinite-dimensional space $K(\bZ/p,1)$ by the lens space $L(p,1) = S^3/(\bZ/p)$, with $\bZ/p$ acting diagonally on $S^3 = \{|z_1|^2 + |z_2|^2 = 1\} \subset \bC^2$. The relation between the homological Bockstein $b$ and its cohomological counterpart $\beta$ is that
\begin{equation} \label{eq:two-bocksteins}
\langle \beta(y), x \rangle + (-1)^{|y|} \langle y, b(x) \rangle = 0.
\end{equation}
Consider $\{|z_1| \leq 1, \; z_2 = \sqrt{1-|z_1|^2}\} \subset L(p,1)$, with the complex orientation from $z_1$. This is a $\bZ/p$-cycle, whose homology class we write as $x$. Applying the homological Bockstein yields a $1/p$ fraction of the boundary, which is exactly the circle $\{z_2 = 0\} \subset L(p,1)$, with its orientation given by going around $z_1$ anticlockwise from $1$ to $e^{2\pi i/p}$. This means that by definition of $\theta$,
\begin{equation} \label{eq:homology-bockstein}
\langle \theta, b(x) \rangle = 1.
\end{equation}
The class $-t$ is Poincar\'e dual to the zero-locus of a section of the pullback of ${\mathcal O}(1)$, hence represented by the cycle $\{z_1 = 0\}$, with the usual orientation of the $z_2$ circle. The intersection number of that and the mod $p$ cycle defined above is 
\begin{equation} \label{eq:minus-t}
\langle -t, x \rangle = 1.
\end{equation}
From \eqref{eq:two-bocksteins}, for $y = \theta$, and \eqref{eq:homology-bockstein} we get $\langle \beta(\theta),x \rangle = 1$, which together with \eqref{eq:minus-t} yields the desired \eqref{eq:theta-bockstein}.
\end{example}

\subsection{Steenrod operations}
The appearance of combinatorial constants similar to those in \eqref{eq:classical-steenrod} goes back to the classical literature (see e.g.\  \cite[p.~107 and p.~112]{steenrod-epstein}). The point of introducing those is to make sure the operations satisfy the Steenrod axioms. Since a comparison between different definitions is made more complicated by sign conventions for equivariant cohomology, we want to explain one way of checking the choices made here.

Fix an odd prime $p$. Consider the Steenrod axiom which says that $P^0(x) = x$ for $x \in H^*(X;\bF_p)$. With our convention \eqref{eq:classical-steenrod}, this is equivalent to
\begin{equation} \label{eq:st-p-is-the-identity}
\mathit{St}_p(x) = 
(-1)^\ast {\textstyle \big( \frac{p-1}{2}! \big)}^{|x|}
 t^{\frac{p-1}{2} |x|} x + 
\text{\it terms of higher degree in $H^*(X;\bF_p)$,}
\end{equation}
with $\ast$ as in \eqref{eq:the-sign}. Suppose that $X$ is an oriented closed manifold, and that we apply this to $x = [\mathit{point}] \in H^{\mathrm{dim}(X)}(X;\bF_p)$. By definition, $\mathit{St}_p(x)$ is obtained from
\begin{equation}
H^{\mathrm{dim}(X)}(X;\bF_p) \xrightarrow{\text{\it $p$-th power}} H^{p\,\mathrm{dim}(X)}_{\bZ/p}(X^p;\bF_p)
\xrightarrow{\text{ \it restriction to diagonal}} H^{p\,\mathrm{dim}(X)}_{\bZ/p}(X;\bF_p).
\end{equation}
Hence, it maps $x$ to itself times the equivariant Euler class of the normal bundle to the diagonal $X \subset X^p$, restricted to a point. If $X$ is one-dimensional, that normal bundle is given by the representation $V$ from Example \ref{th:cyclic-representation}. In general, it can be identified with $\mathrm{dim}(X)$ copies of $V$, up to a Koszul reordering sign $(-1)^\dag$,
\begin{equation}
\dag = \textstyle \frac{|x|(|x|-1)}{2} \frac{p(p-1)}{2} \equiv \frac{|x|(|x|-1)}{2} \frac{p-1}{2} \;\; \text{mod } 2.
\end{equation}
Combining that with the $|x|$-th power of \eqref{eq:cyclic-euler} precisely yields the constant factor in \eqref{eq:st-p-is-the-identity}.
%

\end{document}